\tikzset{
    labl/.style={anchor=south, rotate=90, inner sep=.5mm}
}
\newtheorem{thm}{Theorem}
\newtheorem{thmintro}{Theorem}
\newtheorem{prop}[thm]{Proposition}
\newtheorem{lem}[thm]{Lemma}
\newtheorem{conjecture}[thm]{Conjecture}
\newtheorem{cor}[thm]{Corollary}
\theoremstyle{definition}
\newtheorem{defi}[thm]{Definition}
\newtheorem{example}[thm]{Example}
\newtheorem{rem}[thm]{Remark}
\newtheorem{remark}[thm]{Remark}
\numberwithin{equation}{section}
\numberwithin{thm}{section}
\newcommand{\BA}{\ensuremath{\mathbb {A}}\xspace}
\newcommand{\BC}{\ensuremath{\mathbb {C}}\xspace}
\newcommand{{\BG}}{\ensuremath{\mathbb {G}}\xspace}
\newcommand{{\BK}}{\ensuremath{\mathbb {K}}\xspace}
\newcommand{\BQ}{\ensuremath{\mathbb {Q}}\xspace}
\newcommand{\BR}{\ensuremath{\mathbb {R}}\xspace}
\newcommand{\BZ}{\ensuremath{\mathbb {Z}}\xspace}
\newcommand{\CA}{\ensuremath{\mathcal {A}}\xspace}
\newcommand{\CB}{\ensuremath{\mathcal {B}}\xspace}
\newcommand{\CD}{\ensuremath{\mathcal {D}}\xspace}
\newcommand{\CO}{\ensuremath{\mathcal {O}}\xspace}
\newcommand{\CR}{\ensuremath{\mathcal {R}}\xspace}
\newcommand{\CU}{\ensuremath{\mathcal {U}}\xspace}
\newcommand{\CZ}{\ensuremath{\mathcal {Z}}\xspace}
\newcommand{\SA}{\ensuremath{\mathscr {A}}\xspace}
\newcommand{\x}{\times}
\newcommand{\ox}{\otimes}
\newcommand{\bfe}{\textnormal{\textbf{e}}}
\newcommand{\bfs}{\textnormal{\textbf{s}}}
\newcommand{\bfu}{\textnormal{\textbf{u}}}
\newcommand{\bfv}{\textnormal{\textbf{v}}}
\newcommand{\bfw}{\textnormal{\textbf{w}}}
\newcommand{\id}{\textnormal{id}}
\newcommand{\uf}{\textnormal{uf}}
\newcommand{\sgn}{\textnormal{sgn}}
\newcommand{\Conf}{\textnormal{Conf}}
\newcommand{\Br}{\textnormal{Br}}
\newcommand{\ve}{\varepsilon}
\newcommand{\isoto}{\xrightarrow{\sim}}
\let\emptyset\varnothing
\begin{document}

\title[]{Upper cluster structure on Kac--Moody Richardson varieties}

\author[Huanchen Bao]{Huanchen Bao}
\address{Department of Mathematics, National University of Singapore, Singapore.}
\email{huanchen@nus.edu.sg}

\author[Jeff York Ye]{Jeff York Ye}
\address{Department of Mathematics, National University of Singapore, Singapore.}
\email{e1124873@u.nus.edu}
\subjclass[2020]{} 

\begin{abstract}
 We show coordinate rings of open Richardson varieties are upper cluster algebras for any symmetrizable Kac--Moody type. We further show the coordinate rings of (generalized) open Richardson varieties on the twisted product of flag varieties are upper cluster algebras for any symmetrizable Kac--Moody type. This includes, as special cases, reduced double Bruhat cells, Bott-Samelson varieties, braid varieties. Our results generalize various results by Casals--Gorsky--Gorsky--Le--Shen--Simental  and Galashin--Lam--Sherman-Bennett--Speyer in finite types.
\end{abstract}

\maketitle
	
\tableofcontents
 
\section{Introduction}
\label{sec:intro}

\subsection{Richardson varieties}

Let $G$ be a symmetrizable (minimal) Kac--Moody group over $\mathbb{C}$. Let $B^+$ and $B^-$ be opposite Borel subgroups. We denote the (thin) flag variety by $\mathcal{B} = G/B^+$. The flag variety admits a natural decomposition into open Richardson varieties 
\[
 \mathcal{B} = \sqcup_{v \le w} \mathring{\CB}_{v,w}, \quad \text{ where $v \le w $ are in the Weyl group }W.
\]
The main result of this paper is the following theorem. 

\begin{thmintro}[Theorem~\ref{RichardsonCluster}]\label{thm:1}
The coordinate ring $\mathbb{C}[\mathring{\mathcal{B}}_{v,w}]$ of the open Richardson variety in $\CB$ is an upper cluster algebra.
\end{thmintro}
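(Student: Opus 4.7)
The plan is to exhibit an explicit cluster seed on $\mathring{\CB}_{v,w}$ from combinatorial data attached to a reduced word, and then to identify the resulting upper cluster algebra with the coordinate ring from both sides.

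\textbf{Step 1: The initial seed.} I would fix a reduced word $\bfi = (i_1, \ldots, i_m)$ for $w$ with $m = \ell(w)$, together with a Marsh--Rietsch positive distinguished subexpression $\bfj \subseteq \bfi$ for $v$. To each $k \in \{1, \ldots, m\}$ I attach a generalized minor $\Delta_k$ on $\mathring{\CB}_{v,w}$, realised as a matrix coefficient of the integrable highest-weight module $L(\varpi_{i_k})$ evaluated between the partial Weyl group products $w_{\le k}$ and $v_{\le k}$ determined by $\bfi$ and $\bfj$. The indices belonging to $\bfj$ label frozen variables and the remaining indices label mutable cluster variables; the quiver arrows are read off from the predecessor/successor combinatorics of $\bfi$, following the finite-type templates of Leclerc and of Galashin--Lam--Sherman-Bennett--Speyer.

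\textbf{Step 2: Containment $\BC[\mathring{\CB}_{v,w}] \subseteq \CU$.} Using the Marsh--Rietsch parametrization of $\mathring{\CB}_{v,w}$ I would verify that the $\Delta_k$ are algebraically independent with combined transcendence degree equal to $\dim \mathring{\CB}_{v,w} = \ell(w) - \ell(v)$, and that the frozen subset is invertible on the whole Richardson variety. Any regular function on $\mathring{\CB}_{v,w}$ is therefore a Laurent polynomial in this seed. Performing the same construction for every reduced word braid-equivalent to $\bfi$, and checking by direct manipulation of generalized minors that adjacent reduced words give seeds related by a single cluster mutation, then places every such Laurent expression inside $\CU$.

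\textbf{Step 3: The opposite inclusion $\CU \subseteq \BC[\mathring{\CB}_{v,w}]$.} The cluster variables produced by Step~2 are by construction generalized minors, hence regular on $\mathring{\CB}_{v,w}$, and the frozens are invertible there. The inclusion then follows from the standard ``starfish'' / codimension-two regularity criterion.

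\textbf{Main obstacle.} In symmetrizable Kac--Moody generality $\CB$ is only a thin ind-variety and there is no finite-dimensional ambient group, so a direct Pl\"ucker embedding is unavailable. One must therefore interpret the generalized minors as matrix coefficients on integrable highest-weight modules, take $\BC[\mathring{\CB}_{v,w}]$ to be the strongly regular functions on the ind-variety, and set up the Marsh--Rietsch parametrization at this level of generality. I would expect the bulk of the technical work to lie in this foundational infrastructure together with the braid-move computation of Step~2; once they are in place, the cluster-theoretic arguments should parallel the finite-type proofs fairly directly.
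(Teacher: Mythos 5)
Your proposal has genuine gaps at the two places where the real work lies. First, Step~1 assumes that the ``chamber minors'' $\Delta^{v_{\le k}\omega_{i_k}}_{w_{\le k}\omega_{i_k}}$ attached to a reduced word and a distinguished subexpression, with a quiver read off from the word, already form a seed for $\mathring{\CB}_{v,w}$. This is exactly what must be proved, and it is false in this naive form: the correct seed (here, and in the finite-type constructions of M\'enard/CGG+ that the paper generalizes) is obtained from the Schubert-cell seed only after applying nontrivial mutation sequences before freezing and deleting, and the resulting cluster variables are in general \emph{not} generalized minors. Consequently Step~3's assertion that ``the cluster variables produced by Step~2 are by construction generalized minors, hence regular'' begs the question, and the claim that braid-equivalent reduced words give seeds related by a single mutation is unjustified (in the paper this independence statement, Proposition~\ref{RichardsonIndepRedExp}, requires a delicate case analysis and is proved \emph{after} the cluster structure is established, not as an input). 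Second, in Step~2 the inference ``algebraically independent, right transcendence degree, frozens invertible $\Rightarrow$ every regular function is a Laurent polynomial in the seed'' does not follow; one needs that the candidate variables give an honest open torus chart of $\mathring{\CB}_{v,w}$, and even then membership in $\CU$ (an intersection over \emph{all} seeds) requires Laurentness under every one-step mutation together with the Berenstein--Fomin--Zelevinsky upper-bound theorem (full rank, coprimality) --- braid moves realize only a small subfamily of mutations. You have also reversed the roles of the two standard tools: the starfish/codimension-two argument is what yields $\BC[V]\subseteq\CU$, while the inclusion $\CU\subseteq\BC[V]$ needs regularity of the cluster variables plus a factoriality/irreducibility argument, and the fact that $\BC[\mathring{\CB}_{v,w}]$ is a UFD is itself a nontrivial input (Proposition~\ref{RichardsonFactorial}).

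For contrast, the paper does not attack $\mathring{\CB}_{v,w}$ directly. It realizes $\mathring{\CB}_{v,w}$ inside $\mathring{\CB}_{e,w}$ via the Kazhdan--Lusztig/Bruhat-atlas factorization $\mathring{\CB}_{v,w}\cap \dot{r}U^-B^+/B^+\cong\mathring{\CB}_{v,r}\times\mathring{\CB}_{r,w}$, so that $\BC[\mathring{\CB}_{v,w}]$ is a localization-then-specialization of $\BC[\mathring{\CB}_{e,w}]$ at explicit functions $T_{r_{(l)},v_{(l)}}$ (Proposition~\ref{prop:CB}); the key computation, done with rational quasi-cluster maps between configuration spaces, shows each $T_{r_{(l)},v_{(l)}}$ is an exchange ratio times a frozen monomial in a suitably mutated seed (Proposition~\ref{TIsClusterMonomial}), so localization and specialization become freezing and deletion in Muller's sense; this gives $\BC[\mathring{\CB}_{v,w}]\hookrightarrow\CU(\bfs(v,\bfw))$, and the reverse inclusion uses the UFD property plus irreducibility of cluster variables. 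Finally, your ``main obstacle'' is misplaced: since one works with the minimal Kac--Moody group and thin flag variety, $\mathring{\CB}_{v,w}$ is an honest finite-dimensional affine variety, and the foundational ind-variety issues you anticipate are not the bottleneck --- the seed construction and the two inclusions are.
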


Cluster structures on open Richardson varieties for reductive groups have been conjectured by Leclerc \cite{Lec}. Such a cluster structure has been established in the recent breakthroughs by Casals--Gorsky--Gorsky--Le--Shen--Simental \cite{CGG+} and Galashin--Lam--Sherman-Bennett--Speyer \cites{GLSS, GLSS2}. We shall refer to {\em loc. cit.} for background and history on cluster structures of Richardson varieties.

Both \cite{CGG+} and \cite{GLSS2} establish the cluster structure on open braid varieties first, and deduce the cluster structure on open Richardson varieties as special cases. It is not known whether open Richardson varieties can be identified with open braid varieties in the Kac--Moody setting in general, so approaches in \cites{CGG+, GLSS2} do not carry to the Kac--Moody setting. Let us also mention that there has been attempts by Kashiwara--Kim--Oh--Park \cite{KKOP} generalizing Leclerc's categorical construction to the Kac--Moody setting.

If $G$ is reductive, our initial cluster seed $\bfs(v,\bfw)$ (Definition~\ref{RichardsonSeedDefi}) is the same as the one constructed in \cites{CGG+, Men22, GLSS2} (see \cite{CGG+25} for the comparison). The construction of our cluster seed is motivated by the construction of M\'enard \cite{Men22} in the reductive simply-laced case. We simplify and generalize M\'enard's construction to the Kac--Moody setting. Moreover, we give a geometric explanation for each step in the seed construction.

\subsection{Twisted product of flag varieties}
Let 
\[
\mathcal{Z}  = G \times ^{B^+} G \times ^{B^+} G \times^{B^+} \cdots \times ^{B^+} G/B^+ \text{ ($n$ factors) }
\]
be the twisted product of flag varieties. 
For any sequence $\underline{w}= (w_1, \cdots, w_n)$ of elements in $W$, we define 
\[
\mathring{\CZ}_{\underline{w}}=B^+ \dot w_1 B^+ \times^{B^+} B^+ \dot w_2 B^+ \times^{B^+} \cdots \times^{B^+} B^+ \dot w_n B^+/B^+.
\]
This is analogous to Schubert cell in the flag variety $\CB$. For any $v \in W$, we define  $\mathring{\CZ}^v$ as $m ^{-1}(B^- \dot v B^+/B^+)$, where $m: \CZ \to \CB$ is the convolution product. This is analogous to the opposite Schubert cell in the flag variety $\CB$. The open Richardson variety of $\CZ$ is defined by \[
\mathring{\CZ}_{v, {\underline{w}}}=\mathring{\CZ}_{{\underline{w}}} \cap 
\mathring{\CZ}^v.
\]
The intersection is non-empty if and only if $v \le m_{\ast}({\underline{w}})$ \cite{BH24b}. Here $m_{\ast}({\underline{w}})$ denotes the Demazure product of $\underline{w}$. These open Richardson varieties include, as special cases, the reduced double Bruhat cells, the Bott-Samelson varieties, and the braid varieties. 

\begin{thmintro}[Theorem~\ref{TwistedProductCluster} \& Remark~\ref{rem:twisted}]
The coordinate ring $\mathbb{C}[\mathring{\mathcal{Z}}_{v,{\underline{w}}}]$ of the open Richardson variety $\mathring{\mathcal{Z}}_{v,{\underline{w}}}$ in the twisted product is an upper cluster algebra.
\end{thmintro}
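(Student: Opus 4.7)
The plan is to build the cluster seed for $\mathring{\CZ}_{v, \underline{w}}$ by extending the construction behind Theorem~\ref{thm:1} to the twisted product setting. First I would reduce to the case that each $w_i$ is a simple reflection: whenever $w_i = w_i' w_i''$ with $\ell(w_i) = \ell(w_i') + \ell(w_i'')$, the natural map that inserts an intermediate Borel between $B_{i-1}$ and $B_i$,
\[
\mathring{\CZ}_{v, (w_1, \ldots, w_i', w_i'', \ldots, w_n)} \longrightarrow \mathring{\CZ}_{v, (w_1, \ldots, w_n)},
\]
is an isomorphism (its fibers are singletons by length-additive factorization in the Bruhat order). Iterating, one reduces to the case $\underline{w} = (s_{j_1}, \ldots, s_{j_N})$, so that $\mathring{\CZ}_{v, \underline{w}}$ parametrizes chains $(B_0 = B^+, B_1, \ldots, B_N)$ of Borels with successive relative positions $s_{j_1}, \ldots, s_{j_N}$ and with $B_N$ in the opposite cell indexed by $v$.

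Next I would define an initial seed $\bfs(v, \underline{w})$ on $\mathring{\CZ}_{v, \underline{w}}$ parallel to Definition~\ref{RichardsonSeedDefi}: cluster variables are built from generalized minors applied to the partial-product representatives $\dot s_{j_1} \cdots \dot s_{j_k}$, and the exchange quiver is read off combinatorially from $\underline{w}$ together with a distinguished subword realizing $v$. The construction is formally parallel to the one for $\mathring{\CB}_{v,w}$; indeed, when $\underline{w}$ is a reduced expression of $w$, the convolution map induces an isomorphism $\mathring{\CZ}_{v, \underline{w}} \cong \mathring{\CB}_{v,w}$ and the two seed constructions agree, so Theorem~\ref{thm:1} covers this edge.

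Finally I would show $\mathbb{C}[\mathring{\CZ}_{v,\underline{w}}]$ coincides with the upper cluster algebra of $\bfs(v,\underline{w})$ by adapting the argument used for Theorem~\ref{thm:1}: apply a Starfish-type criterion to reduce the verification to checking that each single-mutation cluster variable is regular on $\mathring{\CZ}_{v,\underline{w}}$ and coprime to its pre-image, with the exchange relations coming from three-term identities among generalized minors on short windows of consecutive Borels. The main obstacle will be this last step: the required identities now involve minors evaluated across several successive Borels in the chain, and the bookkeeping is more intricate than in the single flag variety case. However, because each mutation is local to a short window of the chain, one expects to reduce to essentially rank-two computations of the kind already handled in the proof of Theorem~\ref{thm:1}, combined with an induction on the number $n$ of factors.
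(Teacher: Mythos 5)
Your opening reduction (splitting each $w_i$ into simple reflections) is exactly Remark~\ref{rem:twisted}, and your observation that the case where the word is reduced collapses to Theorem~\ref{thm:1} via convolution is correct. But after that reduction the word $\bfw$ is in general \emph{not} reduced (this is the whole point: braid varieties, Bott--Samelson varieties, reduced double Bruhat cells all arise from words with repetitions), and this is precisely where your plan has a genuine gap. Everything in the proof of Theorem~\ref{thm:1} that you propose to ``adapt'' is tied to reduced expressions in the flag variety $\CB$: the Kazhdan--Lusztig/Bruhat atlas isomorphisms and Proposition~\ref{prop:CB} live on $\CB$, the functions $T_{r,rs_i}$ and their identification as cluster monomials use Marsh--Rietsch parametrizations and configuration spaces attached to reduced words, and the final step needs $\BC[\mathring{\CZ}_{v,\bfw}]$ to be a UFD with all cluster variables regular --- none of which you establish for the twisted product, and none of which transfers ``formally.'' Saying that the exchange relations will come from three-term minor identities on short windows and that one ``expects'' rank-two computations is a restatement of the problem, not a proof; indeed the paper never verifies exchange relations by minor identities even in the flag case.

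The paper's actual route avoids all of this by the thickening of \cite{BH24b}: one enlarges the Dynkin diagram by vertices $\infty_1,\dots,\infty_{n-1}$, so that $th(\bfw)$ becomes a \emph{reduced} word for an element $\Tilde{w}$ of the bigger Weyl group, and Proposition~\ref{ThickFibration} gives $\mathring{\Tilde{\CB}}_{v,\Tilde{w}}\isoto(\BC^\x)^{n-1}\x\mathring{\CZ}_{v,\bfw}$. Theorem~\ref{thm:1} (which holds for arbitrary symmetrizable Kac--Moody type, hence for the thickened group) then applies verbatim to $\mathring{\Tilde{\CB}}_{v,\Tilde{w}}$, and the only new work is Proposition~\ref{FibrationCluster}, which shows the torus coordinates $\Delta_k$ agree, up to monomials, with the isolated frozen variables $A_{(\infty_k,1)}$; setting $\Delta_k=1$ is then frozen-variable deletion (Proposition~\ref{IsolatedDeletion}), yielding the seed of Definition~\ref{RichardsonSeedDefi} for the non-reduced word. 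If you want to pursue your direct approach instead, you would at minimum need twisted-product analogues of the atlas decompositions, a proof that $\BC[\mathring{\CZ}_{v,\bfw}]$ is factorial (in the paper this is deduced \emph{from} the thickening isomorphism together with Proposition~\ref{RichardsonFactorial}), and a base-case cluster structure for arbitrary positive braid words; as written, the proposal supplies none of these and so does not prove the theorem.
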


Our construction of the cluster structure on $\mathring{\mathcal{Z}}_{v,{\underline{w}}}$ is based on the thickening considered by the first author and He in \cite{BH24b}. Via this thickening, the upper cluster structure on $\mathbb{C}[\mathring{\mathcal{Z}}_{v,{\underline{w}}}]$ is actually a direct consequence of the upper cluster structure on a Richardson variety of another more complicated Kac--Moody group from Theorem~\ref{thm:1}. In the case of braid varieties, we expect our seed to coincide with the seed constructed in \cite{CGG+} and \cite{GLSS2}. Logically, our construction is reverse to the constructions in \cites{CGG+, GLSS2} when restricting to reductive groups.

\subsection{Total positivity}

The totally nonnegative part  ${\mathcal{B}}_{v,w, >0}$ of the open Richardson variety for reductive groups is defined by Lusztig \cite{Lus94} using his theory of canonical bases. The generalization to the Kac--Moody setting is straightforward \cite{BH24a}. On the other hand, the totally nonnegative part ${\mathcal{Z}}_{v,{\underline{w}}, >0}$ of the Richardson variety on the twisted product is defined by the first author and He \cite{BH24b}.

One of the original motivation of Fomin--Zelevinsky \cite{FZ02}  to introduce cluster algebras is to better understand Lusztig's theory of canonical bases and total positivity. Given the cluster structure on $\mathring{\mathcal{B}}_{v,w}$ (resp. $\mathring{\mathcal{Z}}_{v,{\underline{w}}}$), we can define the cluster positive locus ${\mathcal{B}}^{cl}_{v,w, >0}$ (resp. ${\mathcal{Z}}^{cl}_{v,{\underline{w}},>0}$) of $\mathring{\mathcal{B}}_{v,w}$ (resp. $\mathring{\mathcal{Z}}_{v,{\underline{w}}}$), where all cluster variables take values in $\mathbb{R}_{>0}$.

\begin{thmintro}[Proposition~\ref{TPAgree} \& Proposition~\ref{TPAgree2}]\label{thm:3}The two notions of positivity coincide, that is, 
\[
{\mathcal{B}}^{cl}_{v,w,>0} ={\mathcal{B}}_{v,w, >0}
,\quad
{\mathcal{Z}}^{cl}_{v,{\underline{w}},>0}={\mathcal{Z}}_{v,{\underline{w}}, >0}.
\]
\end{thmintro}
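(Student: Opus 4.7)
The plan is to prove the two equalities in parallel, reducing the twisted-product case to the ordinary Richardson case via the thickening of \cite{BH24b}. Throughout, let $N$ denote the dimension of the Richardson variety under consideration. By Theorem~\ref{thm:1} (resp.\ Theorem~\ref{TwistedProductCluster}), the initial seed yields an open torus chart $\BT \hookrightarrow \mathring{\CB}_{v,w}$ (resp.\ $\BT \hookrightarrow \mathring{\CZ}_{v,\underline{w}}$), and by definition the cluster positive locus is the image of $\BR_{>0}^N \subset \BT(\BR)$ inside the Richardson variety.

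For the inclusion $\CB_{v,w,>0} \subseteq \CB^{cl}_{v,w,>0}$, it suffices to check that every cluster variable in the initial seed $\bfs(v,\bfw)$ takes positive values on Lusztig's totally nonnegative part, since cluster mutation is subtraction-free and hence propagates positivity to every seed. The geometric construction of $\bfs(v,\bfw)$ (Definition~\ref{RichardsonSeedDefi}) expresses the initial variables as pullbacks of matrix coefficients on integrable highest-weight representations along the same factorization morphisms that appear in the Chevalley-generator parameterization of $\CB_{v,w,>0}$ used in \cite{BH24a}. Positivity then reduces to positivity of matrix coefficients on products of exponentiated positive Chevalley generators, which is classical in the reductive case and extends to the Kac--Moody setting via the positivity of the canonical basis.

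For the reverse inclusion, I would combine the first step with the parameterization of $\CB_{v,w,>0}$ as homeomorphic to $\BR_{>0}^N$ from \cite{BH24a}. Composing with the initial cluster chart yields a continuous injection $\BR_{>0}^N \hookrightarrow \BR_{>0}^N$ whose image lies in both positive loci and is open by invariance of domain. Since $\CB_{v,w,>0}$ is the continuous image of a single connected parameterization and $\CB^{cl}_{v,w,>0}$ is likewise the continuous image of a positive torus, both sets are connected open top-dimensional semialgebraic subsets of $\mathring{\CB}_{v,w}(\BR)$, and the one-sided inclusion together with the dimension match forces equality. The twisted-product statement is then obtained by noting that the thickening of \cite{BH24b} identifies $\CZ_{v,\underline{w},>0}$ with $\CB_{v',w',>0}$ inside the flag variety of a larger Kac--Moody group, and by construction this thickening is compatible with the cluster seed underlying Theorem~\ref{TwistedProductCluster}, so the equality transfers.

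The main obstacle lies in the first step: one has to match the geometric definition of $\bfs(v,\bfw)$ with Lusztig-type positive coordinates in the full Kac--Moody generality. This is subtle because generalized minors must be interpreted via integrable highest-weight modules rather than matrix entries, and the factorization maps underlying the seed (from Definition~\ref{RichardsonSeedDefi}) must be traced explicitly through the Kac--Moody Chevalley parameterization of \cite{BH24a}. Once this identification is made, both inclusions become formal, and the twisted-product version requires only the compatibility of the thickening with the two positivity notions.
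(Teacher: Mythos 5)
Your proposal has two genuine gaps, one in each inclusion. For $\CB_{v,w,>0}\subseteq\CB^{cl}_{v,w,>0}$ you reduce everything to showing that the cluster variables of the initial seed $\bfs(v,\bfw)$ are strictly positive on Lusztig's totally nonnegative part, and you yourself flag this as ``the main obstacle'' without carrying it out. This is not a routine matching of minors with Chevalley coordinates: the variables of $\bfs(v,\bfw)$ are cluster variables of the \emph{mutated} seed $M_m(\bfs(\bfw))$, further restricted to the stratum $\mathring{\CB}_{v,w}$, which lies in the boundary of $\mathring{\CB}_{e,w}$; positivity of the chamber minors on $\CB_{e,w,>0}$ and positivity of canonical bases only give \emph{nonnegativity} after passing to the Hausdorff closure defining $\CB_{\geq 0}$, not the strict positivity you need on the stratum. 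The paper handles exactly this point by a different mechanism: it inducts along the chain $v_{(0)}\le v_{(1)}\le\cdots\le v_{(m)}=v$ coming from the left-most subexpression, using the result of \cite{BH24a} that the Bruhat-atlas map restricts to an isomorphism $\CB_{v_{(l-1)},w,>0}\isoto \CB_{v_{(l-1)},v_{(l)},>0}\x\CB_{v_{(l)},w,>0}$, and the fact (Proposition~\ref{TIsClusterMonomial} and Corollary~\ref{AtlasRegular2}) that the passage to $\CB_{v_{(l)},w}$ is the slice $T_{v_{(l-1)},v_{(l)}}=1$, whose base point $\ast$ lies in the positive part of the one-dimensional Richardson variety; together with the base case $\CB_{e,w,>0}=\CB^{cl}_{e,w,>0}$ (Lemma~\ref{SchubertClusterVariable} and \cite{MR04}) this gives both inclusions simultaneously, with no need to evaluate mutated cluster variables on the Lusztig-positive part directly.

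For the reverse inclusion your topological argument does not work as stated: a one-sided inclusion of two connected, open, top-dimensional (semialgebraic) subsets of the same real variety does not force equality --- compare $\BR_{>1}\subset\BR_{>0}$. Invariance of domain only tells you the image of your injection is open; without an additional closedness or properness statement (or the slice argument above) you cannot conclude $\CB^{cl}_{v,w,>0}\subseteq\CB_{v,w,>0}$. Finally, in the twisted-product case the thickening does not identify $\CZ_{v,\bfw,>0}$ with a totally positive Richardson variety outright; by \cite{BH24b} one has $\Tilde{\CB}_{v,\Tilde{w},>0}\isoto\BR_{>0}^{n-1}\x\CZ_{v,\bfw,>0}$, and one still needs Proposition~\ref{FibrationCluster} (expressing the frozen variables $A_{(\infty_k,1)}$ as monomials in the functions $\Delta_k$) to see that setting $\Delta_k=1$ is compatible with the cluster structure before the flag-variety statement transfers; your ``compatibility of the thickening with the seed'' is asserted rather than proved, though this is a smaller gap than the two above.
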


Theorem~\ref{thm:3} is new even in the reductive case in general. Such identification has previously only been established \cites{GL23,SSB} for open Richardson varieties in finite type A.  Thanks to the concrete geometric interpretation of the seed construction, Theorem~\ref{thm:3} is again a direct consequence of Theorem~\ref{thm:1}.

\subsection{Strategy of the proof} 
Two key ingredients of our approach to Theorem~\ref{thm:1} are the Bruhat atlas introduced by He--Knutson--Lu \cite{HKL} and rational quasi-cluster maps introduced in \S\ref{sec:subalg}. 

The Bruhat atlas used in our setting actually goes back to Kazhdan--Lusztig \cite{KL}. For $v \le r \le w$ in the Weyl group $W$, we have an isomorphism 
\[
\mathring{\CB}_{v,w}\cap \dot{r}U^-B^+/B^+ \isoto\mathring{\CB}_{v,r}\x \mathring{\CB}_{r,w}.
\]
Note that $\mathring{\CB}_{v,r} \cong \mathbb{C}^\x$, if it is one-dimensional. Iterating the process, we can realize $(\mathbb{C}^\x )^n \times \mathring{\CB}_{v,w}$ as an open subvariety of $\mathring{\CB}_{e,w}$. We can further identify $\mathring{\CB}_{v,w}$ as a locally closed subvariety in $\mathring{\CB}_{e,w}$ by specializing the coordinates in  $(\mathbb{C}^\x )^n$.

The cluster structure on $\mathring{\CB}_{e,w}$ has already been established following Williams \cite{Wil13} and Shen-Weng \cite{SW21}. The geometric construction of localization and specialization roughly correspond to freezing and deletion in terms of cluster algebras, respectively.  
In order to perform the relevant freezing and deletion, it remains to calculate explicitly \begin{tikzcd}[column sep=small]T_i: \mathring{\CB}_{e,w} \ar[r,dashed]& (\mathbb{C}^\x)^n \ar[r] & \mathbb{C}^\x\end{tikzcd} to each coordinate in terms of cluster variables.

We introduce rational quasi-cluster maps \S\ref{sec:subalg} for this calculation.  Roughly speaking, rational quasi-cluster maps are rational maps between two (upper) cluster algebras generalizing quasi-equivalence. The key feature is that rational quasi-cluster maps preserve Laurent monomials of cluster variables. 
  \vspace{.2cm}
  
\noindent {\bf Acknowledgment: } HB is supported by MOE grants A-0004586-00-00 and A-0004586-01-00. We would like to thank Ian Le and Melissa Sherman-Bennett for explaining relations between the constructions in \cite{CGG+} and \cite{GLSS2}.

\section{Preliminaries}\label{sec:prelim}

\subsection{Minimal Kac--Moody groups}\label{sec:prelim:KM}

We follow \cite{Kum02} for basics on Kac--Moody groups. Let $I$ be a finite set and $A=(a_{ij})_{i,j\in I}$ be a symmetrizable generalized Cartan matrix. A \textit{Kac--Moody root datum} associated to $A$ is a quintuple
\[
\CD=(I,A,X,Y,(\alpha_i)_{i\in I},(\alpha_i^\vee)_{i\in I}),
\]
where $X$ is a free $\BZ$-module of finite rank with $\BZ$-dual $Y$, and the elements $\alpha_i$ of $X$ and $\alpha_i^\vee$ of $Y$ are such that $\langle \alpha_j^\vee,\alpha_i\rangle=a_{ij}$. 
We will always work with simply connected root datum, i.e., $Y=\BZ[\alpha_i^\vee]_{i\in I}$ and $X=\BZ[\omega_i]_{i\in I}$, where $\omega_i$ denotes the fundamental weight. We define the set of dominant weights $X^+=\{\lambda\in X\,|\, \langle \alpha_i^\vee,\lambda\rangle \geq0 \text{ for all $i\in I$}\}$. Note that a simply connected Kac--Moody root datum is uniquely determined by its generalized Cartan matrix. 

Let $W$ be the Weyl group associated to $\CD$ generated by the simple reflection $s_i$ for $i\in I$. We denote the length function by $\ell(\cdot)$. The group $W$ acts naturally on both $X$ and $Y$. Let $\Delta^{re}=\{w(\pm\alpha_i)\,|\, i\in I, w\in W\}\subset X$ be the set of real roots. Then $\Delta^{re}=\Delta^{re}_+\sqcup \Delta^{re}_-$ is a disjoint union of the positive and negative roots.

The \textit{minimal Kac--Moody group} $G$ associated to the Kac--Moody root datum $\CD$ over $\BC$ is the group generated by the torus $T=Y\ox_\BZ \BC^\x$ and the root subgroup $U_\alpha\simeq \BC$ for each real root $\alpha$, subject to the Tits relations \cite{Tit87}. Let $U^+\subset G$ (resp. $U^-\subset G$) be the subgroup generated by $U_\alpha$ for $\alpha\in \Delta^{re}_+$ (resp. $\alpha\in \Delta^{re}_-$). Let $B^\pm\subset G$ be the subgroup generated by $T$ and $U^\pm$, respectively.

For each $i\in I$, we fix isomorphisms $x_i:\BC\to U_{\alpha_i}$, $y_i:\BC\to U_{-\alpha_i}$ such that the maps
\[
\begin{pmatrix}
1 & a \\
0 & 1 \\
\end{pmatrix}\mapsto x_i(a), \quad 
\begin{pmatrix}
b & 0 \\
0 & b^{-1} \\
\end{pmatrix}\mapsto \alpha_i^\vee(b), \quad 
\begin{pmatrix}
1 & 0 \\
c & 1 \\
\end{pmatrix}\mapsto y_i(c),
\]
define  a homomorphism $SL_2\to G$. The data $(T,B^+,B^-,x_i,y_i;i\in I)$ is called a \textit{pinning} for $G$.

For each $i\in I$, define $\dot{s}_i=x_i(1)y_i(-1)x_i(1)\in G$. For any $w\in W$ with reduced expression $w=s_{i_1}\cdots s_{i_n}$, we define $\dot{w}=\dot{s}_{i_1}\cdots \dot{s}_{i_n}$. It is known that $\dot{w}$ is independent of the choice of reduced expressions.

\subsection{Combinatorics on the Weyl group}
\label{sec:prelim:Weyl}

An \textit{expression} is a finite sequence $\bfw=(i_1,\cdots,i_n)$ in $I$. Then we write $w=s_{i_1}\cdots s_{i_n}\in W$, and say that $\bfw$ is an expression for $w$. By abuse of notations, we also write the expression as $\bfw=s_{i_1}\cdots s_{i_n}$. We define the length $\ell(\bfw)=n$. The expression $\bfw$ is said to be \textit{reduced} if the length $n$ is minimal among all expressions for $\bfw$, in which case we we have $\ell(w)=\ell(\bfw)$. We further write $w_{(k)}=s_{i_1}\cdots s_{i_k}$.

A \textit{subexpression} of $\bfw$ is a sequence $\bfw'=(i'_1,\cdots,i'_n)$ such that $i'_k\in \{i_k,e\}$ for all $k$. 
We further write $w'=s'_{i_1}\cdots s'_{i_n}\in W$, where $s'_{i_k}=s_{i_k}$ if $i'_k=i_k$, and $s'_{i_k}=e$ if $i'_k=e$. We say that $\bfw'$ is a subexpression for $w'$ in $\bfw$.

For any $w'\in W$, recall that $w'\leq w$ with respect to the Bruhat order of $W$ if and only if there exist a subexpression for $w'$ in any reduced expression $\bfw$ for $w$. The right weak Bruhat order $\leq_R$ on $W$ is the partial order generated by the covering relations $w\lessdot_R ws_i$ for $w<ws_i$ and $i\in I$. We similarly define the left weak Bruhat order $\lessdot_L$.

Let $\Br^+ =\Br^+_W$ be the associated (positive) braid monoid of $W$. It is the monoid generated by $s_i$ for $i\in I$ subject to the braid relations in $W$. Expressions for $\beta \in \Br^+$ are defined similar to expressions of elements in $W$. We can view $W$ as a subset of $\Br^+$ using the reduced expressions.

The Demazure product on the set $W$ is defined by
\begin{itemize}
    \item $x*y=xy$ if $x,y\in W$ such that $\ell(xy)=\ell(x)+\ell(y)$;
    \item $s_i*w=w$ if $i\in I$, $w\in W$ such that $s_iw<w$.
\end{itemize}
Then $(W,*)$ is a monoid. We also define $m_*(\bfw)=s_{i_1}*\cdots *s_{i_n}$ for any expression $\bfw$. It is easy to check that $m_*$ descends to a monoid homomorphism $m_*:\Br^+\to W$ such that $m_*(s_i)=s_i$ for all $i\in I$.

Let $\bfw = (i_1, \dots, i_n)$ be an expression and $v\in W$ be such that $v \leq m_*(\bfw)$. Let $\bfv = (i'_1, \dots, i'_n)$ be a subexpression of $v$ in $\bfw$. The subexpression is called {\em distinguished} if $v_{(k)} \le v_{(k-1)}s_{i_k}$ for all $k$. The subexpression $\bfv$ is called \textit{left-most} \cite{BH24b}*{\S3.2} if it is defined as follows:
\begin{itemize}
\item If $s_{i_1}v<v$, then $i'_1=i_1$ and we let $(i'_2,\cdots,i'_n)$ to be the left-most subexpression for $s_{i_1}v$ in $(i_2,\cdots,i_n)$.
\item If $s_{i_1}v>v$, then $i'_1=e$ and we let $(i'_2,\cdots,i'_n)$ to be the left-most subexpression for $v$ in $(i_2,\cdots,i_n)$.
\end{itemize}
We can similarly define the \textit{right-most} subexpression by starting from the right side of $\bfw$. Note that after removing the $e$'s, both the left-most and right-most subexpressions are reduced expressions for $v$. The right-most subexpression is always distinguished.

Let $\overline{I}=\{\overline{i}\,|\, i\in I\}$. A \textit{signed expression} is a finite sequence $(i_1,\cdots,i_n)$ with $i_k\in I\sqcup\overline{I}$.

\subsection{Flag varieties}\label{sec:prelim:flag}

We denote by $\CB=G/B^+$ the (thin) full flag variety of $G$. For $v,w\in W$, let $\mathring{\CB}_w=B^+\dot{w}B^+/B^+$ be the \textit{Schubert cell}, $\mathring{\CB}^v=B^-\dot{v}B^+/B^+$ be the \textit{opposite Schubert cell}, and $\mathring{\CB}_{v,w}=\mathring{\CB}_w\cap \mathring{\CB}^v$ be the \textit{open Richardson variety}. We have the decompositions
\[
\CB=\sqcup_{w\in W} \mathring{\CB}_w=\sqcup_{v\in W} \mathring{\CB}^v=\sqcup_{v,w\in W} \mathring{\CB}_{v,w}.
\]
By \cite{Kum17}, the intersection $\mathring{\CB}_{v,w}$ is non-empty if and only if $v\leq w$, in which case $\mathring{\CB}_{v,w}$ is irreducible and smooth with dimension $\ell(w)-\ell(v)$.

We will also work with the \textit{(opposite) flag variety} $G/B^-$, the \textit{decorated flag variety} $G/U^+$, the \textit{(opposite) decorated flag variety} $G/U^-$. There are   natural projection maps $\pi:G/U^\pm\to G/B^\pm$. We will use lower subscripts $B_?$ or $U_?$ to denote flags in $G/B^+$ or $G/U^+$, and use upper subscripts $B^?$ or $U^?$ to denote flags in $G/B^-$ or $G/U^-$, respectively.

For a pair of flags $(B_0=xB^+, B_1=yB^+)$, we define the \textit{distance} $d(B_0,B_1)=w\in W$ if $x^{-1}y\in B^+\dot{w}B^+$, also written as $B_0 \xrightarrow{w}B_1$. Similarly, for the pair $(B^0=xB^-,B^1=yB^-)$, we define the \textit{distance} $d(B^0,B^1)=w\in W$ if $x^{-1}y\in B^-\dot{w}^{-1}B^-$, also written as $B^0 \xrightarrow{w}B^1$. For the pair $(B^0=xB^-, B_0=yB^+)$, we define the \textit{codistance} $d(B^0,B_0)=w\in W$ if $x^{-1}y\in B^-\dot{w}B^+$. We say that $(B^0,B_0)$ are opposite if $d(B^0,B_0)=1$. These definitions extends to decorated flags via the projection map. For $U_0=xU^+$ and $U_1=yU^+$, we say that $(U_0,U_1)$ are \textit{compatible} if $x^{-1}y\in U^+\dot{w}U^+$ for some $w\in W$. For $U^0=xU^-$ and $U^1=yU^-$, we say that $(U^0,U^1)$ are \textit{compatible} if $x^{-1}y\in U^-\dot{w}^{-1}U^-$ for some $w\in W$. These definitions follow \cite{SW21}*{\S2}.

\subsection{Generalized minors}\label{sec:prelim:minor}

Let $\lambda\in X^+$ and $L(\lambda)$ be the integrable highest weight $G$-module with highest weight $\lambda$. Fix a highest weight vector $\eta_\lambda$. The extremal weight spaces $L(\lambda)_{w\lambda}$ are all one-dimensional, spanned by $\eta_{w\lambda}=\dot{w}\cdot \eta_\lambda$.

For any $\xi\in L(\lambda)$ and $w\in W$, let $\langle \xi,\dot{w}\cdot \eta_\lambda\rangle$ be the coefficient of $\dot{w}\cdot \eta_\lambda$ in $\xi$. We define the minor $\Delta_\lambda : G \to \BC$ by $\Delta_\lambda(g)=\langle g \eta_\lambda,\eta_\lambda\rangle$. For any $w,w'\in W$, we define the generalized minor $\Delta^{w\lambda}_{w'\lambda}(g)=\langle g \dot{w}'\cdot \eta_\lambda,\dot{w}\cdot \eta_\lambda\rangle$ as a function on $G$.

For $x\in U^+$, $y\in U^-$, and $t\in T$, we have 
\begin{align*}
\Delta_\lambda(gx)=\Delta_\lambda(yg)=\Delta_\lambda(x), \quad 
\Delta_\lambda(gt)=\Delta_\lambda(tg)=\langle t,\lambda\rangle\Delta_\lambda(x).
\end{align*}
We also have $\Delta^{w\lambda}_{w'\lambda}(g)=\Delta_\lambda(\dot{w}^{-1}g\dot{w}')$. Note that 
\begin{equation}\label{eq:well}
\frac{\Delta^{w\lambda}_\lambda(gb)}{\Delta^{w'\lambda}_\lambda(gb)}=\frac{\Delta^{w\lambda}_\lambda(g)}{\Delta^{w'\lambda}_\lambda(g)}, \quad \text{ for any $b\in B^+$}.
\end{equation}
Therefore $\Delta^{w\lambda}_\lambda/\Delta^{w'\lambda}_\lambda$ is well-defined on the flag variety $\CB$.

For decorated flags $U^0=xU^-$ and $U_0=yU^+$, we define $\Delta_\lambda(U^0,U_0)=\Delta_\lambda(x^{-1}y)$. It is clear this is a well-defined function from $G/U^- \times G/U^+$ to $\BC$.

\subsection{Cluster algebras}\label{sec:prelim:cluster}

A \textit{(labeled) seed} $\bfs$ is a quadruple $(J, J_\uf,(\ve_{ij})_{i,j\in J}, (d_i)_{i\in J})$ consists of the following data:
\begin{itemize}
    \item An index set $J$ with $|J|=n$, together with a subset $J_\uf\subset J$. The elements in $J$ are called vertices. The vertices in $J_\uf$ are called \textit{mutable} or \textit{unfrozen}. The vertices in $J-J_\uf$ are called \textit{frozen}.
    \item A skew-symmetrizable matrix $(\ve_{ij})_{i,j\in J}$ over $\BQ$ with $\ve_{ij}\in \BZ$ if at least one of $i,j$ is in $J_\uf$, called the \textit{exchange matrix}.
    \item Integers $d_i$ for $i\in J$ with $\gcd(d_i)=1$ such that $\ve_{ij}d_j=-\ve_{ji}d_i$, called the \textit{multipliers}.
\end{itemize}

Let $[a]_+=\max(a,0)$ for $a \in \BQ$. For any $k\in J_\uf$, the \textit{mutation at $k$} gives a new seed $\bfs'=\mu_k(\bfs)$ with the same $J$, $J_\uf$ and $(d_i)_{i \in J}$, and a new exchange matrix $(\ve'_{ij})_{i,j\in J}$ given by
\[
\ve'_{ij} = \begin{cases}-\ve_{ij}, & \text{if } k \in \{i,j\};\\
\ve_{ij}+[\ve_{ik}]_+\ve_{kj}+\ve_{ik}[-\ve_{kj}]_+, &\text{otherwise}.
\end{cases}
\]

We can repeat this process mutating at arbitrary $k\in J_\uf$. A seed $\bfs'$ is said to be \textit{mutation equivalent} to $\bfs$, denoted by $\bfs'\sim \bfs$, if it can be obtained from $\bfs$ by a sequence of mutations.

For any seed $\bfs_0$, the (abstract) cluster algebra and upper cluster algebra with initial seed $\bfs_0$ are defined as follows. Fix an ambient field $K = \BC(A_1, \dots, A_n)$ the field of rational functions over $\BC$ in $n$ variables, generated by the formal variables $A_i$ for $i\in J$. The variables $\{A_{i,\bfs_0}=A_i\}$ are called the \textit{cluster variables} in the seed $\bfs_0$. The variables $\{A_i \,|\, i\in J-J_\uf \}$ are  called \textit{frozen variables}.

Given the cluster variables $\{A_{i,\bfs}\}_{i \in J}$ in a seed $\bfs\sim \bfs_0$, the cluster variables $\{A_{i,\bfs'}\}_{i \in J}$ in $\bfs'=\mu_k(\bfs)$ are elements of $K$ defined by 
\[
\begin{cases}
A_{i,\bfs'}=A_{i,\bfs}, &\text{for $i\neq k$};\\
A_{k,\bfs'} = \frac{1}{A_{k,\bfs}}\left(\prod_{i\in J}A_{i,\bfs}^{[\ve_{ki}]_+}+\prod_{i\in J}A_{i,\bfs}^{[-\ve_{ki}]_+}\right), &\text{for $i = k$.}
\end{cases}
\]
Note that for $i\in J-J_\uf$, $A_{i,\bfs}=A_{i,\bfs_0}=A_i$ for any $\bfs\sim \bfs_0$. Each $\{A_{i,\bfs}\}_{i\in J}$ form a set of free generators of $K$.

The \textit{cluster algebra} $\CA(\bfs_0)$ is defined as the $\BC$-subalgebra of $K$ generated by the cluster variables $A_{i,\bfs}$ over all $i\in J$ and $\bfs\sim \bfs_0$, and $A_i^{-1}$ for $i\in J-J_\uf$. The \textit{upper cluster algebra} $\CU(\bfs_0)$ is defined as the intersection
\[
\CU(\bfs_0)=\bigcap_{\bfs\sim \bfs_0}\BC[A_{i,\bfs}^{\pm1}]_{i\in J}.
\]

It is clear that mutation equivalent seeds give canonically isomorphic (upper) cluster algebras. So the construction only depends on the mutation equivalence class of $\bfs_0$. The Laurent phenomenon introduced and proved in \cite{FZ02} asserts that $\CA(\bfs_0)\subset \CU(\bfs_0)$. The conditions for $\CA(\bfs_0)=\CU(\bfs_0)$ is a long-standing problem in the study of cluster algebras.

\begin{rem}
We will often omit the seed $\bfs$ in notations if no confusion arises. We often deal with multiple seeds $\bfs,\bfs', \bfs''$,  which are not necessarily mutation equivalent. In that case, we use the prime symbol, for example $J', J''$ and $A'_i, A_i''$, to highlight which seed the object is related to.
\end{rem}

\begin{rem}\label{rem:iden}
For an explicit $\BC$-algebra $R$, we say that $R$ is a cluster algebra or an upper cluster algebra if $R\simeq \CA(\bfs_0)$ or $R\simeq \CU(\bfs_0)$ for some seed $\bfs_0$. In this case, we shall always identify the ambient field $K$ with the fraction field ${\rm Frac}(R)$ of $R$ and consider $A_{i,\bfs}$ as elements of $R$. 
\end{rem}

An affine variety $V$ over $\BC$ is said to have an \textit{(upper) cluster structure} given by a seed $\bfs$ if its coordinate ring $\BC[V]$ is an (upper) cluster algebra isomorphic to $\CU(\bfs)$. Then cluster variables will be identified as regular functions on $V$.

\subsection{Operations on seeds}
\label{sec:prelim:operations}

We follow \cite{FG06} for the amalgamation and the defrosting, and follow \cite{Mul13} for the freezing and the deletion.

Let $\bfs$, $\bfs'$ be two seeds. Suppose there are $J_1\subset J-J_\uf$ and $J'_1\subset J'-J'_\uf$ with a bijection $\phi:J_1\to J'_1$ such that $d_i=d'_{\phi(i)}$ for all $i\in J_1$. Define the \textit{amalgamation} of $\bfs$ and $\bfs'$ to be a seed $\bfs''$ where
\begin{itemize}
    \item the index set $J''=J\sqcup J'/\sim_\phi$ with $J''_\uf=J_\uf\sqcup J'_\uf$;
    \item we first extend $(\ve_{ij})_{i,j\in J}$ and $(\ve'_{ij})_{i,j\in J'}$ to matrices indexed by $J''$ by adding zeroes, and then set $\ve''_{ij}=\ve_{ij}+\ve'_{ij}$;
    \item the multipliers $d''_i$ are inherited from either $d_i$ or $d'_i$. The condition $d_i=d'_{\phi(i)}$ ensures that $d''_i$ is well defined.
\end{itemize}

Let $\bfs$ be a seed. Suppose there is $i\in J-J_\uf$ such that $\ve_{ij}, \ve_{ji}\in \BZ$ for all $j\in J$. The \textit{defrosting} of $\bfs$ at $i$ is the seed obtained from $\bfs$ by moving $i$ to $J_\uf$. 
Conversely, for $i\in J_\uf$, the \textit{freezing} of $\bfs$ at $i$ is the seed $\bfs'$ obtained from $\bfs$ by moving $i$ from $J_\uf$ to $J-J_\uf$. We can identify the ambient fields of $\bfs$ and $\bfs'$.

\begin{prop}[\cite{Mul13}*{Proposition 3.1}]
\label{FreezingInclusion}
Let $\bfs'$ be the freezing of $\bfs$ at $i$. Then as subalgebras of the ambient field $K$, we have 
\[
\CA(\bfs')\subset \CA(\bfs)[A_{i,\bfs}^{-1}]\subset \CU(\bfs)[A_{i,\bfs}^{-1}]\subset \CU(\bfs').
\]
\end{prop}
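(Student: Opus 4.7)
The plan is to prove the three inclusions left to right, guided by one basic observation: the mutation rules (both for the exchange matrix and for the cluster variables) at a vertex $k \neq i$ depend only on the matrix entries $\ve_{kj}$ and on the current cluster, not on whether $i$ is classified as frozen or mutable. Consequently, any mutation sequence from $\bfs'$ -- which by definition never mutates at $i$ -- is also a legal mutation sequence from $\bfs$, and at each step the two seeds carry identical tuples of cluster variables in the ambient field $K$. This yields a correspondence from seeds mutation-equivalent to $\bfs'$ into seeds mutation-equivalent to $\bfs$ under which clusters coincide pointwise.

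For the first inclusion $\CA(\bfs')\subset \CA(\bfs)[A_{i,\bfs}^{-1}]$, I would check each type of generator of the left-hand side. A cluster variable from some $\bfs''\sim \bfs'$ is, by the observation above, a cluster variable in a corresponding seed $\tilde{\bfs}\sim \bfs$, hence lies in $\CA(\bfs)$. The inverses of the frozen variables of $\bfs'$ split into two kinds: inverses of variables already frozen in $\bfs$ (which are in $\CA(\bfs)$ by definition), and $A_{i,\bfs}^{-1}$, which is precisely what we have localized at. The middle inclusion is then immediate from the Laurent phenomenon $\CA(\bfs)\subset \CU(\bfs)$, localized at $A_{i,\bfs}$.

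The only nontrivial inclusion is the last one, $\CU(\bfs)[A_{i,\bfs}^{-1}]\subset \CU(\bfs')$. Given $f$ on the left, write $f = g/A_{i,\bfs}^N$ with $g\in \CU(\bfs)$ and $N\geq 0$. For any $\bfs''\sim \bfs'$, apply the same mutation sequence to $\bfs$ to obtain $\tilde{\bfs}\sim \bfs$; by the correspondence, the clusters of $\bfs''$ and $\tilde{\bfs}$ coincide as tuples in $K$. In particular $A_{i,\bfs''}=A_{i,\tilde{\bfs}}$, and since vertex $i$ was never mutated this common value is just $A_{i,\bfs}$ itself. Because $g\in \CU(\bfs)$, it is a Laurent polynomial in the cluster of $\tilde{\bfs}$, hence also in the cluster of $\bfs''$; dividing by a power of $A_{i,\bfs''}=A_{i,\bfs}$ preserves this property. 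Therefore $f$ lies in $\BC[A_{j,\bfs''}^{\pm 1}]_{j\in J}$ for every $\bfs''\sim \bfs'$, i.e., in $\CU(\bfs')$.

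I do not expect any serious obstacle here: the entire argument rests on verifying once and for all that freezing a vertex commutes with the mutation formulas at the other vertices, so that mutation sequences from $\bfs'$ lift canonically to mutation sequences from $\bfs$ preserving all cluster variables. Once that bookkeeping is in place, each of the three inclusions follows essentially from the definitions, together with the Laurent phenomenon for the middle step.
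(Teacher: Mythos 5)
Your argument is correct: the key observation that the exchange formulas at any $k\neq i$ range over all of $J$ and are insensitive to whether $i$ is labeled frozen, so that mutation sequences avoiding $i$ produce identical clusters for $\bfs$ and $\bfs'$, is exactly what drives the result, and the three inclusions then follow as you describe. The paper itself gives no proof but cites \cite{Mul13}*{Proposition 3.1}, and your reasoning is essentially the same as the argument there, so there is nothing to add.
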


For any $i\in J$, the \textit{deletion} of $\bfs$ at $i$ is the seed $\bfs^\dagger$ obtained from $\bfs$ by removing $i$. The ambient field of $\bfs^\dagger$ can be identified with a subfield of the ambient field $K$ of $\bfs$. In particular, $\CA(\bfs^\dagger)$ and $\CU(\bfs^\dagger)$ can be viewed as subalgebras of $K$.
Deletion is particularly nice on  frozen vertices. 

\begin{prop}[\cite{Mul13}*{Proposition 3.7}]
\label{IsolatedDeletion}
Let $\bfs^\dagger$ be the deletion of $\bfs$ at an frozen vertex $i$. Then the natural maps are isomorphisms
\[
\CA(\bfs)/(A_{i} = 1) \cong \CA(\bfs^\dagger), \quad \CU(\bfs)/(A_{i} = 1) \cong \CU(\bfs^\dagger).
\]
\end{prop}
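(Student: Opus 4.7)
The plan is to construct the isomorphisms from the specialization map obtained by setting $A_i = 1$, and verify injectivity via a Laurent-polynomial divisibility argument that uses crucially the fact that $i$ is frozen. I will treat the two isomorphisms in parallel, since the same specialization map will realize both.

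First I would set up the specialization map. Since $i$ is frozen, no mutation ever occurs at $i$, so each mutable index $k \in J_\uf = J_\uf^\dagger$ gives rise to ``parallel'' mutation sequences in $\bfs$ and in $\bfs^\dagger$ whose exchange matrices agree upon deleting the $i$-th row and column. The exchange relation at $k$ in $\bfs$ is
\[
A_{k}\,A'_k \;=\; \prod_{j\in J}A_j^{[\ve_{kj}]_+}+\prod_{j\in J}A_j^{[-\ve_{kj}]_+},
\]
and after substituting $A_i = 1$ we recover the exchange relation of the corresponding mutation in $\bfs^\dagger$. Hence the assignment $A_j \mapsto A_j$ for $j\neq i$ and $A_i \mapsto 1$ extends inductively along mutation sequences to a well-defined ring homomorphism $\mathrm{ev}\colon \CA(\bfs)\to \CA(\bfs^\dagger)$; by the same compatibility applied in every seed, it restricts from a specialization $\CU(\bfs)\to \CU(\bfs^\dagger)$ (using that a Laurent polynomial in the cluster variables of any seed $\bfs'$ can be specialized at $A_i=1$, since $A_i$ remains a frozen cluster variable in $\bfs'$).

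Surjectivity of $\mathrm{ev}$ is then automatic: each cluster variable of $\bfs^\dagger$, arising from a sequence $\mu_{k_1}\cdots\mu_{k_n}$ of mutations, is the image of the cluster variable of $\bfs$ produced by the same sequence. For injectivity modulo $(A_i - 1)$, the key observation is that because $i$ is frozen, $A_i$ is a cluster variable (indeed a free generator) in the Laurent polynomial ring $\BC[A_{j,\bfs'}^{\pm 1}]_{j \in J}$ associated to every mutation-equivalent seed $\bfs'$. Given $f \in \CU(\bfs)$ with $\mathrm{ev}(f)=0$, expand $f = \sum_{k\in\BZ} g_k\, A_i^k$ with $g_k \in \BC[A_{j,\bfs'}^{\pm 1}\colon j\ne i]$ in a fixed seed $\bfs'$. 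The vanishing at $A_i=1$ forces $\sum_k g_k = 0$, and since $(A_i^k - 1)/(A_i - 1) \in \BC[A_i^{\pm 1}]$ for every $k\in\BZ$, we obtain $f = (A_i - 1)\, h$ with $h \in \BC[A_{j,\bfs'}^{\pm 1}]_{j \in J}$. Performing this in every seed shows $h \in \CU(\bfs)$, giving the $\CU$ isomorphism.

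The main obstacle is upgrading this division argument from $\CU$ to $\CA$: one must verify that when $f$ lies in the cluster algebra (not merely the upper cluster algebra) then so does $h = f/(A_i-1)$. The plan is to reduce to the previous paragraph using that $A_i$ is invertible in $\CA(\bfs)$, so that any element can be written as a sum $\sum_k A_i^k c_k$ of cluster-algebra elements $c_k$ not involving $A_i$, and to check directly that the partial sums $\sum_k c_k (A_i^k - 1)/(A_i - 1)$ lie in $\CA(\bfs)$ (each quotient being a Laurent polynomial in $A_i$ with $\BZ$-coefficients). Surjectivity on cluster variables and this direct computation of the quotient together identify the kernel of $\mathrm{ev}\colon \CA(\bfs)\to \CA(\bfs^\dagger)$ with the principal ideal $(A_i-1)$, finishing the proof.
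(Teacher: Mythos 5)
Your specialization map $\mathrm{ev}$ and the reduction of the kernel computation to divisibility by $A_i-1$ in each seed's Laurent ring are the right ideas (the paper itself offers no argument, citing Muller's Proposition~3.7), but two steps do not hold up as written. First, for the upper cluster algebra the surjectivity of $\mathrm{ev}\colon \CU(\bfs)\to\CU(\bfs^\dagger)$ is \emph{not} automatic: hitting every cluster variable of $\bfs^\dagger$ only shows that the image contains $\CA(\bfs^\dagger)$, and $\CU(\bfs^\dagger)$ is in general strictly larger than the subalgebra generated by its cluster variables. Lifting a general element of $\CU(\bfs^\dagger)$ (Laurent in every deleted seed) to an element of $\CU(\bfs)$ (Laurent in every full seed) is precisely the nontrivial content; the naive lift given by reading the same Laurent polynomial in the initial full cluster need not lie in $\CU(\bfs)$. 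Relatedly, in a non-initial seed $\bfs'$ your step ``vanishing at $A_i=1$ forces $\sum_k g_k=0$'' is not immediate, because $\mathrm{ev}$ does not fix the variables $A_{j,\bfs'}$ (it sends them to $A_{j,\bfs'^\dagger}$); the conclusion can be rescued by noting that these images are again algebraically independent, so this part is fixable, but it needs to be said.

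Second, the $\CA$ half rests on the claim that every $f\in\CA(\bfs)$ can be written as $\sum_k A_i^k c_k$ with $c_k\in\CA(\bfs)$ ``not involving $A_i$'', and this fails whenever $i$ has arrows to mutable vertices. Take $J=\{1,i\}$ with $1$ mutable and $\ve_{1i}=1$: then $A_1'=(A_i+1)/A_1$, whose $A_i$-coefficients are both $A_1^{-1}$, and $A_1^{-1}\notin\CA(\bfs)=\BC[A_1,A_1',A_i^{\pm1}]$ (every term of an element of $\CA(\bfs)$ carrying $A_1^{-1}$ has its coefficient divisible by $A_i+1$). If instead you mean $c_k$ in the subalgebra generated by the remaining generators, the decomposition exists, but then your final step requires $\mathrm{ev}$ to be injective modulo $(A_i-1)$ on that subalgebra, which is essentially the statement being proved (in the same example $A_1A_1'-2$ lies in that subalgebra and maps to $0$; it does equal $A_i-1$, but your argument does not show this in general). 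So the identification of $\ker(\mathrm{ev})$ with $(A_i-1)$ in $\CA(\bfs)$ is not established. Note that when $i$ is isolated from the mutable part (the situation suggested by the proposition's label) no cluster variable involves $A_i$, one gets $\CA(\bfs)=\CA(\bfs^\dagger)[A_i^{\pm1}]$ and $\CU(\bfs)=\CU(\bfs^\dagger)[A_i^{\pm1}]$, and your argument goes through verbatim; but for a general frozen vertex, as the statement is phrased and as the paper actually uses it, the two gaps above are exactly where the content of Muller's result lies.
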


\subsection{Lie theoretic quivers}\label{sec:prelim:Lie}

We now return to the Kac--Moody setting and follow \cites{GS24, SW21}. For any $i\in I$, we define the seed $\bfs(i) = (J(i), J(i)_{\uf}, (\ve_{ij})_{i,j \in J(i)}, (d_i)_{i \in J(i)})$ as follows:
\begin{itemize}
    \item the index set $J(i)=(I-\{i\})\cup \{i_l,i_r\}$ with $J(i)_\uf=\emptyset$;
    \item the multipliers are obtained from the symmetrizers $d_i$ of the generalized Cartan matrix, identifying $i_l,i_r\in J(i)$ with $i\in I$;
    \item the exchange matrix is given by $\ve_{i_lj}=-a_{ij}/2$ and $\ve_{i_rj}=a_{ij}/2$ for $j\neq i$, $\ve_{i_ri_l}=1$, and $\ve_{jk}=0$ for $i\not\in\{j,k\}$.
\end{itemize}

The seed $\bfs(\overline{i})$ for $\overline{i}\in \overline{I}$ is the same as $\bfs(i)$ but with exchange matrix $(-\ve_{ij})_{i,j \in J(i)}$. The seed $\bfs(\bfw)$ for an arbitrary signed expression $\bfw$ is constructed inductively by amalgamating the seeds $\bfs(i)$ or $\bfs(\overline{i})$ explained below.

Fix a signed expression $\bfw$. For any $i\in I$, let $n_i$ be the total number of times $i$ or $\overline{i}$ appears in $\bfw$. Suppose we are given the seed $\bfs(\bfw)$ with $J(\bfw)=\{(i,l)\,|\, i\in I, 0\leq l\leq n_i\}$, $J(\bfw)_\uf=\{(i,l)\,|\, i\in I, 0< l< n_i\}$, $d_{(i,l)}=d_i$ and $\ve_{(i,0)(j,0)}, \ve_{(i,n_i)(j,n_j)}\in\{a_{ij}/2,-a_{ij}/2\}$. Note that the starting point $\bfs(i)$ or $\bfs(\overline{i})$ satisfies these assumptions, by identifying $i_l$ with $(i,0)$ and $i_r$ with $(i,1)$. 
Here one should think of $(i,l) \in J(\bfw)$ corresponds to the $l$-th appearance of $i \in I$ counting (starting from $0$) from the left.

The seed $\bfs(\bfw,i)$ is defined as the amalgamation $\bfs(\bfw)$ and $\bfs(i)$, defrosting at $(i,n_i)$. More precisely, we identify $(i,n_i)\in \bfs(\bfw)$ with $i_l\in J(i)$, and $(j,n_j)\in \bfs(\bfw)$ with $j\in J(i)$ for $j\neq i$. In $\bfs(\bfw,i)$, the vertex $i_r$ will be renamed as $(i,n_i+1)$. It is easy to check that $\bfs(\bfw,i)$ also satisfies the assumptions in the previous paragraph. The seed  $\bfs(\bfw,\overline{i})$ is defined similarly. Thus we obtain $\bfs(\bfw)$ for any signed expression $\bfw$. In entirely similar ways, we can construct the seeds $\bfs(i,\bfw)$ and $\bfs(\overline{i},\bfw)$ from the seed $\bfs(\bfw)$.

We define $J^+(\bfw)=\{(i,l)\,|\, i\in I, 0< l\leq  n_i\}$ and $J^0(\bfw)=\{(i,0)\,|\, i\in I\}$. Then there is a bijection $J^+(\bfw)$ with $\{1,\cdots,\ell(w)\}$, where $(i,l)$ corresponds to the index $k$ such that $i_k$ is the $l$-th appearance of $i$ or $\overline{i}$ in $\bfw$. This induces an order on $J^+(\bfw)$, denoted by $\le$. We also impose $(i,0)<(j,l)$ for any $i,j\in I$, $l>0$. We often say vertices of the form $(i,l)$ are at level $i$. The reader is encouraged to keep this visualization, which would make various combinatorics more tractile.

For any seed $\bfs$, we define the sequence of mutations
\begin{equation}\label{eq:muiright}
\mu_{\vec{i}} = \mu_{(i,n_i-1)} \cdots \circ \mu_{(i,2)} \circ\mu_{(i,1)}.
\end{equation}
Intuitively, we mutate at all mutable vertices $(i,l)$ for the fixed $i\in I$ starting with $l=1$. In other words, we mutate all vertices at level $i$ from the left.

\subsection{Double Bott-Samelson varieties}\label{sec:prelim:dBS}

We follow \cite{SW21} for this subsection. 
Let $\beta,\beta'\in \Br^+$. We fix expressions $\bfu=(i_1,\cdots,i_m)$ and $\bfv=(j_1,\cdots,j_n)$ for $\beta$ and $\beta'$, respectively. The decorated Bott-Samelson cell $\Conf^\beta_{\beta'}(\SA)$ is the space of $G$-orbits of decorated flags satisfying the relative positions according to the following picture
\begin{center}
\begin{tikzcd}
U^0\ar[r,"s_{i_1}"]\ar[dash,d] & U^1 \ar[r,"s_{i_2}"] & \cdots \ar[r,"s_{i_{m-1}}"] & U^{m-1} \ar[r,"s_{i_m}"] & U^m\ar[dash,d]\\
U_0\ar[r,swap,"s_{j_1}"] & U_1 \ar[r,swap,"s_{j_2}"] & \cdots \ar[r,swap,"s_{j_{n-1}}"] & U_{n-1} \ar[r,swap,"s_{j_n}"] & U_n
\end{tikzcd}
\end{center}
such that $U^0,\cdots,U^m\in G/U^-$, $U_0,\cdots,U_n\in G/U^+$, the horizontal decorated flags are compatible and the vertical decorated flags are opposite.
Note that a different choice of expressions for $\beta,\beta'$ give the same $\Conf^\beta_{\beta'}(\SA)$ up to a canonical isomorphism.

\begin{thm}[\cite{SW21}*{Theorem 1.1}]
\label{dBSCluster}
$\Conf^{\beta}_{\beta'}(\SA)$ is a smooth affine variety with an (upper) cluster structure.
\end{thm}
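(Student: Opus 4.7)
The plan is to follow the strategy of Shen--Weng \cite{SW21}, and to confirm that each of their arguments extends to the (symmetrizable) Kac--Moody setting, where the flag variety is thin but the generalized minors $\Delta_{\omega_i}$ live in the integrable highest weight modules $L(\omega_i)$ and therefore only see finite-dimensional $\mathfrak{sl}_2$-subrepresentations. The ingredients needed are: smoothness, affineness, an explicit seed $\bfs(\bfu,\bfv)$, explicit cluster variables realized as generalized minors, and a verification (via a starfish-type criterion) that the resulting upper cluster algebra agrees with the coordinate ring.

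First I would handle the geometric statements. Rigidify the $G$-action by fixing a pair of opposite decorated flags $(U^0,U_0)=(U^-,U^+)$; since the stabilizer in $G$ of such a pair is trivial, this identifies $\Conf^\beta_{\beta'}(\SA)$ with a locally closed subvariety of a product of one-parameter unipotent groups parametrizing the horizontal sequences $U^1,\dots,U^m$ and $U_1,\dots,U_n$. Each transition $U^{a-1}\xrightarrow{s_{i_a}}U^a$ (resp.\ $U_{b-1}\xrightarrow{s_{j_b}}U_b$) is a principal $U_{-\alpha_{i_a}}\simeq \mathbb{A}^1$-bundle (resp.\ $U_{\alpha_{j_b}}\simeq\mathbb{A}^1$), so this product is a smooth affine space; the opposition condition between $U^m$ and $U_n$ is the non-vanishing locus of the generalized minors $\Delta_{\omega_i}(U^m,U_n)$ for $i\in I$, an open affine condition. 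Smoothness and affineness of $\Conf^\beta_{\beta'}(\SA)$ follow.

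Next I would build the seed from Section~\ref{sec:prelim:Lie}. Concatenate $\bfu$ (decorated with bars) and $\bfv$ to form the signed expression $\bfw = \bar{\bfu}\bfv$, producing the amalgamated seed $\bfs(\bfw)$ whose vertex set $J(\bfw)$ is indexed by pairs $(i,l)$. For each such vertex define
\[
A_{(i,l)} \;=\; \Delta_{\omega_i}(U^{a(i,l)},\,U_{b(i,l)}),
\]
where the indices $(a(i,l),b(i,l))$ are the endpoints of the unique maximal sub-diagram of the Bott--Samelson picture in which the $l$-th strand of level $i$ lies. These minors are regular on $\Conf^\beta_{\beta'}(\SA)$; the frozen vertices $(i,0)$ and $(i,n_i)$ correspond to the two boundary columns of the picture.

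Finally I would identify $\BC[\Conf^\beta_{\beta'}(\SA)]$ with $\CU(\bfs(\bfw))$ by a generalized starfish argument. Normality comes from smoothness; birationality of the map to $(\mathbb{C}^\times)^N$ defined by the $A_{(i,l)}$ follows from a dimension count together with an inductive reconstruction of the decorated flags from their minors. The exchange relation at an unfrozen vertex $(i,l)$ reduces, after projection to the relevant $\mathfrak{sl}_2$-triple acting on $L(\omega_i)$, to the classical three-term Plücker/Ptolemy identity for $SL_2$-minors; since each such identity involves only finitely many weights, the argument is insensitive to whether $G$ is finite- or infinite-dimensional. The main obstacle is precisely this local-to-global step: one must check that every codimension-one vanishing locus of an initial cluster variable is cut out by a single mutation, and that no two cluster variables yield the same divisor. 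Both are verified by propagating the $SL_2$-Ptolemy identities along the amalgamation and tracking the exchange matrix $(\varepsilon_{ij})$ defined in Section~\ref{sec:prelim:Lie}; because the amalgamation is built one elementary seed $\bfs(i)$ or $\bfs(\bar{i})$ at a time, the verification reduces to a single transition and the conclusion of the theorem follows.
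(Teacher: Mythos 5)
There is nothing in the paper to compare against: Theorem~\ref{dBSCluster} is not proved here, it is quoted verbatim from Shen--Weng \cite{SW21}*{Theorem 1.1}, which is already stated and proved there for arbitrary symmetrizable Kac--Moody groups. So the framing of your proposal (``confirm that each of their arguments extends to the Kac--Moody setting'') is off target -- no extension is needed -- and what you are actually offering is a one-page compression of a proof that occupies most of \cite{SW21}. As such it cannot stand as a proof, and it contains concrete gaps.

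First, the rigidification step is wrong as stated. The stabilizer of the pair $(U^-,U^+)$ is indeed trivial, but pairs of opposite decorated flags do not form a single $G$-orbit: each orbit has a unique representative $(U^-,tU^+)$ with $t\in T$, and distinct $t$ give distinct orbits. Fixing the left edge to $(U^-,U^+)$ therefore captures only a slice of $\Conf^\beta_{\beta'}(\SA)$; compare the isomorphism $T\times\mathring{\CB}_{e,w}\cong\Conf_w(\SA)$ used in the paper, which shows the dimension is $\dim T+\ell(w)$, not $m+n$ as your product of one-parameter subgroups would give. (This torus factor is exactly where the frozen variables $A_{(i,0)}$ live, so dropping it also breaks your later bookkeeping of frozen vertices.) Second, and more seriously, the decisive identification $\BC[\Conf^\beta_{\beta'}(\SA)]\cong\CU(\bfs(\bfw))$ is asserted rather than proved. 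A starfish/codimension-two criterion requires, at minimum, showing that the initial cluster variables and all their one-step mutations are regular, together with irreducibility/coprimality (or a codimension-$\geq 2$ statement for the complement of the union of cluster tori); your proposed reduction ``to a single transition'' along the amalgamation is not a valid argument, because neither upper cluster algebras nor the hypotheses of the starfish lemma are compatible with amalgamation/defrosting in any formal sense. Shen--Weng's actual proof is global: they construct many seeds via reflection and braid-move maps, prove regularity of the resulting generalized minors in those seeds, and only then run the covering-by-cluster-charts argument; none of that is recovered by ``propagating $SL_2$ Ptolemy identities'' one elementary seed at a time. If you want this statement in your write-up, cite \cite{SW21} as the paper does; if you want to reprove it, the local-to-global step is the part that genuinely has to be done.
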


A seed for the cluster structure can be summarized as follows. Let $\bfw = \bfw(\beta,\beta')$ be any signed expression obtained by shuffling together $\bfu$ and $\bfv$, using $\overline{i_k}$ instead $i_k$ for $\bfu$. Then we obtain a seed $\bfs(\bfw)$ following Section \ref{sec:prelim:Lie}. We next explain the isomorphism $\CU(\bfs(\bfw))\simeq \CO(\Conf^{\beta}_{\beta'}(\SA))$ for the cluster structure.

We add extra lines in the picture of decorated flags which breaks it into triangles of the form
\begin{center}
\begin{tikzcd}
& U^k\ar[dash,dr] & \\
U_{l-1} \ar[rr,"s_{j_l}"]\ar[dash,ru] & & U_l
\end{tikzcd} and 
\begin{tikzcd}
U^{k-1} \ar[rr,"s_{i_k}"]\ar[dash,rd] & & U^k\\
& U_l\ar[dash,ur] & \\
\end{tikzcd}
\end{center}
appearing in the order in $\bfw$. Then we have  cluster variables $A_{(i,0)}=\Delta_{\omega_i}(U^0,U_0)$ (recall Remark~\ref{rem:iden}). For $(i,l)$ with $l>0$, the vertex $(i,l)$ corresponds to a triangle above, and we set $A_{(i,l)}=\Delta_{\omega_i}(U^k,U_l)$. 

\begin{rem}
By the uniqueness of compatible lifts \cite{SW21}*{Lemma~2.10}, a point in $\Conf^\beta_{\beta'}(\SA)$ can be specified using the data
\begin{center}
\begin{tikzcd}
U^0\ar[r,"s_{i_1}"]\ar[dash,d] & B^1 \ar[r] & \cdots \ar[r] & B^{m-1} \ar[r,"s_{i_m}"] & B^m\ar[dash,d]\\
U_0\ar[r,swap,"s_{j_1}"] & B_1 \ar[r] & \cdots \ar[r] & B_{n-1} \ar[r,swap,"s_{j_n}"] & B_n
\end{tikzcd}
\end{center}
and each $G$-orbit has a unique representative of the form
\begin{center}
\begin{tikzcd}
U^-\ar[r,"s_{i_1}"]\ar[dash,d] & B^1 \ar[r] & \cdots \ar[r] & B^{m-1} \ar[r,"s_{i_m}"] & B^m\ar[dash,d]\\
tU^+\ar[r,swap,"s_{j_1}"] & B_1 \ar[r] & \cdots \ar[r] & B_{n-1} \ar[r,swap,"s_{j_n}"] & B_n
\end{tikzcd}
\end{center}
for some $t\in T$.
\end{rem}

When $\bfu,\bfv$ are reduced expressions for $w,w'\in W$, the picture can be further reduced to
\begin{center}
\begin{tikzcd}
U^-\ar[r,"w"]\ar[dash,d] & B^1\ar[dash,d]\\
tU^+\ar[r,"w'"] & B_1
\end{tikzcd}
\end{center}
and we will also write $\Conf^w_{w'}(\SA)$ instead of $\Conf^\beta_{\beta'}(\SA)$. If we further have $w = e$, we shall often write $\Conf_{w'}(\SA)= \Conf^w_{w'}(\SA)$.

There is an isomorphism $T\x \mathring{\CB}_{e,w}\isoto \Conf_{w}(\SA)$ which sends $(t,gB^+)$ to
\begin{center}
\begin{tikzcd}
& U^-\ar[dash,dr] & \\
tU^+ \ar[rr]\ar[dash,ru] & & gB^+
\end{tikzcd}
\end{center}
This allows us to identify $\mathring{\CB}_{e,w}$ with the closed subvariety of $\Conf_{w}(\SA)$ defined by $t=1_G$. We obtain the cluster structure on $\mathring{\CB}_{e,w}$ by removing the frozen variables indexed by $(i,0)\in I(\bfw)$ for $i\in I$ by Proposition~\ref{IsolatedDeletion}. For $gB^+=z\dot{w}B^+$ with $z\in U^+$, the compatible decorated flags in the bottom row are given by $U_k=z\dot{w}_{(k)}U^+$, $w_{(k)}=s_{i_1}\cdots s_{i_k}$. This gives an explicit description of the cluster variables following \cite{SW21}*{Theorem~1.1}.

\begin{lem}
\label{SchubertClusterVariable}
For $z\dot{w}B^+\in \mathring{\CB}_{e,w}$ with $z\in U^+$, the cluster variables $A_k$ in the seed $\bfs(\bfw)$ are given by $\Delta^{\omega_{i_k}}_{w_{(k)}\omega_{i_k}}(z)$.
\end{lem}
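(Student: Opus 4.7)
The plan is to obtain the formula by unwinding the general description of cluster variables on $\Conf_w(\SA)$ provided by Theorem~\ref{dBSCluster}, using the explicit identification of $\mathring{\CB}_{e,w}$ as the closed subvariety $\{t=1_G\}\subset \Conf_w(\SA)$ together with the compatible lifts $U_k=z\dot{w}_{(k)}U^+$ already recorded in the paragraph preceding the statement. Everything reduces to a direct substitution into the definitions, so no new technical ingredient is needed beyond careful bookkeeping of the triangles.

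First, I would note that since $\beta = e$, the ``top row'' of the decorated flag picture collapses to the single decorated flag $U^0=U^-$; the expression $\bfw=\bfv$ carries no barred entries, and each triangle in the picture has $U^-$ as its apex and bottom edge $U_{k-1}\xrightarrow{s_{i_k}}U_k$. By the recipe for cluster variables given in \S\ref{sec:prelim:dBS}, the vertex $(i_k,l) \in J^+(\bfw)$ corresponding to the $l$-th occurrence of $i_k$ (equivalently, to the index $k$) contributes the cluster variable
\[
A_k \;=\; \Delta_{\omega_{i_k}}(U^0,U_k) \;=\; \Delta_{\omega_{i_k}}(U^-,U_k).
\]

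Next, I would substitute the explicit compatible lifts $U_k = z\dot w_{(k)} U^+$. By the definition $\Delta_{\lambda}(xU^-,yU^+)=\Delta_{\lambda}(x^{-1}y)$ from \S\ref{sec:prelim:minor}, we get
\[
A_k \;=\; \Delta_{\omega_{i_k}}\!\bigl(z\dot{w}_{(k)}\bigr)
\;=\; \bigl\langle z\dot{w}_{(k)}\cdot \eta_{\omega_{i_k}},\, \eta_{\omega_{i_k}}\bigr\rangle.
\]
Comparing with the definition $\Delta^{w\lambda}_{w'\lambda}(g)=\langle g\dot{w}'\cdot\eta_\lambda,\dot{w}\cdot\eta_\lambda\rangle$, the right-hand side is precisely $\Delta^{\omega_{i_k}}_{w_{(k)}\omega_{i_k}}(z)$, which yields the claimed formula.

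The only point requiring care (and essentially the only ``obstacle,'' though it is a mild one) is verifying that the indexing convention on $J^+(\bfw)$ agrees with the positional indexing $1,\dots,\ell(w)$ in $\bfw$, so that the $k$-th entry of $\bfw$ is indeed assigned to the triangle whose right-hand bottom vertex is $U_k$; this is immediate from the bijection $J^+(\bfw)\leftrightarrow\{1,\dots,\ell(w)\}$ recalled at the end of \S\ref{sec:prelim:Lie}. Once this is noted, invariance of $\Delta_{\omega_i}(U^0,U_0)$ under the $G$-action on decorated flags and the well-definedness identity \eqref{eq:well} confirm that the expression $\Delta^{\omega_{i_k}}_{w_{(k)}\omega_{i_k}}(z)$ depends only on the point $z\dot w B^+\in \mathring{\CB}_{e,w}$, completing the proof.
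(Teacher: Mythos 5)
Your proposal is correct and follows essentially the same route as the paper, which treats the lemma as an immediate consequence of the identification of $\mathring{\CB}_{e,w}$ with the locus $t=1_G$ in $\Conf_w(\SA)$, the compatible lifts $U_k=z\dot{w}_{(k)}U^+$, and the description of the cluster variables $A_{(i,l)}=\Delta_{\omega_i}(U^k,U_l)$ from \cite{SW21}; your substitution $A_k=\Delta_{\omega_{i_k}}(U^-,U_k)=\Delta_{\omega_{i_k}}(z\dot{w}_{(k)})=\Delta^{\omega_{i_k}}_{w_{(k)}\omega_{i_k}}(z)$ is exactly this unwinding. (Only a cosmetic remark: independence of the choice of $z$ follows from right invariance of $\Delta_{\omega_{i_k}}$ under $U^+\cap\dot{w}U^+\dot{w}^{-1}$, rather than from the ratio identity \eqref{eq:well}, but this point is not needed for the statement as given.)
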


\begin{remark}
We essentially only need the cluster structure on double Bruhat cells constructed in \cite{Wil13}, instead of the more general double Bott-Samelson varieties. However, various constructions are more transparent using the diagrammatic version in \cite{SW21}. 
\end{remark}

\section{Flag varieties and Richardson varieties}
 
\subsection{Bruhat atlas}\label{sec:constr:atlas}

For any $r\in W$, we have isomorphisms
\begin{align*}
(\dot{r}U^-\dot{r}^{-1}\cap U^+)\x (\dot{r}U^-\dot{r}^{-1}\cap U^-)&\isoto \dot{r}U^-\dot{r}^{-1}, & (g_1,g_2)&\mapsto g_1g_2;\\
(\dot{r}U^-\dot{r}^{-1}\cap U^-)\x (\dot{r}U^-\dot{r}^{-1}\cap U^+)&\isoto \dot{r}U^-\dot{r}^{-1}, & (h_1,h_2)&\mapsto h_1h_2.
\end{align*}
We define the morphisms
\begin{align*}
\sigma_{r,-}: \dot{r}U^-\dot{r}^{-1}&\to \dot{r}U^-\dot{r}^{-1}\cap U^-, & g_1g_2&\mapsto g_2;\\
\sigma_{r,+}: \dot{r}U^-\dot{r}^{-1}&\to \dot{r}U^-\dot{r}^{-1}\cap U^+, & h_1h_2&\mapsto h_2.
\end{align*}
By \cite{KWY13}*{Lemma 2.2}, we have an isomorphism
\[
\sigma_r=(\sigma_{r,+},\sigma_{r,-})\isoto (\dot{r}U^-\dot{r}^{-1}\cap U^+)\x (\dot{r}U^-\dot{r}^{-1}\cap U^-).
\]
Projecting to the flag variety, we obtain an isomorphism
\[
c_r=(c_{r,+},c_{r,-}): \dot{r}U^-B^+/B^+\isoto \mathring{\CB}_r\x \mathring{\CB}^r, \quad g\dot{r}B^+\mapsto (\sigma_{r,+}(g)\dot{r}B^+,\sigma_{r,-}(g)\dot{r}B^+), \quad g\in \dot{r}U^-\dot{r}^{-1}.
\]
 Restricting to $\mathring{\CB}^v$ and $\mathring{\CB}_{v,w}$, we obtain isomorphisms
\begin{align}
\label{AtlasMap1}
\mathring{\CB}^v\cap \dot{r}U^-B^+/B^+&\isoto\mathring{\CB}_{v,r}\x \mathring{\CB}^r,\\
\label{AtlasMap2}
\mathring{\CB}_{v,w}\cap \dot{r}U^-B^+/B^+&\isoto\mathring{\CB}_{v,r}\x \mathring{\CB}_{r,w}.
\end{align}

Suppose $v\leq r\leq w$ with $\ell(r)=\ell(v)+1$. Let $\mathbf{ r}=s_{i_1}\cdots s_{i_n}$ be a reduced expression for $r$. Then $v=s_{i_1}\cdots \hat{s}_{i_k}\cdots s_{i_n}$ for some unique $k$. Let $v_{(k)}=s_{i_1}\cdots s_{i_{k-1}}$, $r_{(k)}=s_{i_1}\cdots s_{i_k}=v_{(k)}s_{i_k}$.
By \cite{MR04}*{Proposition 8.1}, we have an isomorphism $\mathring{\CB}_{v,r}\isoto\BC^\x$ given by
\[
z\dot{r}B^+\mapsto \Delta^{v_{(k)}\omega_{i_k}}_{r_{(k)}\omega_{i_k}}(z)^{-1}, \quad z \in U^+.
\]
Composing with (\ref{AtlasMap1}), we obtain a function
\begin{equation}\label{eq:Drrs}
D_{v,\mathbf{r}} = (\Delta^{v_{(k)}\omega_{i_k}}_{r_{(k)}\omega_{i_k}})^{-1} \circ c_{r,+}:\mathring{\CB}^v\cap \dot{r}U^-B^+/B^+\to \BC^\x.
\end{equation}

\begin{prop}
\label{AtlasRegular1}
$D_{v,\mathbf{r}}$ extends to a regular function $\mathring{\CB}^v\to \BC$ given by $x\dot{v}B^+\mapsto \Delta^{r_{(k)}\omega_{i_k}}_{v_{(k)}\omega_{i_k}}(x)$ for $x\in U^- \cap \dot{v} U^- \dot{v}^{-1}$. 
\end{prop}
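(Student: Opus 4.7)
The plan is to propose the formula on the right-hand side as a candidate regular function on $\mathring{\CB}^v$, verify its regularity directly from the affine parameterization, and then show it agrees with $D_{v,\mathbf{r}}$ on the open subset where the latter is defined. First I would define $f:\mathring{\CB}^v\to \BC$ by $f(x\dot{v}B^+):=\Delta^{r_{(k)}\omega_{i_k}}_{v_{(k)}\omega_{i_k}}(x)$ for $x\in U^-\cap \dot{v}U^-\dot{v}^{-1}$, and check that it is regular. This is immediate: the map $x\dot{v}B^+\mapsto x$ is an algebraic isomorphism $\mathring{\CB}^v\isoto U^-\cap\dot{v}U^-\dot{v}^{-1}$ (the standard affine parameterization of the opposite Schubert cell), and the generalized minor $\Delta^{r_{(k)}\omega_{i_k}}_{v_{(k)}\omega_{i_k}}$ is a regular function on $G$.

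It then remains to show $f$ and $D_{v,\mathbf{r}}$ coincide on the open intersection $\mathring{\CB}^v\cap \dot{r}U^-B^+/B^+$, which is non-empty since $v\leq r$ (from the identification \eqref{AtlasMap2}). Given a flag $B$ in this intersection, write it simultaneously as $B=x\dot{v}B^+=g\dot{r}B^+$ with $x\in U^-\cap \dot{v}U^-\dot{v}^{-1}$ and $g\in \dot{r}U^-\dot{r}^{-1}$. Unwinding the definition of $D_{v,\mathbf{r}}$ in \eqref{eq:Drrs}, the equality $f(B)=D_{v,\mathbf{r}}(B)$ is equivalent to the algebraic identity
\[
\Delta^{r_{(k)}\omega_{i_k}}_{v_{(k)}\omega_{i_k}}(x)\cdot \Delta^{v_{(k)}\omega_{i_k}}_{r_{(k)}\omega_{i_k}}(\sigma_{r,+}(g))=1.
\]

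I expect this identity to be the main obstacle, and I would prove it by a rank-one reduction. The key observation is that $r_{(k)}=v_{(k)}s_{i_k}$, so the two extremal weights $v_{(k)}\omega_{i_k}$ and $r_{(k)}\omega_{i_k}$ differ by the positive real root $\beta:=v_{(k)}\alpha_{i_k}$. Both minors above are matrix coefficients in the integrable highest weight module $L(\omega_{i_k})$ between the one-dimensional extremal weight spaces at $v_{(k)}\omega_{i_k}$ and $r_{(k)}\omega_{i_k}$, and a weight-space computation shows they isolate respectively the $U_{-\beta}$-component of $x$ and the $U_\beta$-component of $\sigma_{r,+}(g)$: the other root subgroups contributing to the parameterizations shift weights in directions that miss the relevant extremal weight targets. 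The relation $x\dot{v}=g\dot{r}b$ (for some $b\in B^+$), when projected onto the rank-one subgroup $\langle U_\beta,U_{-\beta}\rangle\cong SL_2$, reduces to the classical $SL_2$-Bruhat identity $x_\beta(\alpha)\dot{s}_\beta=y_\beta(\alpha^{-1})\beta^\vee(-\alpha)x_\beta(-\alpha^{-1})$, which forces the product of these two coefficients to equal $1$. The small-rank case ($G=SL_2$ with $v=e$, $r=s$) confirms this directly: in that setting $x=y(a)$ and $\sigma_{r,+}(g)=x(1/a)$, and the identity is $a\cdot (1/a)=1$. Once the identity is established, combining with the regularity of $f$ on all of $\mathring{\CB}^v$ and the density of the open intersection yields the proposition.
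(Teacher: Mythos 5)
Your overall architecture matches the paper's: define the candidate function by the stated formula, observe that regularity is immediate from the parameterization $\mathring{\CB}^v\simeq U^-\cap\dot{v}U^-\dot{v}^{-1}$, and reduce the proposition to the identity
\[
\Delta^{r_{(k)}\omega_{i_k}}_{v_{(k)}\omega_{i_k}}(x)\cdot\Delta^{v_{(k)}\omega_{i_k}}_{r_{(k)}\omega_{i_k}}(\sigma_{r,+}(g))=1
\]
on the dense open overlap. That identity is exactly the right target. The gap is in how you propose to prove it. The claim that ``a weight-space computation shows the minors isolate respectively the $U_{-\beta}$-component of $x$ and the $U_\beta$-component of $\sigma_{r,+}(g)$'' is asserted, not proved, and it is not true by a naive weight argument: the target extremal weight $r_{(k)}\omega_{i_k}$ differs from $v_{(k)}\omega_{i_k}$ by $\beta=v_{(k)}\alpha_{i_k}$, which is generally \emph{not} a simple root, so it can decompose as a sum of positive real roots occurring in the parameterization of $U^-\cap\dot{v}U^-\dot{v}^{-1}$; products of the corresponding root vectors can land in the (one-dimensional, but still reachable by multi-step paths) extremal weight space $L(\omega_{i_k})_{r_{(k)}\omega_{i_k}}$, producing cross-terms you have not ruled out. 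Likewise, ``projecting the relation $x\dot{v}=g\dot{r}b$ onto the rank-one subgroup $\langle U_\beta,U_{-\beta}\rangle$'' is not a well-defined operation on group elements, so the appeal to the $SL_2$ Bruhat identity does not yet constitute a proof outside the genuine rank-one case you verify.

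The paper closes exactly this gap by a different, group-theoretic reduction: both $D_{v,\mathbf{r}}$ and the candidate minor are shown to be invariant under left multiplication by $U^-\cap\dot{r}U^-\dot{r}^{-1}$ (for the minor this uses that $\dot{r}_{(k)}^{-1}g\dot{r}_{(k)}\in U^-$ for $g$ in that subgroup; for $D_{v,\mathbf{r}}$ it is built into the definition of $\sigma_{r,+}$). Since this action sweeps out the $\mathring{\CB}^r$-factor of the atlas decomposition \eqref{AtlasMap1}, it suffices to compare the two functions on the one-dimensional Richardson variety $\mathring{\CB}_{v,r}$, where \cite{MR04}*{Proposition 5.2} gives the explicit form $x=\dot{v}_{(k)}y_{i_k}(t)\dot{v}_{(k)}^{-1}$ and both sides evaluate to $t$ — this is the honest version of your rank-one computation. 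If you replace your weight-space step with this invariance argument (or otherwise justify the isolation claim), the proof goes through.
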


\begin{proof}

Note that $\mathring{\CB}^v \cong U^- \cap \dot{v} U^- \dot{v}^{-1}$, so the function is well-defined. The varieties $\mathring{\CB}^v$ and $\dot{r}U^-B^+/B^+$ are stable under left multiplication by $U^-\cap \dot{r}U^-\dot{r}^{-1}$. For any $g\in U^-\cap \dot{r}U^-\dot{r}^{-1}$, we have $\dot{r}_{(k)}^{-1}g \dot{r}_{(k)}\in U^-$. So for any $x \in G$, we have 
\begin{align*}
\Delta_{v_{(k)}\omega_{i_k}}^{r_{(k)}\omega_{i_k}}(gx)&=\Delta_{\omega_{i_k}}(\dot{r}_{(k)}^{-1}g \dot{r}_{(k)}\dot{r}_{(k)}^{-1}x \dot{v}_{(k)})\\
&=\Delta_{\omega_{i_k}}(\dot{r}_{(k)}^{-1}x \dot{v}_{(k)})\\
&=\Delta_{v_{(k)}\omega_{i_k}}^{r_{(k)}\omega_{i_k}}(x).
\end{align*}
On the other hand, by the definition of $\sigma_{r,+}$, the function $D_{v,r}$ is also invariant under left multiplication by $U^-\cap \dot{r}U^-\dot{r}^{-1}$.

Therefore it suffices to assume that 
$x \dot{v} B^+ \in \mathring{\CB}_{v,r}$. Then by \cite{MR04}*{Proposition~5.2}, we have $x = \dot{v}_{(k)} y_{i_k}(t) \dot{v}_{(k)}^{-1}$ for some $t \in \BC^\x$. Then by direct computation, we have 
\[
 x \dot{v} B^+ = \dot{v}_{(k)} x_{i_k}(t^{-1}) \dot{v}_{(k)}^{-1}\dot{r} B^+.
\]
Then we compute 
\[
\Delta_{v_{(k)}\omega_{i_k}}^{r_{(k)}\omega_{i_k}}(\dot{v}_{(k)} y_{i_k}(t) \dot{v}_{(k)}^{-1})
= t = 
\Delta_{r_{(k)}\omega_{i_k}}^{v_{(k)}\omega_{i_k}}( \dot{v}_{(k)} x_{i_k}(t^{-1}) \dot{v}_{(k)}^{-1})^{-1}.
\]

The proposition is proved.
\end{proof}

\begin{cor}\label{cor:T1}
We have  $\BC[\mathring{\CB}_{v,w}]/(D_{v,\mathbf{r}} =1) \cong  \BC[\mathring{\CB}_{r,w}]$.
\end{cor}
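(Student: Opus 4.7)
My strategy is to reduce the identification to an explicit product decomposition on a dense open subset of $\mathring{\CB}_{v,w}$, then upgrade to the whole variety by a localization argument. Combining the Bruhat atlas isomorphism~\eqref{AtlasMap2} with the isomorphism $\mathring{\CB}_{v,r}\isoto\BC^\x$ from \cite{MR04}*{Proposition~8.1} (via which, by definition~\eqref{eq:Drrs}, $D_{v,\mathbf{r}}$ is precisely the $\BC^\x$-coordinate) produces an isomorphism
\[
\mathring{\CB}_{v,w} \cap \dot r U^- B^+/B^+ \;\isoto\; \BC^\x \x \mathring{\CB}_{r,w},
\]
under which $D_{v,\mathbf{r}}$ corresponds to the projection to the $\BC^\x$ factor. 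At the level of coordinate rings this immediately gives
\[
\BC[\mathring{\CB}_{v,w}\cap \dot r U^- B^+/B^+]/(D_{v,\mathbf{r}}-1) \;\cong\; \BC[\mathring{\CB}_{r,w}].
\]

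The key technical step is to show that $\mathring{\CB}_{v,w}\cap\dot r U^-B^+/B^+$ is precisely the principal open $\{D_{v,\mathbf{r}}\neq 0\}\subset \mathring{\CB}_{v,w}$. The inclusion $\subseteq$ is immediate from the previous paragraph. For the reverse, I plan to use the explicit formula $D_{v,\mathbf{r}}(x\dot v B^+) = \Delta^{r_{(k)}\omega_{i_k}}_{v_{(k)}\omega_{i_k}}(x)$ from Proposition~\ref{AtlasRegular1} to argue that $D_{v,\mathbf{r}}$ vanishes on the entire closed complement $\mathring{\CB}^v\setminus(\mathring{\CB}^v\cap\dot r U^-B^+/B^+)$. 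Since $v\lessdot r$ is a covering relation, this complement is an irreducible codimension-one subvariety. The comparison goes through the characterization of $\dot r U^-B^+/B^+$ as the simultaneous nonvanishing locus of all fundamental minors $\Delta^{r\omega_i}_{v\omega_i}$ for $i\in I$; the ``other'' minors ($i\neq i_k$) are identically $1$ on $\mathring{\CB}^v$ whenever $\langle\gamma^\vee,\omega_i\rangle=0$ (where $\gamma=v^{-1}r$ is the associated positive real root), and share their zero locus with $D_{v,\mathbf{r}}$ otherwise.

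Granted this identification, the coordinate ring of the open subset is the localization $\BC[\mathring{\CB}_{v,w}][D_{v,\mathbf{r}}^{-1}]$. Any quotient by $D_{v,\mathbf{r}}-1$ factors through this localization (since $D_{v,\mathbf{r}}$ becomes invertible with inverse~$1$), so we conclude
\[
\BC[\mathring{\CB}_{v,w}]/(D_{v,\mathbf{r}}-1) \;\cong\; \BC[\mathring{\CB}_{v,w}][D_{v,\mathbf{r}}^{-1}]/(D_{v,\mathbf{r}}-1) \;\cong\; \BC[\mathring{\CB}_{r,w}].
\]
The main obstacle is the key technical step: \emph{a priori} the zero locus of $D_{v,\mathbf{r}}$ on $\mathring{\CB}^v$ could be smaller than the full complement of the open Bruhat cell, and controlling the other generalized minors along this complement is what makes the argument nontrivial.
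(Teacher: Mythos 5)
Your route is the same as the paper's: via the atlas isomorphism \eqref{AtlasMap2} and \cite{MR04}*{Proposition~8.1}, the chart intersection $\mathring{\CB}_{v,w}\cap\dot rU^-B^+/B^+$ is $\BC^\x\x\mathring{\CB}_{r,w}$ with $D_{v,\mathbf r}$ the $\BC^\x$-coordinate, and one then localizes and specializes $D_{v,\mathbf r}=1$. The paper's proof is literally the chain $\BC[\mathring{\CB}_{r,w}]\cong\BC[\mathring{\CB}_{v,w}][D_{v,\mathbf r}^{-1}]/(D_{v,\mathbf r}=1)\cong\BC[\mathring{\CB}_{v,w}]/(D_{v,\mathbf r}=1)$, in which the identification of the chart intersection with the principal open locus $\{D_{v,\mathbf r}\neq0\}\subset\mathring{\CB}_{v,w}$ is left implicit in the notation; so you have correctly isolated the one geometric input hidden in that line, and your closing localization algebra is correct.

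The problem is that your treatment of that input is an assertion, not a proof. The dichotomy is governed by $\langle\gamma^\vee,\omega_i\rangle$ rather than by $i\neq i_k$: when $\langle\gamma^\vee,\omega_i\rangle=0$ you correctly observe $r\omega_i=v\omega_i$, so $\Delta^{r\omega_i}_{v\omega_i}$ is a nonzero constant on $\mathring{\CB}^v$; but for every $i$ with $\langle\gamma^\vee,\omega_i\rangle\neq0$ --- including $i=i_k$ itself, since $D_{v,\mathbf r}=\Delta^{r_{(k)}\omega_{i_k}}_{v_{(k)}\omega_{i_k}}$ involves $r_{(k)},v_{(k)}$ rather than $r,v$ --- the claim that $\Delta^{r\omega_i}_{v\omega_i}$ has the same zero locus as $D_{v,\mathbf r}$ on $\mathring{\CB}^v$ is exactly (indeed slightly stronger than) the inclusion $\{D_{v,\mathbf r}\neq0\}\subseteq\dot rU^-B^+/B^+$ that you set out to establish, and no mechanism is offered for comparing these different generalized minors; the ``characterization of the chart as the simultaneous nonvanishing locus'' only lists the minors, it does not relate them to $D_{v,\mathbf r}$. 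Similarly, the statement that the complement is an irreducible codimension-one subvariety ``since $v\lessdot r$'' is unjustified: a priori it is the union of the zero loci of the several minors $\Delta^{r\omega_i}_{v\omega_i}$, and its irreducibility is essentially what needs proving; moreover $\mathring{\CB}^v$ is infinite-dimensional in Kac--Moody type, so any such dimension or irreducibility argument should be carried out inside the finite-dimensional $\mathring{\CB}_{v,w}$. In fairness, the paper's own one-line proof also takes this identification for granted, but as a self-contained argument your proposal has a genuine gap at precisely the step you flag as the main obstacle.
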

\begin{proof}
We have 
$\BC[\mathring{\CB}_{r,w}] \cong \BC[\mathring{\CB}_{v,w}][D_{v,\mathbf{r}}^{-1}]/(D_{v,\mathbf{r}} =1) \cong  \BC[\mathring{\CB}_{v,w}]/(D_{v,\mathbf{r}} =1)$.
\end{proof}

\subsection{The function $T_{r,rs_i}$}\label{sec:T}

Let $r \le w$ be such that $r < r s_i \le w$ for some $i \in I$. 
Let $\bfw=(i_1,\cdots,i_n)$ be a reduced expression for $w$ and let $w_{(k)} = s_{i_1}\cdots s_{i_k}$ and $w_{(0)}=e$.

\begin{lem}\label{lem:rrs}
Let $r \le rs_i \le w$ for some $i \in I$. We have the following commutative diagram of rational maps 
\[
\begin{tikzcd}
\mathring{\CB}_{e,w} \ar[r,dashed, "c_{rs_i,+}"]  \ar[d, dashed, "c_{r,-}"] & \mathring{\CB}_{e,rs_i} \ar[d, dashed, "c_{r,-}"] \\
\mathring{\CB}_{r,w} \ar[r,dashed,"c_{rs_i,+}"] & \mathring{\CB}_{r,rs_i}.
\end{tikzcd}
\]
\end{lem}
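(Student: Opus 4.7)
Both sides are rational maps $\mathring{\CB}_{e,w}\dashrightarrow \mathring{\CB}_{r,rs_i}$; since $\mathring{\CB}_{e,w}$ is irreducible, we need only check equality on an open dense subset $\CO\subset \mathring{\CB}_{e,w}$ where both are regular. The plan is to iterate the Bruhat atlas (\ref{AtlasMap2}) to produce a triple decomposition
\[
\mathring{\CB}_{e,w}\cap \CO\isoto \mathring{\CB}_{e,r}\x \mathring{\CB}_{r,rs_i}\x \mathring{\CB}_{rs_i,w}
\]
in two natural orders, and to identify both compositions in the lemma with projection to the middle factor.

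In the first iteration, applying (\ref{AtlasMap2}) at $r$ gives $\mathring{\CB}_{e,w}\cap \dot{r}U^-B^+/B^+\isoto \mathring{\CB}_{e,r}\x \mathring{\CB}_{r,w}$, and then applying (\ref{AtlasMap2}) at $rs_i$ to the second factor gives $\mathring{\CB}_{r,w}\cap \dot{(rs_i)}U^-B^+/B^+\isoto \mathring{\CB}_{r,rs_i}\x \mathring{\CB}_{rs_i,w}$; the middle projection of the resulting triple is exactly $c_{rs_i,+}\circ c_{r,-}$ applied to $xB^+$. Reversing the order (first at $rs_i$, then at $r$ on the first factor) gives another iterated isomorphism whose middle projection is $c_{r,-}\circ c_{rs_i,+}$ applied to $xB^+$. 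The lemma thus reduces to showing these two iterated triple decompositions agree on the overlap of their domains.

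To verify this, identify $\mathring{\CB}_{r,rs_i}\cong \BC^\x$ via the function $D_{r,\mathbf{rs_i}}$ of Proposition~\ref{AtlasRegular1}, which on $\mathring{\CB}^r$ admits the explicit generalized-minor formula $y\dot{r}B^+ \mapsto \Delta^{rs_i\omega_i}_{r\omega_i}(y)$ for $y\in U^-\cap \dot{r}U^-\dot{r}^{-1}$. For the first composition, writing $xB^+=\dot{r}uB^+$ with $u\in U^-$ and decomposing $\dot{r}u\dot{r}^{-1}=g_1g_2$ with $g_1\in U^+\cap\dot{r}U^-\dot{r}^{-1}$ and $g_2\in U^-\cap\dot{r}U^-\dot{r}^{-1}$ yields the value $\Delta^{rs_i\omega_i}_{r\omega_i}(g_2)$. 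A parallel computation starting from $xB^+=\dot{(rs_i)}\tilde u B^+$ and the analogous factorization $\dot{(rs_i)}\tilde u\dot{(rs_i)}^{-1}=\tilde h_1\tilde h_2$ gives the corresponding expression for the other composition. The main obstacle is the matching of these two generalized-minor expressions as rational functions on $\mathring{\CB}_{e,w}$; this is carried out using the invariance of $\Delta_\lambda$ under right $U^+$- and left $U^-$-multiplication together with (\ref{eq:well}), which reduces both expressions to the same $B^+$-invariant ratio of minors of $x$, and hence to the same well-defined function on $\mathring{\CB}_{e,w}$.
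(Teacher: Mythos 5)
Your argument is correct, but it runs along a genuinely different track from the paper's. The paper proves the lemma purely group-theoretically: writing $c_{rs_i,+}(x)=g_1x$ with $g_1\in U^-\cap\dot r\dot s_iU^-\dot s_i^{-1}\dot r^{-1}$ and $c_{r,-}(g_1x)=g_2g_1x$ with $g_2\in U^+\cap\dot rU^-\dot r^{-1}$, it refactors $g_2g_1=g_1'g_2'$, uses the weak-order containments $U^-\cap\dot r\dot s_iU^-\dot s_i^{-1}\dot r^{-1}\subset U^-\cap\dot rU^-\dot r^{-1}$ and $U^+\cap\dot rU^-\dot r^{-1}\subset \dot r\dot s_iU^-\dot s_i^{-1}\dot r^{-1}$ to locate $g_1',g_2'$ in the right subgroups, and then invokes uniqueness of the $\sigma_{r,\pm}$-decompositions to identify both composites with the single point $g_2g_1x=g_1'g_2'x$ — no generalized minors appear. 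You instead compose both sides with the $\BC^\x$-coordinate on $\mathring{\CB}_{r,rs_i}$ from Proposition~\ref{AtlasRegular1} and show each composite computes the same ratio $\Delta^{rs_i\omega_i}_{\omega_i}/\Delta^{r\omega_i}_{\omega_i}$ at $x$; since that coordinate is injective, the composites agree. This works, and in effect you are proving Corollary~\ref{AtlasRegular2} twice over (once on $\mathring{\CB}_{e,w}$ for the down-then-right path, once on $\mathring{\CB}_{e,rs_i}$ for the other), so your route yields that corollary as a byproduct, at the price of a heavier minor computation; the paper's route is leaner and keeps the lemma coordinate-free, deferring the minor formula to a separate corollary. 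The triple-decomposition framing at the start is harmless but not really needed — equality of the two middle projections is all you use.

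One point you must make explicit in a full write-up: "invariance of $\Delta_\lambda$ under left $U^-$-multiplication together with \eqref{eq:well}" does not by itself give the matching. Before you can drop the factors $g_1\in U^+\cap\dot rU^-\dot r^{-1}$ (resp.\ $\tilde h_1\in U^-\cap\dot r\dot s_iU^-\dot s_i^{-1}\dot r^{-1}$) from $\Delta^{rs_i\omega_i}_{\omega_i}$ (resp.\ $\Delta^{r\omega_i}_{\omega_i}$), you need $(\dot r\dot s_i)^{-1}g_1\dot r\dot s_i\in U^-$ and $\dot r^{-1}\tilde h_1\dot r\in U^-$, i.e.\ exactly the inversion-set containments coming from $r<rs_i$ quoted above — the same ones the paper uses, and the only place the hypothesis $r\le rs_i$ enters. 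Your sketch glosses over this; once it is inserted, the proof is complete.
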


\begin{proof}
Let $x \in \mathring{\CB}_{e,w}$ be in the open locus where all maps are well-defined. Then we have 
\[
c_{rs_i,+}(x) = g_1x, \text{ for some unique }g_1 \in U^- \cap (\dot{r}\dot{s}_i U^- \dot{s}_i^{-1} \dot{r}^{-1}),
\]
\[
c_{r,-}(g_1x) = g_2g_1x \text{ for some unique }g_2 \in U^+ \cap \dot{r} U^- \dot{r}^{-1}.
\]
Recall the isomorphism $U^+ \times U^- \xrightarrow{\sim} U^- \times U^+$. We have $g_2g_1 = g_1' g_2'$ for $g_1' \in U^-$ and $g_2' \in U^+$.

Since $r \le_R r s_i$, we have $U^- \cap (\dot{r}\dot{s}_i U^- \dot{s}_i^{-1} \dot{r}^{-1}) \subset U^- \cap (\dot{r}  U^-  \dot{r}^{-1})$. The isomorphism above  restricts to 
\[
(U^+ \cap \dot{r} U^- \dot{r}^{-1}) \times (U^- \cap \dot{r} U^- \dot{r}^{-1}) \xrightarrow{\sim} (U^- \cap \dot{r} U^- \dot{r}^{-1}) \times (U^+ \cap \dot{r} U^- \dot{r}^{-1}).
\]
We conclude $g_2' \in  U^+ \cap \dot{r} U^- \dot{r}^{-1}$. Similarly, since $U^+ \cap \dot{r} U^- \dot{r}^{-1} \subset U^+ \cap (\dot{r} \dot{s_i})U^- (\dot{s_i}^{-1}\dot{r}^{-1})$, we conclude that $g_1' \in U^- \cap (\dot{r}\dot{s}_i U^- \dot{s}_i^{-1} \dot{r}^{-1})$. 

We have $g_2'x \in U^-g_2g_1x \subset \mathring{\CB}^r$ and $g_2'x  \in U^+x \subset \mathring{\CB}_w$.  Hence $g_2'x \in \mathring{\CB}_{r,w}$. Since $g_2' \in  U^+ \cap \dot{r} U^- \dot{r}^{-1}$, we can conclude that $g_2'x = c_{r,-}(x)$. Since  $g_1' \in U^- \cap (\dot{r}\dot{s}_i U^- \dot{s}_i^{-1} \dot{r}^{-1})$, we have $g_1'g_2'x = c_{rs_i,+} (g_2'x)$. This finishes the proof.
\end{proof}

We consider the rational function on $\mathring{\CB}_{e,w}$ by 
\[
\begin{tikzcd}
\mathring{\CB}_{e,w} \ar[r,dashed, "c_{rs_i,+}"]  \ar[d, dashed, "c_{r,-}"] & \mathring{\CB}_{e,rs_i} \ar[d, dashed, "c_{r,-}"] \\
\mathring{\CB}_{r,w} \ar[r,dashed,"c_{rs_i,+}"] & \mathring{\CB}_{r,rs_i} \ar[r, "\Delta^{rs_i\omega_i}_{r\omega_i}"]& \BC^\x. 
\end{tikzcd}
\]
By Proposition~\ref{AtlasRegular1}, we can extend the map to  
\[
T_{r,rs_i}:\mathring{\CB}_{e,w}\cap \dot{r}U^-B^+/B^+\to \mathring{\CB}_{r,w} \xrightarrow{D_{r,\mathbf{rs_i}}} \BC.
\]
Here the reduced expression for $rs_i$ is always chosen such that the last term is equal to $s_i$ to be consistent with Proposition~\ref{AtlasRegular1}.

\begin{cor}
\label{AtlasRegular2}
We have $T_{r,rs_i}=\Delta^{rs_i\omega_i}_{\omega_i}/\Delta^{r\omega_i}_{\omega_i}$ on $\mathring{\CB}_{e,w}\cap \dot{r}U^-B^+/B^+$, or as rational functions on $\mathring{\CB}_{e,w}$.
\end{cor}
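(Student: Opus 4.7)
The plan is to verify the identity pointwise on the open dense subset $\mathring{\CB}_{e,w}\cap \dot{r}U^-B^+/B^+$ of the irreducible variety $\mathring{\CB}_{e,w}$, at which point the equality of rational functions follows by density. A general point of this open locus has the form $x=g\dot{r}B^+$ with $g\in \dot{r}U^-\dot{r}^{-1}$, and I would exploit the unique factorization $g=g_1g_2$ with $g_1\in U^+\cap \dot{r}U^-\dot{r}^{-1}$ and $g_2\in U^-\cap \dot{r}U^-\dot{r}^{-1}$ supplied by the isomorphism $\sigma_r$ of \S\ref{sec:constr:atlas}. By definition $c_{r,-}(x)=g_2\dot{r}B^+$, and choosing a reduced expression $\mathbf{rs_i}$ whose last letter is $s_i$, Proposition~\ref{AtlasRegular1} yields
\[
T_{r,rs_i}(x)=D_{r,\mathbf{rs_i}}(g_2\dot{r}B^+)=\Delta^{rs_i\omega_i}_{r\omega_i}(g_2)=\Delta_{\omega_i}(\dot{s}_i^{-1}\dot{r}^{-1}g_2\dot{r}).
\]

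Next I would evaluate the right-hand side on the same lift $g\dot{r}$ of $x$, which is legitimate since by \eqref{eq:well} the quotient $\Delta^{rs_i\omega_i}_{\omega_i}/\Delta^{r\omega_i}_{\omega_i}$ descends to a well-defined rational function on $\CB$. Setting $u_j=\dot{r}^{-1}g_j\dot{r}$ for $j=1,2$, both $u_1,u_2$ lie in $U^-$ because conjugation by $\dot{r}^{-1}$ carries $\dot{r}U^-\dot{r}^{-1}$ into $U^-$. Left $U^-$-invariance of $\Delta_{\omega_i}$ (together with $\Delta_{\omega_i}(1)=1$) then gives
\[
\Delta^{r\omega_i}_{\omega_i}(g\dot{r})=\Delta_{\omega_i}(u_1u_2)=1,\qquad \Delta^{rs_i\omega_i}_{\omega_i}(g\dot{r})=\Delta_{\omega_i}(\dot{s}_i^{-1}u_1u_2).
\]
Matching these with the formula for $T_{r,rs_i}(x)$ reduces the corollary to the identity $\Delta_{\omega_i}(\dot{s}_i^{-1}u_1u_2)=\Delta_{\omega_i}(\dot{s}_i^{-1}u_2)$, and I expect this to be the only nontrivial step.

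I would establish this identity via the cleaner statement $\dot{s}_i^{-1}u_1\dot{s}_i\in U^-$, for then $\dot{s}_i^{-1}u_1u_2=(\dot{s}_i^{-1}u_1\dot{s}_i)(\dot{s}_i^{-1}u_2)$ and a further application of $U^-$-invariance finishes the computation. Since $g_1\in U^+\cap \dot{r}U^-\dot{r}^{-1}$ is a product of root subgroup elements $U_\alpha$ for $\alpha\in\mathrm{Inv}(r):=\{\alpha\in\Delta^{re}_+:r^{-1}\alpha\in\Delta^{re}_-\}$, the element $u_1$ lies in the subgroup generated by $U_\beta$ for $\beta\in r^{-1}\mathrm{Inv}(r)$. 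Conjugation by $\dot{s}_i$ maps $U_\beta$ to $U_{s_i\beta}$ and $s_ir^{-1}=(rs_i)^{-1}$, so the claim reduces to the inclusion $\mathrm{Inv}(r)\subset \mathrm{Inv}(rs_i)$. The only potential obstruction is an $\alpha\in\mathrm{Inv}(r)$ with $r^{-1}\alpha=-\alpha_i$, that is, $\alpha=-r\alpha_i$; but the hypothesis $r<rs_i$ is equivalent to $r\alpha_i>0$, which excludes this case. Putting everything together completes the verification on the open locus, and hence the corollary as an identity of rational functions on $\mathring{\CB}_{e,w}$.
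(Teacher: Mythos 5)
Your proposal is correct and takes essentially the same route as the paper's proof: decompose $g=g_1g_2$ via $\sigma_r$, use left $U^-$-invariance of the generalized minors together with the fact that $r<rs_i$ forces $(\dot{r}\dot{s}_i)^{-1}g_1(\dot{r}\dot{s}_i)\in U^-$, and identify the resulting value with $D_{r,\mathbf{rs_i}}(\sigma_{r,-}(g))$ via Proposition~\ref{AtlasRegular1}. The only difference is that you spell out, via the inversion-set inclusion $\mathrm{Inv}(r)\subset\mathrm{Inv}(rs_i)$, the containment $U^+\cap\dot{r}U^-\dot{r}^{-1}\subset U^+\cap\dot{r}\dot{s}_iU^-\dot{s}_i^{-1}\dot{r}^{-1}$, which the paper simply asserts.
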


\begin{proof}
The projection $\mathring{\CB}_{e,w}\cap \dot{r}U^-B^+/B^+ \to \mathring{\CB}_{r,w}$ is given by $g\dot{r}B^+\mapsto \sigma_{r,-}(g)\dot{r}B^+$. By definition, $g=g'\sigma_{r,-}(g)$ for some $g'\in \dot{r}U^-\dot{r}^{-1}\cap U^+$, so $\dot{r}^{-1}g' \dot{r}\in U^-$. By the assumption $r<rs_i$, $(\dot{r}\dot{s})^{-1}g' (\dot{r}\dot{s})$ also lies in $U^-$.

We compute that (the ration is well-defined by \eqref{eq:well})
\[
\frac{\Delta^{rs_i\omega_i}_{\omega_i}(g\dot{r})}{\Delta^{r\omega_i}_{\omega_i}(g\dot{r})}=\frac{\Delta^{rs_i\omega_i}_{\omega_i}(\sigma_{r,-}(g)\dot{r})}{\Delta^{r\omega_i}_{\omega_i}(\sigma_{r,-}(g)\dot{r})}=\Delta^{rs_i\omega_i}_{r\omega_i}(\sigma_{r,-}(g)) = D_{r, \mathbf{rs_i}}(\sigma_{r,-}(g)).
\]
The last equality follows from Proposition \ref{AtlasRegular1}. The corollary is proved.
\end{proof}

Corollary \ref{AtlasRegular2} shows that we can extend $T_{r,rs_i}$ to a rational function on $\dot{r}U^-B^+/B^+$ using the equivalent definition $\Delta^{rs_i\omega_i}_{\omega_i}/\Delta^{r\omega_i}_{\omega_i}$.
The same argument also shows that $T_{r,rs_i}$ interacts well with (\ref{AtlasMap2}) under a right weak Bruhat order assumption.
 
\begin{lem}
\label{AtlasRegular3}
Let $r<rs_i\leq_R v\leq w$ and $v'\leq_R r<rs_i\leq w$. Then $T_{r,rs_i}$ is preserved under the rational maps $c_{v,+}:\mathring{\CB}_{e,w}\dashrightarrow \mathring{\CB}_{e,v}$ and $c_{v',-}: \mathring{\CB}_{e,w}\dashrightarrow \mathring{\CB}_{v',w}$.
\end{lem}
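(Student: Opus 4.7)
The plan is to verify both invariances directly using the explicit formula $T_{r,rs_i} = \Delta^{rs_i\omega_i}_{\omega_i}/\Delta^{r\omega_i}_{\omega_i}$ supplied by Corollary~\ref{AtlasRegular2}, together with the left $U^-$-invariance $\Delta_{\omega_i}(yg) = \Delta_{\omega_i}(g)$ for $y \in U^-$ recalled in \S\ref{sec:prelim:minor}. In each case, the ``missing'' factor produced by the projection $c_{v,+}$ or $c_{v',-}$ will, after conjugation by $\dot{w}'^{-1}$ for $w' \in \{r, rs_i\}$, be shown to land in $U^-$, where it is absorbed by the minor; the weak Bruhat hypotheses turn out to be exactly what is needed to ensure that the conjugate lies in $U^-$.

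For the first claim, take a generic point $g\dot{v}B^+ \in \mathring{\CB}_{e,w} \cap \dot{v}U^-B^+/B^+$ and decompose $g = h\cdot \sigma_{v,+}(g)$ with $h \in U^- \cap \dot{v}U^-\dot{v}^{-1}$, so that $c_{v,+}(g\dot{v}B^+) = \sigma_{v,+}(g)\dot{v}B^+$. For each $w' \in \{r, rs_i\}$ it suffices to prove
\[
\Delta^{w'\omega_i}_{\omega_i}(h\cdot \sigma_{v,+}(g)\dot{v}) = \Delta^{w'\omega_i}_{\omega_i}(\sigma_{v,+}(g)\dot{v}).
\]
Rewriting the left side as $\Delta_{\omega_i}\bigl((\dot{w}'^{-1}h\dot{w}')\cdot \dot{w}'^{-1}\sigma_{v,+}(g)\dot{v}\bigr)$ and invoking the $U^-$-invariance, this reduces to $\dot{w}'^{-1}h\dot{w}' \in U^-$. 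Since $h$ is a product of root subgroups $U_{-\beta}$ for positive real roots $\beta$ with $v^{-1}\beta > 0$, and conjugation sends $U_{-\beta}$ to $U_{-w'^{-1}\beta}$, the condition becomes $w'^{-1}\beta > 0$ whenever $v^{-1}\beta > 0$; equivalently, every positive real root inverted by $w'^{-1}$ is inverted by $v^{-1}$, which is the standard characterization of $w' \leq_R v$. The hypothesis $r < rs_i \leq_R v$ supplies this for both $w' = r$ and $w' = rs_i$.

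The second claim is dual. Write $g = g_1 \cdot \sigma_{v',-}(g)$ with $g_1 \in U^+ \cap \dot{v}'U^-\dot{v}'^{-1}$ a product of $U_\alpha$ for positive real roots $\alpha$ satisfying $v'^{-1}\alpha < 0$. The same manipulation reduces the desired identity to $\dot{w}'^{-1}g_1\dot{w}' \in U^-$ for $w' \in \{r, rs_i\}$, and this in turn becomes $w'^{-1}\alpha < 0$ whenever $v'^{-1}\alpha < 0$, i.e., $v' \leq_R w'$, which is supplied by $v' \leq_R r <_R rs_i$.

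The only subtlety is bookkeeping: making sure that the factor removed by each projection is the one lying in a root-subgroup product indexed by the inversion set of $v$ or $v'$, and then observing that $\Delta^{w'\omega_i}_{\omega_i}$ is blind to it precisely when that inversion set is contained in (respectively contains) the inversion set of $w'$. The lemma is then a direct consequence of the standard characterization of the right weak Bruhat order via inclusion of inversion sets.
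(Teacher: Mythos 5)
Your proof is correct, but it follows a different route from the paper's. The paper's own proof is purely structural: from the definitions of $\sigma_{?,\pm}$ one gets the commutativities $c_{rs_i,+}\circ c_{v,+}=c_{rs_i,+}$ and $c_{r,-}\circ c_{v',-}=c_{r,-}$ as rational maps on $\mathring{\CB}_{e,w}$ (this is where the hypotheses $rs_i\leq_R v$ and $v'\leq_R r$ enter, via containments of the relevant unipotent subgroups), and then Lemma~\ref{lem:rrs} allows one to commute $c_{rs_i,+}$ past $c_{r,-}$, so that $T_{r,rs_i}$, viewed as $\Delta^{rs_i\omega_i}_{r\omega_i}$ composed with these projections, is unchanged. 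You instead bypass Lemma~\ref{lem:rrs} altogether and verify the invariance directly on the formula $T_{r,rs_i}=\Delta^{rs_i\omega_i}_{\omega_i}/\Delta^{r\omega_i}_{\omega_i}$ of Corollary~\ref{AtlasRegular2}, showing that each generalized minor, evaluated on the chosen lifts, is blind to the factor stripped off by $c_{v,+}$ (resp.\ $c_{v',-}$) because its conjugate by $\dot{w}'^{-1}$ lands in $U^-$; the weak-order hypotheses enter through the inversion-set characterization. The underlying mechanism is the same in both arguments (a unipotent factor conjugates into $U^-$ and is absorbed by left $U^-$-invariance of $\Delta_{\omega_i}$), but your version is a self-contained minor computation, at the price of invoking Corollary~\ref{AtlasRegular2}, while the paper's is shorter once Lemma~\ref{lem:rrs} is available.

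One point to tighten: for the $c_{v,+}$ case the stripped factor $h$ lies in $U^-\cap\dot{v}U^-\dot{v}^{-1}$, which is in general infinite-dimensional in the Kac--Moody setting, so the phrase ``$h$ is a product of root subgroups $U_{-\beta}$'' requires the (standard, but worth citing) fact that in the minimal Kac--Moody group this intersection is generated by the real root subgroups it contains. Alternatively you can avoid root-subgroup generation entirely: iterate the one-step containment $U^-\cap\dot{x}\dot{s}_jU^-\dot{s}_j^{-1}\dot{x}^{-1}\subset U^-\cap\dot{x}U^-\dot{x}^{-1}$ for $x<xs_j$ (exactly the containment used in the proof of Lemma~\ref{lem:rrs}) along a chain from $w'$ up to $v$ in right weak order, which gives $U^-\cap\dot{v}U^-\dot{v}^{-1}\subset U^-\cap\dot{w}'U^-\dot{w}'^{-1}$ and hence $\dot{w}'^{-1}h\dot{w}'\in U^-$ directly. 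On the $c_{v',-}$ side the factor $g_1\in U^+\cap\dot{v}'U^-\dot{v}'^{-1}$ is a genuine finite product of root subgroups (of cardinality $\ell(v')$), so no such issue arises there.
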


\begin{proof}
We have two commutative diagrams by the definitions of $c_{?,-}$ and $c_{?, +}$:
\[
\begin{tikzcd}
\mathring{\CB}_{e,w} \ar[r, dashed, "c_{v,+}"] \ar[dr, dashed, "c_{rs_i,+}" left] & \mathring{\CB}_{e,v} \ar[d, dashed, "c_{rs_i,+}" ]\\
 & \mathring{\CB}_{e,rs_i};
\end{tikzcd}
\qquad 
\begin{tikzcd}
\mathring{\CB}_{e,w}\ar[dr, dashed, "c_{r,-}" left]  \ar[r, dashed,"c_{v',-}" above] & \mathring{\CB}_{v',w} \ar[d, dashed,"c_{r,-}"]\\
 & \mathring{\CB}_{r,w}.
\end{tikzcd}
\]

The following lemma follows immediately from Lemma~\ref{lem:rrs}.
\end{proof}

\subsection{The coordinate ring \texorpdfstring{$\BC[\mathring{\CB}_{v,w}]$}{C[Bvw]}}
\label{sec:RichardsonRing}
 In this subsection, we establish various properties on the coordinate ring  $\BC[\mathring{\CB}_{v,w}] $ for $v\le w$.

\begin{prop}\label{prop:CB}
We fix a reduced expression $\bfv = (i_1, \dots, i_m) $ of $v$. We define $v_{(l)} = v_{i_1} \dots v_{i_{l}}$. Then we have 
\[
 \BC[\mathring{\CB}_{v,w}] \cong \BC[\mathring{\CB}_{e,w}][T^{\pm 1}_{v_{(l-1)}, v_{(l)}}; 1 \le l \le m]/(T_{v_{(l-1)}, v_{(l)}} =1 ; 1 \le l \le m).
\]
\end{prop}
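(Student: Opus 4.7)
The plan is to induct on $m = \ell(v)$ and strip off one simple reflection at a time via Corollary~\ref{cor:T1}. The base case $m = 0$ is trivial (both sides are $\BC[\mathring{\CB}_{e,w}]$). For the inductive step, write $v' = v_{(m-1)}$, whose reduced expression $(i_1,\ldots,i_{m-1})$ is a prefix of $\bfv$. The inductive hypothesis gives
\[
\BC[\mathring{\CB}_{v',w}] \cong \BC[\mathring{\CB}_{e,w}]\bigl[T_{v_{(l-1)},v_{(l)}}^{\pm 1}; 1 \le l \le m-1\bigr]\big/\bigl(T_{v_{(l-1)},v_{(l)}} - 1; 1 \le l \le m-1\bigr),
\]
and Corollary~\ref{cor:T1} applied with the role of $v$ played by $v'$ and $r = v = v_{(m)}$ yields
\[
\BC[\mathring{\CB}_{v,w}] \cong \BC[\mathring{\CB}_{v',w}]\big/\bigl(D_{v',\bfv} - 1\bigr).
\]

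To splice these two identifications, I would verify that $D_{v',\bfv} \in \BC[\mathring{\CB}_{v',w}]$ corresponds, under the inductive isomorphism, to (the image of) $T_m := T_{v',v} \in \BC(\mathring{\CB}_{e,w})$. The inductive isomorphism is built from a chain of Bruhat atlases whose $c_{?,-}$ projections compose cleanly: since $v_{(1)} \le_R v_{(2)} \le_R \cdots \le_R v$, the $U^\pm$ decomposition arguments of Section~\ref{sec:constr:atlas} together with the commutativity in Lemma~\ref{lem:rrs} give $c_{v_{(l)},-} \circ c_{v_{(l-1)},-} \circ \cdots \circ c_{v_{(1)},-} = c_{v_{(l)},-}$ as rational maps $\mathring{\CB}_{e,w} \dashrightarrow \mathring{\CB}_{v_{(l)},w}$. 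By the very definition $T_{r,rs_i} = D_{r,\mathbf{rs_i}} \circ c_{r,-}$ from Section~\ref{sec:T}, the pullback of $D_{v',\bfv}$ via this chain is precisely $T_m$. Substituting the inductive hypothesis into the display above, using the elementary identity $R[f^{-1}]/(f-1) \cong R/(f-1)$, produces
\[
\BC[\mathring{\CB}_{v,w}] \cong \BC[\mathring{\CB}_{e,w}]\bigl[T_{v_{(l-1)},v_{(l)}}^{\pm 1}; 1 \le l \le m\bigr]\big/\bigl(T_{v_{(l-1)},v_{(l)}} - 1; 1 \le l \le m\bigr),
\]
completing the induction.

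The main technical point is the composition identity for the iterated $c_{v_{(l)},-}$; this amounts to tracking through the $\sigma_{?,\pm}$ decomposition using that the subgroups $U^+ \cap \dot{v}_{(l)} U^- \dot{v}_{(l)}^{-1}$ nest according to right weak Bruhat containment, and parallels the proof of Lemma~\ref{lem:rrs}. Given this compatibility, the identification of $D_{v',\bfv}$ with $T_m$ is immediate from the definitions in Section~\ref{sec:T}, and the remaining bookkeeping of localizations and quotients is purely formal.
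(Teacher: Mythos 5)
Your proposal is correct and takes essentially the same route as the paper: both realize $\mathring{\CB}_{v,w}$ inside $\mathring{\CB}_{e,w}$ by iterating the Bruhat atlas along the reduced word of $v$, localizing at the functions $T_{v_{(l-1)},v_{(l)}}$ and setting them equal to $1$, with the key compatibility being that $T_{v_{(l-1)},v_{(l)}}$ is preserved under the projections $c_{?,-}$ --- your composition identity $c_{v_{(l)},-}\circ\cdots\circ c_{v_{(1)},-}=c_{v_{(l)},-}$, proved by the nesting of the subgroups $U^+\cap \dot{v}_{(l)}U^-\dot{v}_{(l)}^{-1}$ along the right weak order, is precisely the second commutative diagram in the proof of Lemma~\ref{AtlasRegular3}, which is the more apt reference than Lemma~\ref{lem:rrs}. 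The only cosmetic difference is that you induct by peeling off the last letter of $\bfv$ via Corollary~\ref{cor:T1}, whereas the paper iterates from the first letter.
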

\begin{proof}
Recall the atlas isomorphism 
\[
\mathring{\CB}_{e,w}\cap \dot{v}_{(1)}U^-B^+/B^+ \isoto\mathring{\CB}_{e,v_{(1)}}\x \mathring{\CB}_{v_{(1)},w}.
\]
The closed subvariety defined by $T_{e,v_{(1)}} =1$ is isomorphisc to 
\[
\{\ast\}\x \mathring{\CB}_{v_{(1)},w} \cong \mathring{\CB}_{v_{(1)},w}.
\]
Since $T_{e,v_{(1)}}$ is regular on $\mathring{\CB}_{e,w}$, we have 
\[
 \BC[\mathring{\CB}_{e,w} \cap (\dot{v}_{(1)}U^-B^+/B^+)] =  \BC[\mathring{\CB}_{e,w}][T_{e,v_{(1)}}^{-1}].
\]
Therefore, we have 
\[
 \BC[\mathring{\CB}_{e,w}][T_{e,v_{(1)}}^{\pm1}]/(T_{e,v_{(1)}}  =1) = \BC[\mathring{\CB}_{e,w}]/(T_{e,v_{(1)}} =1) \cong \BC[\mathring{\CB}_{v_{(1)},w}].
\]

By Proposition~\ref{AtlasRegular1}, we can then consider the regular function $T_{v_{(1)}, v_{(2)}} : \mathring{\CB}_{v_{(1)}, w} \rightarrow \mathring{\CB}_{v_{(1)}, v_{(2)}}$. Note that $v_{(l-1)} \le_R v_{(l)}$. By the commutative diagram in Lemma~\ref{AtlasRegular3}, we have 
\[
 \BC[\mathring{\CB}_{e,w}][T_{e,v_{(1)}}^{\pm1}, T_{v_{(1)}, v_{(2)}}^{\pm1}]/(T_{e,v_{(1)}} = T_{v_{(1)}, v_{(2)}}  =1) \cong 
 \BC[\mathring{\CB}_{v_{(1)},w}]/ (T_{v_{(1)}, v_{(2)}} = 1) \cong  \BC[\mathring{\CB}_{v_{(2)},w}].
\]
The proposition is proved by repeating the process.
\end{proof}

\begin{prop}\label{RichardsonFactorial}
$\BC[\mathring{\CB}_{v,w}]$ is a unique factorization domain for any $v\leq w$. 
\end{prop}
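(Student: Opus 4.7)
The plan is to establish factoriality of $\BC[\mathring{\CB}_{v,w}]$ by showing that the divisor class group $\mathrm{Cl}(\mathring{\CB}_{v,w})$ vanishes; this suffices since $\mathring{\CB}_{v,w}$ is smooth and irreducible by \cite{Kum17}. I would first handle $v = e$ by exhibiting $\mathring{\CB}_{e,w}$ as a principal open subset of a Schubert cell, and then reduce the general case to this one via Proposition~\ref{prop:CB}.

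For $v = e$, the Schubert cell $\mathring{\CB}_w \cong U^+ \cap \dot{w}U^-\dot{w}^{-1}$ is an affine space $\BC^{\ell(w)}$, whose coordinate ring is a polynomial ring. For $u \in U^+ \cap \dot{w}U^-\dot{w}^{-1}$, the point $u\dot{w}B^+$ lies in the big cell $\mathring{\CB}^e = U^-B^+/B^+$ if and only if $u\dot{w} \in U^-TU^+$, equivalently $\Delta^{\omega_i}_{w\omega_i}(u) = \Delta_{\omega_i}(u\dot{w}) \ne 0$ for every $i \in I$. Hence $\mathring{\CB}_{e,w}$ is the principal open subset of $\mathring{\CB}_w$ defined by $\prod_{i \in I}\Delta^{\omega_i}_{w\omega_i} \ne 0$, and $\BC[\mathring{\CB}_{e,w}]$ is a localization of a polynomial ring and therefore a UFD.

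For general $v \le w$, fix a reduced expression $\bfv = (i_1,\ldots,i_m)$ of $v$ and write $T_l = T_{v_{(l-1)}, v_{(l)}}$. As in the proof of Proposition~\ref{prop:CB}, iterating the atlas isomorphism \eqref{AtlasMap2} identifies the open subset of $\mathring{\CB}_{e,w}$ on which all $T_l$ are regular and non-vanishing with the product $(\BC^\times)^m \times \mathring{\CB}_{v,w}$: each $\BC^\times$ factor arises from the one-dimensional open Richardson $\mathring{\CB}_{v_{(l-1)}, v_{(l)}}$, and under the identification the $T_l$ pull back to the Laurent coordinates on the $(\BC^\times)^m$ factor. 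This yields the isomorphism
\[
\BC[\mathring{\CB}_{e,w}][T_1^{\pm 1},\ldots,T_m^{\pm 1}] \;\cong\; \BC[T_1^{\pm 1},\ldots,T_m^{\pm 1}] \otimes_\BC \BC[\mathring{\CB}_{v,w}].
\]
As an iterated localization of the UFD $\BC[\mathring{\CB}_{e,w}]$, the left-hand side is itself a UFD. To descend to $\BC[\mathring{\CB}_{v,w}]$, note that the projection $(\BC^\times)^m \times \mathring{\CB}_{v,w} \to \mathring{\CB}_{v,w}$ admits the section $y \mapsto (1,\ldots,1,y)$, so pullback by the projection yields an injection $\mathrm{Pic}(\mathring{\CB}_{v,w}) \hookrightarrow \mathrm{Pic}((\BC^\times)^m \times \mathring{\CB}_{v,w}) = 0$. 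Since $\mathring{\CB}_{v,w}$ is smooth and irreducible, $\mathrm{Pic}$ coincides with $\mathrm{Cl}$, and the vanishing $\mathrm{Cl}(\mathring{\CB}_{v,w}) = 0$ gives the desired factoriality.

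I anticipate no substantive obstacle; the nontrivial geometric content---the iterated atlas decomposition and the interpretation of the $T_l$ as torus coordinates on the ``extra'' factor---has already been packaged into Proposition~\ref{prop:CB}.
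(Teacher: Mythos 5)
Your proof is correct and follows essentially the same route as the paper: the base case treats $\BC[\mathring{\CB}_{e,w}]$ as a localization of the polynomial ring of the Schubert cell, and the general case uses the iterated atlas isomorphism of Proposition~\ref{prop:CB} to realize $\BC[\mathring{\CB}_{e,w}][T_1^{\pm1},\dots,T_m^{\pm1}]\cong \BC[T_1^{\pm1},\dots,T_m^{\pm1}]\otimes_\BC\BC[\mathring{\CB}_{v,w}]$ as a UFD and then descend. The only cosmetic difference is that the paper inducts one simple reflection at a time and implicitly uses that $R[x^{\pm1}]$ being a UFD forces $R$ to be one, whereas you perform all $m$ steps at once and make that descent explicit via $\mathrm{Pic}=\mathrm{Cl}$ and the section of the projection.
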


\begin{proof}
 We know $\BC[\mathring{\CB}_{e,w}]$ is a unique factorization domain, since it is the non-vanishing locus of a single equation on the affine space $B^+\dot{w}B^+/B^+ \cong \BA^{\ell(w)}$;
 see also \cite{SW21}*{Theorem~2.30}. 
 Let $v \le vs_i \le w$ for some $i \in I$. Assume $\BC[\mathring{\CB}_{v,w}]$ is a unique factorization domain, we show $\BC[\mathring{\CB}_{vs_i,w}]$ is also a unique factorization domain. The theorem would follow immediately.

Let $r = v s_i$. By the atlas isomorphism \eqref{AtlasMap2}, we know 
\[
\BC[\mathring{\CB}_{vs_i,w}][x^{\pm1}] \cong \BC[\mathring{\CB}_{vs_i,w}]\otimes_\BC\BC[\mathring{\CB}_{v,vs_i}] \cong \BC[\mathring{\CB}_{v,w}\cap (\dot{r}U^{-}B^+/B^+)].
\]
Since $\BC[\mathring{\CB}_{v,w}]$ is a unique factorization domain, the localization 
$\BC[\mathring{\CB}_{v,w}\cap (\dot{r}U^{-}B^+/B^+)] = \BC[\mathring{\CB}_{v,w}][T^{-1}_{v,r}]$ is still a unique factorization domain.
Hence $\BC[\mathring{\CB}_{vs_i,w}]$ is also a unique factorization domain.
\end{proof}

\section{Rational quasi-cluter maps on configuration spaces}

In this section, we introduce rational quasi-cluster maps between cluster algebras. We then construct various rational quasi-cluster maps between configuration spaces using results from \cite{SW21}. We then deduce crucial computational results to be used in \S\ref{sec:upper}.

\subsection{Rational quasi-cluster maps}\label{sec:subalg}

\begin{defi}
\label{SubalgDefi}
Let $(\CA,\bfs)$ and $(\CA',\bfs')$ be cluster algebras with initial seeds $\bfs$ and $\bfs'$. We say an embedding of fields of fractions $\sigma: K \rightarrow K'$ is rational quasi-cluster, denoted by
\[
\begin{tikzcd}
    (\CA,\bfs) \ar[r,dashed,"(\tilde{J} { , }S')"] & (\CA',\bfs'),
\end{tikzcd}
\]
if we have 
\begin{itemize}
    \item a set $\Tilde{J}\subset J$ containing $J_\uf$;
    \item a embedding of sets $\iota: \Tilde{J} \rightarrow J'$ mapping $J_\uf $ to $J'_\uf$;
    \item a subset $S' \subset J' - \iota(\tilde{J})$;
\end{itemize} 
satisfying the following properties:
\begin{enumerate}
    \item for any $i\in \Tilde{J}$, $\sigma(A_i)$ and $ A_{\iota(i)}'$ differ by a Laurent monomial in $A'_j$, $j\in S'$;
    \item for any $i\in J_\uf$, $\sigma(X_i) = X'_{\iota(i)}$. Here the exchange ratios are defined as
    \begin{equation}\label{eq:X}
    X_i=\prod_{j\in J} A_j^{\ve_{ij}} \quad \text{and} \quad X'_{\iota(i)}=\prod_{j\in J'} (A'_j)^{\ve_{\iota(i) j}}.
    \end{equation}
    \item for any $i\in J-\Tilde{J}$, $\sigma(A_i)$ is contained in the multiplicative group generated by $A'_i$, $i\in S'$.
\end{enumerate}
\end{defi}

\begin{remark}
The precise subset $S'$ is usually not important. We shall often omit it after establishing some basic properties in this subsection.
\end{remark}

\begin{remark}
One can consider rational quasi-cluster homomorphisms as a generalization of the quasi-cluster homomorphsims introduced by Fraser \cite{Fra16}. 
\end{remark}

The following lemma is immediate from the definition.

\begin{lem}
\label{SubalgTransitive}
Let $(\CA,\bfs)$, $(\CA',\bfs')$, and $(\CA'',\bfs'')$ be three cluster algebras. Assume that 
\begin{itemize}
    \item 
    \begin{tikzcd}
    (\CA,\bfs) \ar[r,dashed,"(\tilde{J} { , }S')"] & (\CA',\bfs'); 
\end{tikzcd}
    \item 
    \begin{tikzcd}
    (\CA',\bfs') \ar[r,dashed,"(\tilde{J'} { , }S'')"] & (\CA'',\bfs''); 
\end{tikzcd}
    \item $\iota(\Tilde{J})\subset \widetilde{J'}$.
\end{itemize}
Then we have 
\begin{tikzcd}
    (\CA,\bfs) \ar[rrr,dashed,"(\Tilde{J} { , }S''\cup \iota'(S'\cap \Tilde{J}'))"] &&& (\CA'',\bfs''). 
\end{tikzcd}
\end{lem}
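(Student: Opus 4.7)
The plan is to verify Definition~\ref{SubalgDefi} directly for the composed field embedding $\sigma'' := \sigma' \circ \sigma : K \to K''$, equipped with the composed index map $\iota'' := \iota' \circ \iota : \Tilde{J} \to J''$ and the combined distinguished subset $S'' \cup \iota'(S' \cap \Tilde{J'}) \subset J''$. The hypothesis $\iota(\Tilde{J}) \subset \Tilde{J'}$ is exactly what makes $\iota''$ well-defined, and combined with the unfrozen conditions $\iota(J_\uf)\subset J'_\uf$ and $\iota'(\Tilde{J'}\cap J'_\uf)\subset J''_\uf$ it yields $\iota''(J_\uf) \subset J''_\uf$. Before starting the main verification I will also record that the combined data is admissible, i.e.\ $S'' \cup \iota'(S' \cap \Tilde{J'})$ is disjoint from $\iota''(\Tilde{J})$: this follows from $S'' \cap \iota'(\Tilde{J'}) = \emptyset$, together with injectivity of $\iota'$ and $S' \cap \iota(\Tilde{J}) = \emptyset$, which together give $\iota'(S' \cap \Tilde{J'}) \cap \iota'(\iota(\Tilde{J})) = \emptyset$.

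The key step is condition (1). For $i \in \Tilde{J}$, I will write $\sigma(A_i) = A'_{\iota(i)} \cdot M'_i$, with $M'_i$ a Laurent monomial in $\{A'_j : j \in S'\}$, and then apply $\sigma'$. Since $\iota(i) \in \Tilde{J'}$, condition (1) for $\sigma'$ yields $\sigma'(A'_{\iota(i)}) = A''_{\iota''(i)} \cdot N''_i$ with $N''_i$ a Laurent monomial in $\{A''_k : k \in S''\}$. To handle $\sigma'(M'_i)$, I will split $S' = (S' \cap \Tilde{J'}) \sqcup (S' \setminus \Tilde{J'})$: for indices $j \in S' \cap \Tilde{J'}$, condition (1) for $\sigma'$ produces a factor $A''_{\iota'(j)}$ together with a Laurent monomial in $\{A''_k : k \in S''\}$, while for $j \in S' \setminus \Tilde{J'}$ condition (3) for $\sigma'$ produces a Laurent monomial purely in $\{A''_k : k \in S''\}$. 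Multiplying everything together, $\sigma''(A_i)$ equals $A''_{\iota''(i)}$ times a Laurent monomial in $\{A''_k : k \in S'' \cup \iota'(S' \cap \Tilde{J'})\}$, as required.

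The remaining conditions are then direct consequences. For condition (2), fix $i \in J_\uf$; then $\sigma(X_i) = X'_{\iota(i)}$ by condition (2) for $\sigma$, and since $\iota(i) \in J'_\uf$, applying condition (2) for $\sigma'$ yields $\sigma''(X_i) = X''_{\iota''(i)}$. For condition (3), given $i \in J \setminus \Tilde{J}$, I will express $\sigma(A_i)$ as a Laurent monomial in $\{A'_j : j \in S'\}$ and then rerun the same split-and-substitute argument from condition (1) to conclude that $\sigma''(A_i)$ is a Laurent monomial in $\{A''_k : k \in S'' \cup \iota'(S' \cap \Tilde{J'})\}$.

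I do not anticipate any genuine obstacle; the argument is essentially a bookkeeping exercise. The only conceptual point worth singling out is that the split $S' = (S' \cap \Tilde{J'}) \sqcup (S' \setminus \Tilde{J'})$ routes each factor of a Laurent monomial in $\{A'_j : j \in S'\}$ through either condition (1) or condition (3) of $\sigma'$, and it is precisely this dichotomy that forces the advertised union $S'' \cup \iota'(S' \cap \Tilde{J'})$ to govern the output data.
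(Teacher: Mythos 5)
Your proposal is correct: you verify the three conditions of Definition~\ref{SubalgDefi} for the composite $\sigma'\circ\sigma$ with index map $\iota'\circ\iota$ and subset $S''\cup\iota'(S'\cap\Tilde{J'})$, splitting $S'$ into $S'\cap\Tilde{J'}$ and $S'\setminus\Tilde{J'}$ so that each factor is handled by condition (1) or (3) of $\sigma'$, and you also check the needed disjointness of the new subset from $\iota'(\iota(\Tilde{J}))$. This is exactly the routine check the paper has in mind when it declares the lemma immediate from the definition, so your argument matches the paper's (unwritten) proof.
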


The assumption on $X_i$ implicitly impose a relation on the exchange matrices.

\begin{lem}\label{lem:subalgex}
Let \begin{tikzcd}
    (\CA,\bfs) \ar[r,dashed,"(\tilde{J} { , }S')"] & (\CA',\bfs').
\end{tikzcd} 
\begin{itemize}
    \item If $i\in J_\uf$ and $j\in \Tilde{J}$, then $\ve_{ij}=\ve'_{\iota(i)\iota(j)}$.
    \item If $i\in J_\uf$ and $j\in J' - (S'\cup \iota(\Tilde{J}))$, then $\ve'_{\iota(i)j}= 0$.
\end{itemize}
\end{lem}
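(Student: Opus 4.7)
The plan is to unpack condition (2) of Definition~\ref{SubalgDefi}, namely $\sigma(X_i)=X'_{\iota(i)}$ for $i\in J_\uf$, by substituting the definitions from \eqref{eq:X} and using conditions (1) and (3) to rewrite $\sigma(A_j)$ for every $j\in J$, and then to equate exponents of the algebraically independent generators $\{A'_{j'}\}_{j'\in J'}$ of the ambient field $K'$.

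First I would split the product and apply the hypotheses. For $j\in\Tilde{J}$, condition (1) gives $\sigma(A_j) = A'_{\iota(j)}\cdot M_j$ for some Laurent monomial $M_j$ in $\{A'_k \mid k\in S'\}$; for $j\in J-\Tilde{J}$, condition (3) gives $\sigma(A_j) = M_j$ for some such Laurent monomial. Hence
\[
\sigma(X_i) \;=\; \prod_{j\in J}\sigma(A_j)^{\ve_{ij}} \;=\; \Big(\prod_{j\in\Tilde{J}} (A'_{\iota(j)})^{\ve_{ij}}\Big)\cdot M,
\]
where $M$ is a Laurent monomial in $\{A'_k \mid k\in S'\}$. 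Because $S'\subset J'-\iota(\Tilde{J})$, the factor $M$ contributes nothing to the exponent of any $A'_{\iota(j)}$ with $j\in\Tilde{J}$, and the injectivity of $\iota$ ensures that distinct $j\in\Tilde{J}$ contribute to distinct generators $A'_{\iota(j)}$. Therefore the exponent of $A'_{\iota(j)}$ in $\sigma(X_i)$ is precisely $\ve_{ij}$, while the exponent of any $A'_{j'}$ with $j'\in J'-(S'\cup\iota(\Tilde{J}))$ is $0$.

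On the other side, $X'_{\iota(i)}=\prod_{j'\in J'}(A'_{j'})^{\ve'_{\iota(i)j'}}$ has exponent $\ve'_{\iota(i)j'}$ on $A'_{j'}$. Setting $\sigma(X_i)=X'_{\iota(i)}$ and comparing exponents on each $A'_{j'}$ (using algebraic independence in $K'$) yields the first identity $\ve_{ij}=\ve'_{\iota(i)\iota(j)}$ for $j\in\Tilde{J}$ by reading off the exponent on $A'_{\iota(j)}$, and the second identity $\ve'_{\iota(i)j'}=0$ for $j'\in J'-(S'\cup\iota(\Tilde{J}))$ by reading off the exponent on $A'_{j'}$.

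There is no real obstacle; the lemma is essentially the linear-algebraic shadow of condition (2). The only point that deserves care is verifying that the Laurent monomial $M$ is genuinely supported on $S'$ and cannot contaminate the exponent of any $A'_{\iota(j)}$, which is exactly the content of the disjointness $S'\cap\iota(\Tilde{J})=\emptyset$ built into the definition.
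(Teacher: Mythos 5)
Your proposal is correct and follows essentially the same route as the paper's proof: expand $\sigma(X_i)$ using conditions (1) and (3) so that it becomes $\prod_{j\in\Tilde{J}}(A'_{\iota(j)})^{\ve_{ij}}$ times a Laurent monomial supported on $S'$, then compare exponents of the algebraically independent variables $A'_{j'}$ against the expansion of $X'_{\iota(i)}$. Your explicit attention to the injectivity of $\iota$ and the disjointness $S'\cap\iota(\Tilde{J})=\emptyset$ is exactly the (implicit) content of the paper's argument.
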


\begin{proof}
Recall $X_i=\prod_{j\in J} A_j^{\ve_{ij}}$ and $X'_{\iota(i)}=\prod_{j\in J'} (A'_j)^{\ve'_{\iota(i)j}}$. Then we compute 
\begin{align*}
&\sigma(X_i) =\prod_{j\in J} \sigma(A_j)^{\ve_{ij}} = \prod_{j\in \Tilde{J}} {A'}^{\ve_{ij}}_{\iota(j)} \prod_{k \in S'} {A'}_k^{c_k}, \text{ for some } c_k \in \BZ;   \\
&X'_{\iota(i)} =  \prod_{j\in J'} {A'_j}^{\ve'_{\iota(i)j}} = \prod_{j\in \iota(\tilde{J})}{A'_j}^{\ve'_{\iota(i)j}} \prod_{j\in S'} {A'_j}^{\ve'_{\iota(i)j}} \prod_{j\in J' - (S'\cup \iota(\Tilde{J}))} {A'_j}^{\ve'_{\iota(i)j}}. 
\end{align*}
Since the cluster variables in $\bfs'$ are algebraically independent, the identity $\sigma(X_i) =X'_{\iota(i)}$ implies the desired identities on the exponents. 
\end{proof}

One key property of rational quasi-cluster maps is that it is preserved under mutations. 

\begin{prop}
\label{SubalgMutation}
Let \begin{tikzcd}
    (\CA,\bfs) \ar[r,dashed,"(\tilde{J} { , }S')"] & (\CA',\bfs').
\end{tikzcd} Then for any $k\in J_\uf$, we have 
\[
\begin{tikzcd}
    (\CA,\mu_k(\bfs)) \ar[r,dashed,"(\tilde{J} { , }S')"] & (\CA',\mu_{\iota(k)}(\bfs')).
\end{tikzcd} 
\]
\end{prop}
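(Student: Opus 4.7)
The strategy is to verify that the same field embedding $\sigma : K \to K'$, together with the same pair $(\tilde{J}, S')$ and embedding $\iota : \tilde{J} \to J'$, satisfies the three defining conditions of Definition \ref{SubalgDefi} after replacing $\bfs$ by $\mu_k(\bfs)$ and $\bfs'$ by $\mu_{\iota(k)}(\bfs')$. Note that $\iota(k) \in J'_\uf$ since $k \in J_\uf$, so the mutation on the right is legal. The cluster variables at indices $i \neq k$ (on the left) and $\iota(i) \neq \iota(k)$ (on the right) are unchanged by their respective mutations, and by injectivity of $\iota$ these cases pair up, so conditions (1) and (3) are immediate for all indices $\neq k$. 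The work concentrates in two places: condition (2) for every $i \in J_\uf$, and condition (1) at $i = k$.

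For condition (2), I would use the mutation formula
\[
X^{\mu_k(\bfs)}_k = X_k^{-1}, \quad X^{\mu_k(\bfs)}_i = X_i \cdot X_k^{[\ve_{ik}]_+} \cdot (1 + X_k)^{-\ve_{ik}} \text{ for } i \neq k.
\]
The $i = k$ case is immediate: $\sigma(X^{\mu_k(\bfs)}_k) = X'^{-1}_{\iota(k)} = X'^{\mu_{\iota(k)}(\bfs')}_{\iota(k)}$. For $i \in J_\uf$ with $i \neq k$, applying $\sigma$ and using $\sigma(X_i) = X'_{\iota(i)}$, $\sigma(X_k) = X'_{\iota(k)}$, together with Lemma \ref{lem:subalgex} (which gives $\ve_{ik} = \ve'_{\iota(i)\iota(k)}$ since $i \in J_\uf$ and $k \in \tilde{J}$), recovers exactly the analogous formula on the primed side, namely $X'^{\mu_{\iota(k)}(\bfs')}_{\iota(i)}$.

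For condition (1) at $i = k$, I would rewrite the exchange relation as
\[
A^{\mu_k(\bfs)}_k = A_k^{-1} \prod_{i \in J} A_i^{[-\ve_{ki}]_+} (1 + X_k),
\]
and apply $\sigma$. The factor $(1 + X_k)$ becomes $(1 + X'_{\iota(k)})$ by condition (2), matching the corresponding factor in $A'^{\mu_{\iota(k)}(\bfs')}_{\iota(k)}$. Substituting $\sigma(A_j) = A'_{\iota(j)} \cdot M_j$ for $j \in \tilde{J}$ with $M_j$ a Laurent monomial in $\{A'_\ell\}_{\ell \in S'}$, and $\sigma(A_j) \in \langle A'_\ell : \ell \in S'\rangle$ for $j \in J - \tilde{J}$, and then invoking Lemma \ref{lem:subalgex} in both forms (the identity $\ve_{ki} = \ve'_{\iota(k)\iota(i)}$ for $i \in \tilde{J}$, and the vanishing $\ve'_{\iota(k)j} = 0$ for $j \in J' - (S' \cup \iota(\tilde{J}))$), reduces the Laurent-monomial part of $\sigma(A^{\mu_k(\bfs)}_k)$ to $A'^{-1}_{\iota(k)} \prod_{j \in J'} A'^{[-\ve'_{\iota(k)j}]_+}_j$ multiplied by a Laurent monomial in $\{A'_j\}_{j \in S'}$. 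This is precisely condition (1) at $k$.

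The main obstacle is purely bookkeeping. The vanishing statement in the second bullet of Lemma \ref{lem:subalgex} is essential: without it, frozen indices of $J'$ outside $S' \cup \iota(\tilde{J})$ could contribute unwanted factors to $A'^{\mu_{\iota(k)}(\bfs')}_{\iota(k)}$ that do not appear in $\sigma(A^{\mu_k(\bfs)}_k)$. Once the indices are tracked carefully, the verification reduces to direct symbolic substitution using the $A$- and $X$-mutation formulas.
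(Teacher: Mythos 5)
Your proposal is correct and follows essentially the same route as the paper: verify the three conditions of Definition~\ref{SubalgDefi} directly, using the $A$- and $X$-mutation formulas together with both parts of Lemma~\ref{lem:subalgex} (your formula $X_{i,\mu_k(\bfs)}=X_i X_k^{[\ve_{ik}]_+}(1+X_k)^{-\ve_{ik}}$ is exactly the paper's $X_{i,\bfs}\bigl(1+X_{k,\bfs}^{-\sgn(\ve_{ik})}\bigr)^{-\ve_{ik}}$ in a different form). No gaps.
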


\begin{proof} Here the embedding of fields $K \rightarrow K'$ and the embedding $\tilde{J} \rightarrow J'$ are the obvious ones. We check the three conditions in Definition~\ref{SubalgDefi}. We write $f\sim g$ if $f,g\in K'$ differ by a Laurent monomial in $A'_j$, $j\in S'$. Note that if $a \sim b $ and $f \sim g$, then we have $af \sim bf$.

We check Condition $(1)$. 
By definition, $\mu_k(A_i)=A_i$ for $i\neq k$ and
\[
\mu_k(A_k)=\prod_{j\in J}A_j^{[-\ve_{kj}]_+}\frac{1+X_k}{A_k}.
\]
So we have 
$\sigma(\mu_k(A_i))=\sigma(A_i)\sim   A_{\iota(i)} = \mu_{\iota(k)}(A'_{\iota(i)})$ for $i\neq k$. We then compute 
\begin{align*}
   \sigma(\mu_k(A_k)) &= \prod_{j\in J}\sigma(A_j)^{[-\ve_{kj}]_+}\frac{1+\sigma(X_k)}{\sigma(A_k)} \\
   &\sim \prod_{j\in \tilde{J}}{A'_{\iota(j)}}^{[-\ve_{kj}]_+}\frac{1+X'_{\iota(k)}}{A'_{\iota(k)}} = \prod_{j\in \tilde{J}}{A'_{\iota(j)}}^{[-\ve'_{\iota(k)\iota(j)}]_+}\frac{1+X'_{\iota(k)}}{A'_{\iota(k)}}.
\end{align*}
The last identity is by Lemma~\ref{lem:subalgex}. On the other hand, we have 
\begin{align*}
    \mu_{\iota(k)}(A'_{\iota(k)}) = \prod_{j\in J'} {A'_j}^{[-\ve_{\iota(k)j}]_+}\frac{1+X'_{\iota(k)}}{A_{\iota(k)}'}.
\end{align*}
It follows from Lemma~\ref{lem:subalgex} that 
\[
\sigma(\mu_k(A_k)) \sim   \mu_{\iota(k)}(A'_{\iota(k)}).
\]
We have verified the first condition.

We verify Condition (2) now. Recall that  $X_{k,\bfs}=X_{k,\mu_k(\bfs)}^{-1}$ and $X_{\iota(k),\bfs'}=X_{\iota(k),\mu_{\iota(k)}(\bfs')}^{-1}$ by definition. It follows immediately that 
\[
\sigma(X_{k,\mu_k(\bfs)}) = X'_{\iota(k),\mu_{\iota(k)}(\bfs')}.
\]
For (mutable) $i\neq k$, we have
\begin{align*}
X_{i,\mu_k(\bfs)}&=X_{i,\bfs}A_k^{-\ve_{ik}}\mu_k(A_k)^{-\ve_{ik}}\prod_{j\neq k}A_j^{[\ve_{ik}]_+\ve_{kj}+\ve_{ik}[-\ve_{kj}]_+}\\
&=X_{i,\bfs}(1+X_{k,\bfs})^{-\ve_{ik}}\prod_{j\neq k}A_j^{[\ve_{ik}]_+\ve_{kj}}\\
&=X_{i,\bfs}\left(1+X_{k,\bfs}^{-\sgn(\ve_{ik})}\right)^{-\ve_{ik}},  \text{ where $\sgn(\ve_{ik}) \in \{-1, 0, +1\}$}.
\end{align*}
Applying $\sigma$ and using Lemma \ref{lem:subalgex}, this shows that $\sigma(X_{i,\mu_k(\bfs)})=X'_{\iota(i),\mu_{\iota(k)}(\bfs')}$ for all $i\in J_\uf$.

Finally, Condition (3) is trivially preserved under mutations at $k\in J_\uf$ and $\iota(k)\in \iota(J_\uf)$.
\end{proof}

Upon mild modifications, we can extend \eqref{eq:X} to include frozen vertices as well. Let $(p_{ij})_{i,j\in J}$ be an auxiliary matrix with rational coefficients such that 
\begin{equation}\label{eq:au}
\begin{cases}
p_{ij} = 0, &\text{if } \{i,j\} \cap J_{uf} \neq \emptyset;\\
p_{ij} + \varepsilon_{ij,\bfs} \in \BZ, &\text{otherwise. }
\end{cases}
\end{equation}
Note that the choice of $p_{ij}$ is independent of the seed $\bfs$. Then for any seed $\bfs$ of $\CA$, the matrix $(\varepsilon_{ij,\bfs})_{i,j\in J} + (p_{ij})_{i,j\in J}$ has $\BZ$-coefficients, and satisfies the exchange relations. Then for any $i \in J$ (potentially frozen), we define the exchange ratio
\begin{equation}\label{eq:Xf}
 X_{i,\bfs} = \prod_{j\in J} A_{j,\bfs}^{\ve_{ij,\bfs} + p_{ij}}.
\end{equation}

We similarly define a rational matrix $(q_{ij})_{i,j\in J'}$, and the exchange ratio  $X'_{\iota(i),\bfs'}$, where $\iota(i)$ is potentially frozen. Here the definition of both $ X_{i,\bfs}$ and $X_{\iota(i),\bfs'}$ depend on the chosen matrix, which is usually clear from the context. 

The following corollary is straightforward from the proof of Proposition~\ref{SubalgMutation}.
 
\begin{cor}\label{cor:X_ifro}
Let \begin{tikzcd}
    \sigma: (\CA,\bfs) \ar[r,dashed,"(\tilde{J} { , }S')"] & (\CA',\bfs')
\end{tikzcd} and assume $\sigma(X_{i,\bfs}) = X'_{\iota(i),\bfs'}$. Then for any $k \in J_{\uf}$, we have 
\[
\sigma(X_{i,\mu_k(\bfs)}) = X'_{\iota(i),\mu_{\iota(k)}(\bfs')}.
\]
\end{cor}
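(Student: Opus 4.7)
The plan is to directly recycle the computation carried out in the proof of Proposition~\ref{SubalgMutation}, condition~(2), applied to the modified exchange ratios of \eqref{eq:Xf} rather than those of \eqref{eq:X}. The essential observation is that the hypothesis $k\in J_\uf$, combined with the condition \eqref{eq:au} on the auxiliary matrices, forces $p_{ik}=p_{kj}=0$ for all $i,j\in J$ (and similarly for $q$). Consequently, the $k$-th row and column of the augmented matrix $(\ve_{ij}+p_{ij})$ coincide with those of $(\ve_{ij})$, and mutation of the augmented matrix at $k$ produces $(\ve'_{ij}+p_{ij})$, where $\ve'$ denotes the usual exchange-matrix mutation.

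Because of this, the same algebraic manipulation, which only exploits the exchange relation $A_{k,\bfs}\mu_k(A_{k,\bfs})=\prod_jA_j^{[\ve_{kj}]_+}+\prod_jA_j^{[-\ve_{kj}]_+}$ together with the mutation rule for the $k$-th row of the exchange matrix, yields the identity
\[
X_{i,\mu_k(\bfs)} = X_{i,\bfs}\bigl(1+X_{k,\bfs}^{-\sgn(\ve_{ik})}\bigr)^{-\ve_{ik}}
\]
for the modified ratios, regardless of whether $i$ is mutable or frozen. The key point is that every place where the original proof used $\ve_{ik}$ or $\ve_{kj}$ now involves $\ve_{ik}+p_{ik}=\ve_{ik}$ or $\ve_{kj}+p_{kj}=\ve_{kj}$, so nothing changes. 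The analogous identity holds for the primed data on $(\CA',\bfs')$ by the same reasoning.

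To finish, apply $\sigma$ to the displayed identity. The hypothesis supplies $\sigma(X_{i,\bfs})=X'_{\iota(i),\bfs'}$; since $k\in J_\uf\subset\tilde{J}$, condition~(2) of Definition~\ref{SubalgDefi} supplies $\sigma(X_{k,\bfs})=X'_{\iota(k),\bfs'}$; and Lemma~\ref{lem:subalgex}, together with skew-symmetrizability, supplies $\ve_{ik}=\ve'_{\iota(i)\iota(k)}$. Substituting these three equalities transforms the right-hand side of $\sigma(X_{i,\mu_k(\bfs)})$ into the primed analogue of the identity, yielding $\sigma(X_{i,\mu_k(\bfs)})=X'_{\iota(i),\mu_{\iota(k)}(\bfs')}$.

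The proof is therefore essentially a bookkeeping exercise layered on top of Proposition~\ref{SubalgMutation}; the only mild subtlety is applying Lemma~\ref{lem:subalgex} with transposed indices, which is harmless under the tacit convention that the multipliers at matched vertices agree. I do not anticipate any real obstacle beyond keeping track of which entries of the auxiliary matrices are forced to vanish by \eqref{eq:au}.
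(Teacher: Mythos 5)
Your argument is essentially the paper's intended one: rerun the computation in the proof of Proposition~\ref{SubalgMutation} with the augmented ratios \eqref{eq:Xf}, observing that \eqref{eq:au} kills the $k$-th row and column of the auxiliary matrices when $k\in J_\uf$, so the identity $X_{i,\mu_k(\bfs)} = X_{i,\bfs}\bigl(1+X_{k,\bfs}^{-\sgn(\ve_{ik})}\bigr)^{-\ve_{ik}}$ persists for frozen $i$ (with $\ve_{ik}\in\BZ$ since $k$ is mutable), and likewise on the primed side; then apply $\sigma$. This is correct in substance. The one step you should tighten is the equality $\ve_{ik}=\ve'_{\iota(i)\iota(k)}$ for frozen $i\in\tilde{J}$: obtaining it by transposing Lemma~\ref{lem:subalgex} and using skew-symmetrizability requires $d_i/d_k=d'_{\iota(i)}/d'_{\iota(k)}$, and Definition~\ref{SubalgDefi} imposes no condition on multipliers, so your ``tacit convention'' is an extra hypothesis not in the statement (it does hold in all the Lie-theoretic seeds used later, but the corollary is stated abstractly). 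The cleaner fix is that no such convention is needed: the hypothesis $\sigma(X_{i,\bfs})=X'_{\iota(i),\bfs'}$ itself forces $\ve_{ij}+p_{ij}=\ve'_{\iota(i)\iota(j)}+q_{\iota(i)\iota(j)}$ for every $j\in\tilde{J}$, by the same exponent-comparison (algebraic independence of the $A'$'s, together with conditions (1) and (3) of Definition~\ref{SubalgDefi} and $S'\subset J'-\iota(\tilde{J})$) used in the proof of Lemma~\ref{lem:subalgex}; taking $j=k$ mutable gives $p_{ik}=q_{\iota(i)\iota(k)}=0$ and hence $\ve_{ik}=\ve'_{\iota(i)\iota(k)}$ directly. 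With that substitution your proof is complete and matches the paper's.
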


\begin{rem}
All definitions and results in this section so far also apply to upper cluster algebras, since everything is considered purely inside the ambient fields and we only deal with cluster variables in a few seeds.
\end{rem}

\begin{defi}\label{defi:RationalV}
Let $V,V'$ be affine varieties. Assume $\BC(V) = K(\CA)$ for some (upper) cluster algebra $(\CA, \bfs)$, and $\BC(V') = K(\CA')$ for some (upper) cluster algebra $(\CA', \bfs')$.

A dominant rational map $\varphi:V'\dashrightarrow V$ is said to be \textit{rational quasi-cluster for $\bfs'$ and $\bfs$} if the induced map $K(\CA) \rightarrow K(\CA')$ is rational quasi-cluster in the sense of Definition~\ref{SubalgDefi}. We shall denote this by 
\[
\begin{tikzcd}
    (V',\bfs') \ar[r,dashed,"(\tilde{J} { , }S')"] & (V,\bfs).
\end{tikzcd}
\]

We often consider the same variety $V$ with several non-equivalent seeds, i.e., different cluster algebras in the same function field $\BC(V)$.
\end{defi}

\subsection{Rational quasi-cluster maps on configuration spaces}
\label{sec:constr:QCflag}

In this subsection, we define various rational maps between configuration spaces and show that they are rational quasi-cluster. For any signed expression $\bfw$, we fix the auxiliary matrix

\begin{equation}\label{eq:auLie}
 \begin{cases}
    p_{(i,k),(j,l)} =  \frac{1}{2}a_{ij}, &\text{if $i\neq j$ and $(k,l)=(0,0)$ or $(n_i,n_j)$;}\\
    p_{(i,k),(j,l)} = 0, &\text{otherwise. } 
 \end{cases}
\end{equation}

\subsubsection{The map $\pi_{\beta,\beta_2}$}

Let $\beta_1,\beta_2 \in \Br^+$, $\bfu=(i_1,\cdots,i_m)$ and $\bfv=(j_1,\cdots,j_n)$ be expressions for $\beta_1,\beta_2$ respectively. We then obtain an expression $\bfw$ for $\beta=\beta_1\beta_2$ via concatenation. 

Define $\pi_{\beta,\beta_2}: \Conf_\beta(\SA) \dashrightarrow \Conf_{\beta_2}(\SA)$ to be the dominant rational map sending the configuration (subject to additional conditions, since the map is only rational)
\begin{center}
\begin{tikzcd}
& & U^0\ar[dash,drr] & & \\
U_0 \ar[r,swap,"s_{i_1}"]\ar[dash,rru] & \cdots \ar[r,swap,"s_{i_m}"] &  U_m \ar[r,swap,"s_{j_1}"] & \cdots \ar[r,swap,"s_{j_n}"] & U_{m+n}
\end{tikzcd}
\end{center}
to
\begin{center}
\begin{tikzcd}
& U^0\ar[dash,dr] & \\
U_m \ar[r,swap,"s_{j_1}"]\ar[dash,ru] &  \cdots \ar[r,swap,"s_{j_n}"] & U_{m+n}
\end{tikzcd}
\end{center}
Note that $\pi_{\beta,\beta_2}$ is defined whenever $d(U^0,U_m)=e$, so it is regular on an open subspace of $\Conf_\beta(\SA)$.

We are primarily interested in the case when $\beta =w$, $\beta_1 = u$, $\beta_2 = v$ are all in the Weyl group $W \subset \Br^+$.

\begin{rem}
\label{ReductionRemark}
Let $w,w_1,w_2\in W \subset \Br^+$ with reduced expressions $\bfw,\bfw_1,\bfw_2$ be such that $\bfw$ is the concatenation of $\bfw_1$ and $\bfw_2$.

Under the identification $T \times \mathring{\CB}_{e,w} \cong \Conf_w(\SA)$, we compute $\pi_{w,w_2}$ as follows. In the configuration
\begin{center}
\begin{tikzcd}
& U^-\ar[dash,dr] & \\
tU^+ \ar[r,swap,"w_1"]\ar[dash,ru] & U_1   \ar[r,swap,"w_2"] & gB^+
\end{tikzcd}
\end{center}
When $B_1\in B^-B^+/B^+$, we can write $U_1=g_1t'U^+$ for some unique $g_1\in U^-$ and $t'\in T$ such that $(tU^+,g_1t'U^+)$ is compatible. Then for $g_2B^+=g_1^{-1}gB^+\in \mathring{\CB}_{e,w_2}$, we have $\pi_{w,w_2}(t,gB^+)=(t',g_2B^+).$
\end{rem}
Recall the seeds $\bfs(\bfw)$ and $\bfs(\bfv)$ for the cluster structures on $\Conf_\beta(\SA)$ and $\Conf_{\beta_2}(\SA)$ respectively. It follows from \S\ref{sec:prelim:Lie} that we have a natural embedding  $J(\bfv) \rightarrow J(\bfw)$ by constructing both seeds by amalgamation from the right.  

Let us be more specific. Assume each $s_i$ appeared $a_i$ times in $\bfu$. Then the embedding of the index set is given by 
\[
\iota:  J(\bfv) \rightarrow J(\bfw), \quad  (i,l)  \mapsto (i,l+a_i).
\]

\begin{prop}
\label{SubexpSubalg}
The map  
\[
\begin{tikzcd}
    \pi_{\beta,\beta_2}: (\Conf_\beta(\SA),\bfs(\bfw)) \ar[rr,dashed,"(J(\bfv) { , }\emptyset)"] && (\Conf_{\beta_2}(\SA),\bfs(\bfv))
\end{tikzcd}
\]is rational quasi-cluster.

Moreover, assume $i \in I$ appears $m_i > 0$ times in $\bfv$, and appears $n_i \ge m_i$ times in $\bfw$.  Then we have 
\[
\pi_{\beta,\beta_2}^*(A_{(i,m_i),\bfs(\bfv)})  = A_{(i, n_i),\bfs(\bfw)},\qquad  
\pi_{\beta,\beta_2}^*(X_{(i,m_i),\bfs(\bfv)})  = X_{(i, n_i),\bfs(\bfw)}.
\]
\end{prop}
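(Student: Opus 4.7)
The plan is to verify the three conditions of Definition~\ref{SubalgDefi} directly, combining the explicit description of the cluster variables $A_{(i,l)}$ as generalized minors $\Delta_{\omega_i}(U^0, U_\bullet)$ from \S\ref{sec:prelim:dBS} with the amalgamated construction of $\bfs(\bfw)$ from $\bfs(\bfu)$ and $\bfs(\bfv)$ recalled in \S\ref{sec:prelim:Lie}. Dominance of $\pi_{\beta,\beta_2}$, needed so that the pullback $\sigma$ makes sense on fraction fields, is clear: any target configuration extends to the left into a $\bfu$-chain whose starting decorated flag can be chosen generically opposite to $U^0$.

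For Condition~(1), the claim is $\sigma(A_{(i,l),\bfs(\bfv)}) = A_{(i,l+a_i),\bfs(\bfw)}$ for every $(i,l) \in J(\bfv)$, so $S'=\emptyset$ suffices. When $l > 0$, the cluster variable $A_{(i,l),\bfs(\bfv)}$ equals $\Delta_{\omega_i}(U^0, U^{\mathrm{tgt}}_k)$, where $k$ is the position of the $l$-th occurrence of $i$ in $\bfv$; its pullback is $\Delta_{\omega_i}(U^0, U^{\mathrm{src}}_{k+m})$, and position $k+m$ in $\bfw$ is precisely the $(l+a_i)$-th occurrence of $i$. When $l=0$, the pullback is $\Delta_{\omega_i}(U^0, U^{\mathrm{src}}_m)$; since $\dot{s}_j$ fixes $\eta_{\omega_i}$ for $j\neq i$, this value is unchanged along any non-$s_i$ edge and reduces to $A_{(i,a_i),\bfs(\bfw)}$. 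Specializing to $l = m_i$ yields the first identity of the \emph{moreover} clause. Condition~(3) is vacuous since $\tilde{J} = J(\bfv)$.

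For Condition~(2), after using Condition~(1), the identity $\sigma(X_{(i,l),\bfs(\bfv)}) = X_{(i,l+a_i),\bfs(\bfw)}$ reduces to the exchange-matrix identities $\varepsilon_{(i,l+a_i),(j,l''),\bfs(\bfw)} = \varepsilon_{(i,l),(j,l''-a_j),\bfs(\bfv)}$ when $l'' \geq a_j$, and $\varepsilon_{(i,l+a_i),(j,l''),\bfs(\bfw)} = 0$ when $l'' < a_j$. Both follow from the amalgamation description: the vertex $(i,l+a_i)$ with $0<l<m_i$ lies strictly inside the $\bfs(\bfv)$-side (since $a_i < l+a_i < n_i$), so it receives only the $\bfs(\bfv)$-contribution. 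The same amalgamation analysis, combined with the observation that the auxiliary entries $p^\bfv_{(i,m_i),(j,l')}$ and $p^\bfw_{(i,n_i),(j,l'+a_j)}$ from \eqref{eq:auLie} both equal $\tfrac{1}{2}a_{ij}$ precisely when $i \neq j$ and $l' = m_j$ and vanish otherwise, proves the frozen-$X$ identity in the \emph{moreover} clause. The delicate case is $m_j = 0$: the interface vertex $(j,a_j) = (j,n_j)$ simultaneously plays the role of the interface and of the rightmost vertex at level $j$, but the definition of $p$ handles this uniformly. The main technical obstacle is the bookkeeping of indices under $\iota$ and the careful treatment of the interface $l'' = a_j$ in the amalgamation, where both halves of $\bfs(\bfw)$ can a priori contribute.
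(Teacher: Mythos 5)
Your proposal is correct and follows essentially the same route as the paper's (brief) proof: one verifies the three conditions of Definition~\ref{SubalgDefi} directly, using the right-to-left amalgamation construction of $\bfs(\bfw)$ to match exchange-matrix rows at mutable vertices, the explicit minors $\Delta_{\omega_i}(U^0,U_\bullet)$ to see that cluster variables pull back on the nose (including the $l=0$ case via invariance along non-$i$ edges), and the auxiliary matrix \eqref{eq:auLie} for the frozen exchange ratio in the moreover clause. Your write-up simply spells out the index bookkeeping, including the $m_j=0$ interface case, which the paper leaves implicit.
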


\begin{proof}
Let $\bfs=\bfs(\bfv)$ and $\bfs'=\bfs(\bfw)$. From the construction of $J$ and $\ve$, it is clear that there is an injection $\iota:J \to J'$ sending $J_\uf$ to $J'_\uf$ such that for $i,j\in J$, $\ve_{ij}=\ve_{\iota(i)\iota(j)}$. From the picture above $\pi_{\beta,\beta_2}^*(A_i)=A'_{\iota(i)}$ for all $i\in J$, and $\pi_{\beta,\beta_2}^*(X_i)=X'_{\iota(i)}$ for all $i \in J_{\uf}$. We further have $\pi_{\beta,\beta_2}^*(X_{(i,m_i),\bfs(\bfv)})  =X_{(i, n_i),\bfs(\bfw)}$ by \eqref{eq:auLie}.
\end{proof}

\begin{remark}
It is important we consider the configuration space instead of the Richardson variety here. The extra torus factor guarantees that  $\sigma(X_i)=X'_{\iota(i)}$ for all $i \in J_\uf$, as well as the second claim. 

One can also consider an enlargement of $G$ like thickening \cite{BH24b} to achieve this. We use configuration spaces in order to use results in \cite{SW21} freely.
\end{remark}

\subsubsection{The map $\pi_R$}\label{sec:PiR}
We define $\pi_R:\Conf^{s_i}_\beta(\SA)\dashrightarrow\Conf_{\beta}(\SA)$ to be the dominant rational map
\[
\begin{tikzcd}
U^0\ar[r, "s_i"]\ar[dash,d] & U^1\ar[dash,d]\\
U_0\ar[r ] & U_n
\end{tikzcd}
\mapsto 
\begin{tikzcd}
U^0\ar[dash,d] \ar[dr, dash] &  \\
U_0\ar[r ] & U_n
\end{tikzcd}
\]
regular on the open subset given by $d(U^0,U_n)=e$.

\begin{prop}\label{prop:witow}
\begin{enumerate} 
\item The map
\[
\begin{tikzcd}\pi_R: (\Conf^{s_i}_\beta(\SA),\bfs(\bfw, \overline{i})) \ar[rr,dashed,"(J(\bfw) { , }\emptyset)"] && (\Conf_{\beta}(\SA),\bfs(\bfw)).
\end{tikzcd}
\]
is rational quasi-cluster.

\item Assume $i \in I$ appears $m_i > 0$ times in $\bfw$. Then we have 
\begin{align*}
\pi_{R}^*(X_{(i,m_i),\bfs(\bfw)}) &=  X_{(i, m_i),\bfs(\bfw, \overline{i})}  A_{(i, m_i+1),\bfs(\bfw,\overline{i})},\\
\pi_{R}^*(A_{(i,m_i),\bfs(\bfw)}) &=  A_{(i, m_i),\bfs(\bfw, \overline{i})}.
\end{align*}
\item Assume $j \neq i \in I$ appears $m_j > 0$ times in $\bfw$. Then we have 
\begin{align*}
\pi_{R}^*(X_{(j,m_j),\bfs(\bfw)}) &=  X_{(j, m_j),\bfs(\bfw, \overline{i})}A_{(i, m_i),\bfs(\bfw,\overline{i})}^{-a_{ji}}A_{(i, m_i+1),\bfs(\bfw,\overline{i})}^{a_{ji}},\\
\pi_{R}^*(A_{(j,m_j),\bfs(\bfw)}) &=  A_{(j, m_j),\bfs(\bfw, \overline{i})}
\end{align*}
\end{enumerate} 
\end{prop}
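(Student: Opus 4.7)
The strategy is to unpack the combinatorics of the two seeds $\bfs(\bfw)$ and $\bfs(\bfw,\overline{i})$ using the triangulation description from Section~\ref{sec:prelim:dBS}, and exploit the fact that the second seed is obtained from the first by amalgamating $\bfs(\overline{i})$ on the right and defrosting at $(i,m_i)$. Everything then reduces to carefully comparing the rows of the exchange matrices and the auxiliary matrix $(p_{ij})$ at the relevant vertices.

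\textbf{Step 1: Matching cluster variables.} I would first verify claims (2)(b) and (3)(b) by unpacking cluster variables geometrically. Since the new letter $\overline{i}$ sits at the rightmost position of the signed expression, the triangulation associated to $\bfs(\bfw,\overline{i})$ extends that of $\bfs(\bfw)$ by exactly one extra triangle, with vertices $U^0$, $U^1$, $U_n$ and diagonal $(U^1,U_n)$. Every pre-existing triangle (including the one corresponding to $(j,m_j)$ or $(i,m_i)$) retains its apex at $U^0$ and is unchanged under $\pi_R$. This yields $\pi_R^*A_{(k,l),\bfs(\bfw)}=A_{(k,l),\bfs(\bfw,\overline{i})}$ for all $(k,l)\in J(\bfw)$, and gives the additional frozen variable $A_{(i,m_i+1),\bfs(\bfw,\overline{i})}=\Delta_{\omega_i}(U^1,U_n)$. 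This simultaneously establishes Condition~(1) of Definition~\ref{SubalgDefi} with $\tilde{J}=J(\bfw)$, $\iota$ the natural inclusion, $S'=\emptyset$, and renders Condition~(3) vacuous.

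\textbf{Step 2: Condition (2) for mutable vertices.} To verify $\pi_R^*(X_{(k,l),\bfs(\bfw)})=X_{(k,l),\bfs(\bfw,\overline{i})}$ for $(k,l)\in J(\bfw)_{\uf}$, I would observe that mutable vertices in $\bfs(\bfw)$ are by definition non-rightmost, so $(k,l)$ is not one of the vertices $\{(j,m_j)\}_{j\in I}\cup\{(i,m_i+1)\}$ being identified or added during the amalgamation. Consequently the $(k,l)$-row of the exchange matrix is preserved (extended by a zero at $(i,m_i+1)$), and the auxiliary matrix $p$ vanishes on this row in both seeds (from \eqref{eq:au}, since $(k,l)$ is mutable). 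Hence both products agree term by term.

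\textbf{Step 3: Explicit formulas at the frozen rightmost vertices.} Parts (2)(a) and (3)(a) follow from a direct computation with the formula $X_{(k,l),\bfs}=\prod_{(k',l')}A_{(k',l')}^{\ve_{(k,l)(k',l'),\bfs}+p_{(k,l)(k',l')}}$, the key inputs being the amalgamation contributions from $\bfs(\overline{i})$ (computed by combining the formulas $\ve_{i_l,j}=a_{ij}/2$, $\ve_{i_l,i_r}=1$ for $\bfs(\overline{i})$ with skew-symmetry $\ve_{ij}d_j=-\ve_{ji}d_i$ and the symmetrizability identity $d_ia_{ij}=d_ja_{ji}$), together with the shift in the auxiliary matrix: in $\bfs(\bfw)$ the vertex $(i,m_i)$ is rightmost at level $i$ so $p_{(i,m_i)(j,m_j)}=a_{ij}/2$, whereas in $\bfs(\bfw,\overline{i})$ the role of rightmost at level $i$ is taken over by $(i,m_i+1)$, so $p_{(i,m_i+1)(j,m_j)}=a_{ij}/2$ while the $(i,m_i)$-row of $p$ vanishes (since $(i,m_i)$ is now mutable). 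Collecting contributions, the mismatch between $\pi_R^*X_{(i,m_i),\bfs(\bfw)}$ and $X_{(i,m_i),\bfs(\bfw,\overline{i})}$ is exactly the factor $A_{(i,m_i+1),\bfs(\bfw,\overline{i})}$ contributed by the new edge $\ve_{(i,m_i)(i,m_i+1),\bfs(\bfw,\overline{i})}$; for (3)(a) the mismatch at $(j,m_j)$ splits into one factor at $(i,m_i)$ (from the amalgamation shift) and one at $(i,m_i+1)$ (from the $p$-shift), both of the prescribed form after applying symmetrizability.

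The routine part is Steps~1--2; the main technical obstacle is the sign bookkeeping in Step~3, where half-integer contributions from $\bfs(\overline{i})$ must be combined with the $p$-matrix differences between the two seeds, with the symmetrizability relation $d_ia_{ij}=d_ja_{ji}$ doing all the work that makes the exponents $\pm a_{ji}$ come out correctly.
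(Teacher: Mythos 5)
Your plan follows the route the paper intends (its own proof is only the pointer ``direct computation similar to Proposition~\ref{SubexpSubalg}''), and Steps~1--2 are sound: since $\overline{i}$ is appended at the right end, every triangle attached to a letter of $\bfw$ keeps $U^0$ as its top vertex, so $\pi_R^*A_{(k,l),\bfs(\bfw)}=A_{(k,l),\bfs(\bfw,\overline{i})}$ for all $(k,l)\in J(\bfw)$, the extra triangle carries $A_{(i,m_i+1)}=\Delta_{\omega_i}(U^1,U_n)$, and the amalgamation does not touch the rows of non-rightmost vertices; this gives part (1) with $\tilde{J}=J(\bfw)$, $S'=\emptyset$, and the $A$-variable identities in (2)--(3).

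The gap is Step~3, which is where all of (2)(a) and (3)(a) lives and which you describe but never execute; as set up, it would not deliver the stated exponents. The two formulas are linked by skew-symmetrizability, and a short check rules out the data you quote: write $E,E'$ (resp.\ $F,F'$) for the $(i,m_i)(j,m_j)$- (resp.\ $(j,m_j)(i,m_i)$-) entries of the exchange matrices of $\bfs(\bfw)$ and $\bfs(\bfw,\overline{i})$. Comparing the exponent of $A_{(j,m_j)}$ on both sides of (2)(a), using $p_{(i,m_i)(j,m_j)}=a_{ij}/2$ in $\bfs(\bfw)$ but $p'_{(i,m_i)(j,m_j)}=0$ in $\bfs(\bfw,\overline{i})$ (there $(i,m_i)$ is no longer rightmost), forces $E'-E=a_{ij}/2$, hence $F'-F=-a_{ji}/2$ by skew-symmetrizability with the same multipliers; but comparing the exponent of $A_{(i,m_i)}$ in (3)(a), where the explicit factor is $A_{(i,m_i)}^{-a_{ji}}$, forces $F'-F=\tfrac{3}{2}a_{ji}$, which is impossible unless $a_{ij}=0$. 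Concretely, with your quoted inputs ($\ve_{i_lj}=a_{ij}/2$, $\ve_{i_li_r}=1$ for $\bfs(\overline{i})$, the gluing $(i,m_i)\leftrightarrow i_l$, and \eqref{eq:auLie} at face value) the computation produces $\pi_R^*(X_{(i,m_i)})=X'_{(i,m_i)}A_{(i,m_i+1)}^{-1}$ and $\pi_R^*(X_{(j,m_j)})=X'_{(j,m_j)}A_{(i,m_i)}^{a_{ji}}A_{(i,m_i+1)}^{-a_{ji}}$, i.e.\ the inverses of the target monomials. So the ``sign bookkeeping'' you defer is not a routine afterthought: one must first fix, coherently, which elementary seed (and with which orientation) is amalgamated when a barred letter is appended and the matching sign in the auxiliary matrix, and then actually carry out the exponent-by-exponent comparison; invoking symmetrizability alone does not make the exponents $\pm a_{ji}$ come out with the signs in the proposition.
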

\begin{proof}
The proof is by direct computation similar to Proposition~\ref{SubexpSubalg}.
\end{proof}

\subsubsection{The map $R_i$}
Let $i\in I$. Define the reflection map $R_i:\Conf_{s_i\beta}(\SA)\to \Conf^{s_i}_\beta(\SA)$ as follows. For any configuration
\[
\begin{tikzcd}
& U^0 \ar[dash,dr] & \\
U_0 \ar[r,"s_i"]\ar[dash,ru] &  U_1 \ar[r,"\beta"] & U_n
\end{tikzcd}
\]
in $\Conf_{s_i\beta}(\SA)$, there is a unique compatible flag $U \in G/U^-$ such that $d(U,U^0)=s_i$ and $d(U,U_0)=s_i$. Then $d(U,U_1)=e$, so we get a unique configuration
\[
\begin{tikzcd}
U \ar[r,"s_i"]\ar[dash,d] & U^0\ar[dash,d]\\
U_1\ar[r,"\beta"] & U_n
\end{tikzcd}
\]
in $\Conf^{s_i}_\beta(\SA)$. It is easy to see that $R_i$ is an isomorphism by constructing an inverse similarly. The following proposition is essentially \cite{SW21}*{Proposition 4.11}.

\begin{prop} 
\label{ReflQuasiEquiv}
\begin{enumerate}
 \item The map 
\[
\begin{tikzcd}
    R_i: (\Conf_{s_i\beta}(\SA),\bfs(i,\bfw)) \ar[rrrr,dashed,"(J^+(\overline{i}{,}\bfw){,}J^0(\bfs(i{,}\bfw))"] &&&& (\Conf^{s_i}_\beta(\SA),\bfs(\overline{i},\bfw)).
\end{tikzcd}
\]
is rational quasi-cluster. 
\item For any $j \in I$ that appears $n_j > 0$ times in $(i,\bfw)$, we have $R^*_i (X'_{(j,n_j),\bfs(\overline{i},\bfw)}) = X_{(j,n_j),\bfs({i},\bfw)}$.
\item For any $j \in I$ that appears $n_j > 0$ times in $(i,\bfw)$, we have $R^*_i (A'_{(j,n_j),\bfs(\overline{i},\bfw)}) = A_{(j,n_j),\bfs({i},\bfw)}$.
\end{enumerate}
\end{prop}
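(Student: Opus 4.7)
My plan is to unpack the two seeds $\bfs(i,\bfw)$ and $\bfs(\overline{i},\bfw)$ and identify $\iota$ with the natural bijection $J(i,\bfw) = J(\overline{i},\bfw)$ given by the fact that both expressions contain each $j \in I$ exactly $n_j$ times (matching the same position in $\bfw$). Under this identification, $J^+(\overline{i},\bfw)$ coincides with $J^+(i,\bfw)$, which contains $J_\uf$, and the complement $J^0(\bfs(i,\bfw))$ is to be used as the set $S'$. The construction in \S\ref{sec:prelim:Lie} yields identical exchange matrices for $\bfs(i,\bfw)$ and $\bfs(\overline{i},\bfw)$ at all vertices of $J^+$, which makes the second condition of Definition~\ref{SubalgDefi} amount to preservation of the exchange ratios by $R_i^*$.

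For part (1), I would invoke \cite{SW21}*{Proposition 4.11}, which is exactly the statement that $R_i$ is a cluster quasi-equivalence: the pullback $R_i^*(A'_{(j,l),\bfs(\overline{i},\bfw)})$ differs from $A_{(j,l),\bfs(i,\bfw)}$ by a Laurent monomial in the frozen cluster variables $\{A_{(j,0),\bfs(i,\bfw)}\}_{j \in I} = A$-variables indexed by $J^0(\bfs(i,\bfw))$, and the exchange ratios $X_{(j,l)}$ for mutable $(j,l)$ are preserved. This gives conditions (1) and (2) of Definition~\ref{SubalgDefi}; condition (3) is automatic for $J - \tilde{J} = \emptyset$.

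For parts (2) and (3), the key geometric observation is that $R_i$ only alters the left boundary of the configuration: it introduces the new flag $U$ satisfying $d(U,U^0) = d(U,U_0) = s_i$ above the existing $U^0$, while leaving every flag to the right (in particular, all flags incident to the rightmost triangle at each level) unchanged. Consequently, for any $j \in I$ with $n_j > 0$, the triangle indexed by $(j,n_j)$ is identical in both configurations, so $R_i^*(A'_{(j,n_j),\bfs(\overline{i},\bfw)}) = \Delta_{\omega_j}(U^{k_j},U_{l_j}) = A_{(j,n_j),\bfs(i,\bfw)}$, proving (3). For the exchange ratio in (2), recall from \eqref{eq:Xf} and \eqref{eq:auLie} that $X_{(j,n_j)}$ involves only vertices $(k,l)$ with either $k \in J_\uf$-adjacent or $(k,l) = (k,n_k)$ the rightmost vertex at level $k$. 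Since $R_i^*$ identifies all rightmost $A$-variables exactly, and since the exchange matrix entries $\ve_{(j,n_j),(k,l)}$ agree on both sides by construction, we obtain $R_i^*(X'_{(j,n_j),\bfs(\overline{i},\bfw)}) = X_{(j,n_j),\bfs(i,\bfw)}$.

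The main obstacle is that \cite{SW21}*{Proposition 4.11} is stated in slightly different notational conventions, so some care is required to translate between their quasi-equivalence formalism and the rational quasi-cluster framework of Definition~\ref{SubalgDefi}; in particular, verifying that the Laurent monomial scaling is indeed in the variables indexed by $J^0(\bfs(i,\bfw))$ rather than those of $\bfs(\overline{i},\bfw)$, and not in any mutable variables, requires matching the left-to-right amalgamation order with Shen--Weng's conventions.
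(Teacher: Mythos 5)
There is a genuine gap, and it sits exactly at the point where the paper's proof does its real work. Your argument rests on two claims: that $\bfs(i,\bfw)$ and $\bfs(\overline{i},\bfw)$ have identical exchange matrices on $J^+$, and that $R_i^*$ fixes all variables entering the relevant exchange ratios. Neither is quite true. By construction $\bfs(\overline{i})$ carries the \emph{negated} matrix of $\bfs(i)$, so after amalgamation the two seeds differ in the row of $(i,1)$ against the frozen level-$0$ columns: $\ve_{(i,1)(i,0)}=1$ versus $\ve'_{(i,1)(i,0)}=-1$, and $\ve_{(i,1)(j,0)}-\ve'_{(i,1)(j,0)}=a_{ij}$. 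Moreover $R_i^*$ does \emph{not} fix the level-$0$ variable at $i$: writing the standard representative with torus part $t$, one gets $R_i^*(A'_{(i,0)})=\Delta_{\omega_i}(\dot{s}_i^{-1}t\dot{s}_i)$, not $A_{(i,0)}=\Delta_{\omega_i}(t)$. Since the exchange ratio $X_{(i,1)}$ is a product over \emph{all} columns, including the frozen level-$0$ ones, preservation of $X_{(i,1)}$ is not a formal consequence of matrix comparison: one must check that the two discrepancies cancel, which is the explicit torus computation
\[
\frac{\sigma(X'_{(i,1)})}{X_{(i,1)}}=\frac{1}{\Delta_{\omega_i}(\dot{s}_i^{-1}t\dot{s}_i)\,\Delta_{\omega_i}(t)}\prod_{j\neq i}\Delta_{\omega_j}(t)^{-a_{ij}}=1,
\]
i.e.\ the identity $s_i\omega_i=\omega_i-\alpha_i$. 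This is needed both for condition (2) of Definition~\ref{SubalgDefi} when $(i,1)$ is mutable (so hiding it inside the citation of \cite{SW21}*{Proposition 4.11}, justified by the incorrect ``identical matrices'' claim, does not discharge it), and for your part (2) in the case $j=i$, $n_i=1$, where $(i,n_i)=(i,1)$ is frozen, its row genuinely differs between the two seeds, and its exchange ratio involves $A_{(i,0)}$ — there your ``rightmost variables are fixed and the rows agree'' argument breaks down outright.

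Two smaller inaccuracies point to the same oversight: $J-\tilde{J}=J^0(\overline{i},\bfw)$ is not empty, so condition (3) is not automatic — it holds precisely because $R_i^*(A'_{(i,0)})$ is a Laurent monomial in the level-$0$ (torus) variables, which is the same computation as above; and this is also why the configuration space (with its torus factor), rather than the Richardson variety, is the right setting, as the paper remarks. Your geometric argument for part (3) and for part (2) away from the exceptional vertex is fine and matches the paper's $\sigma(A'_k)=A_k$ for $k\neq(i,0)$; to complete the proof you must add the explicit analysis at $(i,0)$ and $(i,1)$ rather than appeal to an identity of exchange matrices that does not hold there.
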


\begin{proof}
Similar to the proof of Proposition \ref{SubexpSubalg}, we restrict to the open subspace of $\Conf_{s_i\beta}(\SA)$ consisting of configurations 
\[
\begin{tikzcd}
& U^-\ar[dash,dr] & \\
tU^+ \ar[r,"s_i"]\ar[dash,ru] &  B_1 \ar[r,"\beta"] & B_n
\end{tikzcd}
\]
such that $B_1 \in U^-B^+/B^+ \cap B^+\dot{s_i}B^+/B^+$. We write 
\[
B_1=y_i(a)B^+ = x_i(1/a)\dot{s}_iB^+=\dot{s}_iy_i(-1/a)B^+ \text{ for some } a\in \BC^\x.
\]

By direct calculation, we can rewrite the configuration as 
\[
\begin{tikzcd}
& U^-\ar[dash,dr] & \\
tU^+ \ar[r,"s_i"]\ar[dash,ru] &  \dot{s}_iy_i(-1/a)\dot{s}_i^{-1}t\dot{s}_iU^+ \ar[r,"\beta"] & B_n.
\end{tikzcd}
\]
We write $t'=\dot{s}_i^{-1}t\dot{s}_i$. Recall this implies that $t^{-1} \dot{s}_iy_i(-1/a) t' \in U^- \dot{s}_iU^+$. Then $R_i$ is given by
\begin{equation}\label{eq:iwi}
\begin{tikzcd} 
& U^-\ar[dash,dr] & \\
tU^+ \ar[r,"s_i"]\ar[dash,ru] &  y_i(a)B^+ \ar[r,"\beta"] & B_n
\end{tikzcd}
 \!\!\!\!\! \mapsto  \!\!\!\!\!
\begin{tikzcd}
\dot{s}_iU^-\ar[r]\ar[dash,d] & U^-\ar[dash,d]\\
\dot{s}_iy_i(-1/a)t'U^+\ar[r] & B_n
\end{tikzcd}
 = 
\begin{tikzcd}
U^-\ar[r]\ar[dash,d] &  y_i(1/a)\dot{s}_i^{-1}U^-\ar[dash,d]\\
 t'U^+\ar[r] &  y_i(1/a)\dot{s}_i^{-1}B_n.
\end{tikzcd}
\end{equation}

Let $\bfs=\bfs(i,\bfw)$ and $\bfs'=\bfs(\overline{i},\bfw)$. Then we have a natural identification of the index sets $J=J'$ and $J_\uf=J'_\uf$. We obtain the following identities for the exchange matrices from the construction 
\begin{align*}
&\ve_{ij}=\ve'_{ij}, \text{ for any }i\in J_\uf-\{(i,1)\}, j\in J;\\
&\ve_{(i,1)j}=\ve'_{(i,1)j}, \text{ for any }j \neq (i,0), (j,0);\\
&\ve_{(i,1)(i,0)}=1,\quad \ve'_{(i,1)(i,0)}=-1, \quad \ve_{(i,1)(j,0)}-\ve'_{(i,1)(j,0)}=a_{ij}.
\end{align*}
For the induced isomorphism $\sigma:K(\Conf^{s_i}_w(\SA))\to K(\Conf^e_{s_iw}(\SA))$, we have $\sigma(A'_k)=A_k$ for all $k\in J-\{(i,0)\}$ following the picture in \eqref{eq:iwi}. We also have $\sigma(X'_k)=X_k$ for all $k\in J_\uf^+-\{(i,1)\}$ from the exchange matrices. When $k\in J^+-\{(i,1)\}$ is frozen, we still have $\sigma(X'_k)=X_k$ by \eqref{eq:Xf} and \eqref{eq:auLie}.

We then compute
\begin{align*}
\frac{\sigma(X'_{(i,1)})}{X_{(i,1)}}&= \frac{\sigma(A'_{(i,0)})^{-1} \prod_{j\neq i}\sigma(A_{(j,0)})^{\ve'_{(i,1)(j,0)}}}{A_{(i,0)} \prod_{j\neq i}A_{(j,0)}^{\ve_{(i,1)(j,0)}}} 
 =\frac{\sigma(A'_{(i,0)})^{-1}}{A_{(i,0)}}\prod_{j\neq i}A_{(j,0)}^{-a_{ij}}\\
&=\frac{1}{\Delta_{\omega_i}(t')\Delta_{\omega_i}(t)}\prod_{j\neq i}\Delta_{\omega_j}(t)^{-a_{ij}} 
 =\frac{1}{\Delta_{\omega_i}(\dot{s}^{-1}_i t \dot{s}_i)\Delta_{\omega_i}(t)}\prod_{j\neq i}\Delta_{\omega_j}(t)^{-a_{ij}} \\
&=1.
\end{align*}
So $\sigma(X'_{(i,1)})=X_{(i,1)}$. In summary, we have $\sigma(X'_k)=X_k$ for all $k\in J^+$. This finishes the proof. 
\end{proof}

\subsection{Mutations and Reductions}
\label{sec:constr:mutate}

Let $i\in I$, $w\in W$, and $\bfw$ be a reduced expression for $w$.  Then we have two seeds $\bfs(\overline{i}, \bfw)$ and $\bfs( \bfw,\overline{i})$ for the configuration space $\Conf^{s_i}_\beta(\SA)$. Recall the sequence of mutations $\mu_{\vec{i}}$ in \eqref{eq:muiright}.

\begin{prop}\cite{SW21}*{Proposition~3.25}\label{prop:SW325}
   We have $\mu_{\vec{i}}(\bfs(\overline{i}, \bfw)) = \bfs( \bfw,\overline{i})$. We have the induced identity bijection $J(\overline{i}, \bfw) \leftrightarrow  J( \bfw, \overline{i})$, $(j,l) \mapsto (j,l)$.
\end{prop}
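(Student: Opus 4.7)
The plan is to proceed by induction on $\ell(\bfw)$, exploiting the amalgamation structure of the seed construction from Section~\ref{sec:prelim:Lie}. Both seeds $\bfs(\overline{i},\bfw)$ and $\bfs(\bfw,\overline{i})$ share the same underlying index set $\{(j,l) \mid j \in I,\ 0 \le l \le n_j\}$ with the same set of mutable vertices and the same multipliers $d_{(j,l)} = d_j$, since these data depend only on the unordered collection of letters in the signed expression. Hence only the exchange matrix needs to be compared under the mutation sequence $\mu_{\vec{i}}$, and the bijection $(j,l) \mapsto (j,l)$ is forced.

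For the base case $\ell(\bfw) = 0$, both seeds equal $\bfs(\overline{i})$ and $\mu_{\vec{i}}$ is empty since $n_i = 1$ leaves no mutable vertex at level $i$. For the inductive step, write $\bfw = \bfw'\cdot (j)$ and split into cases. If $j \ne i$, the number $n_i$ is unchanged and $\mu_{\vec{i}}$ is the same mutation sequence for $\bfw'$ and $\bfw$. Since amalgamation with $\bfs(j)$ affects only the vertex $(j,n_j)$ and interacts with level $i$ through entries $\varepsilon_{(i,l)(j,n_j)}$ of a controlled form, one checks that the mutations in $\mu_{\vec{i}}$ commute with this amalgamation in the sense that applying $\mu_{\vec{i}}$ to $\bfs(\overline{i},\bfw'\cdot(j))$ yields the amalgamation of $\mu_{\vec{i}}(\bfs(\overline{i},\bfw'))$ with $\bfs(j)$, and the inductive hypothesis finishes the case.

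If $j = i$, the amalgamation introduces a new vertex $(i,n_i)$ and a new mutable vertex $(i,n_i-1)$, so $\mu_{\vec{i}}$ on the new seed has one extra mutation at $(i,n_i-1)$ relative to the $\bfw'$-case. The strategy is to apply the inductive hypothesis first to transport the initial $\overline{i}$ through the portion corresponding to $\bfw'$, landing in a seed that amalgamates $\bfs(\bfw',\overline{i})$ with $\bfs(i)$ at the vertex $(i,n_i-1)$. A direct verification using the mutation rule then shows that the final mutation at $(i,n_i-1)$ reverses the arrows at level $i$ at this junction, converting the local $\bfs(\overline{i})$-$\bfs(i)$ pattern into the required $\bfs(i)$-$\bfs(\overline{i})$ pattern and producing $\bfs(\bfw,\overline{i})$.

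The main obstacle is bookkeeping the sign changes of $\varepsilon_{(i,l)(j,k)}$ for $j \ne i$ as mutations accumulate, since each $\mu_{(i,l)}$ can flip these signs according to the non-trivial piece $\ve_{ij} + [\ve_{ik}]_+\ve_{kj} + \ve_{ik}[-\ve_{kj}]_+$ of the mutation rule. This can be handled by observing that the exchange matrix produced by any sequence of amalgamations has a tridiagonal shape at each level (with level-$i$ interactions concentrated near the boundary vertices), so the mutation formulas simplify drastically. Alternatively, one can invoke \cite{SW21}*{Proposition~3.25} directly, where the same identity is established via the geometric interpretation of mutations in terms of reflections of decorated flag configurations, a framework already available through Section~\ref{sec:prelim:dBS}.
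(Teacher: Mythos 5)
The paper offers no proof of this statement: it is imported verbatim from \cite{SW21}*{Proposition~3.25}, where it is established inside Shen--Weng's framework of triangulated double Bott--Samelson cells. Your induction on $\ell(\bfw)$ is therefore a genuinely different, self-contained combinatorial route, and its architecture is sound: the index sets, mutable sets and multipliers of $\bfs(\overline{i},\bfw)$ and $\bfs(\bfw,\overline{i})$ visibly agree, mutations at $(i,1),\dots$ commute with amalgamation of a letter-seed on the right because the glued seed (extended by zeros) contributes nothing to the rows and columns of the mutated vertices, and in the case $j=i$ the single extra mutation at $(i,n_i-1)$ does turn the terminal $\overline{i}\,i$ junction into $i\,\overline{i}$; the exchange-matrix bookkeeping closes when one writes it out. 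What this buys over the paper's citation is an elementary proof depending only on the amalgamation recipe of \S\ref{sec:prelim:Lie}, at the cost of the sign-chasing you acknowledge.

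Two steps are glossed over and should be made explicit. First, in the case $j\neq i$, commuting $\mu_{\vec{i}}$ past the amalgamation and invoking the inductive hypothesis lands you on the seed of the word $(\bfw',\overline{i},j)$, whereas the target is the seed of $(\bfw',j,\overline{i})$; so you also need that a barred and an unbarred letter at \emph{different} levels commute as literal seeds, $\bfs(\dots\overline{i}\,j\dots)=\bfs(\dots j\,\overline{i}\dots)$. This is true (additivity of amalgamation gives identical entries $\ve_{(i,\cdot),(j,\cdot)}$ in either order), but your write-up silently identifies the two words, and note the analogous swap of two unbarred letters, or of $\overline{i}$ with $i$, does \emph{not} preserve the seed, so the point is not vacuous. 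Second, the final mutation at $(i,n_i-1)$ does more than ``reverse the arrows at level $i$'': since that vertex carries entries $-a_{ij}$ to the rightmost vertices $(j,n_j)$ of adjacent levels, the mutation also shifts the entries from $(i,n_i-2)$ and $(i,n_i)$ to each $(j,n_j)$ by $-a_{ij}$ (exactly matching the $\pm a_{ij}/2$ discrepancy with $\bfs(\bfw',i,\overline{i})$), and one must check that the corrections between $(i,n_i-2)$ and $(i,n_i)$, and between two distinct adjacent levels $j,j'$, vanish --- they do, by the signs of the entries at the mutated vertex. With these two verifications spelled out (and folding for the symmetrizable case, as in \S\ref{sec:extra:indep}), your induction is a complete proof; simply citing \cite{SW21} as the paper does is of course also sufficient.
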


Assume $i$ appears $n_i >0$ times in $\mathbf{w}$. We write $\mathbf{w} = \mathbf{w}_1 \mathbf{s}_i \mathbf{w}_2$, where the support of $w_1$ does not contain $s_i$. In other words, we split $\mathbf{w}$ at $(i,1)$, the first appearance of $s_i$ from the left. Combining Proposition \ref{SubexpSubalg}, Proposition\ref{prop:witow} and Proposition\ref{ReflQuasiEquiv}, we get the following proposition.

\begin{prop}\label{prop:ww_2}
Assume each $s_j$ appeared $b_j$ times in $\bfw_1$. 
\begin{enumerate} 
\item The composition
\[
\begin{tikzcd}
    \Phi=R_i\circ\pi_{w,s_iw_2}: (\Conf_{w}(\SA),\mu_{\vec{i}}(\bfs(\bfw))) \ar[rrr,dashed,"(J^+(\bfw_2{,}\overline{i}) {\,,\,} S')"] &&& (\Conf^{s_i}_{w_2}(\SA),\bfs(\bfw_2{,}\overline{i})),
\end{tikzcd}
\]
is rational quasi-cluster. Here  $S'=\{k\in J(\bfw)\,|\, k<(i,1)\}$.

\item The composition $\varphi = \pi_R\circ R_i\circ\pi_{w,s_iw_2}$ is rational quasi-cluster with
\[
\begin{tikzcd}
    (\Conf_{w}(\SA),\mu_{\vec{i}}(\bfs(\bfw))) \ar[rrr,dashed,"(J^+(\bfw_2) {\,,\,} S')"] &&& (\Conf_{w_2}(\SA),\bfs(\bfw_2)).
\end{tikzcd}
\]
Here the embedding $J^+(\bfw_2) \rightarrow J(\bfw)$ is given by
\[
\begin{cases}
(j,l) \mapsto (j,l + b_j), &\text{if } j \neq i;
\\
(i,l) \mapsto (i,l + b_i ) =(i,l).
\end{cases}
\]
\end{enumerate} 
\end{prop}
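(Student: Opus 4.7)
The plan is to realize $\Phi$ and $\varphi$ as compositions of the three rational quasi-cluster maps supplied by Propositions~\ref{SubexpSubalg}, \ref{ReflQuasiEquiv}, and~\ref{prop:witow}, glue them via Lemma~\ref{SubalgTransitive}, and transport the resulting data across $\mu_{\vec i}$ using Propositions~\ref{SubalgMutation} and~\ref{prop:SW325}. The combinatorial key is the hypothesis that $\bfw_1$ avoids $s_i$, which forces $b_i=0$ and makes every relevant embedding the identity at level $i$, so that $\vec i$ corresponds to $\vec i$ throughout.

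For part~(1), first apply Proposition~\ref{SubexpSubalg} to $\pi_{w,s_iw_2}$, whose embedding $J(s_i\bfw_2)\hookrightarrow J(\bfw)$ is $(j,l)\mapsto(j,l+b_j)$, and then Proposition~\ref{ReflQuasiEquiv} to $R_i$, using the tautological identification $\bfs(s_i\bfw_2)=\bfs(i,\bfw_2)$ of the intermediate seeds. Lemma~\ref{SubalgTransitive} produces $\Phi$ as a rational quasi-cluster map with target seed $\bfs(\overline i,\bfw_2)$ and $\tilde J=J^+(\overline i,\bfw_2)$. The $S'$ produced by the lemma will be enlarged to $\{k\in J(\bfw):k<(i,1)\}$, which in fact coincides with the entire complement $J(\bfw)\setminus\iota(\tilde J)$; such enlargement only weakens Conditions~(1)--(3) of Definition~\ref{SubalgDefi} and is therefore harmless. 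Finally, since $b_i=0$ forces $\iota(\vec i)=\vec i$, Proposition~\ref{SubalgMutation} mutates both sides simultaneously by $\vec i$, and Proposition~\ref{prop:SW325} identifies $\mu_{\vec i}(\bfs(\overline i,\bfw_2))=\bfs(\bfw_2,\overline i)$, completing part~(1).

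For part~(2), compose the output of part~(1) with $\pi_R$ from Proposition~\ref{prop:witow}. The naive data $(J(\bfw_2),\emptyset)$ violates the inclusion hypothesis $\iota(\tilde J)\subset\tilde J'=J^+(\bfw_2,\overline i)$ at level~$0$, so I will refine it to $(J^+(\bfw_2),J^0(\bfw_2,\overline i))$: Conditions~(1) and~(2) on $J^+(\bfw_2)$ are unaffected, and Condition~(3) at each $(j,0)$ is satisfied because $\pi_R^*A_{(j,0),\bfs(\bfw_2)}=A_{(j,0),\bfs(\bfw_2,\overline i)}$. With this refinement, Lemma~\ref{SubalgTransitive} yields $\tilde J=J^+(\bfw_2)$ together with the stated embedding $(j,l)\mapsto(j,l+b_j)$ (identity when $j=i$); moreover, since $J^0(\bfw_2,\overline i)\cap J^+(\bfw_2,\overline i)=\emptyset$, the level-$0$ piece of $S'$ coming from the refined $\pi_R$ does not contribute to the composed $S'$, which therefore equals $\{k\in J(\bfw):k<(i,1)\}$ inherited from part~(1).

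I expect the main obstacle to be the small but delicate bookkeeping in the third paragraph: refining Proposition~\ref{prop:witow}'s data to fit the level-$+$ target coming from $R_i$, and checking that the enlargement in part~(1) and the trimming in part~(2) both respect the constraint $S'\subset J'\setminus\iota(\tilde J)$. Once these are verified, the argument reduces to a direct concatenation of the three input propositions.
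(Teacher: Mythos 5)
Your proposal is correct and takes essentially the same route as the paper: it builds $\Phi$ and $\varphi$ from Propositions~\ref{SubexpSubalg}, \ref{ReflQuasiEquiv} and~\ref{prop:witow}, composes them via Lemma~\ref{SubalgTransitive}, and transports the data across $\mu_{\vec{i}}$ using Propositions~\ref{prop:SW325} and~\ref{SubalgMutation}, exactly as in the paper's proof. Your explicit bookkeeping (enlarging $S'$ to the full complement in part~(1), and refining the $\pi_R$ data to $(J^+(\bfw_2), J^0(\bfw_2,\overline{i}))$ so that the hypothesis $\iota(\tilde{J})\subset \tilde{J}'$ of Lemma~\ref{SubalgTransitive} holds in part~(2)) simply spells out the ``suitable adjustments to the various data'' that the paper leaves implicit, and is accurate.
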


\begin{remark}\label{rem:shift}
Note that there is secret shift on the index sets. Assume that each $j \in I$ appears $n_j \ge 0$ times in $\bfw$.  Then $i$ appears $m_i=n_i-1$ times in $\bfw_2$, and $j$ appears $m_j=n_j-a_j$ times in $\bfw_2$ if $j\neq i$. Therefore, for the last vertex $(j,m_j)$ in level $j$ in $J(\bfw_2)$, the embedding sends $(i,m_i)\mapsto(i,n_i-1)$ and $(j,m_j)\mapsto (j,n_j)$ for $j\neq i$.
\end{remark}

\begin{proof}
By Proposition~\ref{SubexpSubalg}, we first have 
\[
\begin{tikzcd}
    \pi_{w,s_iw_2}:(\Conf_{\bfw}(\SA),\bfs(\bfw)) \ar[rrr,dashed,"(J(i{,}\bfw_2) {\,,\,} \emptyset)"] &&& (\Conf_{\mathbf{s_i}\bfw_2}(\SA),\bfs(i,\bfw_2)).
\end{tikzcd}
\]
Then by Proposition~\ref{ReflQuasiEquiv}, we have 
\[
\begin{tikzcd}
   R_i:(\Conf_{s_iw_2}(\SA),\bfs(i,\bfw_2)) \ar[rrrr,dashed,"(J^+(\overline{i}{,} \bfw_2) {\,,\,} J^0(i{,} \bfw_2)"] &&&& (\Conf^{s_i}_{w_2}(\SA),\bfs( \overline{i},\bfw_2)).
\end{tikzcd}
\]
Then by Proposition~\ref{prop:SW325} and Proposition~\ref{SubalgMutation}, we have
\[
\begin{tikzcd}
   R_i:(\Conf_{s_iw_2}(\SA),\mu_{\vec{i}}(\bfs(i,\bfw_2))) \ar[rrrr,dashed,"(J^+(\bfw_2{,}\overline{i}) {\,,\,} J^0(i{,} \bfw_2)"] &&&& (\Conf^{s_i}_{w_2}(\SA),\bfs( \bfw_2,\overline{i})).
\end{tikzcd}
\]
Now the first statement follows from Lemma~\ref{SubalgTransitive} after making suitable adjustments to the various data $J_{\uf},S'$. Part (1) follows now. 

By Proposition~\ref{prop:witow}, we have
\[
\begin{tikzcd}
 \pi_R:(\Conf^{s_i}_{w_2}(\SA),\bfs( \bfw_2,\overline{i}))\ar[rrr,dashed,"(J( \bfw_2) {\,,\,} \emptyset)"] &&& (\Conf_{w_2}(\SA),\bfs( \bfw_2)),
\end{tikzcd}
\]
so Part (2) follows from the same argument.
\end{proof}
 
\subsection{The function $T_{r, rs_i}$ revisited}
Recall \S\ref{sec:T} that, for $r<rs_i \le w$, we defined a rational map 
\begin{equation}\label{eq:Trrs}
T_{r, rs_i} =  {\Delta^{rs_i \omega_i}_{\omega_i}}/{\Delta_{\omega_i}^{r \omega_i}} : \mathring{\CB}_{e,w} \dashrightarrow  \BC.
\end{equation}
We extend $T_{r,rs_i}$ as a rational function on $T \times \mathring{\CB}_{e,w} \cong \Conf_w(\SA)$ via the projection map.

Let $w \in W$ with a reduced expression $\bfw$. Assume $i$ appears $n_i >0$ times in $\bfw$. We split $\bfw = \bfw_1 s_i \bfw_2$ at the first appearance of $s_i$. Note that $i$ appears $n_i$ times in $(\bfw_2, \overline{i})$ as well (counting $\overline{i}$).

\begin{prop}
\label{OneStepMutation}
Let $\{A_k\}$ be the cluster variables in the seed $\bfs(\bfw_2,\overline{i})$ for $\Conf^{s_i}_{w_2}(\SA)$.
We have the following identity of rational functions on $\Conf_w(\SA)$
\[
T_{e,s_i}=(R_i\circ \pi_{w,s_iw_2})^*
\left(\frac{A_{(i,n_i-1)}}{A_{(i,n_i)}}\right).
\]
\end{prop}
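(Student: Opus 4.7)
The plan is to compute $\Phi^*(A_{(i,n_i)})$ and $\Phi^*(A_{(i,n_i-1)})$ explicitly as generalized minors on $\Conf_w(\SA)\cong T\times \mathring{\CB}_{e,w}$ and match their ratio with $T_{e,s_i}$, where $\Phi:=R_i\circ \pi_{w,s_iw_2}$.

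I would first identify the two cluster variables diagrammatically. In the signed expression $(\bfw_2,\overline{i})$ the $\overline{i}$ is processed last, so during the triangulation of the $\bfw_2$ part the only available top vertex is the initial $U^0$. Hence $(i,n_i-1)$ corresponds to the bottom triangle at the rightmost $i$ in $\bfw_2$ (with the convention $l_*=0$ in the degenerate case $n_i=1$), giving
\[
A_{(i,n_i-1)}=\Delta_{\omega_i}(U^0,U_{l_*}),
\]
while $(i,n_i)$ is the top triangle introduced by the terminal $\overline{i}$, giving
\[
A_{(i,n_i)}=\Delta_{\omega_i}(U^1,U_m)
\]
with $U_m$ the last bottom vertex. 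I would then trace $\Phi$ on a point $(t,z\dot{w}B^+)\in T\times \mathring{\CB}_{e,w}$: the projection $\pi_{w,s_iw_2}$ keeps the top flag $U^-$ and restricts the bottom to $U_{k_1},\ldots,U_n$; applying $R_i$ in the pre-rewrite form of \eqref{eq:iwi} yields an image in $\Conf^{s_i}_{w_2}(\SA)$ with top vertices $U^0=\dot{s}_iU^-$, $U^1=U^-$ and bottom vertices $U_{k_1+1},\ldots,U_n$. A short calculation of compatible lifts in $\Conf_w(\SA)$, extending the $t=1$ case of Lemma~\ref{SchubertClusterVariable}, gives $U_k=zt\dot{w}_{(k)}U^+$ for all $k\ge 1$.

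Writing $p_*$ for the position of the rightmost $i$ in $\bfw$ ($p_*=k_1+1+l_*$, or $p_*=k_1+1$ when $n_i=1$), substituting the above one obtains
\[
\Phi^*(A_{(i,n_i)})=\Delta_{\omega_i}(zt\dot{w})=(w\omega_i)(t)\,\Delta^{\omega_i}_{w\omega_i}(z),
\]
\[
\Phi^*(A_{(i,n_i-1)})=\Delta_{\omega_i}(\dot{s}_i^{-1}zt\dot{w}_{(p_*)})=(w_{(p_*)}\omega_i)(t)\,\Delta^{s_i\omega_i}_{w_{(p_*)}\omega_i}(z).
\]
Because every letter of $\bfw$ after position $p_*$ is some $s_j$ with $j\neq i$, and such $\dot{s}_j$ fixes the highest weight vector $\eta_{\omega_i}\in L(\omega_i)$ (as $\langle \alpha_j^\vee,\omega_i\rangle=0$), we have $\dot{w}\eta_{\omega_i}=\dot{w}_{(p_*)}\eta_{\omega_i}$; in particular $w\omega_i=w_{(p_*)}\omega_i$. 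The $T$-characters therefore cancel in the ratio, and applying the same weight-fixing identity once more identifies the quotient with $\Delta^{s_i\omega_i}_{\omega_i}(z\dot{w})/\Delta^{\omega_i}_{\omega_i}(z\dot{w})=T_{e,s_i}$.

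The main obstacle is the diagrammatic bookkeeping: identifying which vertex of each triangle appears in the minor formula for the seed $\bfs(\bfw_2,\overline{i})$, and correctly tracking the top-left flag of the $R_i$-image through \eqref{eq:iwi}. Once those are in place, the remainder is a direct computation with generalized minors and the standard action of $\dot{s}_j$ on $\eta_{\omega_i}$.
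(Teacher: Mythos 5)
Your identification of the image configuration under $\Phi=R_i\circ\pi_{w,s_iw_2}$ is where the argument breaks. The explicit formula \eqref{eq:iwi} for $R_i$ is only valid for the normalized representative whose bottom-left decorated flag has the form $t'U^+$ (underlying flag $B^+$). After $\pi_{w,s_iw_2}$ you deliberately keep the original flags, so the bottom-left flag is $zt\dot w_{(k_1)}U^+$, whose underlying flag is $z\dot w_{(k_1)}B^+=g_1B^+$ with a nontrivial $g_1\in U^-$; to use \eqref{eq:iwi} one must first translate by $g_1^{-1}$ and translate back afterwards. The correct top-left flag of the image is therefore $g_1\dot s_iU^-$, not $\dot s_iU^-$: indeed the defining codistance condition $d(U,U_{k_1})=s_i$ in the construction of $R_i$ fails for $U=\dot s_iU^-$ at a generic point (already visible in $SL_2$). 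Consequently the honest pullback is $\Phi^*(A_{(i,n_i-1)})=\Delta_{\omega_i}(\dot s_i^{-1}g_1^{-1}zt\dot w_{(p_*)})$, and your claimed value $\Delta_{\omega_i}(\dot s_i^{-1}zt\dot w_{(p_*)})$ agrees with it only after one shows $\dot s_i^{-1}g_1^{-1}\dot s_i\in U^-$. That missing step is precisely the heart of the paper's proof: generically $g_1=y_{i_1}(b_1)\cdots y_{i_m}(b_m)$ with the $i_k$ running over the letters of $\bfw_1$ (via \cite{MR04}*{Theorem 7.1}), and because the split is at the \emph{first} occurrence of $i$, no $i_k$ equals $i$, so conjugation by $\dot s_i$ keeps $g_1$ inside $U^-$ and the minor is unchanged. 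Your write-up never invokes this hypothesis when identifying the configuration, which is a telltale sign of the gap: without it the asserted equality of minors is false.

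The remainder of your computation is sound and close to the paper's: the compatible lifts $U_k=zt\dot w_{(k)}U^+$, the value $\Phi^*(A_{(i,n_i)})=(w\omega_i)(t)\,\Delta^{\omega_i}_{w\omega_i}(z)$ (here your shortcut is harmless, since the top-right flag really is $U^-$ and the last bottom decoration is untouched by $R_i$), and the endgame using $\dot s_j\eta_{\omega_i}=\eta_{\omega_i}$ for $j\neq i$ to cancel the torus characters and identify the ratio with $\Delta^{s_i\omega_i}_{\omega_i}/\Delta^{\omega_i}_{\omega_i}=T_{e,s_i}$. Two smaller cautions: in the degenerate case $n_i=1$ the variable $A_{(i,0)}$ involves the bottom-left flag of the image, whose decoration is changed by $R_i$ (see \eqref{eq:iwi}), so that case needs a separate check; and your bookkeeping should make explicit that $A_{(i,n_i-1)}$, defined by the triangle at the last $i$ of $\bfw_2$, equals the minor against the final bottom flag only because all later letters differ from $i$. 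Once you insert the $g_1$-correction and the conjugation argument, your route essentially reproduces the paper's proof.
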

 
\begin{proof}
We following the notations in Remark \ref{ReductionRemark}. Then $\pi_{w,s_iw_2}(t,gB^+)=(t',g_2B^+)$ for some $t'\in T$ and $g_2\in \mathring{\CB}_{e,s_iw_2}$.

By the proof of Proposition \ref{ReflQuasiEquiv}, $R_i\circ \pi_{w,s_iw_2}(t,gB^+)$ is the configuration (for some $t''\in T$)
\begin{center}
\begin{tikzcd}
& U^-\ar[dash,dr] & \\
tU^+ \ar[rr]\ar[dash,ru] & & gB^+
\end{tikzcd}
$\mapsto$
\begin{tikzcd}
\dot{s}_iU^-\ar[r]\ar[dash,d] & U^-\ar[dash,d]\\
\dot{s}_iy_i(-a^{-1})t''U^+\ar[r] & g_2B^+.
\end{tikzcd}
\end{center}

Recall in the seed $\bfs(\bfw_2,\overline{i})$ for $\Conf^{s_i}_{w_2}(\SA)$, the cluster variables $A_{(i,n_i-1)}$ and $A_{(i,n_i)}$ are given by $\Delta_{\omega_i}(U^0,U_1)$ and $\Delta_{\omega_i}(U^1,U_1)$, respectively, on the configuration
\begin{center}
\begin{tikzcd}
U^0\ar[r]\ar[dash,d]\ar[dash,dr] & U^1\ar[dash,d]\\
U_0\ar[r] & U_1
\end{tikzcd}
\end{center}

Let $g_2' \in B^-$ be the unique element such that $g_2'B^+=g_2B^+$ and $(\dot{s}_iy_i(-1/a)t''U^+, g'_2U^+)$ is compatible. Then $R_i\circ \pi_{w,s_iw_2}(t,gB^+)$ fits into the above picture with $U^0=\dot{s}_iU^-$, $U^1=U^-$, $U_0=\dot{s}_iy_i(-a^{-1})t''U^+$, $U_1=g_2'U^+$. We get
\[
(R_i\circ\pi_{w,s_iw_2})^*\left(\frac{A_{(i,n_i-1)}}{A_{(i,n_i)}}\right) (t, gB^+)=\frac{\Delta_{\omega_i}(\dot{s}_i^{-1}g'_2)}{\Delta_{\omega_i}(g'_2)}.
\]

Note that $\frac{\Delta_{\omega_i}(\dot{s}_i^{-1}g'_2)}{\Delta_{\omega_i}(g'_2)} = \frac{\Delta_{\omega_i}(\dot{s}_i^{-1}g_2)}{\Delta_{\omega_i}(g_2)} $ is independent of the choice of $g_2$ by \eqref{eq:well}. We further claim 
\[
\frac{\Delta_{\omega_i}(\dot{s}_i^{-1}g'_2)}{\Delta_{\omega_i}(g'_2)} = \frac{\Delta_{\omega_i}(\dot{s}_i^{-1}g_2)}{\Delta_{\omega_i}(g_2)} =\frac{\Delta_{\omega_i}(\dot{s}_i^{-1}g)}{\Delta_{\omega_i}(g)}.
 \]

We consider the open subspace of $T \times \mathring{\CB}_{e,w}$ such that $\pi_{w_{(k)}}^w(gB^+) \in U^-B^+/B^+$ for any $k \le (i,1)$. Let $\bfw_1=(i_1,\cdots,i_m)$. Then by \cite{MR04}*{Theorem~7.1}, we can write $g_1=y_{i_1}(b_1)\cdots y_{i_m}(b_m)$ for  $b_k\in \BC^\x$. Since $i_k\neq i$ for any $k$, we have $\dot{s}_i^{-1}y_{i_k}(a_k)\dot{s}_i\in U^-$ and $\dot{s}_i^{-1}g_1\dot{s}_i\in U^-$ as well. This shows the desired identity of rational functions.
\end{proof}

Suppose $r<s_ir<s_irs_j\leq w$ for $i, j \in I$. Let $\bfw = \bfw_1 \mathbf{s}_i \bfw_2$ be a reduced expression of $w$. Assume that the left-most subexpression for $s_irs_j$ in $\bfw$ is the same as the left-most subexpression for $s_irs_j$ in $s_i \bfw_2$. This means the $s_i$ above is the leftmost $s_i$ in $\bfw$ and $r<rs_j\leq w_2$. Since $r<rs_j\leq w_2$, the rational function $T_{r,rs_j}$ is well-defined on $\Conf_{w_2}(\SA)$.

Recall the rational map $\varphi=\pi_R\circ R_i\circ\pi_{w,s_iw_2}:\Conf_w(\SA)\dashrightarrow\Conf_{w_2}(\SA)$ defined in Proposition~\ref{prop:ww_2}.
\begin{lem}
\label{PullbackT}
We have  
\[
 \varphi^* (T_{r,rs_j}) = T_{s_ir,s_irs_j}.
\]
\end{lem}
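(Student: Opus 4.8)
The strategy is to compute $\varphi$ explicitly on the flag part and thereby reduce the claim to a manipulation of generalized minors. Observe first that $T_{r,rs_j}$ on $\Conf_{w_2}(\SA)$ and $T_{s_ir,s_irs_j}$ on $\Conf_w(\SA)$ are pulled back along the projections $\Conf_{w_2}(\SA)\cong T\times\mathring{\CB}_{e,w_2}\to\mathring{\CB}_{e,w_2}$ and $\Conf_w(\SA)\cong T\times\mathring{\CB}_{e,w}\to\mathring{\CB}_{e,w}$, so both are independent of the torus coordinate; hence I would ignore all torus factors, work on $\mathring{\CB}_{e,w}$, and check the identity of rational functions on a dense open locus, which suffices.

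Next I would pin down the flag part of $\varphi=\pi_R\circ R_i\circ\pi_{w,s_iw_2}$ on the locus where all three maps are defined. By Remark~\ref{ReductionRemark}, $\pi_{w,s_iw_2}$ carries $gB^+$ to $g_1^{-1}gB^+\in\mathring{\CB}_{e,s_iw_2}$, where $g_1\in U^-$ represents the intermediate flag after $\bfw_1$; exactly as in the proof of Proposition~\ref{OneStepMutation}, $g_1$ has a Marsh--Rietsch factorization $g_1=y_{i_1}(b_1)\cdots y_{i_m}(b_m)$ with $\bfw_1=(i_1,\dots,i_m)$. Applying next the explicit formula \eqref{eq:iwi} for $R_i$ (writing the flag after the leading $s_i$ as $y_i(a)B^+$, $a\in\BC^\times$) and then the definition of $\pi_R$, one finds that, up to torus coordinates, $\varphi$ sends $gB^+$ to the flag $hB^+$ with representative $h=y_i(1/a)\,\dot{s}_i^{-1}g_1^{-1}g$.

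It then suffices to show, for $w'\in\{r,rs_j\}$, that $\Delta_{\omega_j}\bigl(\dot{w'}^{-1}h\bigr)=\Delta_{\omega_j}\bigl(\dot{(s_iw')}^{-1}g\bigr)=\Delta^{s_iw'\omega_j}_{\omega_j}(g)$, for then the ratio of the $w'=rs_j$ and $w'=r$ cases computes $\varphi^{*}T_{r,rs_j}(gB^+)=T_{s_ir,s_irs_j}(gB^+)$ (recall that $T_{r,rs_j}=\Delta^{rs_j\omega_j}_{\omega_j}/\Delta^{r\omega_j}_{\omega_j}$ is a genuine rational function on $\CB$ by \eqref{eq:well}). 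The required simplification uses three facts, all consequences of the hypotheses $r<s_ir<s_irs_j\le w$ and $r<rs_j\le w_2$: (i) $\dot{s}_i\dot{w'}=\dot{(s_iw')}$, since $\ell(s_iw')=\ell(w')+1$; (ii) $\dot{w'}^{-1}y_i(c)\dot{w'}\in U^-$ for any $c$, since $(w')^{-1}\alpha_i\in\Delta^{re}_+$, equivalently $\ell(s_iw')>\ell(w')$; and (iii) $\dot{(s_iw')}^{-1}g_1^{-1}\dot{(s_iw')}\in U^-$. Granting these, one absorbs $y_i(1/a)$ through $\dot{w'}^{-1}$ using (ii), rewrites $\dot{w'}^{-1}\dot{s}_i^{-1}=\dot{(s_iw')}^{-1}$ using (i), and absorbs $g_1^{-1}$ using (iii), invoking the left $U^-$-invariance $\Delta_{\omega_j}(ug)=\Delta_{\omega_j}(g)$ of the minors (\S\ref{sec:prelim:minor}) at each step.

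I expect the main obstacle to be fact (iii) --- equivalently, turning the left-most subexpression hypothesis into a root-positivity statement --- together with the bookkeeping behind the clean form of $\varphi$. For (iii) I would argue that the hypothesis forces the left-most subexpression of $s_irs_j$ in $\bfw$ to select no letter of $\bfw_1$, which means $(s_irs_j)^{-1}\alpha_{i_k}\in\Delta^{re}_+$ for every $k\le m$; since $s_ir<s_irs_j$ rules out $(s_ir)^{-1}\alpha_{i_k}=-\alpha_j$, applying $s_j$ then also gives $(s_ir)^{-1}\alpha_{i_k}\in\Delta^{re}_+$ for all $k\le m$. Conjugating the factorization of $g_1$ by $\dot{(s_iw')}^{-1}$ thus sends each factor $y_{i_k}(b_k)$ into $U_{-(s_iw')^{-1}\alpha_{i_k}}\subset U^-$, which is (iii). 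The remaining minor computations and the verification that the working locus is dense are routine.
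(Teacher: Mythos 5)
Your proposal is correct and follows essentially the same route as the paper's proof: both compute the flag component of $\varphi$ explicitly as $y_i(1/a)\dot{s}_i^{-1}g_1^{-1}gB^+$, restrict to the dense locus where $g_1$ admits a Marsh--Rietsch factorization $y_{i_1}(b_1)\cdots y_{i_m}(b_m)$, translate the left-most subexpression hypothesis into the positivity $(s_irs_j)^{-1}\alpha_{i_k}\in\Delta^{re}_+$ (hence $\dot{(s_iw')}^{-1}g_1\dot{(s_iw')}\in U^-$), and conclude by left $U^-$-invariance of the minors. The only difference is that you spell out the $w'=r$ case (ruling out $(s_ir)^{-1}\alpha_{i_k}=-\alpha_j$), which the paper dispatches with ``similarly.''
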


\begin{proof}
We follow notations in Remark \ref{ReductionRemark}. By Proposition \ref{ReflQuasiEquiv}, under the identification $T \times \mathring{\CB}_{e,w} \cong \Conf_w(\SA)$, the map $\varphi$ is explicitly given by
\[
(t,gB^+)\mapsto (t',y_i(a^{-1})\dot{s}_i^{-1}g_2B^+), \quad \text{ for some $t'\in T$, $a\in \BC^\x$, $g_2\in \mathring{\CB}_{e,s_iw_2}$.}
\]
 Since $r<s_ir<s_irs_j$, we have $s_irs_j>rs_j$. So $\dot{s}_j^{-1}\dot{r}^{-1}y_i(a^{-1})\dot{r}\dot{s}_j\in U^-$.

Recall $g_1\in U^-$ defined in Remark \ref{ReductionRemark}. Then $g_1g_2B^+ = gB^+$. Since we are interested in the identity of rational functions, we further assume $g_1B^+\in \CR_{\bfe,\bfw}$ by \cite{MR04}*{Theorem~7.1}. So 
\[
g_1 = y_{i_1}(a_1) y_{i_2}(a_2) \cdots y_{i_k}(a_k), a_i \in  \BC^\x.
\]
Here $s_{i_1} \cdots s_{i_k} = \bfw_1$ is the reduced expression of $w_1$.  

The left-most subexpression assumption forces $s_{i_k}s_irs_j>s_irs_j$ for any $i_k$ appearing in $\bfw_1$.
Then $\dot{s}_j^{-1}\dot{r}^{-1}\dot{s}_i^{-1}y_{i_k}(b)\dot{s}_i\dot{r}\dot{s}_j\in U^-$  and $\dot{s}_j^{-1}\dot{r}^{-1}\dot{s}_i^{-1}g_1\dot{s}_i\dot{r}\dot{s}_j\in U^-$. Then we compute
\begin{align*}
\Delta^{rs_j \omega_j}_{\omega_j}(y_i(a^{-1})\dot{s}_i^{-1}g_2)&=\Delta_{\omega_j}(\dot{s}_j^{-1}\dot{r}^{-1}y_i(a^{-1})\dot{s}_i^{-1}g_2)\\
&=\Delta_{\omega_j}(\dot{s}_j^{-1}\dot{r}^{-1}\dot{s}_i^{-1}g_2)\\
&=\Delta_{\omega_j}(\dot{s}_j^{-1}\dot{r}^{-1}\dot{s}_i^{-1}g_1g_2)\\
&=\Delta^{s_irs_j \omega_j}_{\omega_j}(g_1g_2).
\end{align*}
Similarly, $\Delta^{r\omega_j}_{\omega_j}(y_i(a^{-1})\dot{s}_i^{-1}g_2)=\Delta^{s_ir \omega_j}_{\omega_j}(g_1g_2)$. We conclude that
\[
\varphi^* (T_{r,rs_j})(t,gB^+)
= 
\frac{\Delta^{rs_j \omega_j}_{\omega_j}(y_i(a^{-1})\dot{s}_i^{-1}g_2)}{\Delta^{r\omega_j}_{\omega_j}(y_i(a^{-1})\dot{s}_i^{-1}g_2)}
=
\frac{\Delta^{s_irs_j \omega_j}_{\omega_j}(g_1g_2)}{\Delta^{s_ir \omega_j}_{\omega_j}(g_1g_2)}
=T_{s_ir,s_irs_j}(t,gB^+).
\]
We finish the proof now.
\end{proof}

\section{Upper cluster structures on Richardson varieties}
\label{sec:upper}

In this section, we show that the open Richardson variety $\mathring{\CB}_{v,w}$ has an upper cluster structure. 

Our seed is constructed by consecutive freezing from the seed $\bfs(\bfw)$, and deleting a collection of isolated frozen vertices at the end. Geometrically, this corresponds to the realization of $\mathring{\CB}_{v,w}$ as a locally subvariety of $\mathring{\CB}_{e,w}$ following Proposition~\ref{prop:CB}. 

One can also construct the seed from $\bfs(\bfw)$ by consecutive freezing and deletion alternatively. Geometrically, this corresponds to the realization of 
$\mathring{\CB}_{v,w}$ as a closed subvariety of $\mathring{\CB}_{vs_i,w}$ ($vs_i \le v$) following Corollary \ref{cor:T1}, and then perform induction. There is no essential difference, except that the second approach would require us to formulate rather complicated induction hypotheses.

\subsection{Notations and seeds}\label{sec:notations}
We fix some notations to be used throughout this section.  

Let $v,w\in W$, $v\leq w$. Fix a reduced expression $\bfw=(i_1,\cdots,i_n)$ for $w$. Let $\bfv=(i'_1,\cdots,i'_n)$ be the left-most (recall \S\ref{sec:prelim:Weyl}) subexpression for $v$ in $\bfw$ and $1\leq k_1<\cdots <k_m\leq n$ be the indices where $i'_{k_l}=i_{k_l} (\neq e)$. In particular, we have  $m = \ell(v)$.
We further define 
\begin{itemize}
\item $\bfw^{(0)}=\bfw$, $w^{(0)}=w$, $\bfw^{(l)}=(i_{k_l+1},\cdots,i_n)$, and $w^{(l)}=s_{i_{k_l+1}}\cdots s_{i_n}$;
\item $v_{(l)}= s_{i_{k_1}} \cdots s_{i_{k_l}}$;  
\item $r_{(l)}= s_{i_{k_1}} \cdots s_{i_{k_{l-1}}}$ and $r_{(0)} = e$.
\end{itemize}

\begin{defi}\label{def:Ml}
Let $l\in\{1,\cdots,m\}$ and $i=i_{k_l}$. Suppose $i_{k_l}$ is the $a_l$-th appearance of $i$ in $\bfv$. Suppose in $\bfw$, there are $b_l$ appearances of $i$ before $i_{k_l}$ that is not in $\bfv$. In particular, $i_{k_l}$ is the $(a_l+b_l)$-th appearance of $i$ in $\bfw$.
\begin{enumerate}
\item We define $\Tilde{\mu}_l = \Tilde{\mu}_l(v, \bfw)$ to be the mutation sequence
\[
\Tilde{\mu}_l:=
\begin{cases}
\mu_{(i,n_i-a_l)}\circ\cdots \circ\mu_{(i,b_l+1)} & \text{if $a_l+b_l<n_i$};\\
\id & \text{if $a_l+b_l=n_i$}.
\end{cases}
\]
\item We also define
\[
M_l = M_l(v, \bfw)=\Tilde{\mu}_l\circ\cdots \Tilde{\mu}_1.
\]
\end{enumerate} 
Note that the definition depends on the reduced expression $\bfw$.
\end{defi}

\begin{defi}
\label{TildeSeedDefi}
We define a seed $\Tilde{\bfs}_l$ inductively starting from the seed $\Tilde{\bfs}_0 = \bfs(\bfw)$ for $\Conf_w(\SA)$ for $0 \le l \le m$. 
At step $l$,  suppose we are given the seed $\Tilde{\bfs}_{l-1}$. Mutate $\Tilde{\bfs}_{l-1}$ using the sequence $\Tilde{\mu}_l$. In $\Tilde{\mu}_l(\Tilde{\bfs}_{l-1})$, freeze all the vertices adjacent to $({i_{k_l}},n_{i_{k_l}}-a_l+1)$ to obtain the seed $\Tilde{\bfs}_l$. Note that the remaining vertices are unchanged. 

The natural cluster algebra $\CA(\Tilde{\bfs}_l)$ associated to the seed $\Tilde{\bfs}_l$ has the same field of fractions as $\CA(\bfs(\bfw))$.
\end{defi}

Note that we only freeze vertices, but keep the exchange matrices the same. In other words, we keep the arrows between frozen vertices. They are irrelevant to define the cluster algebra $\CA(\Tilde{\bfs}_l)$, but we need the additional data to define the exchange ratio of frozen vertices. This is crucial throughout this section. 

\begin{example}
We illustrate the definition by an example. Let 
\[
\bfw = s_2s_1s_2s_3s_4s_5s_4 s_2 s_3s_4 s_2 s_3 s_1 \in S_6.
\]
Let $v = s_2s_1s_3s_5 s_4s_2$. Then the left-most subexpression $\bfv$ is given by the underlined subexpressions 
\[
\underline{s_2} \,\underline{s_1}s_2 \underline{s_3}s_4 \underline{s_5} \,\underline{s_4}\, \underline{ s_2} s_3s_4 s_2 s_3 s_1
\] 

Then we have 
\[
\tilde{\mu}_1 = \mu_{(2,3)} \circ \mu_{(2,2)}\circ  \mu_{(2,1)},   
\quad \tilde{\mu}_2 = \mu_{(1,1)}, \quad \tilde{\mu}_3 = \mu_{(3,2)}\circ  \mu_{(3,1)}  
\]
\[
\tilde{\mu}_4 = id, \quad \tilde{\mu}_5 = \mu_{(4,2)}, 
\quad \tilde{\mu}_6 = \mu_{(2,2)}.
\]
Let us explain the computation for $\tilde{\mu}_5$. We are looking at the $5$th simple reflection (from the left) in $\bfv$, which is $s_4$. This is the first $s_4$ in $\bfv$. So $a_4 =1$. This corresponds to the $7$th simple reflection in $\bfw$. There are exactly one $s_4$ before in $\bfw$, that is not in $\bfv$, i.e., the $5$th element in $\bfw$. So $b_4 =1$.  We have $n_4 =3$, that is, there exactly three $s_4$ appearing in $\bfw$. This determines $\tilde{\mu}_5 = \mu_{(4,2)} = \mu_{(4,b+1)} = \mu_{(4,n_4 -a)}$.
\end{example}
 
\begin{remark}
This example is the same one considered in \cite{Men22}*{Example~B.1}. The algorithm in \cite{Men22} works only for finite types, while our algorithm works for any Kac--Moody types. Nevertheless, the algorithm in \cite{Men22} serves the motivation for us. A more conceptual (and new) reason will be given in the proof below.
\end{remark}

\subsection{The seed $\Tilde{\bfs}_l$}
\label{sec:upper:comp}

The main goal of this section is to establish various properties of the seed $\Tilde{\bfs}_l$ for $0 \le l \le m$.  

Consider the maps $\varphi_l:\Conf_{w^{(l-1)}}(\SA)\dashrightarrow\Conf_{w^{(l)}}(\SA)$ defined via Propositions~\ref{prop:ww_2} for any $1 \le l \le m$. Let
\[
\Tilde{\varphi}_{l-1}=\varphi_{l-1}\circ\cdots\circ \varphi_1:\Conf_w(\SA)\to \Conf_{w^{(l-1)}}(\SA).
\]

\begin{lem}\label{prop:tildephil}
The map  
\[
\begin{tikzcd}
    \Tilde{\varphi}_{l-1}: (\Conf_{w}(\SA),M_{l-1}(\bfs(\bfw))) \ar[rrr,dashed,"(J^+(\bfw^{(l-1)}) {\,,\,} S_{l-1})"] &&& (\Conf_{w^{(l-1)}}(\SA),\bfs(\bfw^{(l-1)})).
\end{tikzcd}
\]
is rational quasi-cluster. Here $S_{l-1}$ is some subset of $J(\bfs(\bfw))$ which we do not need to determine explicitly. 
\end{lem}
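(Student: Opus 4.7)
The plan is to proceed by induction on $l$. For $l=1$, the statement is trivial: $\tilde\varphi_0 = \id_{\Conf_w(\SA)}$, $M_0 = \id$, $\bfw^{(0)} = \bfw$, and the identity map is rational quasi-cluster with the identity embedding and $S_0 = \emptyset$.

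For the inductive step, I would assume the claim for index $l-1$, so $\tilde\varphi_{l-2}: (\Conf_w(\SA), M_{l-2}(\bfs(\bfw))) \dashrightarrow (\Conf_{w^{(l-2)}}(\SA), \bfs(\bfw^{(l-2)}))$ is rational quasi-cluster with some embedding $\iota_{l-2}: J^+(\bfw^{(l-2)}) \hookrightarrow J(\bfw)$ and frozen set $S_{l-2}$. Applying Proposition~\ref{prop:ww_2}(2) with $\bfw$ there replaced by $\bfw^{(l-2)}$ and $i = i_{k_{l-1}}$ (noting that by the left-most property, $s_{i_{k_{l-1}}}$ occurs in $\bfw^{(l-2)}$ for the first time exactly at the position corresponding to $k_{l-1}$, so the required splitting $\bfw^{(l-2)} = \bfw_1\,\mathbf{s}_{i_{k_{l-1}}}\,\bfw^{(l-1)}$ with $s_{i_{k_{l-1}}}\notin \bfw_1$ is well-defined), the one-step map $\varphi_{l-1}$ is rational quasi-cluster from $(\Conf_{w^{(l-2)}}, \mu_{\vec{i_{k_{l-1}}}}(\bfs(\bfw^{(l-2)})))$ to $(\Conf_{w^{(l-1)}}, \bfs(\bfw^{(l-1)}))$. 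Using Proposition~\ref{SubalgMutation}, I would pull back the target-side mutation sequence $\mu_{\vec{i_{k_{l-1}}}}$ along $\iota_{l-2}$ to an equivalent mutation sequence $\mu_{\iota_{l-2}(\vec{i_{k_{l-1}}})}$ acting on $M_{l-2}(\bfs(\bfw))$, then invoke Lemma~\ref{SubalgTransitive} to compose $\tilde\varphi_{l-2}$ with $\varphi_{l-1}$.

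The crucial combinatorial claim that makes the induction go through is that the pulled-back mutation $\mu_{\iota_{l-2}(\vec{i_{k_{l-1}}})}$ coincides \emph{exactly} with $\tilde\mu_{l-1}$ from Definition~\ref{def:Ml}. To check this, one uses two inputs. First, by the left-most property of $\bfv$, no copy of $s_{i_{k_{l-1}}}$ can appear in $\bfw$ strictly between positions $k_{l-2}$ and $k_{l-1}$; combined with the defining counts $a_{l-1}$ (how many $s_{i_{k_{l-1}}}$ lie in $\bfv$ up to and including $k_{l-1}$) and $b_{l-1}$ (how many $s_{i_{k_{l-1}}}$ lie in $\bfw$ before $k_{l-1}$ but outside $\bfv$), this pins down the number of $s_{i_{k_{l-1}}}$ in $\bfw^{(l-2)}$ as $n_{i_{k_{l-1}}} - a_{l-1} - b_{l-1} + 1$. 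Second, one tracks the level-$i_{k_{l-1}}$ part of the iterated embedding $\iota_{l-2}$ and computes that it carries the mutable level-$i_{k_{l-1}}$ vertices $(i,1),\dots,(i,n'_i-1)$ of $\bfw^{(l-2)}$ precisely to the range $[(i,b_{l-1}+1), \ldots, (i,n_i - a_{l-1})]$ in $J(\bfw)$, matching Definition~\ref{def:Ml}.

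This combinatorial matching is the main obstacle of the proof: the inductive skeleton is dictated by Propositions~\ref{SubalgMutation}, \ref{prop:ww_2} and Lemma~\ref{SubalgTransitive}, but verifying that the iterated index shifts at level $i_{k_{l-1}}$ interact correctly with the seemingly asymmetric interval $[b_l+1, n_i - a_l]$ in Definition~\ref{def:Ml} demands careful bookkeeping through the amalgamation process, the preceding mutation sequences $M_{l-2}$ (which effectively shift how the level-$i$ vertex labels are identified across the embedding), and the $\pi_R$, $R_i$, $\pi_{?,?}$ factorization of $\varphi_{l-1}$. Once this is established, the output is the desired rational quasi-cluster map from $(\Conf_w, M_{l-1}(\bfs(\bfw)))$ to $(\Conf_{w^{(l-1)}}, \bfs(\bfw^{(l-1)}))$, with embedding $\iota_{l-1}$ obtained by composing $\iota_{l-2}$ with the embedding of Proposition~\ref{prop:ww_2}(2) for $\varphi_{l-1}$, and with $S_{l-1}$ assembled from $S_{l-2}$ and the frozen set produced in that proposition as dictated by Lemma~\ref{SubalgTransitive}.
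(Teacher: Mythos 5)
Your proposal is correct and follows essentially the same route as the paper: iterate Proposition~\ref{prop:ww_2}, transport the one-level mutation sequences through the quasi-cluster embeddings via Proposition~\ref{SubalgMutation}, compose with Lemma~\ref{SubalgTransitive}, and check that the pulled-back sequence $\mu_{\iota_{l-2}(\vec{\imath}_{k_{l-1}})}$ is exactly $\tilde{\mu}_{l-1}$. The paper presents this as the commutative diagram (illustrated for $l=3$) plus Remark~\ref{rem:shift}, leaving the index bookkeeping implicit, whereas you spell out the counts $a_l,b_l,n_i$ and the left-most-subexpression argument; your accounting matches the paper's.
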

\begin{proof}

Let us illustrate the proof by following picture for $l=3$, which we hope would worth a thousand words. 
\[
\begin{tikzcd}
(\Conf_{w}(\SA),\bfs_0) \ar[r,dash, "\Tilde{\mu}_1 = \mu_{\overrightarrow{i_{k_1}}}"]\ar[rd, dashed] & (\Conf_{w}(\SA),M_1(\bfs_0)) \ar[d, dashed, "(J^+(\bfw^{(1)}){\,,\,} S_1)"]\ar[r,dash, "\Tilde{\mu}_2"] & (\Conf_{w}(\SA),M_2(\bfs_0)) \ar[d, dashed, "(J^+(\bfw^{(1)}){\,,\,} S_1)"]\\
& (\Conf_{w^{(1)}}(\SA),\bfs(\bfw^{(1)}))\ar[r,dash, "{\mu}_{\overrightarrow{i_{k_2}}}"] \ar[rd, dashed] & (\Conf_{w^{(1)}}(\SA),\mu_{\overrightarrow{i_{k_2}}}(\bfw^{(1)}))) \ar[d,dashed,"(J^+(\bfw^{(2)}) {\,,\,} S')"]\\
& & (\Conf_{w^{(2)}}(\SA),\bfs(\bfw^{(l)}))
\end{tikzcd}
\]
\begin{itemize}[leftmargin=*]
\item The horizontal lines are change of seeds and identities on the configuration spaces. 
\item The vertical arrows are $\tilde{\varphi}_1 = \varphi_1$ and $\tilde{\varphi}_2 = \varphi_2 \circ \varphi_1$, respectively. 
\item The set $S_1$ is given by Proposition~\ref{prop:ww_2}. 
\item The set $S'\subset J(\bfw^{(1)})$ is given by $S'= \{k\in J(\bfw^{(1)})\,|\, k< (i_{k_2},1)\}$.
\item Under the embedding $J^+(\bfw^{(1)})\to J(\bfw)$, $J^+(\bfw^{(1)})\cap S'$ maps to $\{k\in J(\bfw)\,|\, k_1<k<k_2\}$. 
Its union with $S_1$ is $S_2$. 
\item By Lemma \ref{SubalgTransitive}, we get the data $(J^+(\bfw^{(2)}), S_2)$ for $\Tilde{\varphi}_2$. 
\item All maps are rational quasi-cluster by Proposition~\ref{SubalgMutation}. 
\item The general case follows from extending this picture and induction.
\end{itemize}
Note that $\Tilde{\mu}_2 \neq \mu_{\overrightarrow{i_{k_2}}}$ for the seed $M_1(\bfs(\bfw))$ in general by Proposition~\ref{prop:ww_2} and Remark~\ref{rem:shift}. 
\end{proof}
Lemma~\ref{prop:tildephil} is our motivation for the definition of $\tilde{\mu}_l$ in Definition~\ref{def:Ml}. The following corollary is immediate now.
\begin{cor}
The map $\Tilde{\varphi}_{l-1}$ induces a rational quasi-cluster map
\[
\begin{tikzcd}
    (\Conf_{w}(\SA),\Tilde{\bfs}_{l-1}) \ar[rrr,dashed,"(J^+(\bfw^{(l-1)}) {\,,\,} S_{l-1})"] &&& (\Conf_{w^{(l-1)}}(\SA),\bfs(\bfw^{(l-1)})).
\end{tikzcd}
\]
\end{cor}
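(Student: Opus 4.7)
The plan is to deduce this corollary directly from the preceding lemma by observing that freezing additional vertices in the source seed preserves the rational quasi-cluster property. First, I would recall that Lemma~\ref{prop:tildephil} already gives
\[
\begin{tikzcd}
\Tilde{\varphi}_{l-1}: (\Conf_{w}(\SA),M_{l-1}(\bfs(\bfw))) \ar[r,dashed,"{(J^+(\bfw^{(l-1)}){\,,\,}S_{l-1})}"] & (\Conf_{w^{(l-1)}}(\SA),\bfs(\bfw^{(l-1)})),
\end{tikzcd}
\]
with an explicit embedding $\iota:J^+(\bfw^{(l-1)})\hookrightarrow J(\bfw)$ and subset $S_{l-1}\subset J(\bfw)$. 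The corollary asks to replace $M_{l-1}(\bfs(\bfw))$ by $\Tilde{\bfs}_{l-1}$ on the left-hand side.

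Next I would compare the two source seeds. By Definition~\ref{TildeSeedDefi}, $\Tilde{\bfs}_{l-1}$ is obtained from $M_{l-1}(\bfs(\bfw))$ by only freezing certain vertices; no further mutations or defrostings are performed. Hence the ambient field, the cluster variables $\{A_i\}_{i\in J(\bfw)}$, and the exchange matrix entries $\varepsilon_{ij}$ all coincide for the two seeds, and $J_{\uf,\Tilde{\bfs}_{l-1}}\subset J_{\uf,M_{l-1}(\bfs(\bfw))}$. Consequently the exchange ratios $X_{i,\Tilde{\bfs}_{l-1}}$ for $i\in J_{\uf,\Tilde{\bfs}_{l-1}}$ agree with $X_{i,M_{l-1}(\bfs(\bfw))}$.

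I would then verify the three conditions of Definition~\ref{SubalgDefi} with the same pair $(J^+(\bfw^{(l-1)}),S_{l-1})$. The embedding compatibility $\iota(J_{\uf,\Tilde{\bfs}_{l-1}})\subset J'_{\uf}$ is automatic since $J_{\uf}$ only shrinks under freezing. Conditions (1) and (3) refer only to cluster variables and to $\Tilde{J}=J^+(\bfw^{(l-1)})$, which are unchanged, so they transfer verbatim. For condition (2), the identity $\sigma(X_i)=X'_{\iota(i)}$ is now demanded only for $i\in J_{\uf,\Tilde{\bfs}_{l-1}}$, a subset of the mutable vertices for which the identity was already established, and both sides are literally the same expressions as before.

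I do not anticipate a genuine obstacle: this is essentially a bookkeeping observation that freezing in the source seed preserves rational quasi-cluster maps, in the same spirit as Proposition~\ref{FreezingInclusion} for ordinary cluster-algebra inclusions. The entire proof should be a short remark pointing at Lemma~\ref{prop:tildephil} and Definition~\ref{TildeSeedDefi}.
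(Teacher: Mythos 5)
Your overall strategy---deduce the corollary from Lemma~\ref{prop:tildephil} by observing that passing from $M_{l-1}(\bfs(\bfw))$ to $\Tilde{\bfs}_{l-1}$ only freezes vertices, so the ambient field, the cluster variables, the exchange matrix, and hence all exchange ratios are unchanged---is the intended one, and your treatment of conditions (1)--(3) of Definition~\ref{SubalgDefi} is essentially right. But you have the two sides of Definition~\ref{SubalgDefi} swapped, and this leads you to dismiss as ``automatic'' the one point that genuinely needs checking. By Definition~\ref{defi:RationalV}, the geometric map $\Tilde{\varphi}_{l-1}\colon \Conf_w(\SA)\dashrightarrow \Conf_{w^{(l-1)}}(\SA)$ induces $\sigma\colon K(\Conf_{w^{(l-1)}}(\SA))\to K(\Conf_w(\SA))$, so in the notation of Definition~\ref{SubalgDefi} the \emph{unprimed} seed is $\bfs(\bfw^{(l-1)})$ (untouched) and the \emph{primed} seed is $\Tilde{\bfs}_{l-1}$ (where the freezing occurs); the structural requirement is $\iota(J_\uf(\bfw^{(l-1)}))\subset J'_\uf=J_\uf(\Tilde{\bfs}_{l-1})$. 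Freezing \emph{shrinks} $J'_\uf$, so this condition becomes strictly harder, not easier; your inclusion ``$\iota(J_{\uf,\Tilde{\bfs}_{l-1}})\subset J'_\uf$'' is not even meaningful, since $\iota$ is defined on a subset of $J(\bfw^{(l-1)})$, not of $J(\bfw)$. (Your remark on condition (2) has the same inversion---the identity $\sigma(X_i)=X'_{\iota(i)}$ is still required for all $i\in J_\uf(\bfw^{(l-1)})$, not for a smaller set---though there the conclusion survives because neither side changes.)

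What actually has to be shown is that none of the vertices frozen at steps $1,\dots,l-1$ lies in $\iota(J_\uf(\bfw^{(l-1)}))$. This is true but requires the structure of the construction: the vertices frozen at step $j$ are those adjacent to $(i_{k_j},n_{i_{k_j}}-a_j+1)$, i.e.\ the support of its exchange ratio; by Proposition~\ref{prop:XA} and Proposition~\ref{prop:ww_2} this support is contained in the union of $S_j$, of already-frozen vertices, and of the $\iota_j$-image of the unique mutable neighbour $(i_{k_j},m'_{i_{k_j}}-1)$ of the last level-$i_{k_j}$ vertex in $\bfs(\bfw^{(j)},\overline{i_{k_j}})$---and that neighbour is the \emph{last} (hence frozen) vertex at level $i_{k_j}$ of $\bfs(\bfw^{(j)})$ (cf.\ Remark~\ref{rem:shift}). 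Since $S_j$ is disjoint from $\iota_j(J^+(\bfw^{(j)}))$, since $\iota_j$ is injective, and since $\iota(J_\uf(\bfw^{(l-1)}))\subset\iota_j(J_\uf(\bfw^{(j)}))$ for every $j\le l-1$ by Lemma~\ref{SubalgTransitive}, an induction on $j$ gives the required disjointness. Without some version of this argument your proof does not establish that $\iota$ maps mutable vertices to mutable vertices, which is also exactly what is needed later to apply Proposition~\ref{SubalgMutation} to the sequence $\Tilde{\mu}_l$ on $\Tilde{\bfs}_{l-1}$.
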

Note that on the left hand side, we consider the cluster algebra $\CA(\tilde{\bfs}_l)$ in the function field of $\Conf_{w}(\SA)$, as indicated by the seed; see Definition~\ref{defi:RationalV}.

Consider the commutative diagram of rational quasi-cluster maps 
\begin{equation}\label{eq:Phil}
\begin{tikzcd}
    (\Conf_{w}(\SA),\Tilde{\bfs}_{l-1}) \ar[rr,dashed,"\Tilde{\varphi}_{l-1}"] \ar[d,dashed, "\Phi_l"] && (\Conf_{w^{(l-1)}}(\SA),\bfs(\bfw^{(l-1)})) \ar[d,dashed,"\pi_{w^{(l-1)}, s_{i_{k_l}}w^{(l)}}" ]  \\ (\Conf^{s_{i_{k_l}}}_{ w^{(l)}}(\SA),\bfs(\overline{i_{k_l}},\bfw^{(l)}))&& (\Conf_{s_{i_{k_l}}w^{(l)}}(\SA),\bfs(i_{k_l},\bfw^{(l)})) \ar[ll,dashed,"R_i"].
\end{tikzcd}
\end{equation}
After mutations and applying Proposition~\ref{prop:SW325}, we hence have the rational quasi-cluster map
\[
\begin{tikzcd}
(\Conf_{w}(\SA), \Tilde{\mu}_l(\Tilde{\bfs}_{l-1})) \ar[r,dashed,"\Phi_l"] & (\Conf^{s_{i_{k_l}}}_{ w^{(l)}}(\SA),\bfs(\bfw^{(l)},\overline{i_{k_l}})).
\end{tikzcd}
\]
Let $\{A_i'\}$ be the cluster variables in $(\Conf^{s_{i_{k_l}}}_{ w^{(l)}}(\SA),\bfs(\bfw^{(l)},\overline{i_{k_l}}))$. Let $\{A_i\}$ be the cluster variables in $(\Conf_{w}(\SA), \Tilde{\mu}_l(\Tilde{\bfs}_{l-1}))$. Recall \eqref{eq:auLie} for the exchange ratios of frozen vertices. 
 
\begin{prop}\label{prop:XA}
Let $(i,m'_i)$ be the right-most (frozen) vertex in $\bfs(\bfw^{(l)},\overline{i_{k_l}})$ at level $i$ with $m'_i>0$, corresponding to the vertex $(i,m_i)$ in $\Tilde{\mu}_l(\Tilde{\bfs}_{l-1})$ under the rational quasi-cluster map $\Phi_l$.
\begin{enumerate}
\item $\Phi^*_l(A'_{(i,m'_i)})$ is a monomial of frozen variables.
\item $\Phi^*_l(X'_{(i,m'_i)}) = X_{(i,m_i)}$ times a monomial of frozen variables.
\end{enumerate} 
\end{prop}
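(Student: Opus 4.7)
The plan is to prove both parts by pulling back $A'_{(i,m'_i)}$ and $X'_{(i,m'_i)}$ through the three rational quasi-cluster maps composing $\Phi_l$ in the diagram \eqref{eq:Phil}, applying the explicit formulas in Propositions~\ref{SubexpSubalg}, \ref{ReflQuasiEquiv}, and \ref{prop:witow}. First I would identify the vertex $(i, m_i) \in J(\bfw)$ corresponding to $(i, m'_i)$. The left-most subexpression property forces the prefix $\bfw_1 = (i_{k_{l-1}+1}, \dots, i_{k_l-1})$ to contain no copy of $s_{i_{k_l}}$. Composing the embeddings $J^+(\bfw^{(l)},\overline{i_{k_l}}) \hookrightarrow J^+(s_{i_{k_l}}\bfw^{(l)}) \hookrightarrow J^+(\bfw^{(l-1)}) \hookrightarrow J(\bfw)$ arising from the three maps, one checks inductively that the right-most level-$i$ vertex in the target is sent to the right-most level-$i$ vertex of $J(\bfw)$, namely $(i, n_i)$. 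Since $(i, n_i)$ is frozen in the initial seed $\bfs(\bfw)$ and none of the freezings defining $\tilde{\bfs}_{l-1}$ nor the mutations in $\tilde{\mu}_l$ affect it (these only touch mutable vertices), $(i, m_i) = (i, n_i)$ remains frozen in $\tilde{\mu}_l(\tilde{\bfs}_{l-1})$.

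For part (1), I would combine the exact pullback formulas for $A$-variables at the right-most vertex: $R_{i_{k_l}}^*$ by Proposition~\ref{ReflQuasiEquiv}(3), $\pi^*$ by Proposition~\ref{SubexpSubalg}, and $\pi_R^*$ by Proposition~\ref{prop:witow}(2)-(3); each preserves the right-most $A$-variable without any extra factor. Hence the composition gives $\Phi_l^*(A'_{(i, m'_i)}) = A_{(i, n_i)}$, which is itself a frozen variable, so certainly a monomial in frozen variables. For part (2), the analogous computation on $X'_{(i, m'_i)}$ proceeds in the same way, but now picks up a Laurent monomial $M$ in frozen variables: each $\pi_R$-component of the maps $\varphi_{j}$ composing $\tilde{\varphi}_{l-1}$ contributes factors of the form $A_{(i_{k_j},\,\cdot\,)}^{\pm a_{\bullet\bullet}}$, which by the same right-most argument applied at the levels $i_{k_j}$ for $j<l$ are themselves frozen variables of $\bfs(\bfw)$.

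Finally I would invoke a mild extension of Corollary~\ref{cor:X_ifro} to transport this identity to the mutated seeds $\bfs(\bfw^{(l)},\overline{i_{k_l}})$ (via $\mu_{\overrightarrow{i_{k_l}}}$ and Proposition~\ref{prop:SW325}) and $\tilde{\mu}_l(\tilde{\bfs}_{l-1})$: if $\sigma(X_i) = X'_{\iota(i)} \cdot M$ for a Laurent monomial $M$ in frozen variables, then after any mutation at a mutable vertex the relation persists with the same $M$, because frozen variables are unchanged by mutation and the calculation in the proof of Proposition~\ref{SubalgMutation} goes through verbatim. The main obstacle will be the bookkeeping of index shifts and of the accumulation of frozen monomial factors through the nested composition $\tilde{\varphi}_{l-1} = \varphi_{l-1} \circ \cdots \circ \varphi_1$; once the identification $(i, m_i) = (i, n_i)$ is in place, the remainder is a routine application of the pullback formulas already established.
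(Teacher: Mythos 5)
Your proposal breaks down at the levels that have already been reflected, i.e.\ $i\in\{i_{k_1},\dots,i_{k_{l-1}}\}$, and this is exactly where the content of the proposition lies. Two claims fail there. First, the index identification: by Remark~\ref{rem:shift}, each map $\varphi_s$ shifts the right-most vertex at level $i_{k_s}$ down by one (the $\pi_R$-component of $\varphi_s$ sends the right-most level-$i_{k_s}$ vertex of the smaller seed to the \emph{second-to-last}, mutable, vertex of the larger one), so the vertex $(i,m_i)$ corresponding to $(i,m'_i)$ is of the form $(i,n_i-a+1)$ and is in general \emph{not} $(i,n_i)$; moreover it is frozen in $\Tilde{\mu}_l(\Tilde{\bfs}_{l-1})$ only because of the freezing steps in Definition~\ref{TildeSeedDefi}, not because it was frozen in $\bfs(\bfw)$. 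Second, and consequently, the identity $\Phi_l^*(A'_{(i,m'_i)})=A_{(i,n_i)}$ ``without any extra factor'' is false: once the pullback lands on a mutable vertex of an intermediate seed, the subsequent reflection and the interleaved mutations $\mu_{\vec{i_{k_s}}}$ turn it into a mutated cluster variable. A minimal example: $\bfw=(1,2,1)$, $v=s_1s_2$, $\bfv=(1,2,e)$; the pullback under $\Phi_2$ of the right-most level-$1$ variable of $\bfs(\bfw^{(2)},\overline{2})$ is, up to torus frozen factors, the \emph{mutated} variable $\mu_{(1,1)}(A_{(1,1)})$, not $A_{(1,2)}$. It is a monomial of frozen variables only because $(1,1)$ got frozen when forming $\Tilde{\bfs}_1$; since cluster variables of a seed are algebraically independent, it cannot equal $A_{(1,n_1)}$ times a monomial in the $(j,0)$-variables. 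The same defect infects your part (2): the factors accumulated from the $\pi_R$-components involve variables at second-to-last vertices of intermediate seeds, and your assertion that these are ``frozen variables of $\bfs(\bfw)$'' is not correct.

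The missing idea is the inductive mechanism the paper uses. One factors $\Phi_l$ through $\Phi_{l-1}$ followed by $\pi_R$, $\pi_{w^{(l-1)},s_{i_{k_l}}w^{(l)}}$ and $R_{i_{k_l}}$, and then handles the problematic level $j=i_{k_{l-1}}$ by two facts: (i) with the auxiliary matrix \eqref{eq:auLie}, the second-to-last level-$j$ variable equals the exchange ratio at the right-most level-$j$ frozen vertex, $A''_{(j,m''_j-1)}=X''_{(j,m''_j)}$, so the inductive hypothesis (your part (2) for $\Phi_{l-1}$) applies to it; and (ii) by construction of $\Tilde{\bfs}_{l-1}$, every vertex adjacent to $(j,n_j-a_{l-1}+1)$ is frozen, so $X_{(j,n_j-a_{l-1}+1)}$, hence $\Phi_{l-1}^*(X''_{(j,m''_j)})$, is a monomial of frozen variables (of the \emph{current} seed). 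Your plan of pushing exact pullback formulas all the way down to $\bfs(\bfw)$, together with the proposed extension of Corollary~\ref{cor:X_ifro}, cannot replace this step: without the induction and the freezing property (ii), there is no reason the accumulated factors are frozen, and the $A$-pullback at the reflected levels is genuinely not a single variable. So the argument as written has a real gap, not just bookkeeping to be filled in.
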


\begin{proof}

We shall prove the proposition by induction on $l$. let us first consider the base case when $l=1$. The claims follow from Proposition~\ref{SubexpSubalg} and Proposition~\ref{ReflQuasiEquiv}.

 We consider the general case now. We write 
\[
\begin{tikzcd}
    (\Conf_{w}(\SA),\Tilde{\mu}_l(\Tilde{\bfs}_{l-1})) \ar[r,dashed,"\Phi_{l-1}"] \ar[dd,dashed, "\Phi_l"] 
    &
    (\Conf^{i_{k_{l-1}}}_{w^{(l-1)}}(\SA),\mu_{\overrightarrow{i_{k_l}}}(\bfs(\bfw^{(l-1)},\overline{i_{k_{l-1}}}))) \ar[d,dashed,"\pi_R" ]
    \\
    &
    (\Conf_{w^{(l-1)}}(\SA),\mu_{\overrightarrow{i_{k_l}}}(\bfs(\bfw^{(l-1)}))) \ar[d,dashed,"\pi_{w^{(l-1)}, s_{i_{k_l}}w^{(l)}}" ]  
    \\ 
    (\Conf^{s_{i_{k_l}}}_{ w^{(l)}}(\SA),\bfs(\bfw^{(l)},\overline{i_{k_l}}))
    & (\Conf_{s_{i_{k_l}}w^{(l)}}(\SA),\mu_{\overrightarrow{i_{k_l}}}(\bfs(i_{k_l},\bfw^{(l)}))) \ar[l,dashed,"R_i"] &.
\end{tikzcd}
\]

Let $j=i_{k_{l-1}}$. Denote by $\{A_i''\}$ the cluster variables for $(\Conf^{i_{k_{l-1}}}_{w^{(l-1)}}(\SA),\mu_{\overrightarrow{i_{k_l}}}(\bfs(\bfw^{(l-1)}, \overline{j})))$, and $(i,m''_i)$ be the right-most (frozen) index at level $i$. Under the rational quasi-cluster map $R_i\circ \pi_{w^{(l-1)}, s_{i_{k_l}}w^{(l)}}\circ \pi_R$, $(i,m'_i)$ corresponds to $(i,m''_i)$ in $J(\bfw^{(l-1)},\overline{j})$ for $i\neq j$, and $(j,m'_j)$ corresponds to $(j,m''_j-1)$.

Notice that by the choice of the auxiliary matrix, $X''_{(j,m''_j)}=A''_{(j,m''_j-1)}$. Meanwhile, $(j,m''_j)$ corresponds to the index $(j,n_j-a_{l-1}+1)$ in $\Tilde{\mu}_l(\Tilde{\bfs}_{l-1})$. By the definition of the seed $\Tilde{\bfs}_{l-1}$, all cluster variables appearing in $X_{(j,n_j-a_{l-1}+1)}$ are frozen variables. Therefore, by the induction hypothesis, $\Phi^*_{l-1}(X''_{j,m''_j})$ is a monomial of frozen variables.

When $i \neq j$, by Proposition~\ref{SubexpSubalg}, Proposition~\ref{prop:witow}, Proposition~\ref{ReflQuasiEquiv}, we compute that
\begin{align*}
\pi^*_R \circ \pi^*_{w^{(l-1)}, s_{i_{k_l}}w^{(l)}} \circ R_i^* (A'_{(i, m'_i)}) &= A''_{(i, m''_i)},\\
\pi^*_R \circ \pi^*_{w^{(l-1)}, s_{i_{k_l}}w^{(l)}} \circ R_i^* (X'_{(i, m'_i)}) &= X''_{(i, m''_i)}{A''}_{(j, m''_j-1)}^{-a_{ij}}{A''}_{(j, m''_j)}^{a_{ij}}\\
&= X''_{(i, m''_i)}{X''}_{(j, m''_j)}^{-a_{ij}}{A''}_{(j, m''_j)}^{a_{ij}}.
\end{align*}

Directly by the induction hypothesis, $\Phi^*_{l-1}(A''_{(i,m''_i)})$ is a monomial of frozen variables, while $\Phi^*_{l-1}(X''_{(i,m''_i)}) = X_{(i,m_i)}$ times a monomial of frozen variables. Therefore, $\Phi^*_l(A'_{(i,m'_i)})$ is a monomial of frozen variables, and $\Phi^*_l(X'_{(i,m'_i)}) = X_{(i,m_i)}$ times a monomial of frozen variables.

When $i=j$, we obtain similarly
\begin{align*}
\pi^*_R \circ \pi^*_{w^{(l-1)}, s_{i_{k_l}}w^{(l)}} \circ R_i^* (A'_{(j, m'_j)}) &= A''_{(j, m''_j-1)}\\
&= X''_{(j, m''_j)},\\
\pi^*_R \circ \pi^*_{w^{(l-1)}, s_{i_{k_l}}w^{(l)}} \circ R_i^* (X'_{(j, m'_j)}) &= X''_{(j, m''_j-1)}A''_{(j, m''_j)}.
\end{align*}

Notice that $(j,m''_j-1)$ is mutable in $\bfs(\bfw^{(l-1)}, \overline{j})$ and $\Phi^*_{l-1}$ is rational quasi-cluster, so we have $\Phi^*_{l-1}(X''_{(j, m''_j-1)}) = X_{(j,m_j)}$.  We also conclude that $\Phi^*_l(A'_{(j,m'_j)})$ is a monomial of frozen variables, and $\Phi^*_l(X'_{(j,m'_j)}) = X_{(j,m_j)}$ times a monomial of frozen variables.
\end{proof}

Recall the rational function $T_{?,?}$ defined in \eqref{eq:Trrs}. 

\begin{lem}\label{lem:tildephiT}
We have a commutative diagram of rational maps
\[
\begin{tikzcd}
    \Conf_{w}(\SA) \ar[d,dashed, "\Tilde{\varphi}_{l-1}"]\ar[rd, dashed, "T_{r_{(l)},v_{(l)}}"] &\\
    \Conf_{w^{(l-1)}}(\SA) \ar[r, dashed, "T_{e,s_{i_{k_l}}}" below]& \BC.
\end{tikzcd}
\]
\end{lem}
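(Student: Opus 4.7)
The plan is to iterate Lemma~\ref{PullbackT}. Writing $\tilde{\varphi}_{l-1} = \varphi_{l-1} \circ \cdots \circ \varphi_1$, I will show by downward induction on $p \in \{l-1, \ldots, 1\}$ that
\[
(\varphi_p^* \circ \cdots \circ \varphi_{l-1}^*)(T_{e,\, s_{i_{k_l}}}) \;=\; T_{\,s_{i_{k_p}} \cdots s_{i_{k_{l-1}}},\; s_{i_{k_p}} \cdots s_{i_{k_l}}\,}
\]
as rational functions on $\Conf_{w^{(p-1)}}(\SA)$. Taking $p = 1$ will yield $\tilde{\varphi}_{l-1}^*(T_{e, s_{i_{k_l}}}) = T_{r_{(l)}, v_{(l)}}$, which is precisely the desired commutativity, since $r_{(l)} = s_{i_{k_1}} \cdots s_{i_{k_{l-1}}}$ and $v_{(l)} = r_{(l)} s_{i_{k_l}}$.

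For the inductive step I apply Lemma~\ref{PullbackT} to $\varphi_p$, taking, in the notation of that lemma, $w = w^{(p-1)}$, the reduced expression split as $\bfw^{(p-1)} = \bfw_1^{(p-1)}\,\mathbf{s}_{i_{k_p}}\,\bfw^{(p)}$ at position $k_p$, together with $s_i = s_{i_{k_p}}$, $r = s_{i_{k_{p+1}}} \cdots s_{i_{k_{l-1}}}$ (the empty product $e$ when $p = l-1$), and $s_j = s_{i_{k_l}}$. The conclusion $\varphi_p^*(T_{r,\,rs_j}) = T_{s_i r,\,s_i r s_j}$ is exactly what the induction demands.

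The main technical issue is verifying the hypotheses of Lemma~\ref{PullbackT} at each step. The reducedness/order conditions $r < s_i r < s_i r s_j \leq w^{(p-1)}$ and $r < r s_j \leq w^{(p)}$ are immediate: any contiguous subword of the reduced expression $v_{(l)} = s_{i_{k_1}} \cdots s_{i_{k_l}}$ is reduced, and $s_i r s_j = s_{i_{k_p}} \cdots s_{i_{k_l}}$ and $r s_j$ are visibly subwords of $\bfw^{(p-1)}$ and $\bfw^{(p)}$ at positions $k_p, \ldots, k_l$ and $k_{p+1}, \ldots, k_l$ respectively. The leftmost-subexpression hypothesis---that the leftmost subexpression of $s_{i_{k_p}} \cdots s_{i_{k_l}}$ in $\bfw^{(p-1)}$ agrees with the one in $s_{i_{k_p}} \bfw^{(p)}$---will unwind from the recursive definition of $\bfv$: at any position strictly between $k_{p-1}$ and $k_p$ in $\bfw$, the ``remaining'' element of $v$ is $s_{i_{k_p}} s_{i_{k_{p+1}}} \cdots s_{i_{k_m}}$, so an $s_{i_{k_p}}$ occurring at such a position would decrease it, forcing inclusion in $\bfv$ and contradicting that the position is skipped. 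Hence $s_{i_{k_p}}$ at position $k_p$ is the leftmost occurrence in $\bfw^{(p-1)}$, and both leftmost subexpressions coincide under the obvious identification.

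The hardest part will be the combinatorial bookkeeping for these leftmost-subexpression hypotheses; the geometric and cluster-theoretic content of the argument is entirely encapsulated in Lemma~\ref{PullbackT}.
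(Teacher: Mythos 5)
Your overall strategy is exactly the paper's: the lemma is proved there by iterating Lemma~\ref{PullbackT}, with the hypotheses at each step supplied by the left-most-ness of $\bfv$, and your downward induction with $s_i=s_{i_{k_p}}$, $r=s_{i_{k_{p+1}}}\cdots s_{i_{k_{l-1}}}$, $s_j=s_{i_{k_l}}$ is the right way to organize that iteration; the order/reducedness checks are fine.

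However, your verification of the left-most-subexpression hypothesis has a gap. What Lemma~\ref{PullbackT} actually needs (and uses in its proof) is that $s_{i_k}\,(s_irs_j)>s_irs_j$ for \emph{every} letter $i_k$ of $\bfw_1$, i.e.\ the left-most subexpression of $u=s_{i_{k_p}}\cdots s_{i_{k_l}}$ in $\bfw^{(p-1)}$ avoids all positions before $k_p$. Your argument only rules out occurrences of the letter $i_{k_p}$ at skipped positions, and the final inference ``$s_{i_{k_p}}$ at position $k_p$ is the leftmost occurrence, hence both leftmost subexpressions coincide'' is not valid in general: a letter $j\neq i_{k_p}$ lying before position $k_p$ could a priori be a left descent of $u$ (e.g.\ when generators commute), and then the left-most subexpression of $u$ would use it even though no $i_{k_p}$ occurs there. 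The correct (short) argument is a prefix-monotonicity observation: if $k_{p-1}<k<k_p$ is a skipped position and $s_{i_k}u<u$, write the remaining tail of $v$ at that stage as $u'=u\,c$ with $c=s_{i_{k_{l+1}}}\cdots s_{i_{k_m}}$ and $\ell(u')=\ell(u)+\ell(c)$; then $\ell(s_{i_k}u')\leq \ell(s_{i_k}u)+\ell(c)=\ell(u')-1$, so $s_{i_k}u'<u'$, contradicting that the left-most recursion for $\bfv$ skipped position $k$. (Your $i_k=i_{k_p}$ case is the special instance of this.) With that replacement your proof is complete and coincides with the paper's.
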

\begin{proof}
The lemma follows from Lemma~\ref{PullbackT}. Note that the assumptions in Lemma~\ref{PullbackT} are always  satisfied in each step, because $\bfv$ is the left-most subexpression for $v$ in $\bfw$.
\end{proof}

\begin{prop}
\label{TIsClusterMonomial}
Let $i = i_{k_l}$. We denote by $\{A_k\}$ the cluster variables in the seed $\Tilde{\bfs}_l$. Then we have the following identity of rational functions on $T\x \mathring{\CB}_{e,w} \cong \Conf_w(\SA)$: 
\[
T_{r_{(l)},v_{(l)}} =X_{(i,n_i-a_l+1)} \text{ times a monomial of frozen variables}.
\]

In particular, any mutable cluster variable $A_k$ appears in $T_{r_{(l)},v_{(l)}}$ if and only if $k$ is adjacent to $(i,n_i-a_l+1)$ in $\Tilde{\bfs}_l$.
\end{prop}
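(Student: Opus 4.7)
The plan is to pull $T_{r_{(l)}, v_{(l)}}$ through the rational quasi-cluster framework developed in Sections~\ref{sec:constr:QCflag}--\ref{sec:constr:mutate}. Lemma~\ref{lem:tildephiT} first reduces the problem to computing $T_{e, s_{i}}$ (where $i = i_{k_l}$) on $\Conf_{w^{(l-1)}}(\SA)$. The left-most property of $\bfv$ forces $s_i$ to appear in $\bfw^{(l-1)}$ for the first time exactly at position $k_l - k_{l-1}$ (otherwise an earlier index would have been selected for the $l$-th letter of the subexpression), so we may split $\bfw^{(l-1)} = \bfw_1^{(l-1)} \cdot s_{i} \cdot \bfw^{(l)}$ with $\bfw_1^{(l-1)}$ free of $s_i$. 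Applying Proposition~\ref{OneStepMutation} then yields
\[
T_{e, s_{i}} = (R_i \circ \pi_{w^{(l-1)}, s_{i} w^{(l)}})^{*}\bigl(A'_{(i, n'_i - 1)}/A'_{(i, n'_i)}\bigr),
\]
where $\{A'_k\}$ are the cluster variables of $\bfs(\bfw^{(l)}, \overline{i})$ and $n'_i$ is the number of $i$'s in $(\bfw^{(l)}, \overline{i})$.

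Next, the rightmost vertex $(i, n'_i)$ admits an explicit formula for its exchange ratio. By the construction of the final amalgamated block $\bfs(\overline{i})$, the nonzero $\ve$ entries of $(i, n'_i)$ are $\ve_{(i, n'_i), (i, n'_i - 1)} = -1$ and $\ve_{(i, n'_i), (j, \ast)} = -a_{ij}/2$ at the cross-level rightmost vertices $(j, \ast)$, while the auxiliary matrix~\eqref{eq:auLie} contributes $p_{(i, n'_i), (j, \ast)} = a_{ij}/2$ at the same cross-level pairs; the $j$-factors cancel, giving $X'_{(i, n'_i)} = (A'_{(i, n'_i - 1)})^{-1}$ and hence $A'_{(i, n'_i - 1)}/A'_{(i, n'_i)} = \bigl(X'_{(i, n'_i)} \cdot A'_{(i, n'_i)}\bigr)^{-1}$. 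Pulling back via $\Phi_l = R_i \circ \pi_{w^{(l-1)}, s_i w^{(l)}} \circ \tilde{\varphi}_{l-1}$ and tracking the label $(i, n'_i)$ backward through the embeddings of Propositions~\ref{SubexpSubalg}, \ref{ReflQuasiEquiv} and~\ref{prop:SW325} (the shifts at level $i$ sum to exactly $a_l + b_l - 1$), the label is seen to correspond to $(i, n_i)$ in $\tilde{\mu}_l(\tilde{\bfs}_{l-1})$. Proposition~\ref{prop:XA} then gives
\[
T_{r_{(l)}, v_{(l)}} = X_{(i, n_i)}^{-1} \cdot G,
\]
where $G$ is a Laurent monomial of frozen variables of $\tilde{\mu}_l(\tilde{\bfs}_{l-1})$, hence of $\tilde{\bfs}_l$.

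The main obstacle is the remaining identification: showing that $X_{(i, n_i - a_l + 1)} \cdot X_{(i, n_i)}$ is a Laurent monomial of frozen variables of $\tilde{\bfs}_l$. By construction of $\tilde{\bfs}_l$, every neighbor of $(i, n_i - a_l + 1)$ in $\tilde{\mu}_l(\tilde{\bfs}_{l-1})$ is frozen, so $X_{(i, n_i - a_l + 1)}$ itself already lies in the frozen variables of $\tilde{\bfs}_l$. What requires care is to verify that the support of $X_{(i, n_i)}$ also consists of cluster variables frozen in $\tilde{\bfs}_l$ (either frozen in some earlier step $\tilde{\bfs}_{l'}$ with $i_{k_{l'}} = i$, or a neighbor of $(i, n_i - a_l + 1)$ in the partially mutated quiver). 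This amounts to a careful bookkeeping of how the truncated mutation sequence $\tilde{\mu}_l = \mu_{(i, n_i - a_l)} \circ \cdots \circ \mu_{(i, b_l + 1)}$ together with the earlier freezings reshape the quiver around the rightmost $i$-vertex; the expected behaviour is a truncated analogue of the chain reversal behind Proposition~\ref{prop:SW325}.

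Once the main identity is established, the biconditional in the second assertion follows tautologically. Since the support of $T_{r_{(l)}, v_{(l)}} = X_{(i, n_i - a_l + 1)} \cdot (\text{frozen monomial})$ lies entirely in frozen variables of $\tilde{\bfs}_l$, no mutable cluster variable of $\tilde{\bfs}_l$ can occur in $T_{r_{(l)}, v_{(l)}}$; conversely, every vertex adjacent to $(i, n_i - a_l + 1)$ in $\tilde{\bfs}_l$ is frozen by the construction, so the joint condition ``mutable in $\tilde{\bfs}_l$ and adjacent to $(i, n_i - a_l + 1)$'' is vacuous on both sides.
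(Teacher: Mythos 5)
Your overall skeleton (reduce to $T_{e,s_i}$ on $\Conf_{w^{(l-1)}}(\SA)$ via Lemma~\ref{lem:tildephiT}, apply Proposition~\ref{OneStepMutation}, rewrite the ratio through an exchange ratio at the right-most level-$i$ vertex, and pull back via Proposition~\ref{prop:XA}) is exactly the paper's route, but your label bookkeeping at level $i$ is wrong, and this is where your argument breaks. By Remark~\ref{rem:shift} (equivalently Proposition~\ref{prop:ww_2}(2)), a reduction step whose deleted first letter is $s_i$ does \emph{not} shift level-$i$ labels (it only drops the top level-$i$ vertex), while a step with letter $j\neq i$ shifts level-$i$ labels by the number of $i$'s in its deleted prefix. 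Since the left-most property guarantees no unselected $i$'s occur between $k_{l-1}$ and $k_l$, the total level-$i$ shift along $\tilde{\varphi}_{l-1}$ is $b_l$, not $a_l+b_l-1$: the right-most level-$i$ vertex $(i,m'_i)$ of $\bfs(\bfw^{(l)},\overline{i})$, with $m'_i=n_i-a_l-b_l+1$, therefore corresponds to $(i,n_i-a_l+1)$ in $\tilde{\mu}_l(\tilde{\bfs}_{l-1})$ --- precisely the vertex in the statement --- and not to $(i,n_i)$. With the correct identification, Proposition~\ref{prop:XA} and Lemma~\ref{lem:tildephiT} give the claimed identity immediately, with no further work; this is the paper's proof.

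Because of the mis-tracking you end up with the formula $T_{r_{(l)},v_{(l)}}=X_{(i,n_i)}^{-1}\cdot G$ and are then forced to posit that $X_{(i,n_i-a_l+1)}\cdot X_{(i,n_i)}$ is a Laurent monomial in frozen variables of $\tilde{\bfs}_l$, which you explicitly leave as ``expected behaviour''. This is a genuine gap: that bridging claim is not established, is not needed once the labels are tracked correctly, and there is no reason it should hold (Proposition~\ref{prop:XA} produces the exchange ratio at the single corresponding vertex, which is $(i,n_i-a_l+1)$, not a product over two vertices). There is also a sign inconsistency: you compute $X'_{(i,n'_i)}=(A'_{(i,n'_i-1)})^{-1}$, whereas the identity actually used in the paper (both in the proof of Proposition~\ref{TIsClusterMonomial} and inside the proof of Proposition~\ref{prop:XA}, which you invoke) is $X'_{(i,m'_i)}=A'_{(i,m'_i-1)}$; whichever convention one settles on, your final expression carries $X^{-1}$ where the statement requires $X$, so the reconciliation cannot be waved away. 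Finally, your reading of the ``in particular'' clause as vacuous misses its intent: it records that the mutable variables of $\tilde{\mu}_l(\tilde{\bfs}_{l-1})$ occurring in $T_{r_{(l)},v_{(l)}}$ are exactly the neighbours of $(i,n_i-a_l+1)$, which is what justifies the freezing step in Definition~\ref{TildeSeedDefi} and its later use in Theorem~\ref{RichardsonCluster}.
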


\begin{proof}

Denote by $A'_k$ the cluster variables in the seed $\bfs(\bfw^{(l)}, \overline{i})$. The right-most vertex $(i,m'_i)$ in $\bfs(\bfw^{(l)},\overline{i})$ corresponds to $(i,n_i-a_l+1)$ in $\Tilde{\mu}_l(\Tilde{\bfs}_{l-1})$ under the rational quasi-cluster map $\Phi_l$. By Proposition \ref{OneStepMutation}, $T_{e,s_i}$ on $\Conf_{w^{(l-1)}}(\SA)$ is given by
\begin{align*}
T_{e,s_i}&=(R_i\circ\pi_{w^{(l-1)},s_iw^{(l)}})^*\left(A_{(i,m'_i-1)}A^{-1}_{(i,m'_i)} \right)\\
&=(R_i\circ\pi_{w^{(l-1)},s_iw^{(l)}})^*\left( X_{(i,m'_i)}A^{-1}_{(i,m'_i)} \right).
\end{align*}

Now the proposition follows immediately from Proposition~\ref{prop:XA} and Lemma~\ref{lem:tildephiT}.
\end{proof}

\begin{remark}
It follows from the proof of Proposition~\ref{prop:XA} that we can actually obtain a rather precise expression for the monomial of frozen variables. We shall not need the precision here.
\end{remark}

\subsection{Richardson varieties}

Recall that a seed $\bfs(e, \bfw)$ for the cluster structure on $\mathring{\CB}_{e,w}$ can be obtained from the seed $\bfs(\bfw) $ for $\Conf_w(\SA)$ by deleting vertices indexed by $(i,0)$ and setting $A_{(i,0)}=1$ for $i\in I$.  

\begin{defi}
\label{RichardsonSeedDefi}
We define  seeds $\bfs_l$ inductively starting from the seed $\bfs_0 = \bfs(e, \bfw)$ for $\mathring{\CB}_{e,w}$ for $0 \le l \le m$. 
At step $l$, let $i=i_{k_l}$ and suppose we are given the seed $\bfs_{l-1}$. Mutate $\bfs_{l-1}$ using the sequence $\Tilde{\mu}_l$. In $\Tilde{\mu}_l(\bfs_{l-1})$, freeze all the vertices adjacent to $(i,n_i-a_l+1)$ then delete the vertex $(i,n_i-a_l+1)$ to obtain the seed $\bfs_l$. Note that the remaining cluster variables are unchanged.

Equivalently, the seed $\bfs_l$ can be obtained from the seed $\tilde{\bfs}_l$ by deleting the set of frozen vertices 
\[
\{(i,0) \,|\, i \in I\} \cup \{(i_{k_l}, n_{i_{k_l}} - a_l + 1) \,|\, 1 \le l \le m\}.
\]

We write $\bfs(v, \bfw) = \bfs_m$ for the seed obtained at the end of this process. Recall $m = \ell(v)$.
\end{defi}

\begin{remark}\label{rem:seedw}
The algorithm in Definition~\ref{RichardsonSeedDefi} does not require $\bfw$ to be reduced. In general, we obtain the seed for the cluster structure on the open Richardson variety on the twisted product of flag varieties; see Theorem~\ref{TwistedProductCluster}.
\end{remark}

\begin{thm}
\label{RichardsonCluster}
The coordinate ring of the open Richardson variety $\mathring{\CB}_{v,w}$ has an upper cluster algebra structure given by the seed $\bfs(v, \bfw)$. 
\end{thm}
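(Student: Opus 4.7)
The plan is to prove the theorem by induction on $l$, where $0 \le l \le m := \ell(v)$, establishing at each stage that $\CU(\bfs_l) \cong \BC[\mathring{\CB}_{v_{(l)}, w}]$ compatibly with the natural identification of cluster variables as regular functions; the case $l = m$ yields the theorem.

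The base case $l = 0$ is the cluster structure on $\mathring{\CB}_{e, w}$ recalled before Lemma~\ref{SchubertClusterVariable}: start with the cluster structure on $\Conf_w(\SA) \cong T \times \mathring{\CB}_{e, w}$ from Theorem~\ref{dBSCluster}, and delete the isolated frozen vertices $(i, 0)$ for $i \in I$ via Prop~\ref{IsolatedDeletion}. For the inductive step, assuming $\CU(\bfs_{l-1}) \cong \BC[\mathring{\CB}_{v_{(l-1)}, w}]$, the three-stage passage from $\bfs_{l-1}$ to $\bfs_l$---mutation $\tilde{\mu}_l$, freezing the neighbors of $k_l := (i_{k_l}, n_{i_{k_l}} - a_l + 1)$, then deleting $k_l$---is to be matched with the geometric factorization
\[
\BC[\mathring{\CB}_{v_{(l)}, w}] \cong \BC[\mathring{\CB}_{v_{(l-1)}, w}][T_{v_{(l-1)}, v_{(l)}}^{-1}]/(T_{v_{(l-1)}, v_{(l)}} - 1),
\]
which is the one-step version of Prop~\ref{prop:CB} (equivalently, Cor~\ref{cor:T1} combined with the regularity of $T_{v_{(l-1)}, v_{(l)}}$ on $\mathring{\CB}_{v_{(l-1)}, w}$ from Prop~\ref{AtlasRegular1}).

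Each of the three cluster operations should carry a clear geometric counterpart. Mutations preserve $\CU$. For the freezing step, Prop~\ref{TIsClusterMonomial} is the decisive input: in $\tilde{\bfs}_l$ the function $T_{v_{(l-1)}, v_{(l)}}$ equals $X_{k_l}$ times a Laurent monomial in frozen variables, and because every neighbor of $k_l$ has just been frozen, $X_{k_l}$ itself lies in the frozen subalgebra; consequently $T_{v_{(l-1)}, v_{(l)}}$ becomes invertible in $\CU(\tilde{\bfs}_l)$, which matches the freezing with the localization at $T_{v_{(l-1)}, v_{(l)}}$ via Prop~\ref{FreezingInclusion}. For the deletion step, I intend to verify that $k_l$ is always frozen at the time of deletion (originally in $\bfs(\bfw)$ when $a_l = 1$ since $k_l = (i_{k_l}, n_{i_{k_l}})$, and otherwise by induction because $k_l$ becomes a frozen neighbor in a previous step), so Prop~\ref{IsolatedDeletion} applies and imposes $A_{k_l} = 1$; using Prop~\ref{TIsClusterMonomial} again, this should be equivalent to the geometric condition $T_{v_{(l-1)}, v_{(l)}} = 1$.

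The main obstacle is verifying that the inclusions from Prop~\ref{FreezingInclusion} and the Laurent phenomenon become equalities at the geometric level---so that $\CU(\tilde{\bfs}_l)$ is precisely the coordinate ring of the open locus $\mathring{\CB}_{v_{(l-1)}, w} \cap \dot{v}_{(l)} U^- B^+/B^+$---and, secondly, that the ideals $(A_{k_l} - 1)$ and $(T_{v_{(l-1)}, v_{(l)}} - 1)$ coincide after this identification. This requires a careful geometric interpretation of all cluster variables in $\tilde{\bfs}_l$ as regular functions on the open locus via the atlas maps, using Lemma~\ref{SchubertClusterVariable} to track generalized minors through the mutation sequences $\tilde{\mu}_1, \ldots, \tilde{\mu}_l$; granted this, iterating $m$ times delivers $\CU(\bfs(v, \bfw)) \cong \BC[\mathring{\CB}_{v, w}]$.
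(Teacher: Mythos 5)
Your construction of the map is the same as the paper's: the dictionary ``freezing $=$ localization at $T_{r_{(l)},v_{(l)}}$, deletion $=$ specialization $T_{r_{(l)},v_{(l)}}=1$'', implemented through Proposition~\ref{prop:CB}, Proposition~\ref{FreezingInclusion}, Proposition~\ref{IsolatedDeletion} and, crucially, Proposition~\ref{TIsClusterMonomial}, is exactly how the paper produces the embedding $\BC[\mathring{\CB}_{v,w}]\hookrightarrow \CU(\bfs(v,\bfw))$. But the step you yourself flag as ``the main obstacle'' --- upgrading the inclusions of Proposition~\ref{FreezingInclusion} and the Laurent phenomenon to equalities, so that $\CU(\tilde{\bfs}_l)$ is literally the coordinate ring of the open locus at each stage --- is a genuine gap, and the route you sketch for it is not the one that works. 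Muller's result only gives $\CA(\bfs')\subset \CA(\bfs)[A_i^{-1}]\subset\CU(\bfs)[A_i^{-1}]\subset\CU(\bfs')$; equality $\CU(\bfs)[A_i^{-1}]=\CU(\bfs')$ after freezing is not known in general, and the paper never proves, nor needs, the intermediate identifications $\CU(\tilde{\bfs}_l)\cong\BC[\mathring{\CB}_{v_{(l-1)},w}\cap \dot{v}_{(l)}U^-B^+/B^+]$. Instead, the reverse inclusion is obtained at the final stage by a factoriality argument: Proposition~\ref{TIsClusterMonomial} and Proposition~\ref{FreezingInclusion} give $\CA(\tilde{\bfs}_m)\subset\CO(\Conf_w(\SA))[T^{\pm1}_{r_{(l)},v_{(l)}}]$, hence all cluster variables of $\bfs(v,\bfw)$ are regular on $\mathring{\CB}_{v,w}$; then one uses that cluster variables are irreducible in $\CU(\bfs_m)$ (Geiss--Leclerc--Schr\"oer), that $\BC[\mathring{\CB}_{v,w}]$ is a UFD (Proposition~\ref{RichardsonFactorial} --- a result your plan never invokes, though it is proved in the paper precisely for this purpose), and the Fomin--Williams--Zelevinsky criterion to conclude $\CU(\bfs_m)\subset\BC[\mathring{\CB}_{v,w}]$. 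Without this (or some substitute), your induction does not close.

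A smaller inaccuracy: the deletion step does not impose $A_{k_l}=1$, and the ideals $(A_{k_l}-1)$ and $(T_{r_{(l)},v_{(l)}}-1)$ do not coincide on the nose. By Proposition~\ref{TIsClusterMonomial}, $T_{r_{(l)},v_{(l)}}=X_{k_l}$ times a frozen monomial, where $X_{k_l}$ is a Laurent monomial in the (now frozen) neighbors of $k_l$ and does not involve $A_{k_l}$ itself; the variable $A_{k_l}$ enters the frozen monomial with exponent $-1$. So the relation $T_{r_{(l)},v_{(l)}}=1$ eliminates $A_{k_l}$ by expressing it as a Laurent monomial in the other frozen variables, and it is the fact that the vertices $(i_{k_l},n_{i_{k_l}}-a_l+1)$ are isolated in $\tilde{\bfs}_m$ (so that $\CU(\tilde{\bfs}_m)$ is $\CU(\bfs_m)$ with these variables adjoined as free Laurent factors) that makes this quotient well behaved. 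This is repairable by a change of variables, but as stated your matching of the two ideals is not correct.
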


\begin{proof}
We construct the following commutative diagram 
\[
\begin{tikzcd}
    \CO(\Conf_w(\CA))[T_{r_{(1)}, v_{(1)}}^{\pm 1}, \dots, T_{r_{(m)}, v_{(m)}}^{\pm1}] \ar[r, hook] \ar[d, ->>] & \ar[d, ->>] \CU(\tilde{\bfs}_m) = \CU(\bfs_m)[A^{-1}_{(i_{k_1}, n_{i_{k_1}} - a_1 + 1)}, \dots, A^{-1}_{(i_{k_m}, n_{i_{k_m}} - a_m + 1)}]  \\
    \BC[\CB_{v,w}] \ar[r, hook] & \CU(\bfs_m)
\end{tikzcd}
\]
\begin{itemize}[leftmargin=*]
       \item It follows by the construction of $\tilde{\bfs}_m$, the frozen vertices $\{(i_{k_s}, n_{i_{k_s}} - a_s + 1) \,|\, 1 \le s \le l\}$ are isolated.  We have $\CU(\tilde{\bfs}_m) = \CU(\bfs_m)[A^{-1}_{(i_{k_1}, n_{i_{k_1}} - a_1 + 1)}, \dots, A^{-1}_{(i_{k_m}, n_{i_{k_m}} - a_m + 1)}]$. 
       \item The map $\CO(\Conf_w(\CA))[T_{r_{(1)}, v_{(1)}}^{-1}, \dots, T_{r_{(m)}, v_{(m)}}^{-1}] \rightarrow \CU(\tilde{\bfs}_l)$ is induced by the standard cluster structure $\CO(\Conf_w(\CA)) \cong \CU(\bfs(\bfw))$ and repeatedly applying Proposition~\ref{FreezingInclusion} and Proposition~\ref{TIsClusterMonomial}.
         
        \item The map $\CO(\Conf_w(\CA))[T_{r_{(1)}, v_{(1)}}^{\pm 1}, \dots, T_{r_{(m)}, v_{(m)}}^{\pm1}] \rightarrow  \BC[\mathring{\CB}_{v,w}]$ is defined by 
        \[
         A_{(i,0)} \mapsto 1,\quad i \in I,
        \]
        \[
        T_{r_{(l)}, v_{(l)}} \mapsto 1,  \quad  1 \le l \le m.
        \]
        The quotient is indeed $ \BC[\mathring{\CB}_{v,w}]$ by Proposition~\ref{prop:CB}.
        \item
        The quotient map  $\CU(\tilde{\bfs}_m) \rightarrow \CU(\bfs_m)$ is defined by 
        \[
        A_{(i,0)} \mapsto 1, \quad  i \in I,
        \]
        \[
        A^{-1}_{(i_{k_l}, n_{i_{k_l}} - a_l + 1)} \mapsto \text{a frozen monomial determined by Proposition~\ref{TIsClusterMonomial} and $T_{r_{(l)}, v_{(l)}} \mapsto 1$}.
        \]
        \item The embedding $\BC[\CB_{v,w}]  \rightarrow   \CU(\bfs_m)$ is the natural induced map to make the diagram commute.
\end{itemize}
We show $\BC[\mathring{\CB}_{v,w}]  \rightarrow   \CU(\bfs_m)$ is surjective. By Proposition~\ref{TIsClusterMonomial} and Proposition~\ref{FreezingInclusion},  we have $\CA(\tilde{\bfs}_m ) \subset \CO(\Conf_w(\CA))[T_{r_{(1)}, v_{(1)}}^{\pm 1}, \dots, T_{r_{(m)}, v_{(m)}}^{\pm1}]$. Then we conclude that $\CA(\bfs_m) \subset \BC[\mathring{\CB}_{v,w}]$, that is,  all cluster variables are in $\BC[\mathring{\CB}_{v,w}]$.
By \cite{GLS13}*{Theorem 3.1}, all cluster variables are irreducible in $\CU(\bfs_m)$.  By Proposition \ref{RichardsonFactorial}, $\BC[\mathring{\CB}_{v,w}]$ is a UFD. By \cite{FWZ21}*{Corollary 6.4.6 and Remark 6.4.4}, we conclude that $\CU(\bfs_m)\subset\BC[\mathring{\CB}_{v,w}]$.
\end{proof}

\begin{remark}
By construction, in finite types, our seed coincides with the seed in \cite{Men22}, and \cite{CGG+} for open Richardson varieties. See \cite{CGG+}*{Section 10} for a comparison.
\end{remark}

In \cites{CGG+, GLSS2}, it was shown that in finite types, the cluster structure on any open Richardson variety is locally acyclic. We conjecture that this is still true for Kac--Moody types in general. In particular, this would imply the upper cluster algebra equals the cluster algebra.

\begin{conjecture}\label{conj}
For any $v\leq w$ in $W$ and reduced expression $\bfw$ for $w$, the seed $\bfs(v, \bfw)$ is locally acyclic.
\end{conjecture}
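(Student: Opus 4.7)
The plan is to induct on $\ell(v)$ via the stepwise construction $\bfs_0 = \bfs(e,\bfw), \bfs_1, \ldots, \bfs_m = \bfs(v,\bfw)$ from Definition~\ref{RichardsonSeedDefi}. The inductive step is essentially free: the passage from $\bfs_{l-1}$ to $\bfs_l$ consists of a mutation sequence $\tilde{\mu}_l$ (which preserves the underlying upper cluster algebra and hence local acyclicity tautologically), the freezing of all vertices adjacent to the central vertex $(i_{k_l}, n_{i_{k_l}}-a_l+1)$ together with that vertex itself (which preserves local acyclicity by Muller's freezing theorem \cite{Mul13}), and finally the deletion of the now-isolated frozen central vertex (which yields an isomorphic upper cluster algebra by Proposition~\ref{IsolatedDeletion}). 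Hence the conjecture reduces to local acyclicity of the base seed $\bfs(e,\bfw)$.

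The substantive content is therefore the base case: showing $\bfs(e,\bfw)$ is locally acyclic, or equivalently (since they differ only by isolated frozen vertices at level~$0$) showing $\bfs(\bfw)$ is locally acyclic. In finite types this is established in \cite{GLSS2} via verification of the Louise property on the associated quiver, and the most direct route would be to adapt their argument to Kac--Moody types by induction on $\ell(w)$. Since $\bfs(\bfw)$ is built by iterated amalgamation of the elementary seeds $\bfs(i)$, each amalgamation step introduces a single new mutable vertex together with a controlled pattern of three-term cycles; one hopes to exhibit, for every mutable vertex, an explicit freezing of its neighbors producing an acyclic subseed, and then assemble these into the required cover. Induction on the number of amalgamations should allow each local verification to reduce to the rank-two situation, where the elementary seeds $\bfs(i)$ and $\bfs(j)$ can be checked by hand.

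The main obstacle is precisely this base case in Kac--Moody generality: the finite-type proofs in \cite{CGG+, GLSS2} rely on combinatorial models (braid varieties, plabic networks, Demazure weaves) whose Kac--Moody analogues are not presently available, and the Louise verification becomes combinatorially delicate when the Cartan entries $a_{ij}$ are large negative integers, since the number of arrows between consecutive levels grows and the three-term cycle analysis must be refined. A promising alternative strategy is to exploit the rational quasi-cluster maps $\pi_{\beta,\beta_2}$ and $R_i$ developed in Section~\ref{sec:constr:QCflag} to transfer local acyclicity between configuration spaces of different sizes, thereby reducing the Kac--Moody verification to subdiagrams of bounded rank where finite-type results apply directly; this is the step I would expect to require the most new input.
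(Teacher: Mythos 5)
The statement you are addressing is labelled as a \emph{conjecture} in the paper: the authors explicitly leave it open and offer no proof (they only note that the finite-type case is established in \cite{CGG+} and \cite{GLSS2}, and they record the special case $w=vu$ as a corollary). So there is no argument in the paper to compare yours against, and your text should be judged as a proposed proof of an open problem.

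Your inductive reduction is reasonable and consistent with the paper's framework: local acyclicity is mutation-invariant, it is preserved under freezing (cluster localization, by Muller), and deleting an isolated frozen vertex changes neither the mutable part of any exchange matrix in the mutation class nor the existence of covers by acyclic cluster localizations, so passing from $\bfs_{l-1}$ to $\bfs_l$ indeed preserves local acyclicity. But this means your entire argument rests on the base case, namely local acyclicity of $\bfs(\bfw)$ (equivalently $\bfs(e,\bfw)$) for an arbitrary reduced word in an arbitrary symmetrizable Kac--Moody group, and here you have not given a proof. Your sketch for it is explicitly aspirational (``one hopes to exhibit \dots'', ``this is the step I would expect to require the most new input''), and the difficulty you identify is real: the known verifications of the Louise property for these quivers are finite-type arguments relying on combinatorial models (3D plabic graphs, weaves) with no current Kac--Moody analogue, and Shen--Weng's proof that $\CA=\CU$ for double Bott--Samelson cells does not proceed via local acyclicity. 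The proposed workaround of transferring local acyclicity along the rational quasi-cluster maps $\pi_{\beta,\beta_2}$ and $R_i$ is also unsubstantiated: those maps are designed to track cluster variables and exchange ratios between different configuration spaces, and nothing in the paper (or in your sketch) shows that local acyclicity pulls back or pushes forward along them. In short, the reduction step is sound but contributes nothing beyond what the paper's construction already makes apparent; the conjecture remains open because the base case is open, and your proposal does not close it.
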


For the special case $w=vu$ with $\ell(w)=\ell(v)+\ell(v)$, $\CA=\CU$ can be shown directly by studying the seed $\bfs(v,\bfw)$ from Definition \ref{RichardsonSeedDefi}.

\begin{cor}
Assume $w = v u$ with $\ell(w) = \ell(v) + \ell(u)$. For any reduced expression $\bfw$ such that $\bfv = (i_1, i_2, \dots, i_m , e, \dots, e)$, we have $\CA(\bfs(v,\bfw)) = \CU(\bfs(v,\bfw))$.
\end{cor}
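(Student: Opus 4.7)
The plan is to reduce the desired equality $\CA(\bfs(v,\bfw)) = \CU(\bfs(v,\bfw))$ to the corresponding equality for an open Schubert cell, where it can be established by an acyclicity argument. Set $\bfu = (i_{m+1}, \ldots, i_n)$ and $u = s_{i_{m+1}} \cdots s_{i_n}$, so that $\bfw = \bfv\bfu$ is a concatenation of reduced expressions with $w = vu$ and $\ell(w)=\ell(v)+\ell(u)$.

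First, under the hypothesis on $\bfv$, every entry of $\bfw$ prior to position $k_l=l$ lies in $\bfv$, and consequently the integer $b_l$ of Definition~\ref{def:Ml} vanishes for all $l$. Hence each $\tilde{\mu}_l$ is a left-aligned chain mutation $\mu_{(i_{k_l},n_{i_{k_l}}-a_l)}\circ\cdots\circ\mu_{(i_{k_l},1)}$ at level $i_{k_l}$. Iterating Proposition~\ref{prop:ww_2}, the cumulative sequence $M_m$ together with the rational quasi-cluster map $\tilde{\varphi}_m$ of Lemma~\ref{prop:tildephil} is compatible with the standard Lie-theoretic embedding $J^+(\bfu) \hookrightarrow J(\bfw)$. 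A direct count shows that the $|I|+\ell(v)$ vertices removed from $\tilde{\bfs}_m$ to form $\bfs(v,\bfw)$---namely $\{(i,0):i\in I\}$ together with $\{(i_{k_l}, n_{i_{k_l}}-a_l+1) : 1\le l\le m\}$---are exactly those of $J(\bfw)$ lying outside the image of this embedding.

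Second, Proposition~\ref{prop:XA} combined with the freezings prescribed in Definition~\ref{RichardsonSeedDefi} implies that these deleted vertices are isolated frozen vertices in $\tilde{\bfs}_m$. Their deletion (together with imposing $A_{(i,0)}=1$ and $T_{r_{(l)},v_{(l)}}=1$) identifies $\bfs(v,\bfw)$ with the standard seed $\bfs(e,\bfu)$ on the open Schubert cell $\mathring{\CB}_{e,u}$. The rational map $\tilde{\varphi}_m$ restricts to an isomorphism of affine varieties $\mathring{\CB}_{v,w} \cong \mathring{\CB}_{e,u}$ compatible with the seed identification, reducing the corollary to $\CA(\bfs(e,\bfu)) = \CU(\bfs(e,\bfu))$.

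Third, I would establish this Schubert-cell equality by verifying that the mutable quiver of $\bfs(e,\bfu)$ is acyclic under the orientation inherited from the amalgamation construction of \S\ref{sec:prelim:Lie}: the intra-level arrows assemble into a single directed path at each level $i$, and the cross-level arrows dictated by the Cartan data admit a consistent topological order precisely because $\bfu$ is reduced. Once acyclicity is in hand, the classical theorem of Berenstein--Fomin--Zelevinsky yields $\CA = \CU$. The main obstacle is this last step: a uniform verification of acyclicity (or local acyclicity) for $\bfs(e,\bfu)$ across all symmetrizable Kac--Moody types. This is well known in finite types, and the extension should follow from a careful combinatorial analysis of the amalgamation, or alternatively from an invocation of Muller's local acyclicity theorem.
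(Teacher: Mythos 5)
Your first two steps agree with the paper: under the hypothesis the integers $b_l$ of Definition~\ref{def:Ml} all vanish, and the algorithm of Definition~\ref{RichardsonSeedDefi} peels off the prefix $\bfv$ so that $\bfs(v,\bfw)$ is literally the seed $\bfs(e,\bfu)$ (the geometric identification $\mathring{\CB}_{v,w}\cong\mathring{\CB}_{e,u}$ you invoke is not needed for this purely seed-theoretic statement). The genuine gap is in your third step. The mutable quiver of $\bfs(e,\bfu)$ is \emph{not} acyclic in general: the amalgamation construction of \S\ref{sec:prelim:Lie} produces oriented $3$-cycles between adjacent levels as soon as letters interleave, already for $\bfu=(1,2,1,3,2,1)$ in type $A_3$, and for long reduced words (e.g.\ $(1,2,1,2,\dots)$ in a rank-two Kac--Moody group) the mutation class need not contain any acyclic seed at all, so the Berenstein--Fomin--Zelevinsky acyclicity criterion is unavailable. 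Your fallback, ``invoke Muller's local acyclicity theorem,'' requires first \emph{proving} local acyclicity of $\bfs(e,\bfu)$ in arbitrary symmetrizable Kac--Moody type; this is precisely the content of the paper's Conjecture~\ref{conj} (known only in finite type via \cite{CGG+}, \cites{GLSS, GLSS2}), so it cannot be cited, and your own statement that this is ``the main obstacle'' concedes the proof is incomplete exactly where the work lies.

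The step you are missing is much softer and is what the paper uses: $\bfs(e,\bfu)$ is obtained from the Shen--Weng seed $\bfs(\bfu)$ for $\Conf_u(\SA)$ by deleting the frozen vertices $(i,0)$, $i\in I$. Theorem~\ref{dBSCluster} already gives $\CA(\bfs(\bfu))=\CU(\bfs(\bfu))=\CO(\Conf_u(\SA))$ in full Kac--Moody generality, and Proposition~\ref{IsolatedDeletion} shows that deletion of a frozen vertex passes both $\CA$ and $\CU$ to the same quotient, hence preserves the equality $\CA=\CU$. Combining this with your seed identification $\bfs(v,\bfw)=\bfs(e,\bfu)$ finishes the corollary with no acyclicity input whatsoever; replacing your third step by this two-line argument would make the proposal correct and essentially identical to the paper's proof.
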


\begin{proof}
Let $\bfu = (i_{m+1}, \dots, i_n)$ be corresponding reduced expression of $u$. Under the assumption, the seed $\bfs(v,\bfw)$ is the same as the seed $\bfs(e,\bfu)$. The corollary follows from Proposition~\ref{IsolatedDeletion} and Theorem~\ref{dBSCluster}.
\end{proof}

\section{Twisted products of flag varieties}\label{sec:twisted}

In this section we extend the upper cluster structure on open Richardson varieties in flag varieties to open Richardson varieties in twisted products of flag varieties.

\subsection{The twisted product}\label{sec:twisted:defi}

The \textit{twisted product} of flag varieties is defined as
\[
\CZ_n=G\x^{B^+}G \x^{B^+} \cdots \x^{B^+} G/B^+, 
\]
with $n$ factors. We will sometimes omit the subscript $n$ if the context is clear.

The isomorphism $G^n\isoto G^n$, $(g_1,g_2,\cdots,g_n)\mapsto (g_1,g_1g_2,\cdots, g_1g_2\cdots g_n)$, induces an isomorphism $\CZ_n\to \CB^n$,
\[
(g_1,g_2,\cdots,g_n)\mapsto (g_1B^+,g_1g_2B^+,\cdots, g_1g_2\cdots g_nB^+).
\]
In particular, we have the convolution product $m:\CZ_n\to \CB$,
\[
(g_1,g_2,\cdots,g_n)\mapsto g_1g_2\cdots g_nB^+.
\]

Let $\bfw=(i_1,\cdots,i_n)$ be an expression in $W$. The Schubert cell in $\CZ$ corresponding to $\bfw$ is defined as
\[
\mathring{\CZ}_\bfw = B^+\dot{s}_{i_1}B^+\x^{B^+} B^+\dot{s}_{i_2}B^+ \x^{B^+} \cdots \x^{B^+} B^+\dot{s}_{i_n}B^+/B^+.
\]
The opposite Schubert cell in $\CZ$ corresponding to $v\in W$ is defined as
\[
\mathring{\CZ}^v = \{x\in \CZ_n\,|\, m(x)\in \mathring{\CB}^v\}.
\]
The open Richardson variety in $\CZ$ corresponding to $v$ and $\bfw$ is defined as $\mathring{\CZ}_{v,\bfw}=\mathring{\CZ}_\bfw\cap \mathring{\CZ}^v$.

By \cite{BH24b}, the variety $\mathring{\CZ}_{v,\bfw}$ is non-empty if and only if $v\leq m_*(\bfw)$, in which case it is irreducible of dimension $\ell(\bfw)-\ell(v)$.

\begin{rem}\label{rem:twisted}
The definition of Schubert cells in twisted products is more general in \cite{BH24b}, in terms of a sequence $\underline{w}=(w_1,\cdots,w_n)$ of elements in $W$. By replacing each $w_i$ with its reduced expression, $\mathring{\CZ}_{v,\underline{w}}\subset \CZ_n$ is isomorphic to $\mathring{\CZ}_{v,\bfw}\subset\CZ_m$ for some $m\geq n$.

Since we only consider the locally closed subvariety $\mathring{\CZ}_{v,\bfw}$ instead of its closure in $\CZ_n$, the two definitions give isomorphic varieties. 
\end{rem}

\begin{rem}Special cases of $\mathring{\CZ}_{v,\bfw}$ include reduced double Bruhat cells, Bott-Samelson varieties, and braid varieties by \cite{BH24b}. 
\end{rem}

\subsection{The thickening map}\label{sec:twisted:thick}

We follow \cite{BH24b} to define the thickening group. Let $\Tilde{I}=I\sqcup\{\infty_1,\cdots,\infty_{n-1}\}$. We define $\Tilde{A}=(\Tilde{a}_{ij})_{i,j\in \Tilde{I}}$ as follows:
\begin{itemize}
    \item $\tilde{a}_{ij}=a_{ij}$ for $i,j\in I$
    \item $\tilde{a}_{i,\infty_l}=\tilde{a}_{\infty_l,i}=-2$ for $i\in I$ and $1\leq l\leq n-1$.
    \item $\tilde{a}_{\infty_l,\infty_l}=2$ for $1\leq l\leq n-1$ and $\tilde{a}_{\infty_l,\infty_{l'}}=-2$ for $1\leq l,l'\leq n-1$ with $l\neq l'$.
\end{itemize}

Let $\Tilde{G}$ be the minimal Kac--Moody group of simply connected type associated to $(\Tilde{I},\Tilde{A})$ and $\Tilde{W}$ be its Weyl group. We denote by ${\Tilde{\CB}}$ be the flag variety for $\Tilde{G}$, and denote the Schubert cells, the opposite Schubert cells, and the open Richardson varieties similarly to \S\ref{sec:prelim:flag}.

The Weyl group $W$ can be identified with the parabolic subgroup of $\Tilde{W}$ corresponding to $I \subset \Tilde{I}$. So we view $W$ as a subgroup of $\Tilde{W}$. The group $G$ can be identified with the standard Levi subgroup of $\Tilde{G}$ corresponding to $I$. So we also view $G$ as a subgroup of $\Tilde{G}$ as well.

Fix a pinning $(\Tilde{T},\Tilde{B}^+,\Tilde{B}^-,x_i,y_i;i\in \Tilde{I})$ of $\Tilde{G}$ compatible with the pinning of $G$. We define the thickening map 
\begin{align*}
th:(\BC^\x)^{n-1}\x G^n &\to \Tilde{G},\\
(a_1,\cdots,a_{n-1},g_1,\cdots,g_n)&\mapsto g_1y_{\infty_1}(a_1)g_2y_{\infty_2}(a_2)\cdots y_{\infty_{n-1}}(a_{n-1})g_n.
\end{align*}
For any expression $\bfw$ in $W$, let
\[
th(\bfw)=(i_1,\infty_1,i_2,\infty_2,\cdots,\infty_{n-1},i_n)
\]
Let $\Tilde{w}$ be the corresponding element in $\Tilde{W}$. Note that $th(\bfw)$ is a reduced expression for $\Tilde{w}$.

Any element in $\mathring{\CZ}_\bfw$ can be uniquely written as $(\dot{s}_{i_1}y_1,\cdots,\dot{s}_{i_n}y_n)$ for some $y_k\in U^-$. Therefore, the thickening map induces an injection $th:(\BC^\x)^{n-1}\x\mathring{\CZ}_\bfw\to \Tilde{\CB}$. Since $\dot{s}_{i_k}y_k\in B^+\dot{s}_{i_k}B^+$ for all $k$ and $y_{\infty_l}(a_l)\in B^+\dot{s}_{\infty_l}B^+$ for all $l$, the image lies in $\mathring{\Tilde{\CB}}_{\Tilde{w}}$.

\begin{prop}{\cite{BH24b}*{Proposition 3.2}}
\label{ThickFibration}
Let $\Tilde{P}_I^\pm$ be the parabolic subgroups of $G$ containing $\Tilde{B}^\pm$ corresponding to $I$. Let
\[
\CZ'=\Tilde{P}_I^- \Tilde{B}^+/\Tilde{B}^+\cap \Tilde{P}_I^+(\Tilde{B}^+\dot{s}_{\infty_1}\Tilde{B}^+)\Tilde{P}_I^+(\Tilde{B}^+\dot{s}_{\infty_2}\Tilde{B}^+)\cdots(\Tilde{B}^+\dot{s}_{\infty_{n-1}}\Tilde{B}^+)\Tilde{P}_I^+/\Tilde{B}^+.
\]
Then there is a locally trivial $(\BC^\x)^{n-1}$ fibration $\pi:\CZ'\to \CZ$ sending $\mathring{\Tilde{\CB}}_{v,\Tilde{w}}$ to $\mathring{\CZ}_{v,\bfw}$. Furthermore, $th:(\BC^\x)^{n-1}\x\mathring{\CZ}_\bfw\to \Tilde{\CB}$ is a section of $\pi$.
\end{prop}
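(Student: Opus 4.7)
The plan is to construct a biholomorphism $\Phi:(\BC^\x)^{n-1}\times\CZ\isoto\CZ'$, use it to define $\pi$ as the projection onto $\CZ$, note that $th=\Phi$ is tautologically a section, and then match Schubert conditions on both sides. As a first step, since $\Tilde{B}^+\subset\Tilde{P}_I^+$, one has $\Tilde{P}_I^+(\Tilde{B}^+\dot{s}_{\infty_l}\Tilde{B}^+)\Tilde{P}_I^+=\Tilde{P}_I^+\dot{s}_{\infty_l}\Tilde{P}_I^+$, so $\CZ'$ parametrizes flags $\tilde{g}\Tilde{B}^+$ with $\tilde{g}\in\Tilde{P}_I^-\Tilde{B}^+$ admitting a factorization $\tilde{g}=p_1\dot{s}_{\infty_1}p_2\cdots\dot{s}_{\infty_{n-1}}p_n$ with $p_k\in\Tilde{P}_I^+$.

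The crux is normalizing such a factorization. Using the Levi decomposition $\Tilde{P}_I^+=L_I\cdot\Tilde{U}_I^+$ with $L_I=G\cdot T_\infty$ (where $T_\infty$ is the subtorus generated by $\alpha_{\infty_l}^\vee$) together with the commutation relations between $\dot{s}_{\infty_l}$ and root subgroups of $\Tilde{G}$, one shows that every such $\tilde{g}\Tilde{B}^+$ admits a unique representative of the form
\[
 g_1\,y_{\infty_1}(a_1)\,g_2\,y_{\infty_2}(a_2)\cdots y_{\infty_{n-1}}(a_{n-1})\,g_n\,\Tilde{B}^+,
\]
where $a_l\in\BC^\x$ and $(g_k)\in G^n$ is determined modulo the $B^+$-twisted product relations defining $\CZ_n$. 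Each $a_l$ is forced nonzero because otherwise $y_{\infty_l}(0)=1$ would collapse the factorization and place $\tilde{g}$ in a strictly smaller Bruhat chain omitting the $\dot{s}_{\infty_l}$-step, contradicting the defining chain condition on $\CZ'$. This yields the isomorphism $\Phi$.

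With $\Phi$ established, $\pi$ is the (globally trivial, hence locally trivial) $(\BC^\x)^{n-1}$-fibration forgetting the $a_l$'s, and $th=\Phi$ is a section by construction. For the Richardson correspondence, $\Phi((\BC^\x)^{n-1}\times\mathring{\CZ}_\bfw)\subseteq\mathring{\Tilde{\CB}}_{\Tilde{w}}$ since each $y_{\infty_l}(a_l)\in\Tilde{B}^+\dot{s}_{\infty_l}\Tilde{B}^+$ and each $g_k\in B^+\dot{s}_{i_k}B^+$. The opposition matches because $G$ normalizes $\Tilde{U}_I^-$, so one can rewrite $\Phi(a,x)\Tilde{B}^+=\tilde{u}\cdot m(x)\Tilde{B}^+$ with $\tilde{u}\in\Tilde{U}_I^-\subset\Tilde{B}^-$; then $\Phi(a,x)\in\mathring{\Tilde{\CB}}^v$ iff $m(x)\in\Tilde{B}^-\dot{v}\Tilde{B}^+\cap G=B^-\dot{v}B^+$ for $v\in W\subset\Tilde{W}$, i.e., iff $m(x)\in\mathring{\CB}^v$. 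Combining yields $\Phi((\BC^\x)^{n-1}\times\mathring{\CZ}_{v,\bfw})=\mathring{\Tilde{\CB}}_{v,\Tilde{w}}$.

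The main technical obstacle is the normalization lemma producing the unique thickening form $g_1\,y_{\infty_1}(a_1)\cdots g_n$, which rests on careful bookkeeping of commutation relations between root subgroups of $G$ and the $\infty$-directions in $\Tilde{G}$. Once that is in place, the chain-condition-forces-$a_l\in\BC^\x$ argument and the Schubert-cell matching are comparatively direct.
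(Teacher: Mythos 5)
The proposition is imported verbatim from \cite{BH24b}*{Proposition 3.2} and the present paper gives no proof, so your proposal must stand on its own, and it has a genuine gap at its core. Your normalization lemma asserts that every point of $\CZ'$ has a unique representative $g_1y_{\infty_1}(a_1)\cdots y_{\infty_{n-1}}(a_{n-1})g_n\Tilde{B}^+$ in which the $a_l\in\BC^\x$ are well defined while $(g_k)$ is determined only modulo the twisted-product relations. These two requirements are incompatible. Replace $(g_1,g_2,\dots)$ by $(g_1b,\,b^{-1}g_2,\dots)$ with $b=tu\in B^+=TU^+$: since there are no real roots of the form $m\gamma-m'\alpha_{\infty_1}$ with $\gamma$ a positive root of $G$ and $m,m'>0$, the factor $u$ commutes with $y_{\infty_1}(a_1)$, but the torus part gives $ty_{\infty_1}(a_1)t^{-1}=y_{\infty_1}(\alpha_{\infty_1}(t)^{-1}a_1)$, and $\alpha_{\infty_1}|_T$ is nontrivial because $a_{i,\infty_1}=-2$. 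Thus the same flag of $\CZ'$ corresponds to different tuples $(a_l)$ for different representatives of the same point of $\CZ$: the $a_l$ are not functions on $\CZ'$, only the class $[(g_k)]\in\CZ$ is. Consequently your $\Phi:(\BC^\x)^{n-1}\x\CZ\to\CZ'$ is not well defined, and the fibration is \emph{not} globally trivial; it is the $(\BC^\x)^{n-1}$-bundle associated to the characters $\alpha_{\infty_l}$. This is precisely why the proposition claims only local triviality and produces the section $th$ only over the cell $\mathring{\CZ}_\bfw$, where the canonical representatives $g_k=\dot{s}_{i_k}y_k$ with $y_k\in U^-$ pin down the $a_l$. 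Your step ``globally trivial, hence locally trivial'' therefore proves too much and must be replaced by a cell-by-cell trivialization.

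Beyond this, the corrected normalization is the entire technical content of the statement, and you only assert it. In particular you never use the hypothesis $\tilde{g}\Tilde{B}^+\in\Tilde{P}_I^-\Tilde{B}^+/\Tilde{B}^+$, which is what excludes the unipotent-radical directions of $\Tilde{U}_I^+$ that cannot be absorbed into the normal form: conjugating $\Tilde{U}_I^+$ past $\dot{s}_{\infty_l}$ sends part of it into $U_{-\alpha_{\infty_l}}$ and part into $G$, and the leftover is killed exactly by the big-cell condition in $\Tilde{G}/\Tilde{P}_I^+$. What does go through in your write-up is the argument that $a_l\neq 0$ (the Bruhat cells of $\Tilde{G}/\Tilde{P}_I^+$ detect which $s_{\infty_l}$ occur) and the matching of the two opposition conditions via $\tilde{g}=\tilde{u}\,g_1\cdots g_n\,\tilde{b}$ with $\tilde{u}\in\Tilde{U}_I^-$ and $\Tilde{B}^-\dot{v}\Tilde{B}^+\cap G=B^-\dot{v}B^+$, which correctly reduces $\mathring{\Tilde{\CB}}^v$ to $m(x)\in\mathring{\CB}^v$.
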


\subsection{Cluster structures}\label{sec:twisted:cluster}

By Proposition \ref{ThickFibration}, there is an isomorphism $\mathring{\Tilde{\CB}}_{v,\Tilde{w}}\isoto (\BC^\x)^{n-1}\x\mathring{\CZ}_{v,\bfw}$. Projection to the first component gives regular functions $\Delta_1$, $\cdots$, $\Delta_{n-1}$ on $\mathring{\Tilde{\CB}}_{v,\Tilde{w}}$.

\begin{prop}
\label{FibrationCluster}
Let $\bfs(v, th(\bfw))$ be the seed for $\mathring{\Tilde{\CB}}_{v,\Tilde{w}}$ constructed in Theorem \ref{RichardsonCluster} using the reduced word $th(\bfw)$ for $\Tilde{w}$. For $k=1,\cdots,n-1$, we have 
\[
A_{(\infty_k,1)}=\frac{1}{\Delta_k}\prod_{l=1}^{k-1}\Delta_l^{q_{kl}}, \qquad \text{where } q_{kl} =-\langle \alpha_{\infty_l}^\vee, s_{i_{l+1}}s_{\infty_{l+1}}\cdots s_{i_k}s_{\infty_k}\omega_{\infty_k}\rangle.
\]
\end{prop}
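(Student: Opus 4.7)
The plan is to identify $A_{(\infty_k,1)}$ as a specific generalized minor via Lemma~\ref{SchubertClusterVariable}, and then evaluate that minor on a thickened point.

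First I check that the cluster variable $A_{(\infty_k,1)}$ in the final seed $\bfs(v, th(\bfw))$ is simply the restriction of the same named variable in $\bfs(e, th(\bfw))$. Two observations make this work. The vertex $(\infty_k,1)$ satisfies $n_{\infty_k}=1$, so it is frozen already in $\bfs(th(\bfw))$ and hence in $\bfs(e, th(\bfw))$. Moreover, since $v\in W$, the left-most subexpression of $v$ in $th(\bfw)$ uses only positions at levels in $I$, so every $\tilde\mu_l$ in Definition~\ref{def:Ml} mutates only at vertices $(i,\cdot)$ with $i\in I$. Thus the function at $(\infty_k,1)$ is never touched, and by Lemma~\ref{SchubertClusterVariable} it equals
\[
A_{(\infty_k,1)}(z\dot{\tilde w}\tilde B^+)=\Delta^{\omega_{\infty_k}}_{\tilde w_{(2k)}\omega_{\infty_k}}(z)=\Delta_{\omega_{\infty_k}}(z\dot{\tilde w}_{(2k)}),\quad z\in \tilde U^+,
\]
where $\tilde w_{(2k)}=s_{i_1}s_{\infty_1}\cdots s_{i_k}s_{\infty_k}$.

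The key representation-theoretic input for evaluating this minor on a thickened point is that $\omega_{\infty_k}$ is $W$-invariant: since $\langle\alpha_i^\vee,\omega_{\infty_k}\rangle=0$ for all $i\in I$, the Serre relations force $f_i\eta_{\omega_{\infty_k}}=0$, and consequently the whole subgroup $G\subset\tilde G$ fixes the extremal vector $\eta_{\omega_{\infty_k}}$. Using the $SL_2$ identity $y_{\infty_l}(a_l)=x_{\infty_l}(a_l^{-1})\alpha_{\infty_l}^\vee(-a_l^{-1})\dot s_{\infty_l}x_{\infty_l}(a_l^{-1})$, I rewrite the thickened element
\[
g_{th}=g_1\,y_{\infty_1}(a_1)\,g_2\cdots y_{\infty_{n-1}}(a_{n-1})\,g_n
\]
in the canonical form $g_{th}=\tilde z\,\dot{\tilde w}\,\tilde b$ with $\tilde z\in\tilde U^+$ and $\tilde b\in\tilde B^+$. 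The $G$-factors $g_l$ can be absorbed into $\tilde z$ and $\tilde b$ without affecting the minor, because they fix $\eta_{\omega_{\infty_k}}$ on both sides. The torus pieces $\alpha_{\infty_l}^\vee(-a_l^{-1})$ are commuted past the intervening $\dot s_{j}$'s, producing conjugates $\dot{\tilde u}_l^{-1}\alpha_{\infty_l}^\vee(-a_l^{-1})\dot{\tilde u}_l\in\tilde T$ for suitable Weyl-word prefixes $\tilde u_l$, and these torus elements accumulate on the right inside $\tilde b$.

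Applying the right-equivariance $\Delta_{\omega_{\infty_k}}(gt)=\langle t,\omega_{\infty_k}\rangle\,\Delta_{\omega_{\infty_k}}(g)$ and using $\langle w^{-1}\alpha^\vee,\mu\rangle=\langle\alpha^\vee,w\mu\rangle$, each torus factor with index $l$ in the range $1\le l<k$ pairs against $\omega_{\infty_k}$ to yield $a_l^{-\langle\alpha_{\infty_l}^\vee,\,s_{i_{l+1}}s_{\infty_{l+1}}\cdots s_{i_k}s_{\infty_k}\omega_{\infty_k}\rangle}=a_l^{q_{kl}}$; the factor at $l=k$ produces $1/a_k$; and factors with $l>k$ disappear because $\tilde w_{(2k)}$ stops at $s_{\infty_k}$ and $\omega_{\infty_k}$ is fixed by $s_{\infty_l}$ for $l>k$. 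Multiplying these contributions and recalling $\Delta_l=a_l$ from the construction of the thickening section in Proposition~\ref{ThickFibration}, we obtain the claimed formula.

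The main obstacle is purely combinatorial: carefully rewriting $g_{th}$ as $\tilde z\,\dot{\tilde w}\,\tilde b$ requires tracking signs, normalizations, and conjugations arising from the $SL_2$ identities, and matching them with the Weyl-theoretic expression for $q_{kl}$. Once this rearrangement is verified, the pairing with $\omega_{\infty_k}$ converts the product of torus contributions into the closed form in the statement.
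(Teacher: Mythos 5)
Your route is genuinely different from the paper's: you propose to evaluate the minor directly on a thickened element by rearranging it into the canonical form $\tilde z\,\dot{\Tilde{w}}\,\tilde b$, whereas the paper restricts to the totally nonnegative locus (Zariski dense by Deodhar), reads off $A_{(\infty_k,1)}$ from the Marsh--Rietsch chamber ansatz as $\prod_l b_l^{p_{kl}}\prod_l a_l^{q_{kl}}$, computes the fibration coordinates by renormalizing into the thickening normal form ($\Delta_l=a'_l=a_l\prod_j b_j^{r_{lj}}$, \emph{not} $a_l$), and then proves the exponent identity $p_{kl}=\sum_{l\le j\le k}q_{kj}r_{jl}$ by induction. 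Measured against this, your first step already has a gap: the identification $A_{(\infty_k,1)}(z\dot{\Tilde{w}}\Tilde{B}^+)=\Delta^{\omega_{\infty_k}}_{\Tilde{w}_{(2k)}\omega_{\infty_k}}(z)$ on $\mathring{\Tilde{\CB}}_{v,\Tilde{w}}$ is not just a matter of the vertex being frozen and never mutated. The Richardson variety is not a subvariety of $\mathring{\Tilde{\CB}}_{e,\Tilde{w}}$; the variable is transported through the Bruhat-atlas chart, so one must check that the minor is unchanged under left translation by elements of $\dot{v}\Tilde{U}^-\dot{v}^{-1}\cap\Tilde{U}^+$ (the paper's Claim (a)). This is fixable, in fact by your own observation that $G$ fixes $\eta_{\omega_{\infty_k}}$ and its coefficient functional, but you do not address it.

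The more serious gap is the central simplification. The claim that the $G$-factors $g_l$ can be absorbed ``without affecting the minor'' because they fix $\eta_{\omega_{\infty_k}}$ cannot be right: the exponents $q_{kl}$ contain the reflections $s_{i_{l+1}},\dots,s_{i_k}$ with $i_j\in I$, so the $G$-factors visibly enter the answer. Concretely, for $k=2$, $l=1$ one computes $q_{21}=-\langle\alpha^\vee_{\infty_1},s_{i_2}s_{\infty_2}\omega_{\infty_2}\rangle=-6$, while discarding the $G$-letter would give $-\langle\alpha^\vee_{\infty_1},s_{\infty_2}\omega_{\infty_2}\rangle=-2$. The fixed-vector argument only strips $G$-factors adjacent to $\eta_{\omega_{\infty_k}}$ or to the coefficient functional; the $g_j$ sandwiched between the $y_{\infty_l}(a_l)$ act on vectors other than $\eta_{\omega_{\infty_k}}$. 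To produce the $s_{i_j}$'s you must keep the Weyl parts of the $g_j$ in the rearrangement, and then their unipotent and torus parts also generate torus contributions (the $b_l^{p_{kl}}$ of the paper) that do affect the minor; moreover your identification $\Delta_l=a_l$ is valid only for points written in the exact thickening normal form, which your $SL_2$ rewrites destroy, so $\Delta_l$ acquires corrections (the paper's $a'_l=a_l\prod_j b_j^{r_{lj}}$). Showing that these two effects combine precisely into powers of the $\Delta_l$ is the heart of the proposition and is exactly the paper's identity $p_{kl}=\sum_{l\le j\le k}q_{kj}r_{jl}$, which you defer as ``purely combinatorial'' bookkeeping. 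As it stands, the essential claim is asserted rather than proved.
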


\begin{proof}
Let $g\Tilde{B}^+ =z\dot{\Tilde{w}}\Tilde{B}^+\in \mathring{\Tilde{\CB}}_{v,\Tilde{w}} $ for some $z\in \Tilde{U}^+$. Let $\Tilde{w}_{(k)}=s_{i_1}s_{\infty_1}\cdots s_{i_k}s_{\infty_k}$.

(a) {\it We claim $A_{(\infty_k,1)}(gB^+) = \Delta^{\omega_{i_k}}_{\Tilde{w}_{(k)}\omega_{\infty_k}}(z)$}.

Recall that $A_{(\infty_k,1)}$ is obtained by restricting the cluster variable $A'_{(\infty_k,1)}$ in the seed $\bfs(e,th(\bfw))$ for $\mathring{\Tilde{\CB}}_{e,\Tilde{w}}$ via the atlas isomorphism.  By definition, we haves 
$A'_{(\infty_k,1)}(g' \Tilde{B}^+) = \Delta^{\omega_{i_k}}_{\Tilde{w}_{(k)}\omega_{\infty_k}}(z')$ for $g' \Tilde{B}^+ = z' \dot{\Tilde{w}}\Tilde{B}^+\in \mathring{\Tilde{\CB}}_{e,\Tilde{w}}$ with $z'\in \Tilde{U}^+$.

Recall  the atlas isomorphism
\[
\mathring{\Tilde{\CB}}_{e,\Tilde{w}}\cap \dot{v}\Tilde{U}^-\Tilde{B}^+/\Tilde{B}^+\to \mathring{\Tilde{\CB}}_{e,v}\x \mathring{\Tilde{\CB}}_{v,\Tilde{w}}, \quad h\dot{v}\Tilde{B}^+\mapsto (\sigma_{v,+}(h)\dot{v}\Tilde{B}^+,\sigma_{v,-}(h)\dot{v}\Tilde{B}^+), \quad \text{for } h\in \dot{v}\Tilde{U}^-\dot{v}^{-1}.
\]
Let $\sigma_{v,-}(h)\dot{v}\Tilde{B}^+ = g \Tilde{B}^+ = z\dot{\Tilde{w}}\Tilde{B}^+ $. We have  
\begin{align*}
A_{(\infty_k,1)} (\sigma_{v,-}(h)\dot{v}\Tilde{B}^+) &= A'_{(\infty_k,1)} (h'\sigma_{v,-}(h)\dot{v}\Tilde{B}^+), \quad \text{for some }h' \in    \dot{v}\Tilde{U}^-\dot{v}^{-1}\cap \Tilde{U}^+ \\
& = \Delta^{\omega_{i_k}}_{\Tilde{w}_{(k)}\omega_{\infty_k}}(h'z)\\
 & =  \Delta^{\omega_{i_k}}_{\Tilde{w}_{(k)}\omega_{\infty_k}}(z).
\end{align*}
This proves Claim (a).

Let $\bfv=(i_1',i_2',\cdots,i_n')$ be the right-most subexpression of $v$ in $\bfw$. By \cite{BH24b}*{Section 3.2}, the expression 
$i(\bfv)=(i_1',e,i_2',e,\cdots,e,i_n')$ is the right-most subexpression of $v$ in $th(\bfw)$. We write $g_k=\dot{s}_{i_k}$ if $i_k'=i_k$ and $g_k=y_{i_k}(b_k)$ with $b_k\in \BR_{>0}$ if $i_k'=e$. We define
\[
\Tilde{G}_{i(\bfv),th(\bfw), >0}=\{g_1y_{\infty_1}(a_1)\cdots y_{\infty_{n-1}}(a_{n-1})g_n \in \Tilde{G}\,|\, a_k,b_k\in \BR_{>0}\}
\]
By \cite{MR04}*{Theorem 11.3} (or its generalization to Kac--Moody groups), we have \[
\Tilde{G}_{i(\bfv),th(\bfw),>0} \cong \Tilde{\CB}_{i(\bfv),th(\bfw),>0}.
\]

Since $\Tilde{\CB}_{i(\bfv),th(\bfw),>0}$ is Zariski dense in $\mathring{\Tilde{\CB}}_{v,\Tilde{w}}$ by \cite{Deo85}*{Theorem 1.1}, it suffices to check equality on $\Tilde{\CB}_{i(\bfv),th(\bfw),>0}$.
So we can assume $g = g_1y_{\infty_1}(a_1)\cdots y_{\infty_{n-1}}(a_{n-1})g_n\in \Tilde{G}_{i(\bfv),th(\bfw), >0}$, where $g_k=\dot{s}_{i_k}$ if $i_k'=i_k$ and $g_k=y_{i_k}(b_k)$ with $b_k\in \BR_{>0}$ if $i_k'=e$.

For $l\leq k$, we define 
\[
p_{kl} =-\langle \alpha_{i_l}^\vee, s_{\infty_l}s_{i_{l+1}}s_{\infty_{l+1}}\cdots s_{i_k}s_{\infty_k}\omega_{\infty_k}\rangle, \quad 
q_{kl} =-\langle \alpha_{\infty_l}^\vee, s_{i_{l+1}}s_{\infty_{l+1}}\cdots s_{i_k}s_{\infty_k}\omega_{\infty_k}\rangle.
\]
We have $p_{kl},q_{kl}\leq 0$ as $th(\bfw)$ is reduced. By \cite{MR04}*{Section 7}, we have
\[
A_{(\infty_k,1)}(g\Tilde{B}^+)=\prod_{l=1,i'_l=e}^k b_l^{p_{kl}}\prod_{l=1}^k a_l^{q_{kl}}.
\]

  Recall $y_i(b)=\dot{s}_iy_i(-1/b)x_i(b)\alpha_i^\vee(b)$ for any $b\in \BR_{>0}$. Then we can write 
\[
g\Tilde{B}^+ = g_1y_{\infty_1}(a_1)\cdots y_{\infty_{n-1}}(a_{n-1})g_n \Tilde{B}^+=g'_1y_{\infty_1}(a'_1)\cdots y_{\infty_{n-1}}(a'_{n-1})g'_n \Tilde{B}^+. 
\]
Here $a'_k > 0$, $g'_k=\dot{s}_{i_k}$ if $i_k'=i_k$, and $g'_k=\dot{s}_{i_k}y_{i_k}(b_k')$ for some $b_k'\in \BR_{>0}$ if $i_k'=e$. Direct computation shows that 
\[
a'_k=a_k\prod_{l=1,i_l'=e}^k b_l^{r_{kl}}, \quad \text{where }r_{kl}=-\langle s_{i_k}s_{i_{k-1}}\cdots s_{i_{l+1}}\alpha_{i_l}^\vee, \alpha_{\infty_k}\rangle.
\]

(b) {\it We conclude that $\Delta_k(g\Tilde{B}^+) = a'_k$.}

Now the proposition is equivalent to the following claim. 

(c) {\it We claim $p_{kl}=\sum_{l\leq j\leq k}q_{kj}r_{jl}$ for all $1 \le l\leq k$.} 

We prove by induction on both $k$ and $k-l$. Notice that $p_{kl},q_{kl},r_{kl}$ only depend on the expression $(i_l,\cdots,i_k)$. So it suffices to prove the claim for $p_{k1}$. The base case follows from the fact that $p_{kk}=-2$, $q_{kk}=-1$, $r_{kk}=2$ for all $k$.

Assuming the identity for $p_{k,1}$, we want to show that $p_{k+1,1}=\sum_{l\leq j\leq k+1}q_{k+1,j}r_{j,1}$. Let $p',q',r'$ be the same coefficients obtained using the expression $(i_1,i_3,\cdots,i_{k+1})$. By definition, $q'_{kj}=q_{(k+1),(j+1)}$ for all $k\geq j$.

We have
\begin{align*}
p_{k+1,1}&=-\langle \alpha_{i_1}^\vee, s_{\infty_1}s_{i_2}s_{\infty_2}\cdots s_{i_{k+1}}s_{\infty_{k+1}}\omega_{\infty_{k+1}}\rangle\\
&=-\langle \alpha_{i_1}^\vee+2\alpha_{\infty_1}^\vee, s_{i_2}s_{\infty_2}\cdots s_{i_{k+1}}s_{\infty_{k+1}}\omega_{\infty_{k+1}}\rangle\\
&=2q_{k+1,1}-\langle \alpha_{i_1}^\vee, s_{i_2}s_{\infty_2}\cdots s_{i_{k+1}}s_{\infty_{k+1}}\omega_{\infty_{k+1}}\rangle\\
&=2q_{k+1,1}-\langle \alpha_{i_1}^\vee-a_{i_2,i_1}\alpha_{i_2}^\vee, s_{\infty_2}\cdots s_{i_{k+1}}s_{\infty_{k+1}}\omega_{\infty_{k+1}}\rangle\\
&=2q_{k+1,1}-a_{i_2,i_1}p_{k+1,2}+p'_{k,1},
\end{align*}
and
\begin{align*}
r_{j,1}&=-\langle s_{i_j}\cdots s_{i_2}\alpha_{i_1}^\vee, \alpha_{\infty_j}\rangle\\
&=-\langle s_{i_j}\cdots s_{i_3}(\alpha_{i_1}^\vee-a_{i_2,i_1}\alpha_{i_2}^\vee), \alpha_{\infty_j}\rangle\\
&=-a_{i_2,i_1}r_{j,2}+r'_{j-1,1}.
\end{align*}

We compute that
\begin{align*}
\sum_{1\leq j\leq k+1}q_{k+1,j}r_{j,1}&=2q_{k+1,1}+\sum_{2\leq j\leq k+1}q_{k+1,j}(-a_{i_2,i_1}r_{j,2}+r'_{j-1,1})\\
&=2q_{k+1,1}-a_{i_2,i_1}\sum_{2\leq j\leq k+1}q_{k+1,j}r_{j,2}+\sum_{2\leq j\leq k+1}q_{k+1,j}r'_{j-1,1}\\
&=2q_{k+1,1}-a_{i_2,i_1}\sum_{2\leq j\leq k+1}q_{k+1,j}r_{j,2}+\sum_{1\leq j\leq k}q'_{k,j}r'_{j,1}\\
&=2q_{k+1,1}-a_{i_2,i_1}p_{k+1,2}+p'_{k,1}.
\end{align*}
We finish the proof now.
\end{proof}

\begin{thm}
\label{TwistedProductCluster}
The coordinate ring of the open Richardson variety  $\mathring{\CZ}_{v,\bfw}$ in the twisted product $\CZ$ has an upper cluster algebra structure given by the seed $\bfs(v, \bfw)$.
\end{thm}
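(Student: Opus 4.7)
The strategy is to deduce this theorem from Theorem~\ref{RichardsonCluster} applied to the thickened Kac--Moody group $\Tilde{G}$, transferring the upper cluster structure on $\mathring{\Tilde{\CB}}_{v,\Tilde{w}}$ to $\mathring{\CZ}_{v,\bfw}$ via the section of the thickening fibration $\pi:\CZ'\to \CZ$ from Proposition~\ref{ThickFibration}. By that proposition, $th$ identifies $\mathring{\Tilde{\CB}}_{v,\Tilde{w}}$ with $(\BC^\x)^{n-1}\x\mathring{\CZ}_{v,\bfw}$ with the torus coordinates $\Delta_1,\ldots,\Delta_{n-1}$, so
\[
\BC[\mathring{\CZ}_{v,\bfw}]\cong \BC[\mathring{\Tilde{\CB}}_{v,\Tilde{w}}]/(\Delta_k-1\,:\,1\leq k\leq n-1).
\]
Since $th(\bfw)$ is a reduced expression for $\Tilde{w}\in\Tilde{W}$, Theorem~\ref{RichardsonCluster} supplies an isomorphism $\BC[\mathring{\Tilde{\CB}}_{v,\Tilde{w}}]\cong \CU(\bfs(v,th(\bfw)))$, so it remains to understand the quotient on the cluster side.

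For each $k$, the letter $\infty_k$ appears exactly once in $th(\bfw)$, so $(\infty_k,1)$ is the last (hence frozen) vertex at level $\infty_k$ in the initial seed $\bfs(e,th(\bfw))$. The algorithm in Definition~\ref{RichardsonSeedDefi} performs mutations $\Tilde{\mu}_l$ only at vertices lying at levels $i_{k_l}\in I$ (since $v\in W$ and thus the left-most subexpression of $v$ in $th(\bfw)$ uses only $I$-letters), and freezings are applied only to vertices adjacent to such points. Consequently, the vertices $(\infty_k,1)$ remain frozen throughout and are \emph{not} deleted in the construction of $\bfs(v,th(\bfw))$. By Proposition~\ref{FibrationCluster}, the $n-1$ functions $A_{(\infty_k,1)}$ are related to the $\Delta_k$ by an upper-triangular monomial change of coordinates (the $k$-th row inverts $\Delta_k$ and involves $\Delta_l$ for $l<k$ only), so setting $\Delta_k=1$ for all $k$ is equivalent to setting $A_{(\infty_k,1)}=1$ for all $k$.

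Iterating Proposition~\ref{IsolatedDeletion} at each of the frozen vertices $(\infty_1,1),\ldots,(\infty_{n-1},1)$ then yields
\[
\CU(\bfs(v,th(\bfw)))\big/\bigl(A_{(\infty_k,1)}=1\,:\,1\leq k\leq n-1\bigr)\;\cong\;\CU(\bfs^\dagger),
\]
where $\bfs^\dagger$ is obtained from $\bfs(v,th(\bfw))$ by deleting the $n-1$ frozen vertices $(\infty_k,1)$. The final step, which I expect to be the main technical point, is the combinatorial verification that $\bfs^\dagger=\bfs(v,\bfw)$ in the sense of Remark~\ref{rem:seedw}. Concretely, one traces both algorithms in parallel: because the left-most subexpression of $v$ in $th(\bfw)$ is just that in $\bfw$ padded by $e$'s at the $\infty_k$ positions, the sequences $\Tilde{\mu}_l$ are identical on both sides; one must check that each amalgamation step $\bfs(\cdots)\mapsto \bfs(\cdots,\infty_k)\mapsto \bfs(\cdots,\infty_k,i_{k+1})$ contributes exactly the same exchange data between $I$-vertices as the single amalgamation $\bfs(\cdots)\mapsto \bfs(\cdots,i_{k+1})$ once the $(\infty_k,1)$ and $(\infty_k,0)$ vertices are removed. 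This reduces to a local calculation with the Cartan entries $\Tilde a_{\infty_k,i}=-2$ together with the defrosting--freezing bookkeeping in Section~\ref{sec:prelim:Lie}, and will complete the proof.
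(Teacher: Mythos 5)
Your proposal is correct and follows essentially the same route as the paper: Proposition~\ref{ThickFibration} gives the presentation $\BC[\mathring{\CZ}_{v,\bfw}]\cong \BC[\mathring{\Tilde{\CB}}_{v,\Tilde{w}}]/(\Delta_k-1)$, Proposition~\ref{FibrationCluster} converts the relations $\Delta_k=1$ into $A_{(\infty_k,1)}=1$ at the frozen vertices $(\infty_k,1)$, and Proposition~\ref{IsolatedDeletion} realizes the quotient as the upper cluster algebra of the deleted seed. The final identification of the deleted seed with $\bfs(v,\bfw)$, which you flag as the remaining technical point, is exactly what the paper disposes of via Remark~\ref{rem:twisted}/Remark~\ref{rem:seedw}, and your outline of that check (the left-most subexpression of $v$ in $th(\bfw)$ is the one in $\bfw$ padded by $e$'s, so the mutation sequences agree) is the right one.
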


\begin{proof}
By Proposition \ref{ThickFibration}, we have 
\[
\BC[\mathring{\CZ}_{v,\bfw}]\simeq \BC[\mathring{\CB}_{v,\Tilde{w}}]/(\Delta_k-1)_{k=1}^{n-1}.
\]
By Proposition \ref{FibrationCluster}, this quotient is equivalent to deleting the frozen variables $A_{(\infty_k,1)}$ for $k=1,\cdots,n-1$ from the cluster structure on $\mathring{\CB}_{v,\Tilde{w}}$. Therefore by Proposition~\ref{IsolatedDeletion}, we get an upper cluster structure on $\BC[\mathring{\CZ}_{v,\bfw}]$. It is clear the seed is given by Definition~\ref{RichardsonSeedDefi}; see Remark~\ref{rem:seedw}.
\end{proof}

\begin{remark}
In the case of braid varieties (special cases of $\mathring{\CZ}_{v,\bfw}$), we expect our seed to be the same as the seed in \cite{CGG+}.
\end{remark}

It follows from the isomorphism $\mathring{\Tilde{\CB}}_{v,\Tilde{w}}\isoto (\BC^\x)^{n-1}\x\mathring{\CZ}_{v,\bfw}$ and Proposition~\ref{RichardsonFactorial} that $\BC[\mathring{\CZ}_{v,\bfw}]$ is also a unique factorization domain. Further note that the validity of Conjecture~\ref{conj} would imply the seed $\bfs(v,\bfw)$ on $\BC[\mathring{\CZ}_{v,\bfw}]$ is also locally acyclic.

\section{Further properties}\label{sec:extra}

\subsection{Independence of reduced expressions}
\label{sec:extra:indep}

The construction of the seed for $\mathring{\CB}_{v,w}$ in Definition \ref{RichardsonSeedDefi} depends on the initial choice of reduced expression $\bfw$ for $w$. For a different choice of reduced expression $\bfw'$, the seeds $\bfs(\bfw)$ and $\bfs(\bfw')$ are mutation equivalent up to a relabeling. We show that the mutation equivalence descends to the seeds for $\mathring{\CB}_{v,w}$ with $v \le w$.

In the proofs of this section, we will assume that $G$ is symmetric for simplicity. The symmetrizable case follows from standard folding arguments; see \cite{FG09}.

\begin{lem}
\label{lem:subquiver}
Let $\bfs$ be a skew-symmetric seed and $J'\subset J$ be a non-empty subset. Let $\bfs'$ be the seed obtained from $\bfs$ by deleting all vertices in $J-J'$. Suppose there exist a mutation sequence $\Tilde{\mu}$ on $\bfs'$ such that $\Tilde{\mu}(\bfs')=\bfs'$. Then the same mutation sequence $\Tilde{\mu}$ applied to $\bfs$ gives a seed $\Tilde{\mu}(\bfs)=\bfs$.
\end{lem}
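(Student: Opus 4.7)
The approach exploits the \emph{locality} of skew-symmetric matrix mutation: the update rule
\[
b'_{ij} = \begin{cases} -b_{ij} & \text{if } k\in\{i,j\}, \\ b_{ij} + [b_{ik}]_+ b_{kj} + b_{ik}[-b_{kj}]_+ & \text{otherwise,}\end{cases}
\]
only involves entries from the $k$-th row and column together with the entry $b_{ij}$ itself. The plan is to proceed by induction on the length of $\Tilde{\mu} = \mu_{k_N}\circ\cdots\circ\mu_{k_1}$, where each $k_i \in J'\cap J_\uf$.

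\textbf{Step 1: The $J'\times J'$ block.} An immediate consequence of locality is that the trajectory of the $J'\times J'$ block of $\bfs$ under $\Tilde{\mu}$ is computed by the same formulas that compute the trajectory of $\bfs'$, since no mutation step ever accesses a row or column outside $J'$. Hence
\[
\Tilde{\mu}(\bfs)\big|_{J'\times J'} \;=\; \Tilde{\mu}(\bfs') \;=\; \bfs' \;=\; \bfs\big|_{J'\times J'},
\]
so every entry $b_{ij}$ with $i,j\in J'$ is preserved.

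\textbf{Step 2: Entries involving $J-J'$.} For an entry $b_{iv}$ with $i\in J'$ and $v\in J-J'$, the update at a mutation $\mu_k$ with $k\in J'$ involves only $b_{iv}$, $b_{ik}$, and $b_{kv}$. Viewing $\bfs$ as a coefficient extension of $\bfs'$ by the frozen-looking columns and rows indexed by $J-J'$, these entries play the role of $c$-vector / coefficient data for $\bfs'$. The plan is to obtain their preservation from a rigidity statement: once $\Tilde{\mu}$ fixes the principal part $\bfs'$ \emph{exactly} (not merely up to permutation), the induced action on the attached coefficient columns is also the identity. This would follow from sign-coherence of $c$-vectors (Derksen--Weyman--Zelevinsky; Gross--Hacking--Keel--Kontsevich) applied to the principal coefficient extension of $\bfs'$, together with the fact that the coefficient columns of $\bfs$ can be realized as an integer linear combination of the principal-coefficient $c$-vectors. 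Once the $J'\times (J-J')$ entries are pinned down, another application of locality handles the remaining entries $b_{uv}$ with $u,v\in J-J'$.

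\textbf{Main obstacle.} Step~1 is essentially formal, but Step~2 is where the real content lies: locality alone is insufficient, because the mutation formula for $b_{iv}$ genuinely mixes $J'$-data with $(J-J')$-data, and in principle cumulative changes over a long sequence could fail to cancel. The rigidity argument via sign-coherence is the natural tool, but verifying that it applies requires care, since the coefficient columns of a general $\bfs$ are not standard basis vectors. An alternative, possibly cleaner route, would be to interpret both sides of $\Tilde{\mu}(\bfs)=\bfs$ as regular self-maps of the associated cluster variety and show that they agree on a sufficiently large closed subvariety cut out by specializing the $J-J'$ coordinates; this geometric reformulation may in fact be the authors' preferred approach given the geometric flavor of the rest of the paper.
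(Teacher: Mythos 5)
Your Step 1 is fine, but it only concerns the exchange matrix, and the lemma cannot be a statement about exchange matrices alone: if $\bfs'$ is a single mutable vertex with the $1\times 1$ zero matrix and $\Tilde{\mu}=\mu_1$, then $\Tilde{\mu}$ fixes the matrix of $\bfs'$ while negating every entry $\ve_{1v}$, $v\in J-J'$, of $\bfs$. So the hypothesis $\Tilde{\mu}(\bfs')=\bfs'$ must be used at the level of cluster variables (which rules out this example, since $\mu_1(A_1)\neq A_1$), and your argument never touches them. This is exactly where the real gap sits, in your Step 2: the rigidity statement ``if $\Tilde{\mu}$ fixes $\bfs'$ then it acts trivially on any attached coefficient columns'' is precisely the coefficient-independence of the exchange graph, and it does \emph{not} follow from sign-coherence of $c$-vectors. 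The mechanism you propose --- writing the columns indexed by $J-J'$ as integer combinations of principal-coefficient $c$-vectors and transporting them by the $C$-matrix --- fails because the action of a mutation sequence on coefficient data is only piecewise linear: for the rank-two matrix with $\ve_{12}=1$, mutation at $1$ sends a coefficient row $(a,b)$ to $(-a,\,b+[a]_+)$, which for $a<0$ is not the corresponding combination of the images of $(1,0)$ and $(0,1)$. Sign-coherence controls signs within each $c$-vector; it does not linearize the tropical dynamics on arbitrary integer columns, and the passage from periodicity with fewer coefficients to periodicity with more coefficients historically required the strictly stronger input of linear independence of cluster monomials.

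Two further points. The entries $\ve_{uv}$ with $u,v\in J-J'$ do change under mutation at $k\in J'$ (the paper deliberately keeps arrows between frozen vertices), and ``another application of locality'' does not control them: knowing that the $J\times J'$ block returns to its initial value says nothing about its intermediate values, on which the accumulated change of $\ve_{uv}$ depends. The paper's proof is short because it quotes exactly the two theorems that close these gaps: by \cite{CKLP}*{Theorem 4.6} the exchange graph depends only on the mutable part of the exchange matrix (one declares the vertices occurring in $\Tilde{\mu}$ mutable and everything else frozen), so $\Tilde{\mu}(\bfs')=\bfs'$ forces the cluster variables of $\Tilde{\mu}(\bfs)$ and $\bfs$ to coincide; then \cite{GSV08}*{Theorem 4} (a seed is uniquely determined by its cluster) recovers the full exchange matrix, including the $J'\times(J-J')$ and frozen--frozen entries. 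To salvage your approach you would have to prove, not assume, the principal-coefficient periodicity, and that is exactly what these citations supply.
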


\begin{proof}
Recall the exchange graph is the graph whose vertices are the seeds and whose edges connect seeds related by a single mutation \cite{FZ07}. By \cite{CKLP}*{Theorem 4.6}, the exchange graph only depends on the mutable part of the exchange matrix. Therefore, by viewing the vertices involved in $\Tilde{\mu}$ as mutable and the rest as frozen, $\Tilde{\mu}(\bfs')=\bfs'$ implies that the cluster variables of $\Tilde{\mu}(\bfs)$ and $\bfs$ agree. Note that by the setting in \cite{CKLP}, we lose information on the frozen part of the exchange matrix, i.e. the $\ve_{ij}$ where both $i,j$ are not involved in $\Tilde{\mu}$, so we cannot immediately conclude that $\Tilde{\mu}(\bfs)=\bfs$. However we know all cluster variables must coincide. The result then follows from \cite{GSV08}*{Theorem 4}, which states that the seed is uniquely determined by its cluster variables.
\end{proof}

\begin{prop}
\label{RichardsonIndepRedExp}
The seeds $\bfs(v,\bfw)$ and $\bfs(v,\bfw')$ are mutation equivalent up to relabeling.
\end{prop}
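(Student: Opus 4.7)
The plan is to reduce to a single braid move via Matsumoto's theorem and then lift the known mutation equivalence on $\bfs(\bfw)$ through the freeze/delete procedure of Definition~\ref{RichardsonSeedDefi} with the help of Lemma~\ref{lem:subquiver}.

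First I would invoke Matsumoto's theorem to reduce to the case where $\bfw$ and $\bfw'$ differ by a single braid relation. A commutation move $s_is_j \leftrightarrow s_js_i$ with $a_{ij}=0$ is immediate: the seeds $\bfs(\bfw)$ and $\bfs(\bfw')$ on $\Conf_w(\SA)$ agree up to swapping two labels, and since the left-most subexpression of $v$ transforms accordingly, the output of Definition~\ref{RichardsonSeedDefi} for $v$ in $\bfw$ versus $\bfw'$ agrees after the same relabeling.

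For a true braid move $s_is_js_i\cdots\leftrightarrow s_js_is_j\cdots$ of length $m_{ij}$, the seeds $\bfs(\bfw)$ and $\bfs(\bfw')$ on $\Conf_w(\SA)$ are mutation equivalent up to relabeling via an explicit local sequence $\mu_{\mathrm{braid}}$ supported on vertices in the braid region; this is essentially the content of \cite{SW21}*{Theorem~1.1} combined with the amalgamation picture of \S\ref{sec:prelim:Lie}. I would show that this \emph{same} $\mu_{\mathrm{braid}}$ realizes the desired mutation equivalence between $\bfs(v,\bfw)$ and $\bfs(v,\bfw')$. Concretely, the steps are: (i) describe by a finite case analysis on the braid pattern and on how the left-most subexpression selects letters from it, how the mutation sequence $M_m$ of Definition~\ref{def:Ml} and the set of frozen and deleted vertices transform under the braid move; (ii) verify that the compositions $M_m\circ \mu_{\mathrm{braid}}$ and $M'_m$ (with the primed objects coming from $\bfw'$) agree up to the obvious relabeling, possibly after absorbing mutations at isolated frozen vertices; (iii) apply Lemma~\ref{lem:subquiver} to transport the equivalence from the full seed $\bfs(\bfw)$ to the subseed $\bfs(v,\bfw)$ obtained by the freeze/delete procedure.

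The main obstacle is step (i), a somewhat tedious case analysis of the behavior of the left-most subexpression across a braid move: depending on which letters of the pattern are picked out by $\bfv$, the counts $a_l$ and $b_l$ of Definition~\ref{def:Ml} and the identification of vertices adjacent to those being frozen can shift in subtle ways, and one must verify that these shifts match the local exchange relations produced by $\mu_{\mathrm{braid}}$. As in the analogous treatments for finite types in \cite{CGG+} and \cite{Men22}, once this local bookkeeping is in place, Proposition~\ref{SubalgMutation} together with Lemma~\ref{lem:subquiver} completes the argument.
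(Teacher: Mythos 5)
Your strategy is essentially the paper's: reduce to a single braid move, analyze how the left-most subexpression $\bfv$ meets the braid pattern, and show that the braid-move mutation on $\bfs(\bfw)$ is either absorbed by or commutes with the sequences $\Tilde{\mu}_l$ of Definition~\ref{def:Ml} interleaved with the freezings of Definition~\ref{RichardsonSeedDefi}, with Lemma~\ref{lem:subquiver} and Proposition~\ref{SubalgMutation} doing the transport. Two caveats on the details. First, the paper does not carry out your step (i) for a general braid move of length $m_{ij}$: it assumes $G$ symmetric, so the move is $(i,j,i)\sim(j,i,j)$ and corresponds to the single mutation $\mu_{(i,a)}$ relating $\bfs(\bfw)$ and $\bfs(\bfw')$, and the symmetrizable case is handled by folding; this keeps the bookkeeping to six local patterns for $\bfv$. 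Second, your step (ii) is stated a bit too strongly and step (iii) inverts the direction of Lemma~\ref{lem:subquiver}: when $\bfv$ avoids the braid region entirely, the two final seeds are not equal after relabeling but differ by the leftover mutation $\mu_{(i,a)}$ (which still gives the proposition), and the lemma is used to lift mutation identities verified on the seeds of the shorter words $\bfw^{(l)}$ (which sit inside the seeds $\Tilde{\bfs}_{l}$ as deletions, via the rational quasi-cluster maps of \eqref{eq:Phil}) up to the full seed, not to push an equivalence from $\bfs(\bfw)$ down to $\bfs(v,\bfw)$. With these corrections your outline fills in to the paper's argument; the substance is precisely the case analysis you defer, which is cases (a), (b)(1)--(5) and (c) in the paper's proof.
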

\begin{proof}
We may assume that $\bfw$ and $\bfw'$ differ by a single braid move, which must be $(i,j,i)\sim(j,i,j)$ or $(i,j) \sim (j,i)$ in the symmetric case. The second case is trivial. We assume it is the first case. We can write $\bfw=(\bfw_1,i,j,i,\bfw_2)$ and $\bfw'=(\bfw_1,j,i,j,\bfw_2)$ for some $i,j\in I$ with $a_{ij}=a_{ji}=-1$. We write $\bfs = \bfs(v,\bfw)$ and $\bfs' =\bfs(v,\bfw')$.

Assume the expression $(i,j,i)$ in $\bfw$ corresponds to $(i,a)$, $(j,b)$ and $(i,a+1)$ in $J(\bfw)$. Then the expression $(j,i,j)$ in $\bfw'$ corresponds to $(j,b)$, $(i,a)$ and $(j,b+1)$ in $J(\bfw')$. We have $\bfs(\bfw')$ equals to $\mu_{(i,a)}(\bfs(\bfw))$, after relabeling the vertices by $(i,a)\mapsto (j,b)$, $(j,b)\mapsto (j,b+1)$ and $(i,a+1)\mapsto (i,a)$.

The left-most subexpression $\bfv$ of $v$ also depends on $\bfw$. The only possibilities for $\bfv$ are $(\bfv_1, 1,1,1, \bfv_2)$, $(\bfv_1, i,1,1, \bfv_2)$, $(\bfv_1, 1,j,1,\bfv_2)$, $(\bfv_1, i,j,1, \bfv_2)$, $(\bfv_1, 1,j,i,\bfv_2)$ and $(\bfv_1, i,j,i, \bfv_2)$. Then the left-most subexpression $\bfv'$ of $v$ also depends on $\bfw'$ are $(\bfv_1, 1,1,1, \bfv_2)$, $(\bfv_1, 1,i,1, \bfv_2)$, $(\bfv_1, j,1,1,\bfv_2)$, $(\bfv_1, 1, i,j, \bfv_2)$, $(\bfv_1, j,i,1, \bfv_2)$ and $(\bfv_1, j,i,j, \bfv_2)$, respectively.

Recall seeds $\tilde{\bfs}_l$ defined in Definition~\ref{TildeSeedDefi}. We similarly define  seeds $\tilde{\bfs}'_l$ using the reduced expression $\bfw'$ (still with respect to the same $v$), where the relevant sequence of mutations will be denoted by $\tilde{\mu}'_l$. We have $\bfs = \bfs_m$ and $\bfs' = \bfs'_m$ in this notation.

For brevity, equality of seeds in this proof will always be understood to be up to relabeling. We start with $\tilde{\bfs}_{0} = \mu_{(i,a)}(\tilde{\bfs}'_{0})$ and compare $\tilde{\bfs}_{l}$ with $\tilde{\bfs}'_{l}$ for $1 \le l \le m$.  We shall freely use notations from \S\ref{sec:notations}.

{\it (a) For any $l$ such that $k_l < (i,a)$, we claim $\tilde{\bfs}_{l} = \mu_{(i,a)}(\tilde{\bfs}'_{l})$. }

We proceed inductively. Recall the rational quasi-cluster map $\Phi_l$ in \eqref{eq:Phil}. 

Since $k_l < (i,a)$, the relevant $(i,j,i)$ must be a subexpression of $\bfw^{(l)}$. Then we have the following commutative diagram of mutations 
 \[
 \begin{tikzcd}
    \bfs(\overline{i_k},\bfw^{(l)}) \ar[r, "\Tilde{\mu}_{\overrightarrow{i_{k_l}}}"] \ar[d] &\bfs(\bfw^{(l)},\overline{i_k})\ar[d]\\
    \bfs(\overline{i_k},{\bfw'}^{(l)}) \ar[r, "\Tilde{\mu}_{\overrightarrow{i_{k_l}}}" ] & \bfs({\bfw'}^{(l)},\overline{i_k})
\end{tikzcd}
\]
The vertical mutations corresponds to $(i,j,i) \sim (j,i,j)$. Thanks to Proposition~\ref{lem:subquiver}, we see that 
\[
 \tilde{\mu}_{l} (\tilde{\bfs}_{l-1}) = \mu_{(i,a)} \circ \tilde{\mu}'_{l} \circ \mu_{(i,a)}  (\tilde{\bfs}_{l-1}) = \mu_{(i,a)} \circ \tilde{\mu}'_{l} (\tilde{\bfs}'_{l-1}).
\]
After freezing, we obtain that $\tilde{\bfs}_{l} = \mu_{(i,a)} ( \tilde{\bfs}'_{l})$. This proves Claim (a).
 
{\it (b) Assume $\bfv \neq (\bfv_1, 1,1,1, \bfv_2)$, then we have $\bfs =\bfs'$.} 

Assume $k_l \in \{(i,a), (j,b), (i,a+1)\}$. By (a), we can assume $\tilde{\bfs}_{l-1} = \mu_{(i,a)}(\tilde{\bfs}'_{l-1})$. We divide into several cases based on the subexpression $\bfv$.

\begin{enumerate}
\item Assume $k_l = (i,a)$, and $\bfv = (\bfv_1, i,1,1, \bfv_2)$. Then $\bfv' = (\bfv_1, 1,i,1, \bfv_2)$. Then by the definition of $\tilde{\mu}'_l$, we see that 
\[
 \tilde{\mu}_l (\tilde{\bfs}_{l-1}) = \tilde{\mu}_l \circ \mu_{(i,a)} (\tilde{\bfs}'_{l-1}) = \tilde{\mu}'_l(\tilde{\bfs}'_{l-1}).
\]
After freezing, we obtain that $\tilde{\bfs}_l = \tilde{\bfs}'_l$.
\item Assume $k_l = (j,b)$, and $\bfv = (\bfv_1, 1,j,1, \bfv_2)$. Then similar to case (1), we see that $\bfs_l = \bfs'_l$.
\item Assume $k_l = (i,a)$, and $\bfv = (\bfv_1, i,j,1, \bfv_2)$. Then we have $k_{l+1} = (j,b)$. Similar to the argument in case (1), we obtain that $\bfs_{l+1} = \bfs'_{l+1}$.
\item Assume $k_l = (j,b)$, and $\bfv = (\bfv_1, 1,j,i, \bfv_2)$. Then we have $k_{l+1} = (i,a+1)$. Similar to the argument in case (1) or (3), we obtain that $\bfs_{l+1} = \bfs'_{l+1}$.
\item Assume $k_l = (i,a)$, and $\bfv = (\bfv_1, i,j,i, \bfv_2)$. Note that $\bfw^{(l+2)} = (\bfw')^{(l+2)} = \bfw_2$. We claim $\tilde{\bfs}_{l+2} = \tilde{\bfs}'_{l+2}$.

We have the following commutative diagram of mutations
\[
 \begin{tikzcd}
    \bfs(\overline{i},\overline{j}, \overline{i}, \bfw_2) \ar[r] \ar[d] &\bfs(\bfw_2,\overline{i},\overline{j}, \overline{i})\ar[d]\\
    \bfs(\overline{j},\overline{i}, \overline{j},\bfw_2) \ar[r] & \bfs(\bfw_2,\overline{j},\overline{i}, \overline{j}).
\end{tikzcd}
\]
Thanks to Proposition~\ref{lem:subquiver} again, we have a commutative diagram of mutations on the seed $\tilde{\bfs}_l$. We conclude that 
\[
 \tilde{\mu}_{l+2} \circ \tilde{\mu}_{l+1} \circ \tilde{\mu}_{l} (\tilde{\bfs}_l) = {}'\mu \circ  \tilde{\mu}'_{l+2} \circ \tilde{\mu}'_{l+1} \circ \tilde{\mu}'_{l} (\tilde{\bfs}'_l). 
\]
Here ${}'\mu$ is the mutation corresponding to $(\overline{i},\overline{j}, \overline{i})\sim (\overline{j},\overline{i}, \overline{j})$. 

So $\tilde{\bfs}_{l+2}$ and $\tilde{\bfs}'_{l+2}$ only differ by three frozen vertices that will be deleted to obtain $\bfs$ and $\bfs'$, respectively.
\end{enumerate}

Summarizing the five cases above and continuing the construction following Definition~\ref{RichardsonSeedDefi}, we obtain that $\bfs =\bfs'$. This proves Claim (b).

{\it (c) Assume $\bfv = (\bfv_1, 1,1,1, \bfv_2)$, then we have $\bfs =\mu_{(i, a)}(\bfs')$.}

Assume $k_{l-1} < (i,a)$ and $k_l > (i, a+1)$. Then by Claim (a), we see that $\bfs_{l-1} =\mu_{(i, a)}(\bfs'_{l-1})$. Recall the rational quasi-cluster maps in \eqref{eq:Phil}
\[
\begin{tikzcd}
    (\Conf_{w}(\SA),\Tilde{\bfs}_{l-1}) \ar[rr,dashed,"\Tilde{\varphi}_{l-1}"]  && (\Conf_{w^{(l-1)}}(\SA),\bfs(\bfw^{(l-1)})), 
\end{tikzcd}
\]
\[
\begin{tikzcd}
    (\Conf_{w}(\SA),\Tilde{\bfs}'_{l-1}) \ar[rr,dashed,"\Tilde{\varphi}_{l-1}"]  && (\Conf_{w^{(l-1)}}(\SA),\bfs(\bfw^{(l-1)})). 
\end{tikzcd}
\]

For the seed $\bfs(\bfw^{(l-1)})$, the sequence $\mu_{\overrightarrow{i_{k_l}}}$ of mutations commute with the mutation for $(i,j,i) \sim (j,i,j)$. This is because the vertex corresponds to the first $i$ in $(i,j,i)$ is isolated from the any vertex involved in $\mu_{\overrightarrow{i_{k_l}}}$. By Proposition~\ref{lem:subquiver}, the sequence $\tilde{\mu}_{l}$ and the mutation $\mu_{(i,a)}$ commute for both $\Tilde{\bfs}_{l-1}$ and $\Tilde{\bfs}'_{l-1}$. We conclude that  $\bfs_{l} =\mu_{(i, a)}(\bfs'_{l})$. Similar argument shows $\bfs_{m} =\mu_{(i, a)}(\bfs'_{m})$. The claim follows now.

The proposition is proved by combining the three parts.
\end{proof}

The generalization to the twisted product is straightforward.  Let $\bfw$ and $\bfw'$ be two expressions (not necessarily reduced) differ by braid moves. So they represent the same element in $\Br^+_W$, and we have $\mathring{\CZ}_{v, \bfw} \cong \mathring{\CZ}_{v, \bfw'}$. Let $v \le m_*(\bfw) =m_*(\bfw')$. 
\begin{cor}
The seeds $\bfs(v,\bfw)$ and $\bfs(v,\bfw')$ are mutation equivalent up to relabeling.
\end{cor}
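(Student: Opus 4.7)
The proof proceeds by a direct extension of the argument for Proposition~\ref{RichardsonIndepRedExp} to allow non-reduced expressions. By the transitivity of mutation equivalence, I reduce to the case where $\bfw$ and $\bfw'$ differ by a single braid move. As in Proposition~\ref{RichardsonIndepRedExp}, I assume $G$ is symmetric, with the symmetrizable case following via folding \cite{FG09}. The braid moves are then $(i,j) \leftrightarrow (j,i)$ with $a_{ij}=0$ or $(i,j,i) \leftrightarrow (j,i,j)$ with $a_{ij}=-1$.

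The key observation is that Definition~\ref{RichardsonSeedDefi}, as extended to non-reduced expressions in Remark~\ref{rem:seedw}, is purely combinatorial: it uses only the index set $J(\bfw)$, the initial seed $\bfs(\bfw)$, and the left-most subexpression of $v$ in $\bfw$. The initial seed $\bfs(\bfw)$ arises from the configuration space $\Conf_\beta(\SA)$ with $\beta \in \Br^+$ represented by $\bfw$; since this variety depends only on the braid monoid element $\beta$, braid moves on $\bfw$ translate to the standard cluster mutations on $\bfs(\bfw)$ exactly as in the reduced case. In particular, a single $(i,j,i) \leftrightarrow (j,i,j)$ braid move produces $\bfs(\bfw') = \mu_{(i,a)}(\bfs(\bfw))$ up to the relabeling used in the proof of Proposition~\ref{RichardsonIndepRedExp}.

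With this observation, the case analysis in the proof of Proposition~\ref{RichardsonIndepRedExp}---which enumerates the possible interactions of the left-most subexpression $\bfv$ with the braid region $(i,j,i)$ in $\bfw$---carries over verbatim. Proposition~\ref{lem:subquiver} continues to supply the commutativity of $\mu_{(i,a)}$ with the mutation sequences $\tilde{\mu}_l$ needed to propagate the braid-move mutation through the freezing and mutation steps defining $\bfs(v,\bfw)$ and $\bfs(v,\bfw')$.

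The main obstacle is to verify that no step in the original proof tacitly relied on the reducedness of $\bfw$. Since the identities used (such as $\bfs(\bfw') = \mu_{(i,a)}(\bfs(\bfw))$ up to relabeling, and the commutation relations among $\tilde{\mu}_l$ and $\mu_{(i,a)}$ in the various subcases) are all local to the braid region and are statements about combinatorially-defined seeds, this verification is routine but tedious given the multiple subcases to check. As an alternative, one could attempt to lift each braid move through the thickening map $th$; however, since $th(\bfw)$ and $th(\bfw')$ need not represent the same element of $\tilde{W}$, this route would require additional combinatorial work in $\tilde{W}$ and does not appear to simplify the argument.
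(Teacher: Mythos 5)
Your proposal is correct and matches the paper's (unstated) intent: the paper offers no separate argument for this corollary beyond declaring the generalization straightforward, and the intended proof is exactly yours, namely that the proof of Proposition~\ref{RichardsonIndepRedExp} applies verbatim to non-reduced expressions because the configuration spaces $\Conf_\beta(\SA)$, the seeds $\bfs(\bfw)$, the quasi-cluster maps, and the braid-move-equals-mutation fact from \cite{SW21} are all available for arbitrary words in $\Br^+$, while the six-case analysis of the left-most subexpression and Lemma~\ref{lem:subquiver} never use reducedness of $\bfw$. Your remark that the thickening map cannot be used directly (since $th(\bfw)$ and $th(\bfw')$ are reduced words for different elements of $\tilde{W}$) is also accurate.
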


\subsection{Comparison of total positivity structures}
\label{sec:extra:TP}

We first recall the totally nonnegative part of flag varieties defined in \cites{Lus94, BH24a}. Let $U^-_{\geq0}$ be the submonoid of $G$ generated by $y_i(a)$ for $i\in I$ and $a\in \BR_{>0}$. Let $\CB_{\geq0}=\overline{U^-_{\geq0}B^+/B^+}$ be the closure of $U^-_{\geq0}B^+/B^+$ in $\CB$ with respect to the Hausdorff topology. For any $v\leq w$, let $\CB_{v,w,>0}=\mathring{\CB}_{v,w}\cap \CB_{\geq0}$.

Meanwhile, using the upper cluster structure, we can define another totally positive subset of $\mathring{\CB}_{v,w}$. For any seed $\bfs$, let $\CB_{v,w,>0}^{cl}$ be the cluster positive subset where all cluster variables in $\bfs$ take values in $\BR_{>0}$. Since the mutation relations are subtraction free, this definition is independent of the choice of seed $\bfs$ up to mutation equivalence (and relabeling).

\begin{prop}
\label{TPAgree}
$\CB_{v,w,>0}=\CB_{v,w,>0}^{cl}$.
\end{prop}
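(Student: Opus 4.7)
The plan is to prove the equality by induction on $\ell(v)$, using the realization of $\mathring{\CB}_{v,w}$ as a closed subvariety of $\mathring{\CB}_{u,w}$ for $u\lessdot v$ (Corollary~\ref{cor:T1}) together with the explicit formula from Proposition~\ref{TIsClusterMonomial}.

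For the base case $v=e$, both $\CB_{e,w,>0}$ and $\CB_{e,w,>0}^{cl}$ are homeomorphic to $\BR_{>0}^{\ell(w)}$: the former via Marsh-Rietsch's parametrization of totally positive flags by products $y_{i_1}(a_1)\cdots y_{i_n}(a_n)$, and the latter via the values of the cluster variables in $\bfs(e,\bfw)$, which by Lemma~\ref{SchubertClusterVariable} are the generalized minors $\Delta^{\omega_{i_k}}_{w_{(k)}\omega_{i_k}}$. I would argue that the two parametrizations are related by a subtraction-free change of variables in both directions: one direction follows from Lusztig's positivity of generalized minors on totally nonnegative flags, and the other uses the classical Berenstein--Fomin--Zelevinsky inversion expressing Lusztig parameters as subtraction-free rational functions in the minors, which extends to the Kac--Moody setting via the cluster structure on $\Conf_w(\SA)$ from \cite{SW21}.

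For the inductive step with $v=us_i$, $u<v$, I would use that the seed $\bfs(v,\bfw)$ is obtained from $\bfs(u,\bfw)$ (after a sequence of mutations) by freezing a single vertex $k^*$ at level $i$ and then deleting it with $A_{k^*}=1$ (Definition~\ref{RichardsonSeedDefi}). By Proposition~\ref{TIsClusterMonomial}, the regular function $D_{u,\mathbf{r}}$ from Corollary~\ref{cor:T1} coincides with the exchange ratio $X_{k^*}$ times a monomial in frozen variables. A point $x\in\mathring{\CB}_{v,w}$ then lies in $\CB_{v,w,>0}^{cl}$ if and only if, under the inclusion $\mathring{\CB}_{v,w}\hookrightarrow \mathring{\CB}_{u,w}$ afforded by Proposition~\ref{prop:CB}, it lies in $\CB_{u,w,>0}^{cl}$ and satisfies $D_{u,\mathbf{r}}=1$; this is because cluster variables in $\bfs(u,\bfw)$ other than $A_{k^*}$ restrict to cluster variables in $\bfs(v,\bfw)$, while $A_{k^*}$ itself is a positive Laurent monomial in the other cluster variables together with $D_{u,\mathbf{r}}$. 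The inductive hypothesis identifies this common locus with totally positive elements of $\mathring{\CB}_{u,w}$ at which $D_{u,\mathbf{r}}=1$, and Marsh-Rietsch's parametrization identifies the latter with $\CB_{v,w,>0}$.

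The main obstacle is the base case, specifically the Kac--Moody analogue of the Berenstein--Fomin--Zelevinsky inversion showing that Marsh-Rietsch coordinates on $\CB_{e,w,>0}$ are subtraction-free rational expressions in the generalized minors of Lemma~\ref{SchubertClusterVariable}. A natural approach is to track the Marsh-Rietsch parameters through the amalgamation construction of the seed $\bfs(\bfw)$ using the rational quasi-cluster maps of Section~\ref{sec:subalg}, which are manifestly subtraction-free. Once the base case is secured, the inductive step reduces to bookkeeping of positivity along the freezing-and-deletion operations, where the key input is that $T_{r_{(l)},v_{(l)}}$ evaluates positively on totally positive points by virtue of being a positive monomial in the cluster variables of $\tilde{\bfs}_l$ (Proposition~\ref{TIsClusterMonomial}).
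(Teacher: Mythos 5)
Your overall strategy --- induction on $\ell(v)$ along the subexpression, realizing $\mathring{\CB}_{v,w}$ as the slice $D_{u,\mathbf r}=1$ inside $\mathring{\CB}_{u,w}$, and using Proposition~\ref{TIsClusterMonomial} to see that the deleted frozen variable takes a positive value on that slice --- is the same as the paper's. However, the two steps you treat as either an open obstacle or a routine appeal are exactly the places where the paper brings in external theorems, and as written your argument does not close there. First, the base case is not an obstacle requiring a new Kac--Moody Berenstein--Fomin--Zelevinsky inversion: by Lemma~\ref{SchubertClusterVariable} the initial cluster variables on $\mathring{\CB}_{e,w}$ are the chamber minors $\Delta^{\omega_{i_k}}_{w_{(k)}\omega_{i_k}}$, and the statement that positivity of these minors characterizes $\CB_{e,w,>0}$ is the known total positivity criterion (\cite{MR04}*{Theorem 12.1} in finite type, \cite{BH21} for the Kac--Moody setting), which the paper simply cites. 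Your sketched alternative (tracking Marsh--Rietsch parameters through the amalgamation construction) is left uncarried out and would amount to reproving that theorem.

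Second, and more seriously, in the inductive step you assert that ``Marsh--Rietsch's parametrization identifies'' the locus $\{x\in\CB_{u,w,>0}\,:\,D_{u,\mathbf r}(x)=1\}$ with $\CB_{v,w,>0}$. This is not a formal consequence of the parametrization: what is needed is that the Bruhat atlas isomorphism $\mathring{\CB}_{u,w}\cap \dot v U^- B^+/B^+\isoto \mathring{\CB}_{u,v}\times\mathring{\CB}_{v,w}$ restricts to an isomorphism of totally nonnegative parts $\CB_{u,w,>0}\isoto\CB_{u,v,>0}\times\CB_{v,w,>0}$ (including the fact that $\CB_{u,w,>0}$ lies in the chart $\dot v U^-B^+/B^+$). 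This is the product structure theorem \cite{BH24a}*{Theorem 5.3}, which is precisely what the paper invokes at this point; without citing or proving it, the Lusztig-positivity side of your induction does not go through, particularly in Kac--Moody type where the Marsh--Rietsch results themselves require generalization. With these two inputs supplied, your cluster-side bookkeeping (cluster variables of $\bfs(u,\bfw)$ restricting to those of $\bfs(v,\bfw)$ after freezing and deletion, positivity of the deleted frozen variable via Proposition~\ref{TIsClusterMonomial}, and the slice point $T_{r_{(l)},v_{(l)}}=1$ lying in $\CB_{u,v,>0}\cong\BR_{>0}$) matches the paper's proof.
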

\begin{proof}
Retain the notations in \S\ref{sec:notations}. We prove by induction on $l$ that $\CB_{v_{(l)},w,>0}=\CB_{v_{(l)},w,>0}^{cl}$. The base case is $\CB_{e,w,>0}=\CB_{e,w, >0}^{cl}$, which follows by Lemma \ref{SchubertClusterVariable} and \cite{MR04}*{Theorem 12.1} (or \cite{BH21}).
 
By \cite{BH24a}*{Theorem 5.3} and our induction hypothesis, the rational map $
\mathring{\CB}_{v_{(l-1)},w}\dashrightarrow \mathring{\CB}_{v_{(l-1)},v_{(l)}}\x \mathring{\CB}_{v_{(l)},w}$ restricts to an isomorphism
\[
c_{v_{(l)}}:{\CB}_{v_{(l-1)},w, >0}^{cl} ={\CB}_{v_{(l-1)},w,>0}\isoto {\CB}_{v_{(l-1)},v_{(l)},>0}\x {\CB}_{v_{(l)},w,>0}.
\]

Recall the commutative diagram in Theorem~\ref{RichardsonCluster}. The natural quotient map $\BC[\mathring{\CB}_{v_{(l-1)}, w}] \rightarrow  \BC[\mathring{\CB}_{v_{(l)}, w}]$ given by $T_{v_{(l-1)}, v_{(l)}} = \prod_{i=1}^lT_{r_{(i)}, v_{(i)}} =1$ maps cluster variables to cluster variables, except the deleted isolated frozen cluster variable. 

Geometrically, this is given by the embedding  
\[
\CB_{v_{(l)}, w} \rightarrow \{\ast\}\times \CB_{v_{(l)}, w} \subset {\CB}_{v_{(l-1)},v_{(l)}} \x \CB_{v_{(l)}, w} \rightarrow \CB_{v_{(l-1)}, w}.
\]
Here $ \ast  \in {\CB}_{v_{(l-1)},v_{(l)}, >0} $ is the point determined by equation $T_{v_{(l-1)}, v_{(l)}} =1$; see Corollary~\ref{AtlasRegular2}.

Therefore, we have 
\begin{align*}
 \{\ast \} \x \CB_{v_{(l)}, w,>0}^{cl} &=  (\{\ast \} \x \CB_{v_{(l)}, w}) \cap c_{v_{(l)}}({\CB}_{v_{(l-1)},w,>0}^{cl}) \\
 &= (\{\ast \} \x \CB_{v_{(l)}, w}) \cap ({\CB}_{v_{(l-1)},v_{(l)},>0}\x {\CB}_{v_{(l)},w,>0}) \\
 &=  \{\ast \} \x \CB_{v_{(l)}, w, >0}.
\end{align*}

 It follows that $\CB_{v_{(l)}, w,>0}^{cl} = \CB_{v_{(l)}, w, >0}$.
\end{proof}

The discussion on total positivity extends to the twisted product. Let $\CZ_{\geq0}$ be the Hausdorff closure of $(U^-_{\geq0},\cdots, U^-_{\geq0}B^+/B^+)$ in $\CZ$ defined in \cite{BH24b}. Let $\CZ_{v,\bfw,>0}=\mathring{\CZ}_{v,\bfw}\cap \CZ_{\geq0}$. Let $\CZ_{v,\bfw, >0}^{cl}$ be the cluster positive subset defined from the cluster structure.

\begin{prop}\label{TPAgree2}
$\CZ_{v,\bfw,>0}=\CZ_{v,\bfw, >0}^{cl}$.
\end{prop}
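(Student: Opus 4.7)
The plan is to reduce Proposition~\ref{TPAgree2} to Proposition~\ref{TPAgree} applied to the thickened Kac--Moody group $\Tilde{G}$, using the thickening isomorphism from Proposition~\ref{ThickFibration} and the explicit formula for the frozen variables $A_{(\infty_k,1)}$ from Proposition~\ref{FibrationCluster}.

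First, I would record a commutative diagram of isomorphisms of complex varieties
\[
\mathring{\Tilde{\CB}}_{v,\Tilde{w}} \isoto (\BC^\x)^{n-1}\x\mathring{\CZ}_{v,\bfw},
\]
where the factors in $(\BC^\x)^{n-1}$ correspond to the coordinates $\Delta_1,\dots,\Delta_{n-1}$. I would then show that under this identification
\[
\Tilde{\CB}^{cl}_{v,\Tilde{w},>0}\,=\,(\BR_{>0})^{n-1}\x \CZ^{cl}_{v,\bfw,>0}.
\]
Indeed, the seed $\bfs(v,\bfw)$ is obtained from $\bfs(v,th(\bfw))$ by deleting the isolated frozen vertices $(\infty_k,1)$ (Theorem~\ref{TwistedProductCluster}), so $\CZ^{cl}_{v,\bfw,>0}$ is the locus where all cluster variables of $\bfs(v,\bfw)$ are positive, while $\Tilde{\CB}^{cl}_{v,\Tilde{w},>0}$ additionally requires $A_{(\infty_1,1)},\dots,A_{(\infty_{n-1},1)}\in\BR_{>0}$. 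By Proposition~\ref{FibrationCluster}, $A_{(\infty_k,1)}$ is a Laurent monomial in $\Delta_1,\dots,\Delta_k$ with $\Delta_k^{-1}$ appearing, so inverting this triangular system shows that each $\Delta_k$ is a Laurent monomial in the $A_{(\infty_l,1)}$'s; consequently the two positivity conditions on the $(\BC^\x)^{n-1}$--factor coincide.

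Next, I would establish the analogous identification on the Lusztig side
\[
\Tilde{\CB}_{v,\Tilde{w},>0}\,=\,(\BR_{>0})^{n-1}\x \CZ_{v,\bfw,>0}.
\]
This is essentially contained in \cite{BH24b}: the thickening map restricts to a bijection between $(\BR_{>0})^{n-1}\x\CZ_{v,\bfw,>0}$ and $\Tilde{\CB}_{v,\Tilde{w},>0}$, since on the open stratum both sides are parametrized by the same positive data $(a_k,b_k)\in\BR_{>0}$ through the factorization $g_1 y_{\infty_1}(a_1)\cdots y_{\infty_{n-1}}(a_{n-1})g_n$ (cf.\ the proof of Proposition~\ref{FibrationCluster}). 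Here the $a_k$ become the $\BR_{>0}$-coordinates on $(\BR_{>0})^{n-1}$ while the $b_k$ record the positive parametrization of $\CZ_{v,\bfw,>0}$.

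Finally I would combine these two displayed identifications with Proposition~\ref{TPAgree} applied to the Kac--Moody group $\Tilde{G}$, which gives $\Tilde{\CB}_{v,\Tilde{w},>0}=\Tilde{\CB}^{cl}_{v,\Tilde{w},>0}$. Taking the image in the second factor yields $\CZ_{v,\bfw,>0}=\CZ^{cl}_{v,\bfw,>0}$. The only nontrivial point, and the one I expect to require the most care, is the matching of the two parametrizations of the $(\BC^\x)^{n-1}$ factor on both the Lusztig and cluster sides; but this is exactly the content of Proposition~\ref{FibrationCluster} together with the positive-parametrization arguments in \cite{BH24b}, so no new ideas beyond what is already in the paper are needed.
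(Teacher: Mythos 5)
Your overall strategy is the same as the paper's: reduce to Proposition~\ref{TPAgree} for the thickened group $\Tilde{G}$ via the isomorphism $\mathring{\Tilde{\CB}}_{v,\Tilde{w}}\isoto (\BC^\x)^{n-1}\x\mathring{\CZ}_{v,\bfw}$ of Proposition~\ref{ThickFibration}, using \cite{BH24b}*{Theorem 3.4} on the Lusztig side and Proposition~\ref{FibrationCluster} on the cluster side. However, there is a gap in your cluster-side identification $\Tilde{\CB}^{cl}_{v,\Tilde{w},>0}=(\BR_{>0})^{n-1}\x \CZ^{cl}_{v,\bfw,>0}$. Your justification only compares the deleted frozen variables $A_{(\infty_k,1)}$ with the fiber coordinates $\Delta_k$ (that triangular monomial comparison is fine), but it implicitly treats the remaining cluster variables of $\bfs(v,th(\bfw))$ as if they were pullbacks of the cluster variables of $\bfs(v,\bfw)$ along the projection to the second factor. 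That is not literally true: by Theorem~\ref{TwistedProductCluster} (via Proposition~\ref{IsolatedDeletion}), the cluster variables of $\bfs(v,\bfw)$ are the restrictions of those of $\bfs(v,th(\bfw))$ to the section $\{\Delta_1=\cdots=\Delta_{n-1}=1\}$, and away from this section the cluster variables of the big seed genuinely vary along the $(\BC^\x)^{n-1}$-fibers (their weights pair nontrivially with the coroots $\alpha_{\infty_l}^\vee$, as is visible in the explicit positive-parametrization formulas in the proof of Proposition~\ref{FibrationCluster}). So the asserted product decomposition of the cluster-positive locus does not follow from what you wrote; one would additionally have to show that along the fibers these variables change only by positive factors (e.g.\ by exhibiting the fiber direction as the action of the rank-$(n-1)$ subtorus of $\Tilde{T}$ centralizing $G$, under which generalized minors rescale by characters), which is an extra argument you have not supplied.

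The cleaner repair, and what the paper actually does (``the same argument as the proof of Proposition~\ref{TPAgree}''), is to avoid the product decomposition on the cluster side entirely and work with the section: on the slice $\{\Delta_1=\cdots=\Delta_{n-1}=1\}$ the big-seed cluster variables restrict exactly to the small-seed ones and the deleted frozen variables take the positive value $1$, so the slice of $\Tilde{\CB}^{cl}_{v,\Tilde{w},>0}$ is exactly $\CZ^{cl}_{v,\bfw,>0}$; meanwhile \cite{BH24b}*{Theorem 3.4} shows the slice of $\Tilde{\CB}_{v,\Tilde{w},>0}$ is exactly $\CZ_{v,\bfw,>0}$. Intersecting the identity $\Tilde{\CB}_{v,\Tilde{w},>0}=\Tilde{\CB}^{cl}_{v,\Tilde{w},>0}$ from Proposition~\ref{TPAgree} with this section then gives the statement, with no claim needed about how the remaining cluster variables behave along the fibers.
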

\begin{proof}

Let $\tilde{\CB}$ be the thickening flag variety considered in \S\ref{sec:twisted:thick}. By \cite{BH24b}*{Theorem 3.4}, the isomorphism $\mathring{\Tilde{\CB}}_{v,\Tilde{w}}\isoto (\BC^\x)^{n-1}\x\mathring{\CZ}_{v,\bfw}$ restricts to an isomorphism $\Tilde{\CB}_{v,\Tilde{w},>0}\isoto \BR_{>0}^{n-1}\x\CZ_{v,\bfw,>0}$. By the same argument as the proof of Proposition \ref{TPAgree}, the identity $\Tilde{\CB}_{v,\Tilde{w},>0}=\Tilde{\CB}_{v,\Tilde{w}, >0}^{cl}$ implies $\CZ_{v,\bfw,>0}=\CZ_{v,\bfw, >0}^{cl}$.
\end{proof}

\subsection{Full rank and reddening sequences}
\label{sec:extra:fullrank}

In this section we show that the seed  constructed in Definition \ref{RichardsonSeedDefi} have full rank and admits reddening sequences. Note that Definition \ref{RichardsonSeedDefi} applies to the case of twisted product as well; see Theorem~\ref{TwistedProductCluster}.

Recall a seed $\bfs$ is said to have \textit{full rank} if the submatrix of the exchange matrix of $\bfs$ indexed by $J\x J_\uf$ has full rank \cite{BFZ05}.

\begin{prop}
\label{SeedFullRank}
\begin{enumerate}
\item The seed $\bfs(v,\bfw)$ constructed in Definition \ref{RichardsonSeedDefi} for $\mathring{\CB}_{v,w}$ has full rank. 
\item The seed $\bfs(v,\bfw)$ constructed in Definition \ref{RichardsonSeedDefi} for $\mathring{\CZ}_{v, \bfw}$ has full rank. 
\end{enumerate}
\end{prop}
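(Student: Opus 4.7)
The plan is to track the full rank of the submatrix $B_{J, J_\uf}$ through the construction in Definition~\ref{RichardsonSeedDefi}, starting from a full-rank initial seed. For part~(1), the initial seed $\bfs_0 = \bfs(e, \bfw)$ on $\mathring{\CB}_{e, w}$ has full rank: this follows from the full rank of the Bott--Samelson seed $\bfs(\bfw)$ established in \cite{SW21} combined with Proposition~\ref{IsolatedDeletion} applied to the isolated frozen vertices $\{(i,0)\,|\, i \in I\}$, or alternatively by a direct inductive verification via the amalgamation construction of~\S\ref{sec:prelim:Lie}. It then suffices to show that each of the three operations used in Definition~\ref{RichardsonSeedDefi}---mutation at a mutable vertex, freezing of a mutable vertex, and deletion of a mutable vertex whose mutable neighbors have just been frozen---preserves full rank.

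Mutation and freezing are standard: mutation at a mutable vertex $k$ acts by $\mu_k(B) = E_k B F_k$ with $E_k, F_k$ invertible integer matrices, and since $k$ is mutable, $F_k$ preserves the splitting $J = J_\uf \sqcup (J - J_\uf)$, so $B_{J, J_\uf}$ transforms by invertible matrices on both sides and retains its rank; freezing a mutable vertex $j$ removes column $j$ from $B_{J, J_\uf}$, leaving $|J_\uf| - 1$ columns from a linearly independent set, which remain linearly independent.

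The main observation concerns the deletion step. For $k = (i_{k_l}, n_{i_{k_l}} - a_l + 1)$, every mutable neighbor has just been frozen, so for any $j \in J_\uf$ one has $b_{kj} = 0$: if $j = k$ this is the diagonal zero, and if $j \neq k$ is mutable then $j$ is not a neighbor of $k$. By skew-symmetrizability, $b_{jk} = 0$ as well. Hence row $k$ of $B_{J, J_\uf}$ vanishes identically, while column $k$ vanishes on its $J_\uf$-block (but may remain nonzero on the frozen rows). Deletion of $k$ thus removes a zero row (no rank change) and one column from a linearly independent set (rank drops by exactly one to $|J_\uf| - 1 = |J'_\uf|$), preserving full rank. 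Iterating through Definition~\ref{RichardsonSeedDefi} establishes (1).

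For part~(2), the identical three operations apply with initial seed $\bfs_0 = \bfs(e, \bfw)$ on $\mathring{\CZ}_{e, \bfw}$, whose full rank follows from (1) applied to the thickened seed $\bfs(e, th(\bfw))$ on $\mathring{\tilde{\CB}}_{e, \tilde{w}}$: using Proposition~\ref{FibrationCluster} and the trivial $(\BC^\x)^{n-1}$-fibration of Proposition~\ref{ThickFibration}, a lower-triangular invertible monomial rescaling of the frozen variables $A_{(\infty_k, 1)}$ turns them into the fiber coordinates $\Delta_k$, decoupling them from the rest of the seed as isolated frozen vertices; deletion of isolated frozen vertices trivially preserves full rank, giving full rank of $\bfs(e, \bfw)$, and the iteration of (1) completes (2). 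The main obstacle is the row-and-column vanishing verification in the deletion step and the rescaling/decoupling argument for (2), both of which are routine once the setup is in place.
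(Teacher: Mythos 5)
Your inductive step is essentially the paper's argument and is sound: mutation preserves the rank of the $J\times J_\uf$ submatrix, freezing deletes columns of a full-column-rank matrix, and after freezing all neighbours of the vertex $k=(i_{k_l}, n_{i_{k_l}}-a_l+1)$ its row vanishes on the mutable columns, so deleting it cannot destroy full column rank. (One small inaccuracy: in the construction the deleted vertex is frozen at that moment -- e.g.\ $(i,n_i)$ at the first step on level $i$ -- not mutable; this only simplifies the bookkeeping, since only a row is removed.)

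The genuine gaps are in your base cases. For (1), the vertices $(i,0)$ are \emph{not} isolated in $\bfs(\bfw)$: $(i,0)$ is joined to $(i,1)$ (and to neighbouring levels), and $(i,1)$ is mutable whenever $n_i\ge 2$. Deleting a non-isolated frozen vertex removes a row of the $J\times J_\uf$ matrix and can in general drop the column rank (one mutable vertex with a single adjacent frozen vertex is already a counterexample), so ``full rank of $\bfs(\bfw)$ plus deletion of the $(i,0)$'' is not a proof; it also leans on a full-rank statement for $\bfs(\bfw)$ that you would have to extract from \cite{SW21}. The paper instead argues directly on $\bfs(e,\bfw)$: the square submatrix with rows $\{(i,l)\,|\,1<l\le n_i\}$ and columns $J_\uf$ is triangular with diagonal entries $\ve_{(i,l+1),(i,l)}=1$ (essentially \cite{BFZ05}, Proposition 2.6). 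For (2), the decoupling step fails: rescaling frozen variables by monomials in other frozen variables does not change the entries $\ve_{fj}$ between a frozen vertex $f$ and mutable vertices $j$ (these are pinned by the mutable exchange ratios together with skew-symmetrizability), and the vertices $(\infty_k,1)$ are adjacent to mutable vertices because $\tilde{a}_{i,\infty_k}=-2$ for all $i\in I$; so they never become isolated, and the claim that deleting them preserves full rank is unsupported. The thickening detour is also unnecessary: the triangularity argument uses only the amalgamation structure of $\bfs(\bfw)$ and applies verbatim to a non-reduced expression $\bfw$, after which your (correct) induction gives (2) exactly as in (1).
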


\begin{proof}
We prove the first claim. The second one is similar. The base case $\mathring{\CB}_{e,w}$ is essentially \cite{BFZ05}*{Proposition 2.6}. We present a proof here for completeness.

Recall that given a reduced word $\bfw$ for $w$, the corresponding seed $\bfs$ for $\mathring{\CB}_{e,w}$ is indexed by
\[
J^+(\bfw)=\{(i,l)\,|\,i\in I,0<l\leq n_i\},
\qquad 
J^+(\bfw)_\uf=\{(i,l)\,|\,i\in I,0<l<n_i\}.
\]

Consider the submatrix with rows indexed by $\{(i,l)\,|\,i\in I,1<l \le n_i\}$ and columns indexed by $J^+(\bfw)_\uf$. 
By construction, $\ve_{(i,l+1),(i,l)}=1$, and if $(j,l') < (i,l)$, then $\ve_{(i,l+1),(j,l')}=0$. This shows that the rearranged submatrix is upper triangular with diagonal entries equal to 1. So $\bfs$ has full rank.

\cite{BFZ05}*{Lemma 3.2} shows that the full rank property is preserved under mutation. Meanwhile, in terms of the submatrix $M$ indexed by $J\x J_\uf$, freezing and deletion in the construction of $\bfs$ correspond to delete a row of $M$ and all columns of $M$ with non-zero entries in that row. By linear algebra, the full rank property is preserved. The proposition follows from induction.
\end{proof}

We refer to \cite{DK20} for the definition of maximal green sequences and reddening sequences.

\begin{prop}
\label{SeedReddening}
\begin{enumerate}
\item  The seed  $\bfs(v,\bfw)$ for  $\mathring{\CB}_{v,w}$   admits   a reddening sequence.
\item The seed  $\bfs(v,\bfw)$ for  $\mathring{\CZ}_{v, \bfw}$ admits   a reddening sequence.
\end{enumerate}
\end{prop}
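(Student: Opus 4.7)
The plan is to propagate the existence of a reddening sequence from the initial seed $\bfs(\bfw)$ on $\Conf_w(\SA)$ through the freezing-and-deletion process in Definition~\ref{RichardsonSeedDefi} to arrive at $\bfs(v,\bfw)$. The twisted product case would then follow from the flag variety case via the thickening, as in the proof of Theorem~\ref{TwistedProductCluster}.

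First I would establish the base case: the seed $\bfs(\bfw)$ for the double Bott-Samelson variety $\Conf_w(\SA)$ admits a reddening sequence. This is essentially the existence of a Donaldson--Thomas transformation for double Bott-Samelson cells, established in \cite{SW21} and \cite{GS24}, where the explicit reddening sequence is built from iterated $\mu_{\vec{i}}$ style sequences corresponding to a twisted cyclic shift. The seed $\bfs(e,\bfw)$ for $\mathring{\CB}_{e,w}$ then admits a reddening sequence as it is obtained from $\bfs(\bfw)$ by deleting isolated frozen vertices, which is straightforward to check since the deleted vertices are isolated and hence do not interact with c-vectors of the other vertices.

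Next, the inductive step requires showing that each operation used to pass from $\bfs(e,\bfw)$ to $\bfs(v,\bfw)$ preserves admission of a reddening sequence. The three operations involved are: mutation, freezing of a mutable vertex, and deletion of an isolated frozen vertex. Reddening sequences are invariant under mutation equivalence by definition. Deletion of isolated frozens obviously preserves reddening sequences since the remaining quiver is unchanged. The key step is freezing: if $\bfs$ admits a reddening sequence, then so does the seed obtained by freezing a mutable vertex $k$. The cleanest way to verify this is to take a reddening sequence $\bfr$ for $\bfs$, analyze the c-vectors along $\bfr$, and produce from $\bfr$ a new sequence $\bfr'$ that avoids mutation at $k$ while still taking all c-vectors of the remaining mutable vertices to negative ones; pairs of consecutive mutations at $k$ in $\bfr$ can be contracted using $\mu_k^2 = \id$, and isolated mutations at $k$ can be moved to the end of $\bfr$ using that freezing makes $k$ inert.

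Once part (1) is established, part (2) for the twisted product follows immediately: by the proof of Theorem~\ref{TwistedProductCluster}, the seed $\bfs(v,\bfw)$ for $\mathring{\CZ}_{v,\bfw}$ is obtained from the seed $\bfs(v,th(\bfw))$ for $\mathring{\Tilde{\CB}}_{v,\tilde{w}}$ by deleting the isolated frozen vertices $(\infty_k,1)$ for $k=1,\dots,n-1$, and this deletion preserves reddening sequences. The main obstacle will be making the freezing step rigorous in the generality of symmetrizable Kac--Moody type; while the folklore argument sketched above is believed to work uniformly, one has to be careful with c-vector tracking when mutating infinitely-sided skew-symmetrizable matrices. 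An alternative, perhaps more transparent route would be to exhibit an explicit reddening sequence for $\bfs(v,\bfw)$ directly using the $\mu_{\vec{i}}$-type mutation sequences that already appear in Definition~\ref{RichardsonSeedDefi}, mirroring the Donaldson--Thomas construction for $\bfs(\bfw)$.
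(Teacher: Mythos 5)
Your overall skeleton is the same as the paper's: base case from \cite{SW21} (maximal green sequence for the double Bott--Samelson/Schubert seed), then an induction in which each step of Definition~\ref{RichardsonSeedDefi} (mutation, freezing, deletion of an isolated frozen vertex) preserves the existence of a reddening sequence, and part (2) via the thickening exactly as in Theorem~\ref{TwistedProductCluster}. The paper disposes of the inductive step in one line by citing \cite{Mul16}*{Section 3}, where Muller proves that the existence of a reddening sequence is invariant under mutation and is inherited by freezing (equivalently, by passing to full subquivers of the mutable part) and deletion.

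The genuine gap is your substitute argument for the freezing step. Starting from a reddening sequence $\bfr$ for $\bfs$, you cannot ``contract pairs of consecutive mutations at $k$'' and ``move isolated mutations at $k$ to the end'': mutations at adjacent vertices do not commute, and omitting or relocating a single $\mu_k$ changes every subsequent exchange matrix and hence every subsequent $c$-vector, so the modified sequence bears no controlled relation to the original one and there is no reason it turns the remaining $c$-vectors negative. This is precisely why the full-subquiver/freezing statement is a nontrivial theorem of Muller rather than a formal manipulation; the correct fix is to invoke \cite{Mul16}*{Section 3} (as the paper does) or to reproduce Muller's actual argument, not the commutation heuristic. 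Two smaller points: mutation-invariance of reddening sequences is a theorem (Keller, with an elementary proof by Muller), not true ``by definition''; and your worry about ``infinitely-sided'' matrices is moot, since every seed occurring here has finitely many vertices --- the only caveat in the symmetrizable case is the skew-symmetrizable versus skew-symmetric setting, which is handled by the same folding remark the paper uses elsewhere.
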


\begin{proof}We shall again focus on the first case. By \cite{SW21}*{Section 4}, the seed for $\mathring{\CB}_{e,w}$ admits a maximal green sequence, which is by definition a reddening sequence. By \cite{Mul16}*{Section 3}, the property of admitting reddening sequences is preserved under mutation, freezing and deletion, so the proposition follows from induction.
\end{proof}

The full rank and reddening sequence hypotheses allows us to invoke the results in \cite{GHKK}, which answers the Fock--Goncharov conjecture in \cite{FG09}.

\begin{cor}
\begin{enumerate}
\item The upper cluster algebra $\BC[\mathring{\CB}_{v,w}]$ has a canonical basis of theta functions parametrized by the integral tropical points of the dual cluster variety.
\item The upper cluster algebra   $\BC[\mathring{\CZ}_{v, \bfw}]$ has a canonical basis of theta functions parametrized by the integral tropical points of the dual cluster variety.
\end{enumerate}
\end{cor}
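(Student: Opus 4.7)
The plan is to invoke the main theorem of Gross--Hacking--Keel--Kontsevich \cite{GHKK}, which resolves the (full) Fock--Goncharov conjecture for any cluster algebra whose initial seed has a full rank exchange matrix and admits a reddening (equivalently, maximal green) sequence. The two immediately preceding propositions in the paper have been set up precisely to verify these two hypotheses for the seed $\bfs(v,\bfw)$, so only an essentially formal invocation is needed.

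Step by step, I would proceed as follows. First, appeal to Proposition~\ref{SeedFullRank} to note that the seed $\bfs(v,\bfw)$ has full rank in both the Richardson setting on $\CB$ and the twisted-product setting on $\CZ$. Second, invoke Proposition~\ref{SeedReddening} to supply a reddening sequence for $\bfs(v,\bfw)$ in each case. Third, use Theorem~\ref{RichardsonCluster} and Theorem~\ref{TwistedProductCluster} to identify the coordinate rings $\BC[\mathring{\CB}_{v,w}]$ and $\BC[\mathring{\CZ}_{v,\bfw}]$ with the upper cluster algebras $\CU(\bfs(v,\bfw))$. Fourth, apply the main theorem of \cite{GHKK} to conclude that $\CU(\bfs(v,\bfw))$ admits a canonical basis of theta functions indexed by the integral tropical points of the Fock--Goncharov dual cluster variety $\CA^{\vee}$, and transport this basis across the isomorphism above.

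I do not expect a substantive obstacle at this stage: all the real work has already been carried out in verifying full rank and the reddening sequence, which in turn reduce, via stability under mutation, freezing, and deletion, to the known case of $\mathring{\CB}_{e,w}$ from \cite{BFZ05} and \cite{SW21}. The only mildly delicate point worth flagging is that \cite{GHKK} is written in the skew-symmetric setting, whereas our seeds are in general only skew-symmetrizable; this gap is bridged by the standard folding argument of \cite{FG09}, which is already invoked elsewhere in the paper (for instance in \S\ref{sec:extra:indep}). With that caveat, the two statements (1) and (2) follow uniformly from the same citation.
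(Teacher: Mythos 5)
Your proposal matches the paper's argument exactly: the paper deduces the corollary by combining Theorem~\ref{RichardsonCluster} and Theorem~\ref{TwistedProductCluster} with Proposition~\ref{SeedFullRank} and Proposition~\ref{SeedReddening}, and then invoking \cite{GHKK}. The remark about folding for the skew-symmetrizable case is a reasonable extra precaution but does not change the route.
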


\end{document}